\documentclass[11pt]{article}
\usepackage{mydef2col}
\usepackage{markArticle}

\theoremstyle{definition}
\newtheorem{definition}{Definition}[section]
\newtheorem{example}{Example}

\newcommand{\Krylov}{\mathcal{K}}
\DeclareMathOperator{\spn}{span}
\newcommand{\bvec}[1]{\mathbf{#1}}
\DeclareMathOperator{\re}{\Ree}

\title{Diagonal Frog: High-order positivity-preserving FD schemes \\ for anisotropic Fokker-Planck equations}
\subtitle{From Financial engineering to Active Matter}

\shorttitle{Positivity-preserving FD schemes}

\author{
    \authorstyle{ Andrey Itkin}
    \newline \newline
    \institution{FREE department, Tandon School of Engineering, New York University, email: \url{aitkin@nyu.edu}} \\
}

\date{\today}

\begin{document}

\maketitle

\lettrineabstract{The Fokker–Planck equation is fundamental to statistical mechanics, yet in settings with multiple state variables, anisotropic (cross-) diffusion, and jumps, conventional discretizations frequently produce non-physical negative probability densities. Building on the operator approach of \cite{ItkinBook}, we introduce a family of "Diagonal Frog" discretizations whose spatial operators are eventually M-matrices (EM-matrices). Although these operators lack a local M-matrix structure, positivity of the directional sub-operators emerges in the spirit of Zeno's paradox: the matrix exponential, assembled as the limit of infinitely many ever-smaller substeps, is provably nonnegative after a short transient even though no single substep is. For the mixed-derivative block, whose generator is not eventually nonnegative, positivity instead rests on a factorized resolvent solver and holds conditionally, on an explicit step-size window; discrete mass is conserved exactly by the splitting for every step size. The resulting schemes are second-order accurate in time and space and require $O(m^2 N + m^3)$ operations per time step, where $m$ is the dimension of the Krylov subspace used to apply the exponential. As stress tests, we solve a two-dimensional anisotropic Fokker--Planck equation in the strong cross-diffusion regime against an exact Gaussian reference, a Kramers escape problem in a double-well potential, and an advection-dominated problem, and observe that the schemes remain stable, nonnegative, and mass-conservative for a wide range of P\'ecklet numbers (so, don't need any flux limiter). Finally, we extend the construction to multidimensional processes and to the backward Kolmogorov equation with jumps.
}

\section{Introduction}

% full introduction is in file fdPositiveLight5.tex

The Fokker–Planck equation (FPE) models the time evolution of probability density functions (PDFs) $p(t,x)$ in non-equilibrium stochastic systems \cite{Risken1996,Kwok2018}. A fundamental physical requirement is that $p(t,x) \ge 0$ for all coordinates and times. However, modern applications involving anisotropy (cross-diffusion) and non-locality (Lévy jumps) frequently cause standard finite-difference (FD) schemes to produce unphysical negative values. These negative "ghost densities" violate mass conservation and render thermodynamic quantities, like the Gibbs entropy, mathematically undefined.

The struggle to preserve positivity is pervasive across quantitative finance, physics, and biology. In multi-variable physical systems or active matter biology (e.g., Run-and-Tumble particles undergoing Motility-Induced Phase Separation), cross-diffusion and steep gradients often lead to numerical undershoots \cite{ParkPetrosian1996,CamporealeEtAl2013a}. Similarly, when modeling anomalous diffusion or Lévy flights via Partial Integro-Differential Equations (PIDEs), standard quadrature or FFT solvers frequently introduce positivity-breaking Gibbs phenomena \cite{gao2015second,ContTankov,Cherubinietal2010,hirsa2013,Duffy2006}. Identical bimodal and oscillatory artifacts plague financial models dealing with correlated jumps and stochastic volatility \cite{ItkinLipton2014,ItkinCarrBarrierR3}.

To suppress these numerical artifacts, researchers across various disciplines have developed specialized techniques, summarized in Table~\ref{tab:scheme_comparison}. These approaches can be categorized into three primary classes
\begin{table}[!htbp]
\centering
\begin{tabular}{|l|l|l|}
\toprule
\textbf{Scheme Type} & \textbf{Positivity} & \textbf{Common Issues} \\
\midrule
Standard FD          & No   & Negative $p$ near gradients or cross-diffusion. \\
Chang--Cooper        & Yes  & Hard to generalize to $d > 1$. \\
Log-transformation   & Yes  & Oversensitive to near-zero densities. \\
Finite Volume (NTPFA)& Yes  & Complex to implement on non-orthogonal meshes. \\
\bottomrule
\end{tabular}
\caption{Comparison of numerical schemes (spatial order greater than one) for the Fokker--Planck equation.}
\label{tab:scheme_comparison}
\end{table}

These approaches can be categorized into three primary classes:

\begin{enumerate}
\item \textbf{Exponential Fitting and Transformations:} Positivity can be
enforced via the Chang-Cooper method \cite{ChangCooper1970} or
exponential-fitting schemes \cite{Duffy2006}. However, these are often restricted
to one dimension or suffer from reduced first-order accuracy to satisfy M-matrix
stability requirements \cite{ParkPetrosian1996}. Alternatively, logarithmic
transformations (e.g., the SILL scheme) guarantee positivity by substituting $p =
e^f$; while effective, this approach increases computational complexity by
introducing non-linearities into the FPE \cite{YuanShu2005,Gong2021SILL}.

\item \textbf{Limiters and Non-Standard FD:} This class includes Total Variation Diminishing (TVD) limiters \cite{Osher1983, OsherChakravarthy1984, Harten1983,
PareschiRusso2005}, non-linear flux limiters \cite{QiSu2025}, and non-standard
finite difference methods that employ denominator adjustments
\cite{ChapwanyaLubuma2015}. While these schemes maintain high-order accuracy in
smooth regions, they typically revert to first-order stable schemes near
discontinuities to preserve the TVD property \cite{PareschiRusso2005}.

\item \textbf{High-Order and ML-Enhanced Schemes:} Recent developments include high-order FD schemes that necessitate strict spatial mesh constraints to
preclude long-term oscillations \cite{HuZhang2023, LiuGaoZhang2024}.
Additionally, machine-learning-enhanced schemes, such as WENO-DS, have been
trained to suppress unphysical oscillations in financial modeling contexts
\cite{KossaczkaEhrhardt2024,HoangEhrhardt2026}.

\end{enumerate}

Despite these advances, robustly handling multi-dimensional cross-diffusion ($D_{ij} \neq 0$) without violating the discrete maximum principle remains computationally difficult \cite{CamporealeEtAl2013a, CamporealeEtAl2013b, Albert2013, CamporealeEtAl2013c, mcfarland2025efficient}.

To systematically handle mixed-derivative terms and non-local Lévy operators while guaranteeing positivity, the author previously introduced an FD framework for financial PIDEs based on pseudo-differential operators and M-matrix (and EM-matrix) theory \cite{Itkin2014,Itkin2014b,Itkin3D,ItkinBook}. The core mechanism is an operator-splitting technique built from a discretization that represents operators in coordinate space using EM-matrices, preserving non-negativity for both backward and forward equations.

Although this M-matrix framework has demonstrated success in financial Lévy models \cite{ItkinBook}, it has yet to be fully extended to complex physical systems. In this paper, we bridge that gap by applying this approach to the multivariate FPE (also those that could govern active Lévy flyers). We focus on three main objectives:

\begin{enumerate}
\item A detailed derivation of the positivity-preserving splitting for two-dimensional phase spaces.
\item A rigorous proof that the scheme stays stable even under strong cross-diffusion.
\item A numerical benchmark comparing our results with standard solvers, showing where traditional FD methods fail (especially for heavy-tailed distributions).
\end{enumerate}

The rest of the paper is organized as follows. \Cref{sec:disc1d} establishes the mathematical foundation of the FPE and M- and EM-matrix discretization and introduces the 1D Diagonal Frog (DF) scheme, stability and convergence analysis and includes the "Zeno's paradox" interpretation. \Cref{sec:disc2d} generalizes these results for the two-dimensional case, elaborates on a version of Strang's splitting that preserves conditional positivity. The most important part is a derivation of the mixed derivatives iterative scheme of \cite{Itkin3D} for the FPE equation. \Cref{sec:schemes} compares various time integration schemes including the exponential integrator which is a part of the DF scheme. In \cref{sec:krylov} we describe Krylov methods of computing the matrix exponential for a banded matrix. \Cref{sec:numerics} presents various numerical experiments to justify our theoretical findings. In \cref{sec:BKE} in a spirit of \cite{ItkinBook} we extend the proposed method to Backward Kolmogorov equations and jump-diffusion models for some \LY processes with known characteristic function. Finally, \cref{sec:conclusion} offers concluding remarks.

\section{Spatial Discretisation of the 1D Fokker--Planck Equation} \label{sec:disc1d}

The numerical schemes proposed in this paper rely on properties of M-matrices and eventually nonnegative (EM) matrices to preserve positivity. For brevity, a rigorous discussion of the definitions and properties of M-matrices and EM-matrices, along with the proofs supporting our analysis, is provided in \cref{app0}. Readers already familiar with the topic (see, e.g., \cite{BermanPlemmons94, NoutsosTsatsomeros2008, OleskyEtAl2009, ItkinBook}) may find this appendix supplemental.

The discretisation schemes that exploit these matrix properties are constructed for the continuous FPE. In its general form for a $d$-dimensional continuous state space, the FPE reads
\begin{equation} \label{FP_def}
\frac{\partial p(t,x)}{\partial t} = -\sum_{i=1}^d \frac{\partial}{\partial x_i}\big[\mu_i(t,x)p(t,x)\big] + \frac12 \sum_{i,j=1}^d \frac{\partial^2}{\partial x_i\partial x_j}\big[D_{ij}(t,x)p(t,x)\big],
\end{equation}
where $\bm{\mu}$ is the drift vector and $\bm{D}$ is the diffusion tensor. The right-hand side of \eqref{FP_def} can be written compactly by introducing the Fokker–Planck operator $\calL$:
\begin{equation} \label{FP_comp}
\frac{\partial p}{\partial t} = \calL\,p,\qquad
\calL\,\cdot\, = -\sum_{i=1}^d\frac{\partial}{\partial x_i}\big[\mu_i(t,x)\,\cdot\,\big] + \frac12\sum_{i,j=1}^d\frac{\partial^2}{\partial x_i\partial x_j}\big[D_{ij}(t,x)\,\cdot\,\big].
\end{equation}

In a one-dimensional (1D) case, we re-write \eqref{FP_comp} as
\begin{equation}
  \frac{\partial p}{\partial t} = -\frac{\partial}{\partial x}[\mu(x,t)\,p]
  + \frac{\partial^2}{\partial x^2}[D(x,t)\,p],
  \label{eq:fpe}
\end{equation}
where $D(x,t) = \tfrac{1}{2}\sigma^2(x,t) \geq 0$ is the diffusion coefficient.
This is a parabolic PDE with the fundamental conservation property
\begin{equation}
  \int_{-\infty}^{\infty} p(x,t)\,dx = 1, \qquad p(x,t) \geq 0,
  \quad \text{for all } t \geq 0,
\end{equation}
i.e., $p(\cdot,t)$ is a probability density for all time.

Consider a uniform spatial grid $\Omega_h\colon x_1 < x_2 < \cdots < x_n$ with
mesh spacing $h$. After spatial discretization on this grid, the FPE reduces to the linear ODE system
\begin{equation}   \label{eq:ode}
\dot{\bvec{p}}(t) = L(t)\,\bvec{p}(t), \qquad \bvec{p}(0) = \bvec{p}_0,
\end{equation}
where $L(t) \in \R^{n \times n}$ is the discretized Fokker--Planck operator, and $\bvec{p}(t) \in \R^n$ approximates the PDF at the grid points.
The two paramount numerical properties that any time-integration scheme must
preserve are:
\begin{enumerate}[label=(\roman*)]
\item \textbf{Positivity}: $p^n_i \geq 0$ for all $i,n$, so that the discrete PDF remains nonnegative.

\item \textbf{Mass conservation}: $\sum_i p^n_i = 1$ (or constant) for all $n$.
\end{enumerate}
Below we will analyse several time-integration strategies with respect to these properties, culminating in the Krylov subspace exponential integrator as the
method of choice.

\subsection{Divergence form and flux}

We write the FPE \eqref{eq:fpe} in conservative (divergence) form
\begin{equation}   \label{eq:consform}
\frac{\partial p}{\partial t} = -\frac{\partial J}{\partial x},
\end{equation}
where the probability flux $J(x,t)$ is
\begin{equation}   \label{eq:flux}
J(x,t) = \mu(x,t)\,p - \frac{\partial}{\partial x}[D(x,t)\,p].
\end{equation}
This representation makes the conservation structure explicit: integrating
\eqref{eq:consform} over any interval and applying the fundamental theorem of
calculus shows that the total probability changes only through boundary fluxes,
and is exactly preserved under zero-flux or absorbing boundary conditions.

We assume throughout this section that
\begin{equation}   \label{eq:mudirn}
\mu(x,t) > 0 \quad \text{for all } x,\, t \geq 0,
\end{equation}
and, in addition, that the problem is uniformly parabolic on the grid,
\begin{equation}   \label{eq:Dpos}
D_i(t) := D(x_i,t) > 0 \quad \text{for all } i,\, t \geq 0.
\end{equation}
The case $\mu < 0$ admits a fully symmetric treatment with the upwind direction
reversed; for sign-changing $\mu$ the stencil is applied directionally at each
grid point.

On the uniform grid $x_i = x_1 + (i-1)h$, $i = 1,\ldots,n$, we discretize
$-\partial_x J|_{x_i}$ using second-order finite difference approximations.
The local mesh P\'eclet number is
\begin{equation}
  \mathrm{Pe}_i(t) = \frac{\mu_i(t)\,h}{D_i(t)},
  \label{eq:peclet}
\end{equation}
where $\mu_i(t) = \mu(x_i,t)$ and $D_i(t) = D(x_i,t)$.

The advective contribution $\partial_x[\mu p]$ is approximated by the
second-order backward (upwind) difference $\calF^B_2$, consistent with the assumption $\mu > 0$,
\begin{equation}   \label{eq:adv}
\frac{\partial}{\partial x}[\mu p]\bigg|_{x_i} \approx \calF^B_2\,(\mu p)_i
  = \frac{3(\mu p)_i - 4(\mu p)_{i-1} + (\mu p)_{i-2}}{2h}.
\end{equation}
To recall, this stencil has truncation error $O(h^2)$ and involves only points
$\{x_{i-2}, x_{i-1}, x_i\}$, consistent with the upwind direction.

The diffusive contribution $\partial^2_x[Dp]$ is approximated by the standard
second-order centred difference $\calS^C_2$ in all regimes
\begin{equation} \label{eq:diff_ctr}
\frac{\partial^2}{\partial x^2}[D p]\bigg|_{x_i} \approx \calS^C_2\,(Dp)_i
  = \frac{(Dp)_{i+1} - 2(Dp)_i + (Dp)_{i-1}}{h^2}.
\end{equation}
We note that a one-sided three-point stencil for the second derivative achieves
only $O(h)$ accuracy; a one-sided $O(h^2)$ stencil requires four points.  To
maintain uniform second-order accuracy with a minimal stencil we therefore use
$\calS^C_2$ throughout, regardless of the P\'eclet regime.

\begin{myremark}[One-sided diffusion stencils] \label{rem:onesided}

Replacing $\calS^C_2$ by a one-sided second-derivative stencil would not improve
the structural properties of the scheme; it destroys them, for two independent
reasons.  First, monotonicity is unattainable for \emph{any} consistent one-sided
approximation of the second derivative, at any order: consistency requires the
moment conditions $\sum_k c_k = 0$ and $\sum_k k\,c_k = 0$, and the latter, with
all stencil offsets $k \leq 0$ and nonnegative neighbour weights $c_k \geq 0$
($k \neq 0$), gives $\sum_k k\,c_k = -\sum_k |k|\,c_k < 0$ unless all weights
vanish.  The centred stencil is thus the unique monotone placement.

Second, the minimal one-sided $O(h^2)$ stencil, $(Dp)''_i \approx [2(Dp)_i - 5(Dp)_{i-1} + 4(Dp)_{i-2} - (Dp)_{i-3}]/h^2$, is dynamically unstable: combined
with $\calF^B_2$ it renders $L$ lower triangular, so that $\operatorname{spec}(L)
= \{2D_i/h^2 - 3\mu_i/(2h)\}$, with \emph{positive} eigenvalues wherever
$\mathrm{Pe}_i < 4/3$ - the semi-discretization diverges on every sufficiently
fine mesh. Equivalently, its Fourier symbol satisfies $\Ree\,S(\pi) = 12 D/h^2 >
0$: the stencil is anti-diffusive at the grid scale.  Its truncation constant,
$-\tfrac{11}{12}h^2(Dp)^{(4)}$, is moreover eleven times that of $\calS^C_2$. For
the advective term, a one-sided second-order stencil is precisely $\calF^B_2$ of
\cref{eq:adv}, and by Godunov's barrier (\cref{rem:godunov}) no linear
second-order variant - one-sided, biased, or averaged (e.g., Fromm's scheme,
which halves the offending coefficient to $-\mu_{i-2}/(4h)$ but cannot remove it)
- restores monotonicity.
\end{myremark}

Combining \eqref{eq:adv} and \eqref{eq:diff_ctr} in \eqref{eq:consform} gives,
for interior nodes $i = 3, \ldots, n-1$ (with the convention $p_n \equiv 0$ under
absorbing conditions, see below):
\begin{equation}   \label{eq:ode1d}
\dot{p}_i = \alpha_i\,p_{i-2} + \beta_i\,p_{i-1} + \gamma_i\,p_i + \delta_i\,p_{i+1},
\end{equation}
with coefficients
\begin{equation}   \label{eq:coeffs1d}
\alpha_i = -\frac{\mu_{i-2}}{2h}, \qquad
\beta_i  = \frac{D_{i-1}}{h^2} + \frac{2\mu_{i-1}}{h}, \qquad
\gamma_i = -\frac{2D_i}{h^2} - \frac{3\mu_i}{2h}, \qquad
\delta_i = \frac{D_{i+1}}{h^2}.
\end{equation}
As a consistency check, for constant $\mu$ the advective weights
$(-1,\,4,\,-3)/(2h)$ sum to zero, so a constant state generates no spurious
advective source, in agreement with \eqref{eq:consform}.

\subsection{Matrix form and boundary conditions} \label{ssec:matrixform}

We impose absorbing boundary conditions $p(x_1,t) = p(x_n,t) = 0$ and collect the
interior values into the vector
$\bvec{p}(t) = \bigl( p_2(t),\ldots,p_{n-1}(t) \bigr)^{\!\top} \in \R^{n-2}$. \cref{eq:ode1d,eq:coeffs1d} then define the rows
$i = 3,\ldots,n-1$ of the operator $L(t)$ in \eqref{eq:ode} (entries referencing
$p_1$ or $p_n$ are simply absent); the near-boundary row $i=2$, for which
$x_{i-2}$ falls outside the grid, employs the first-order upwind stencil
\eqref{eq:coeffs_adv} introduced below.

\begin{myremark}[Loss of monotonicity of the second-order upwind stencil] \label{rem:godunov}

Let $A^-(t) := -L(t)$ with $L$ assembled from \cref{eq:ode1d,eq:coeffs1d}.  By \eqref{eq:mudirn},
\begin{equation*}
(A^-)_{i,i-2} \;=\; -\alpha_i \;=\; \frac{\mu_{i-2}}{2h} \;>\; 0 ,
\end{equation*}
so $A^-$ possesses a strictly positive off-diagonal entry in every interior
row.  Hence $A^-$ is not a Z-matrix and therefore cannot be an M-matrix for
any mesh width $h$ or any value of the P\'eclet number; equivalently, $L$ is
not a Metzler matrix, and the semigroup leaves the nonnegative cone
immediately: $(e^{tL})_{i,i-2} = t\,\alpha_i + O(t^2) < 0$ for all
sufficiently small $t > 0$.  This is Godunov's order barrier in disguise: a
\emph{linear} monotone discretization of the advective term is at most
first-order accurate.  Unconditional positivity could therefore be obtained
only by nonlinear flux limiting or by sacrificing second-order accuracy in
the advective term; we pursue neither.  Instead, a weaker, \emph{eventual}
form of positivity survives for the second-order upwind scheme, formalised
in \cref{ssec:em}, while the central scheme of \cref{ssec:advdom} is monotone under the mesh condition $\mathrm{Pe}_i < 2$; the resulting trade-off is summarised in \cref{ssec:summary}.
\end{myremark}

\subsection{The EM-matrix property of the second-order scheme} \label{ssec:em}

Although $A^- = -L$ is not an M-matrix, it retains the spectral content of the
M-matrix definition within the theory of EM-matrices.  Recall EM-matrices
generalise M-matrices: the Z-sign pattern is abandoned, while the
Perron--Frobenius structure of the dominant eigenpair is retained in an eventual
sense.

We begin this section by informally stating the core idea. When one uses a
high-order numerical scheme (e.g., a second-order upwind scheme) to solve
convection-dominated problems (such as fluid flow or the FPE), the scheme
introduces spurious numerical oscillations. Consequently, if one attempts to
resolve a sharp wave or a steep gradient, a second-order scheme will naturally
``overcorrect'', generating non-physical, negative ripples near sharp edges.
Godunov's theorem states that no linear numerical scheme of second-order or
higher accuracy can guarantee that solutions remain perfectly nonnegative
(monotone) for all time steps.

The matrix $L$ governs how the solution evolves over a time step $\tau$. Below,
we provide a spectral decomposition that splits the matrix propagator
$e^{\tau L}$ into two parts:
\begin{equation*}
\underbrace{e^{\tau L}}_{\text{Total Propagator}} =
\underbrace{e^{\lambda_1 \tau}P}_{\substack{\text{Dominant Behavior} \\ \text{(Positive)}}} + \underbrace{e^{\lambda_1  \tau}E(\tau)}_{\substack{\text{High-Frequency Error} \\ \text{(Oscillatory)}}}
\end{equation*}
For the positive part, the eigenvectors associated with the principal
eigenvalue $\lambda_1$ are strictly positive, hence $P > 0$ entrywise (note
that $\lambda_1$ itself is non-positive). This part represents the true,
physically sound, long-term state of the system. The error part $E(\tau)$
contains all remaining eigenvalues. Since we are using a high-order scheme,
this is precisely where the Godunov-type oscillations reside.

As time $\tau$ increases, the error term decays exponentially at a rate
governed by the spectral gap $g$. The ``transient'' refers to the brief window
$0 \leq \tau < \tau_0$ during which the error term $E(\tau)$ remains large
enough to cause problems. However, because the error decays exponentially,
these oscillations can only persist for a very short time $\tau_0$. Once the
time step $\tau$ exceeds the transient threshold $\tau_0$, the positive
dominant part $P$ completely overwhelms the error part $E(\tau)$, i.e.,
$\max_{ij} |E(\tau)_{ij}| < \min_{ij} P_{ij}$. As soon as this mathematical
threshold is crossed, the entire matrix operator $e^{\tau L}$ becomes strictly
positive.

Therefore, if the time step is comfortably larger than $\tau_0$, the natural
physics of the system (via the spectral gap) will forcefully suppress these
oscillations, ensuring that results remain physically realistic and strictly
positive. The non-physical behavior is confined to the initial, short
transient.

We proceed by evaluating this idea with mathematical rigor, working under the
following discrete ground-state hypothesis - a Krein--Rutman property of the
discretized operator.

\smallskip
\begin{hypothesis}[H] \label{hypH}
For each $t \geq 0$ the rightmost eigenvalue $\lambda_1(t)$ of $L(t)$ is
real, algebraically simple and strictly dominant, $\Ree(\lambda) < \lambda_1(t)$ for
every other eigenvalue $\lambda$ of $L(t)$; moreover $\lambda_1(t) \leq 0$, and
the associated right and left eigenvectors $\bvec{v}(t)$ and $\bvec{w}(t)$ may be
chosen entrywise positive.
\end{hypothesis}
\smallskip

\begin{proposition}[EM-matrix property of the second-order scheme]  \label{prop:emmatrix}
Let $L(t)$ be assembled from \cref{eq:ode1d,eq:coeffs1d} with absorbing boundary conditions as in \cref{ssec:matrixform}, and set $A^-(t) := -L(t)$.  Under
Hypothesis~(H), $A^-(t)$ is an EM-matrix for every $t \geq 0$.  More precisely,
there exists $s_0 \geq 0$ such that for every $s \geq s_0$ the matrix $B := sI +
L(t)$ is eventually positive and $\rho(B) = s + \lambda_1(t) \leq s$.
\end{proposition}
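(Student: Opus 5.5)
Fix $t\ge 0$ and write $L=L(t)$, $\lambda_1=\lambda_1(t)$, with $\bvec v,\bvec w>0$ from Hypothesis~(H). The plan is to work with the shifted matrix $B=sI+L$. Its eigenvalues are $s+\lambda$ for $\lambda\in\operatorname{spec}(L)$, and it has the same right and left eigenvectors as $L$. I will then use the Perron--Frobenius characterisation of eventual positivity recalled in \cref{app0} (Noutsos--Tsatsomeros; Olesky et al.): a real matrix is eventually positive if and only if its spectral radius is a simple eigenvalue, strictly dominant in modulus, with entrywise positive right and left eigenvectors. An EM-matrix is, by definition, a matrix of the form $sI-B$ with $B$ eventually nonnegative and $\rho(B)\le s$. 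So once $B$ is shown to be eventually positive with $\rho(B)=s+\lambda_1\le s$, the identity $A^-=-L=sI-B$ gives the claim.

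\textbf{Step 1 (dominance in modulus after a shift).} Hypothesis~(H) gives dominance only in real part, $\Ree\lambda<\lambda_1$ for every other eigenvalue $\lambda$. I need dominance in modulus, $|s+\lambda|<s+\lambda_1$, and I expect this to be the main point of the proof. For $s+\lambda_1>0$ the inequality is equivalent to
\[
(s+\Ree\lambda)^2+(\Im\lambda)^2<(s+\lambda_1)^2,
\]
that is,
\[
2s(\lambda_1-\Ree\lambda)>(\Re\lambda)^2+(\Im\lambda)^2-\lambda_1^2=|\lambda|^2-\lambda_1^2.
\]
Set $g:=\min_{\lambda\neq\lambda_1}(\lambda_1-\Ree\lambda)>0$. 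This minimum is positive because the spectrum is finite and $\lambda_1$ is simple. Then any
\[
s>s_0:=\max\Bigl\{-\lambda_1,\ \max_{\lambda}\frac{|\lambda|^2-\lambda_1^2}{2g}\Bigr\}
\]
makes $s+\lambda_1>0$ and $s+\lambda_1$ strictly dominant in modulus. A crude explicit choice is $s_0=\|L\|_\infty^2/g$.

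One boundary issue needs care: the statement says $s\ge s_0$, but my estimate gives a strict inequality. I will either define $s_0$ with a small margin, or note that the inequality is strict at $s=s_0$ whenever $s_0$ comes from the second term. I will also state explicitly that $\lambda_1\le 0$, so that $\rho(B)=s+\lambda_1\le s$.

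\textbf{Step 2 (simplicity and positive eigenvectors).} Since $\lambda_1$ is algebraically simple for $L$, $s+\lambda_1$ is algebraically simple for $B$. The vectors $\bvec v$ and $\bvec w$ are positive right and left eigenvectors of $B$ for $s+\lambda_1$. By Step 1, $\rho(B)=s+\lambda_1$.

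\textbf{Step 3 (conclusion).} The characterisation theorem now gives that $B$ is eventually positive. Concretely,
\[
B^k/\rho(B)^k\to \bvec v\bvec w^\top/(\bvec w^\top\bvec v)>0.
\]
This can also be checked directly from the Jordan decomposition, because the remaining part of $B^k$ decays at the geometric rate $\max_{\lambda\neq\lambda_1}|s+\lambda|/(s+\lambda_1)<1$. Hence $A^-=sI-B$ is an EM-matrix. Since $t$ was arbitrary, this holds for every $t\ge 0$.
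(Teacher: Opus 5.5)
Your proposal is correct and follows the same route as the paper: shift to $B=sI+L$, turn the real-part gap of (H) into strict modulus dominance of $s+\lambda_1$ for large $s$, invoke the Noutsos--Tsatsomeros strong Perron--Frobenius characterisation to get eventual positivity, and read off $\rho(B)=s+\lambda_1\le s$ from $\lambda_1\le 0$. Your explicit threshold $s_0=\|L\|_\infty^2/g$ is a small quantitative refinement of the paper's limiting argument, and it settles your boundary worry: since $g\le \lambda_1+\|L\|_\infty$, this choice already gives $s_0+\lambda_1>0$ and strict dominance at $s=s_0$ itself.
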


\begin{proof}
Fix $t$ and suppress it.  Since $B = sI + L$, we have $\operatorname{spec}(B) = s
+ \operatorname{spec}(L)$ with eigenvectors unchanged; in particular $s +
\lambda_1$ is an algebraically simple eigenvalue of $B$ with right eigenvector
$\bvec{v} > 0$, and of $B^{\!\top}$ with eigenvector $\bvec{w} > 0$.  For any
eigenvalue $\lambda \neq \lambda_1$ of $L$,
\begin{equation*}
(s+\lambda_1)^2 - |s+\lambda|^2 = (\lambda_1 - \Ree(\lambda))\,(2s + \lambda_1 + \Ree(\lambda)) - (\Imm(\lambda))^2 \;\longrightarrow\; +\infty \quad (s \to \infty),
\end{equation*}
because $\lambda_1 - \Ree(\lambda) > 0$ by (H).  The spectrum being finite, there
exists $s_0 \geq 0$, which we also take large enough that $s_0 + \lambda_1 > 0$,
such that for all $s \geq s_0$,
\begin{equation*}
s + \lambda_1 \;>\; |s + \lambda| \qquad \text{for every eigenvalue } \lambda \neq \lambda_1 .
\end{equation*}
Hence $\rho(B) = s + \lambda_1 > 0$ is a simple, strictly dominant eigenvalue of
$B$, and the corresponding right and left eigenvectors are entrywise positive;
that is, both $B$ and $B^{\!\top}$ possess the strong Perron--Frobenius property.
By the characterisation of \cite{Noutsos2006, NoutsosTsatsomeros2008}, a matrix has this property together with its transpose if and only if it is eventually positive.  Finally, $s \geq \rho(B) = s + \lambda_1$ is equivalent to $\lambda_1 \leq 0$, which holds by (H).  Therefore $A^- = sI - B$ satisfies \cref{def1em} and thus is an EM-matrix.
\end{proof}

\begin{corollary}[Eventual positivity of the propagator] \label{cor:eventualpos}
Under Hypothesis~(H), for each frozen $t$ the matrix $-A^-(t) = L(t)$ is
eventually exponentially positive: with the spectral projector $P :=
\bvec{v}\bvec{w}^{\!\top} / (\bvec{w}^{\!\top}\bvec{v}) > 0$ and spectral gap $g
:= \lambda_1 - \max_{\lambda \neq \lambda_1} \Ree(\lambda) > 0$,
\begin{equation} \label{eq:specdecomp}
e^{\tau L} = e^{\lambda_1 \tau}\bigl( P + E(\tau) \bigr), \qquad
\|E(\tau)\| \leq C_\varepsilon\, e^{-(g-\varepsilon)\tau} \quad \text{for any } \varepsilon \in (0,g),
\end{equation}
so there exists $\tau_0 \geq 0$ such that $e^{\tau L} > 0$ entrywise for all
$\tau \geq \tau_0$.  In particular $e^{\tau L}\bvec{p}_0 > 0$ for every nonzero
$\bvec{p}_0 \geq 0$ and all $\tau \geq \tau_0$.
\end{corollary}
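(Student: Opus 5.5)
The plan is to work with the spectral (Riesz) decomposition of $L = L(t)$ at frozen $t$. By Hypothesis~(H), $\lambda_1$ is algebraically simple. The associated spectral projector is therefore the rank-one matrix $P = \bvec{v}\bvec{w}^{\!\top}/(\bvec{w}^{\!\top}\bvec{v})$. This is well defined because $\bvec{w}^{\!\top}\bvec{v} > 0$, as both vectors are entrywise positive, and $P > 0$ entrywise for the same reason. Since $P$ commutes with $L$ and $LP = PL = \lambda_1 P$, we can write
\begin{equation*}
e^{\tau L} = e^{\tau L}P + e^{\tau L}(I-P) = e^{\lambda_1\tau}P + e^{\tau L}(I-P).
\end{equation*}
Defining $E(\tau) := e^{-\lambda_1\tau}e^{\tau L}(I-P)$ gives the form \eqref{eq:specdecomp} exactly. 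What remains is the decay bound on $E(\tau)$.

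For that bound, note that $Q := I-P$ is the complementary spectral projector. The restriction of $L$ to $\operatorname{range}(Q)$ has spectrum $\operatorname{spec}(L)\setminus\{\lambda_1\}$, and by (H) every such eigenvalue has real part at most $\lambda_1 - g$. Then $e^{-\lambda_1\tau}e^{\tau L}Q = e^{\tau(L-\lambda_1 I)}Q$, and the operator $L - \lambda_1 I$ restricted to $\operatorname{range}(Q)$ has spectral abscissa $-g$. To bound it I would use the Jordan form, or equivalently the standard estimate $\|e^{\tau M}\| \le C_\varepsilon e^{(\alpha(M)+\varepsilon)\tau}$ for any matrix $M$ with spectral abscissa $\alpha(M)$. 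This estimate follows from $\|e^{\tau M}\|\le \kappa(S)\,e^{\alpha\tau}\sum_{k<m}\tau^k\|N\|^k/k!$, absorbing the polynomial factor into $e^{\varepsilon\tau}$. This yields $\|E(\tau)\| \le C_\varepsilon e^{-(g-\varepsilon)\tau}$ for each $\varepsilon\in(0,g)$, with $C_\varepsilon$ also absorbing $\|Q\|$. This step is routine, and the only care needed is that nontrivial Jordan blocks among the subdominant eigenvalues are exactly why $\varepsilon$ appears.

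For eventual positivity, set $\pi := \min_{ij}P_{ij} > 0$. All norms on $\R^{n\times n}$ are equivalent, so $\max_{ij}|E(\tau)_{ij}| \le c\,\|E(\tau)\| \le cC_\varepsilon e^{-(g-\varepsilon)\tau}$. We then choose $\tau_0$ with $cC_\varepsilon e^{-(g-\varepsilon)\tau_0} < \pi$; taking $\tau_0 = 0$ if this already holds at $\tau = 0$, which keeps $\tau_0 \ge 0$. For $\tau\ge\tau_0$ every entry of $P+E(\tau)$ is then at least $\pi - \max|E_{ij}| > 0$, and multiplying by $e^{\lambda_1\tau}>0$ gives $e^{\tau L}>0$ entrywise. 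Finally, if $\bvec{p}_0\ge0$ and $\bvec{p}_0\neq0$, each component of $e^{\tau L}\bvec{p}_0$ is a sum of strictly positive matrix entries times nonnegative weights, at least one of which is positive, so the result is strictly positive.

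An alternative route would appeal to \cref{prop:emmatrix}: $B = sI+L$ is eventually positive, and eventually positive matrices are known to give eventually exponentially positive $e^{\tau(B-sI)}$. The direct spectral argument above is self-contained, however, and I would use it. The main (mild) obstacle is the uniform Jordan-block estimate, and in particular stating clearly that $C_\varepsilon$ may blow up as $\varepsilon\to0$ when subdominant eigenvalues on the line $\Ree\lambda = \lambda_1-g$ are defective. Everything else is bookkeeping.
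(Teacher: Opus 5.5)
Your proposal is correct and follows essentially the same route as the paper. Both split $e^{\tau L}=e^{\lambda_1\tau}P+e^{\tau L}(I-P)$ using the rank-one spectral projector, bound the remainder by the non-normal (Jordan) estimate with $\varepsilon$ absorbing the polynomial factors, and conclude positivity once $\max_{ij}|E(\tau)_{ij}|<\min_{ij}P_{ij}$. You also spell out steps the paper leaves implicit: the identity $e^{\tau L}P=e^{\lambda_1\tau}P$, the norm-equivalence step, and the final claim that $e^{\tau L}\bvec p_0>0$.
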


\begin{proof}
Write $e^{\tau L} = e^{\lambda_1 \tau} P + R(\tau)$ with $R(\tau) = e^{\tau L}(I
- P)$.  The spectrum of $L$ restricted to $\operatorname{ran}(I-P)$ lies in
$\{\Ree(\lambda) \leq \lambda_1 - g\}$, so for any $\varepsilon \in (0,g)$ there
is $C_\varepsilon$ with $\|R(\tau)\| \leq C_\varepsilon e^{(\lambda_1 - g +
\varepsilon)\tau}$ (the $\varepsilon$ absorbing possible polynomial factors from
non-normality).  Thus $E(\tau) := e^{-\lambda_1\tau} R(\tau) \to 0$, and since
$\min_{ij} P_{ij} > 0$, the representation \eqref{eq:specdecomp} yields $e^{\tau
L} > 0$ entrywise as soon as $\max_{ij} |E(\tau)_{ij}| < \min_{ij} P_{ij}$.
\end{proof}

\begin{lemma}[Discrete divergence form] \label{lem:divform}
The matrices $L$ of \cref{prop:mmatrix2,prop:emmatrix}, assembled in flux form with the zero-flux boundary closure, satisfy
\begin{equation}\label{eq:colsums}
\bvec 1^\top L=0 .
\end{equation}
\end{lemma}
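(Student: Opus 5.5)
The plan is to reduce \eqref{eq:colsums} to a telescoping identity, by writing $L$ as a discrete divergence of face fluxes. In flux form the semi-discrete equation reads
\begin{equation*}
\dot p_i = -\frac{J_{i+1/2}-J_{i-1/2}}{h},\qquad i=1,\dots,N,
\end{equation*}
where $N$ denotes the number of unknowns and each face flux $J_{i+1/2}$ is a linear combination of the nodal values $(\mu p)_k$ and $(Dp)_k$. In matrix form this is $L = -\tfrac1h G F$. Here $F$ maps $\bvec p$ to the vector of interior face fluxes $(J_{3/2},\dots,J_{N-1/2})$, and $G$ is the $N\times(N-1)$ difference matrix with entries $G_{i,i-1/2}=-1$ and $G_{i,i+1/2}=+1$. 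The zero-flux closure means the boundary faces carry $J_{1/2}=J_{N+1/2}=0$, so these faces simply do not appear. Each column of $G$ has exactly one $+1$ and one $-1$, so $\bvec 1^\top G=0$, and hence $\bvec 1^\top L = -\tfrac1h(\bvec 1^\top G)F = 0$. In words, summing $\dot p_i$ over $i$ telescopes to $-(J_{N+1/2}-J_{1/2})/h=0$.

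The second step is to show that the matrices of \cref{prop:mmatrix2,prop:emmatrix}, which are written in nodal stencil form, are actually of this type. For the diffusion part I take $J^D_{i+1/2} = -\bigl((Dp)_{i+1}-(Dp)_i\bigr)/h$. Its divergence reproduces $\calS^C_2(Dp)_i$ exactly, with coefficients $D_{i-1}/h^2$, $-2D_i/h^2$ and $D_{i+1}/h^2$. For the second-order upwind advection I take $J^A_{i+1/2} = \bigl(3(\mu p)_i-(\mu p)_{i-1}\bigr)/2$, the linear upwind extrapolation to the face. Then
\begin{equation*}
J^A_{i+1/2}-J^A_{i-1/2} = \tfrac12\bigl(3(\mu p)_i-4(\mu p)_{i-1}+(\mu p)_{i-2}\bigr),
\end{equation*}
which recovers \eqref{eq:adv}, and so the coefficients \eqref{eq:coeffs1d}. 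The central and first-order variants of \cref{prop:mmatrix2} are handled in the same way, with $J^A_{i+1/2}=((\mu p)_i+(\mu p)_{i+1})/2$ and $J^A_{i+1/2}=(\mu p)_i$ respectively.

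The main obstacle is the boundary rows. As the lemma states explicitly, it applies to the zero-flux assembly and not to the absorbing one of \cref{ssec:matrixform}, which leaks mass. Near the boundary, the first-order row and the truncated second-order stencil have to be defined as divergences of reconstructed face fluxes rather than as truncated nodal stencils. Otherwise the columns for the nodes near the boundary will not sum to zero. Concretely, I would set $J_{1/2}=J_{N+1/2}=0$ and use the first-order reconstruction $J^A_{3/2}=(\mu p)_1$ at the first interior face, where $x_{i-1}$ is unavailable. Diffusive fluxes are kept at every interior face. Once every row is defined through face fluxes in this way, the telescoping argument holds without change. A direct check then confirms it: for each column $k$, summing the entries contributed by $p_k$ gives zero. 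In particular the advective weights $(-1,4,-3)$ of \eqref{eq:adv} sum to zero, and so do the diffusive weights $(1,-2,1)$, provided every contributing row is present.
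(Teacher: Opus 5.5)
Your proof is correct and follows the paper's argument: the paper writes $(L\bvec p)_i=(F_{i+1/2}-F_{i-1/2})/h$ with $F_{1/2}=F_{n+1/2}=0$ and lets the sum telescope, and your factorisation $L=-\tfrac1h GF$ with $\bvec 1^\top G=0$ is the same observation in matrix form, while your check that the diffusive, central and second-order upwind stencils are face-flux differences is what the paper records separately in \cref{lem:cons}. One side remark, which does not affect the identity: $J^A_{3/2}=(\mu p)_1$ combined with the second-order flux at $x_{5/2}$ gives node $2$ the advective row $-\tfrac{3}{2h}\bigl((\mu p)_2-(\mu p)_1\bigr)$, which is conservative but inconsistent since it approximates $-\tfrac32(\mu p)'$; the central value $\tfrac12\bigl((\mu p)_1+(\mu p)_2\bigr)$ at $x_{3/2}$ would instead reproduce the first-order upwind row exactly.
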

\begin{proof}
$(L\bvec p)_i=\bigl(F_{i+1/2}(\bvec p)-F_{i-1/2}(\bvec p)\bigr)/h$ with
$F_{1/2}=F_{n+1/2}=0$; summing over $i$ telescopes.
\end{proof}

Several remarks should be made, however.
\begin{enumerate}
\item \Cref{prop:emmatrix} quantifies, rather than contradicts,
\cref{rem:godunov}: no shift $s$ can render $A^-$ a Z-matrix, and positivity of
$e^{\tau L}$ for \emph{all} $\tau \geq 0$ is genuinely unattainable. What
survives is positivity after a transient whose duration $\tau_0 =
O\bigl(g^{-1}\log(C_\varepsilon / \min_{ij} P_{ij})\bigr)$ is controlled by the
spectral gap; the well-known oscillations of second-order upwind schemes are
confined to this transient.

\item \Cref{hypH} holds at the continuous level: the principal eigenvalue
of the Fokker--Planck operator on a bounded interval with absorbing boundary
conditions is real, simple, and negative, with positive principal eigenfunctions
of the operator and its adjoint (Sturm--Liouville theory, or Krein--Rutman
applied to the positivity-improving semigroup).

At the discrete level, rather than relying on asymptotic convergence for
vanishing $h$, this property is inherited structurally by the regime-switched
operator $L(t)$ discussed in the next section. Because the  central-difference
stencil is applied exclusively where the local P\'eclet number satisfies
$\mathrm{Pe} < 2$, the off-diagonal entries remain non-negative. This ensures
$L(t)$ is an irreducible Metzler matrix, guaranteeing a real, simple rightmost
eigenvalue with positive eigenvectors via the Perron--Frobenius theorem. While
this Metzler structure is secured by the P\'eclet restriction, the strict
negativity ($\lambda_1 < 0$) depends crucially on the absorbing boundary
conditions. Thus, for any given $\mu$, $D$, and $h$, \cref{hypH} serves as a
robust algebraic property that is readily verified numerically.

\item For time-dependent coefficients, the frozen-coefficient propagators
$e^{\Delta t\,L(t_k)}$ are entrywise positive whenever $\Delta t \geq
\tau_0(t_k)$; for shorter steps, the iterates may transiently leave the
nonnegative cone.
\end{enumerate}

\subsection{A fully second-order central scheme} \label{ssec:advdom}

To achieve a fully second-order spatial discretization across all regimes, we
use the second-order centered difference
\begin{equation} \label{eq:ctr2}
\calF^C_2\,(\mu p)_i = \frac{(\mu p)_{i+1} - (\mu p)_{i-1}}{2h},
\end{equation}
while retaining the centred diffusion \eqref{eq:diff_ctr}. Both terms are
then in discrete flux (telescoping or conservative) form, mirroring \eqref{eq:consform}.
This yields, for $i = 2,\ldots,n-1$ with $p_1 = p_n = 0$,
\begin{equation}   \label{eq:ode_adv}
\dot{p}_i = a_i\,p_{i-1} + b_i\,p_i + c_i\,p_{i+1},
\end{equation}
with coefficients
\begin{equation}   \label{eq:coeffs_adv}
a_i = \frac{D_{i-1}}{h^2} + \frac{\mu_{i-1}}{2h}, \qquad
b_i = -\frac{2D_i}{h^2}, \qquad
c_i = \frac{D_{i+1}}{h^2} - \frac{\mu_{i+1}}{2h}.
\end{equation}
The truncation error is now $O(h^2)$ uniformly across both the advective and
diffusive terms, eliminating the first-order numerical diffusion inherent to
upwinding.

To avoid confusion with indices, here let us denote the matrix $A^-$ appearing in
\cref{prop:emmatrix} as $A$. It has elements
\begin{align} \label{eq:Amatrix}
(A)_{i,i}    &= \frac{2D_i}{h^2}, \qquad
(A)_{i,i-1}  = -\frac{D_{i-1}}{h^2} - \frac{\mu_{i-1}}{2h}, \qquad
(A)_{i,i+1}  = -\frac{D_{i+1}}{h^2} + \frac{\mu_{i+1}}{2h}
\end{align}
for $i = 2,\ldots,n-1$, where entries referencing indices outside $\{2,\ldots,n-1\}$ are absent.

The sign of the super-diagonal entries is governed by the P\'eclet number:
$(A)_{i,i+1} < 0$ if and only if $\mathrm{Pe}_{i+1} < 2$, while
$(A)_{i,i+1} > 0$ if and only if $\mathrm{Pe}_{i+1} > 2$.  The monotonicity
threshold of the central scheme is therefore $\mathrm{Pe} = 2$, and the two
regimes behave very differently: below the threshold $A$ is a classical
M-matrix, while above it the generator becomes essentially skew-symmetric
and, in contrast to the upwind scheme of \cref{ssec:em}, loses even the
EM-matrix structure.

\begin{proposition}[Diffusion-dominated regime] \label{prop:mmatrix2}
Assume \cref{eq:mudirn,eq:Dpos} and $\mathrm{Pe}_i(t) < 2$ for all $i$. Then,
for every $t \geq 0$, the matrix $A(t)$ defined by \cref{eq:ode_adv,eq:coeffs_adv,eq:Amatrix} with absorbing boundary conditions is a nonsingular M-matrix.
\end{proposition}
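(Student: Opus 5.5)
The plan is to verify the two defining ingredients of a nonsingular M-matrix separately: first the Z-sign pattern, then nonsingularity. For the second I will use the classical criterion (see \cref{app0} and \cite{BermanPlemmons94}) that a Z-matrix with positive diagonal which is \emph{irreducibly diagonally dominant} is a nonsingular M-matrix. It is convenient to apply this criterion to $A^{\top}$, since $A$ is an M-matrix iff $A^{\top}$ is, and the column sums of $A$ come out exactly because the central scheme is in flux form. Fix $t$ and suppress it throughout.

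\textbf{Step 1 (Z-pattern and diagonal).} From \eqref{eq:Amatrix}, $(A)_{i,i} = 2D_i/h^2 > 0$ by \eqref{eq:Dpos}. Next, $(A)_{i,i-1} = -D_{i-1}/h^2 - \mu_{i-1}/(2h) < 0$ by \eqref{eq:mudirn} and \eqref{eq:Dpos}. Finally,
\[
(A)_{i,i+1} = -\frac{D_{i+1}}{h^2}\Bigl(1 - \tfrac12 \mathrm{Pe}_{i+1}\Bigr) < 0,
\]
precisely because $\mathrm{Pe}_{i+1} < 2$. All other entries vanish. Hence $A$ is a Z-matrix, and since the inequalities are strict, every sub- and super-diagonal entry is nonzero. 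The tridiagonal directed graph of $A$ is therefore strongly connected, so $A$ (and $A^{\top}$) is irreducible.

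\textbf{Step 2 (column sums).} Compute $\bvec 1^{\top} A$ column by column. Column $j$ contains the diagonal entry $2D_j/h^2$, the entry $(A)_{j+1,j} = -D_j/h^2 - \mu_j/(2h)$ from row $j+1$, and the entry $(A)_{j-1,j} = -D_j/h^2 + \mu_j/(2h)$ from row $j-1$. Note that the coefficients depend only on the column index, which is the discrete divergence structure. Consequently:
\begin{itemize}
\item for interior columns $3 \le j \le n-2$ the column sum is exactly $0$;
\item in column $2$ row $1$ is absent, so the sum is $D_2/h^2 - \mu_2/(2h) = (D_2/h^2)(1 - \mathrm{Pe}_2/2) > 0$;
\item in column $n-1$ row $n$ is absent, so the sum is $D_{n-1}/h^2 + \mu_{n-1}/(2h) > 0$.
\end{itemize}
Thus $A^{\top}$ is weakly diagonally dominant by rows (for a Z-matrix with positive diagonal, zero row sum means equality in dominance), with strict dominance in at least one row. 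The degenerate case of a single unknown, $A = (2D_2/h^2)$, is trivial.

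\textbf{Step 3 (conclusion).} Combining Steps 1 and 2, $A^{\top}$ is an irreducibly diagonally dominant Z-matrix with positive diagonal. By Taussky's theorem it is nonsingular. Gershgorin's theorem places all eigenvalues of $A^{\top}$ in the closed right half-plane, and nonsingularity excludes $0$. The standard characterisation (e.g.\ condition ``irreducibly diagonally dominant Z-matrix with positive diagonal'' in \cite{BermanPlemmons94}) then gives that $A^{\top}$, hence $A$, is a nonsingular M-matrix.

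The main obstacle is nonsingularity. Weak dominance alone only yields a possibly singular M-matrix, because the interior column sums vanish exactly. The strict inequality $\mathrm{Pe} < 2$ is used twice here. First, it gives strict positivity of the boundary column sum in column $2$. Second, it keeps the super-diagonal nonzero, which is needed for irreducibility so that the strict dominance at the boundary propagates through the whole chain. If a direct argument is preferred, an alternative is to exhibit $\bvec x > 0$ with $A^{\top}\bvec x > 0$, obtained by a small perturbation of $\bvec 1$ using the strict boundary columns.
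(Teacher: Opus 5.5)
Your proposal is correct and follows essentially the same route as the paper: the Z-sign pattern from $\mathrm{Pe}_{i+1}<2$, exact column sums (zero in the interior, strictly positive in columns $2$ and $n-1$), irreducibility of the tridiagonal $A^{\top}$, Taussky's theorem for nonsingularity, and Gershgorin to exclude the closed left half-plane except $0$. The only cosmetic difference is the closing criterion: you cite the irreducibly-diagonally-dominant characterisation of nonsingular M-matrices, whereas the paper uses the equivalent fact that a Z-matrix whose spectrum lies in the open right half-plane is a nonsingular M-matrix.
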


\begin{proof}
Fix $t \geq 0$ and suppress it; rows and columns are indexed by
$i,j \in \{2,\ldots,n-1\}$.

\emph{Step 1 (Z-matrix with positive diagonal).}
By \cref{eq:mudirn,eq:Dpos}, $(A)_{i,i-1} < 0$ always; $(A)_{i,i+1} < 0$
because $\mathrm{Pe}_{i+1} < 2$; and $(A)_{i,i} = 2D_i/h^2 > 0$.

\emph{Step 2 (weak diagonal dominance by columns).}
Let $S_j := \sum_{i=2}^{n-1} (A)_{ij}$.  For an interior column,
$3 \leq j \leq n-2$, rows $j-1$, $j$ and $j+1$ contribute
\begin{equation*}
S_j = \Bigl(-\frac{D_j}{h^2} + \frac{\mu_j}{2h}\Bigr)
    + \frac{2D_j}{h^2}
    - \Bigl(\frac{D_j}{h^2} + \frac{\mu_j}{2h}\Bigr) = 0 ,
\end{equation*}
while for the two columns adjacent to the absorbing boundaries
\begin{equation*}
S_2 = \frac{D_2}{h^2} - \frac{\mu_2}{2h} > 0
\quad (\text{by } \mathrm{Pe}_2 < 2),
\qquad
S_{n-1} = \frac{D_{n-1}}{h^2} + \frac{\mu_{n-1}}{2h} > 0 .
\end{equation*}
Since the off-diagonal entries are non-positive, $S_j \geq 0$ is equivalent to
$(A)_{jj} \geq \sum_{i \neq j} |(A)_{ij}|$; that is, $A^{\!\top}$ is weakly
diagonally dominant, with strict dominance in the rows corresponding to
$j = 2$ and $j = n-1$.

\emph{Step 3 (irreducibility and conclusion).}
All sub- and super-diagonal entries are nonzero (the latter because
$\mathrm{Pe}_{i+1} < 2$ strictly), so $A$ and $A^{\!\top}$ are irreducible.  By
Taussky's theorem $A^{\!\top}$, hence $A$, is nonsingular; every
Ger\v{s}gorin disc of $A^{\!\top}$ is centred at $(A)_{jj} > 0$ with radius at
most $(A)_{jj}$, so $\Ree(\lambda) \geq 0$ for all eigenvalues, with equality
only at $\lambda = 0$, which is excluded.  A Z-matrix whose spectrum lies in
the open right half-plane is a nonsingular M-matrix.
\end{proof}

\begin{corollary} \label{cor:positivity}
Under the assumptions of \cref{prop:mmatrix2}, for every $\Delta t > 0$ and
$s \geq 0$:
\begin{enumerate}[label=(\alph*)]
  \item $(I + \Delta t\,A)^{-1} \geq 0$ entrywise;
  \item $e^{-sA} \geq 0$ entrywise, and $\bvec{1}^{\!\top} e^{-sA} \leq
        \bvec{1}^{\!\top}$ componentwise.
\end{enumerate}
Hence, in the diffusion-dominated regime, the central scheme preserves
non-negativity of $\bvec{p}$ for \emph{all} times and step sizes, and the
discrete mass $\sum_i p_i(t)$ is non-increasing, decaying only through the
absorbing boundaries.
\end{corollary}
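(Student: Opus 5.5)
The corollary follows from \cref{prop:mmatrix2} together with standard properties of nonsingular M-matrices, combined with the column-sum computation already carried out in Step~2 of its proof.

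For part (a), fix $\Delta t>0$ and set $M:=I+\Delta t\,A$. Since $A$ is a Z-matrix, so is $M$; its diagonal entries are $1+2\Delta t D_i/h^2>0$. Step~2 of the proof of \cref{prop:mmatrix2} shows that $A^{\top}$ is weakly diagonally dominant with nonnegative column sums $S_j\ge 0$. Hence the column sums of $M$ are $1+\Delta t\,S_j\ge 1>0$, so $M^{\top}$ is strictly diagonally dominant with positive diagonal. A Z-matrix of this kind is a nonsingular M-matrix. Alternatively, the spectrum of $M$ is $1+\Delta t\,\operatorname{spec}(A)$, which lies in the open right half-plane. Since every nonsingular M-matrix has an entrywise nonnegative inverse, $M^{-1}\ge 0$.

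For part (b), the nonnegativity of $e^{-sA}$ comes from the Z-sign pattern. Choose $c\ge \max_i (A)_{ii}$, so that $cI-A\ge 0$ entrywise. Because $cI$ commutes with $A$,
\begin{equation*}
e^{-sA}=e^{-cs}\,e^{s(cI-A)}=e^{-cs}\sum_{k\ge 0}\frac{s^k}{k!}(cI-A)^k \;\ge\; 0 .
\end{equation*}
An alternative route is the limit $e^{-sA}=\lim_{N\to\infty}(I+\tfrac{s}{N}A)^{-N}$, a product of nonnegative matrices by part (a).

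For the mass bound, let $\bvec{r}(s)^{\top}:=\bvec 1^{\top}e^{-sA}$. Then $\bvec r(0)=\bvec 1$ and
\begin{equation*}
\frac{d}{ds}\bvec r^{\top}=-\bvec 1^{\top}e^{-sA}A=-\bvec 1^{\top}A\,e^{-sA}
\end{equation*}
(the two factors commute). Now $\bvec 1^{\top}A=(S_j)_j\ge 0$ by Step~2, and $e^{-sA}\ge 0$, so $\frac{d}{ds}\bvec r^{\top}\le 0$ componentwise. Integrating from $0$ to $s$ gives $\bvec 1^{\top}e^{-sA}\le \bvec 1^{\top}$.

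Both conclusions then transfer to the solution. For $\bvec p_0\ge0$, the quantities $(I+\Delta tA)^{-1}\bvec p$ and $e^{-sA}\bvec p$ remain nonnegative. Also $\sum_i p_i(s)=\bvec 1^{\top}e^{-sA}\bvec p_0\le \bvec 1^{\top}\bvec p_0$, and this sum is nonincreasing in $s$ by the semigroup property. The same monotonicity holds for the implicit step, since $\bvec 1^{\top}M\ge \bvec 1^{\top}$ implies $\bvec 1^{\top}M^{-1}\bvec q\le \bvec 1^{\top}\bvec q$ for $\bvec q\ge 0$.

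The only delicate point is the sign of $\bvec 1^{\top}A$. This requires $\mathrm{Pe}_2<2$ at the left boundary column, together with exact cancellation in the interior columns, both already established in Step~2. The decay of mass is attributed to the two boundary columns $S_2,S_{n-1}>0$, which identifies it as outflow through the absorbing boundaries. Time dependence of $A(t)$ is handled by freezing $t$ on each step, as in the statement.
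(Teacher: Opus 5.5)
Your proof is correct and follows the paper's argument essentially step for step. For (a) you use strict column diagonal dominance of the Z-matrix $I+\Delta t\,A$; for nonnegativity in (b) you use the shift $e^{-sA}=e^{-cs}e^{s(cI-A)}$; and for the mass bound you use the sign of $\frac{d}{ds}\bvec 1^\top e^{-sA}=-(\bvec 1^\top A)\,e^{-sA}$ with $\bvec 1^\top A=(S_2,0,\dots,0,S_{n-1})\ge 0$. Your extra remarks (the product-limit route to $e^{-sA}\ge 0$, and mass monotonicity of the implicit step via $\bvec 1^\top M\ge\bvec 1^\top$) go slightly beyond what the paper writes but are also correct.
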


\begin{proof}
(a) $I + \Delta t\,A$ is a Z-matrix with column sums $1 + \Delta t\,S_j \geq
1 > 0$, hence strictly diagonally dominant by columns; by Step~3 above it is a
nonsingular M-matrix with nonnegative inverse.
(b) Let $c := \max_i (A)_{ii}$ and $B := cI - A \geq 0$ entrywise; then
$e^{-sA} = e^{-sc}\,e^{sB} \geq 0$.  Moreover $\bvec{1}^{\!\top} A =
(S_2, 0, \ldots, 0, S_{n-1}) \geq 0$, so
\begin{equation*}
\frac{d}{ds}\,\bvec{1}^{\!\top} e^{-sA}
  = -\,(\bvec{1}^{\!\top} A)\, e^{-sA} \leq 0
\end{equation*}
componentwise, both factors being non-negative; the column sums of $e^{-sA}$
start at $1$ and never increase.
\end{proof}

\begin{proposition}[Advection-dominated regime: stability without eventual
positivity] \label{prop:skew}
Assume \cref{eq:mudirn,eq:Dpos} and $\mathrm{Pe}_i(t) > 2$ for all $i$.  Then:
\begin{enumerate}[label=(\roman*)]
\item There exists a positive diagonal matrix $V(t)$ such that
$V^{-1} A V = \Lambda + K$, where $\Lambda = \operatorname{diag}(2D_i/h^2)$ and
$K = -K^{\!\top}$.  Consequently every eigenvalue $\lambda$ of $A(t)$
satisfies
\begin{equation*}
\frac{2\min_i D_i}{h^2} \;\leq\; \Ree(\lambda) \;\leq\; \frac{2\max_i D_i}{h^2},
\end{equation*}
$A(t)$ is nonsingular, and
$\|V^{-1} e^{-sA} V\|_2 \leq \exp\bigl(-2s \min_i D_i / h^2\bigr)$ for all
$s \geq 0$.
\item Nevertheless, $A(t)$ is in general \emph{not} an EM-matrix, and
$e^{-sA}$ is \emph{not} eventually nonnegative.  Indeed, for constant
coefficients $\mu_i \equiv \mu$, $D_i \equiv D$ with $\mathrm{Pe} = \mu h / D
> 2$ one has, with $m := n-2$ and $\theta_k := k\pi/(m+1)$,
\begin{equation*}
\operatorname{spec}(A)
  = \Bigl\{ \tfrac{2D}{h^2} + 2\mathrm{i}\,\omega\cos\theta_k :
            k = 1,\ldots,m \Bigr\},
\qquad
\omega := \sqrt{\tfrac{\mu^2}{4h^2} - \tfrac{D^2}{h^4}} \;>\; 0 ,
\end{equation*}
so the entire spectrum lies on the vertical line $\Ree(\lambda) = 2D/h^2$.
For every $s \in \R$ the spectral radius of $sI - A$ is then attained only at
non-real eigenvalues, so $sI - A$ is never eventually nonnegative and no
representation of the form \cref{def1em} exists; moreover $e^{-sA}$ possesses
negative entries for all $s$ outside a discrete set.
\end{enumerate}
\end{proposition}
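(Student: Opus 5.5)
For part (i) I would symmetrize the tridiagonal matrix $A$ of \eqref{eq:Amatrix} by a diagonal similarity. Write the off-diagonals as $\ell_i := (A)_{i,i-1} = -(D_{i-1}/h^2 + \mu_{i-1}/(2h)) < 0$ and $u_i := (A)_{i,i+1} = \mu_{i+1}/(2h) - D_{i+1}/h^2$. When $\mathrm{Pe}_{i+1} > 2$ we have $u_i > 0$, so each product $u_i\,\ell_{i+1}$ is strictly negative. For $V = \operatorname{diag}(v_i)$ we get $(V^{-1}AV)_{i,i+1} = u_i v_{i+1}/v_i$ and $(V^{-1}AV)_{i+1,i} = \ell_{i+1} v_i/v_{i+1}$. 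I would choose the ratio recursively by
\begin{equation*}
\frac{v_{i+1}^2}{v_i^2} = -\frac{\ell_{i+1}}{u_i} > 0 .
\end{equation*}
With this choice the two transformed entries are $\pm\sqrt{-u_i\ell_{i+1}}$ with opposite signs, so $K := V^{-1}AV - \Lambda$ is real skew-symmetric and the diagonal $\Lambda$ is untouched.

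The spectral and semigroup bounds then follow from the skew-symmetry. For a unit eigenvector $x$ of $\Lambda + K$ we have $\Ree(x^*(\Lambda+K)x) = x^*\Lambda x$, because $x^*Kx$ is purely imaginary. This gives the two-sided bound on $\Ree\lambda$, and nonsingularity follows since $\min_i D_i > 0$. For the propagator, set $y(s) = e^{-s(\Lambda+K)}y_0$. Then
\begin{equation*}
\tfrac{d}{ds}\|y\|^2 = -2\,y^\top\Lambda y \le -\tfrac{4\min_i D_i}{h^2}\,\|y\|^2 .
\end{equation*}
Grönwall's inequality gives the stated bound, using $V^{-1}e^{-sA}V = e^{-s(\Lambda+K)}$.

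For part (ii), with constant coefficients $\Lambda = (2D/h^2)I$ and $K$ is a constant tridiagonal skew matrix with off-diagonals $\pm\omega$, where
\begin{equation*}
\omega^2 = -u\ell = \Bigl(\frac{\mu}{2h}\Bigr)^2 - \Bigl(\frac{D}{h^2}\Bigr)^2 .
\end{equation*}
Writing $K = \omega\,\mathrm{i}\,(\text{a unitary diagonal conjugate of the symmetric Toeplitz tridiagonal } T)$, the spectrum of $K$ is $\{2\mathrm{i}\omega\cos\theta_k\}$, since the spectrum of $T$ is $\{2\cos\theta_k\}$. Next, for real $s$, the eigenvalues of $sI - A$ are $s - 2D/h^2 - 2\mathrm{i}\omega\cos\theta_k$. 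Their moduli are maximized where $|\cos\theta_k|$ is largest, namely $k = 1$ and $k = m$, which form a complex-conjugate pair with nonzero imaginary part because $\cos\theta_1 \neq 0$. (If $m$ is odd, the real eigenvalue at $\theta = \pi/2$ is never dominant.)

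An eventually nonnegative matrix must have its spectral radius as an eigenvalue; this is the Perron-type fact for eventually nonnegative matrices of Noutsos--Tsatsomeros used in the paper. That fails here for every $s$, so no representation of the form \cref{def1em} exists.

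For the negativity of $e^{-sA}$, I would write $e^{-sA} = e^{-2Ds/h^2}\,V e^{-sK} V^{-1}$. The entries of $e^{-sK}$ are explicit trigonometric sums $\sum_k \phi_k(i)\phi_k(j)\,\mathrm{i}^{\,j-i}e^{-2\mathrm{i}\omega s\cos\theta_k}$. Take the $(i,i+1)$ entry: it equals $\sum_k \phi_k(i)\phi_k(i+1)\sin(2\omega s\cos\theta_k)$ up to sign, which is a nonzero real-analytic, odd, almost-periodic function of $s$. The signs in the other pairs $(i,i+1)$ versus $(i+1,i)$ are opposite, so at each $s$ one of the two entries is negative unless both vanish. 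Both vanish only on the zero set of a nonzero analytic function, which is discrete. This is the step I expect to be the main obstacle: proving the function is not identically zero (I would check its derivative at $s = 0$, which equals $\pm\omega \neq 0$) and controlling the sign pairing carefully.
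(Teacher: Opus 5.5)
Your proposal is correct. For part (i), the spectrum, and the non-EM conclusion it follows the paper's proof step for step: diagonal symmetrization, the numerical-range bound, Gr\"onwall, the tridiagonal Toeplitz spectrum, and the Perron property of eventually nonnegative matrices.

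The one genuinely different step is the last claim, that $e^{-sA}$ has negative entries off a discrete set.

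\emph{The paper's route.} It argues globally. It writes $V^{-1}e^{-sA}V=e^{-2sD/h^2}Q(s)$ with $Q(s)=e^{-sK}$ orthogonal. An entrywise nonnegative orthogonal matrix must be a permutation matrix, and the non-constant analytic curve $Q(s)$ meets the finite set of permutation matrices only on a discrete set. This gives an exact characterization: $e^{-sA}\ge 0$ if and only if $Q(s)$ is a permutation matrix. But it depends on $e^{-sK}$ being orthogonal up to a scalar, i.e.\ on $\Lambda$ being scalar.

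\emph{Your route.} You argue locally with one scalar entire function $f(s)=(e^{-sK})_{i,i+1}$. The $(i,i+1)$ and $(i+1,i)$ entries of $e^{-sA}$ have opposite signs, and $f'(0)=-K_{i,i+1}\neq 0$ makes the zero set of $f$ discrete. This avoids the permutation-matrix lemma and names the offending entry explicitly.

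\emph{The sign pairing.} The step you flag as delicate needs no eigen-expansion. Set $S=\operatorname{diag}((-1)^j)$ and $M=\Lambda+K$. Since $K$ is tridiagonal with zero diagonal, $SMS=\Lambda-K=M^{\top}$, and this holds for \emph{any} diagonal $\Lambda$. Hence $S e^{-sM}S=(e^{-sM})^{\top}$, which gives $(e^{-sM})_{i+1,i}=-(e^{-sM})_{i,i+1}$. The derivative of $(e^{-sM})_{i,i+1}$ at $s=0$ is $-\sqrt{-u_i\ell_{i+1}}\neq 0$.

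\emph{What your route buys.} Phrased this way, your argument shows that $e^{-sA}$ fails to be nonnegative off a discrete set under the variable-coefficient hypotheses of (i). The paper's orthogonality argument does not cover that case.

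\emph{One shared caveat.} Like the paper, you tacitly need $m\ge 2$, so that $\cos\theta_1\neq 0$ and $K\neq 0$. For $m=1$ the matrix is the positive scalar $2D_2/h^2$ and (ii) fails.
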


\begin{proof}

(i) The sub-diagonal entries are strictly negative, and by
$\mathrm{Pe}_{i+1} > 2$ the super-diagonal entries are strictly positive, so
the off-diagonal products are negative:
$(A)_{i,i+1}(A)_{i+1,i} = -\kappa_i^2$ with
$\kappa_i := \sqrt{|(A)_{i,i+1}|\,|(A)_{i+1,i}|} > 0$.  Define $v_2 := 1$ and
$v_{i+1} := v_i \sqrt{|(A)_{i+1,i}| / |(A)_{i,i+1}|} > 0$, and set
$V := \operatorname{diag}(v_2,\ldots,v_{n-1})$.  Then $T := V^{-1} A V$ is
tridiagonal with $T_{ii} = 2D_i/h^2$ and $T_{i,i+1} = \kappa_i = -T_{i+1,i}$,
i.e.\ $T = \Lambda + K$ with $K$ skew-symmetric.  For any unit vector
$\bvec{x}$, $\Ree(\bvec{x}^* T \bvec{x}) = \bvec{x}^* \Lambda \bvec{x} \in
[2\min_i D_i/h^2,\ 2\max_i D_i/h^2]$, and the spectrum is contained in the
numerical range, which proves the eigenvalue bounds and nonsingularity.  For
the norm bound, the solution of $\dot{\bvec{x}} = -T\bvec{x}$ satisfies
$\frac{d}{ds}\|\bvec{x}\|_2^2 = -\bvec{x}^{\!\top}(T + T^{\!\top})\bvec{x} =
-2\,\bvec{x}^{\!\top} \Lambda\, \bvec{x} \leq -(4\min_i D_i/h^2)
\|\bvec{x}\|_2^2$, and Gr\"onwall's inequality concludes. \\

(ii) For constant coefficients, $A = (2D/h^2)\,I + C$ with $C =
\operatorname{tridiag}(a, 0, c)$, $a = -(D/h^2 + \mu/(2h)) < 0$ and $c = \mu/(2h)
- D/h^2 > 0$, so that $ac = -\omega^2 < 0$.  The classical formula for the
eigenvalues of a tridiagonal Toeplitz matrix, $\operatorname{spec}(C) =
\{2\sqrt{ac}\,\cos\theta_k\}$, yields the stated spectrum.  For any $s \in \R$,
the eigenvalues of $B := sI - A$ have moduli $\bigl((s - 2D/h^2)^2 +
4\omega^2\cos^2\theta_k\bigr)^{1/2}$, maximised at $k \in \{1, m\}$, where
$\cos\theta_k \neq 0$: the modulus-maximising eigenvalues form a non-real
conjugate pair.

An eventually nonnegative matrix possesses the Perron--Frobenius property, i.e.\
its spectral radius is itself an eigenvalue \cite{Noutsos2006}; here $\rho(B)
\notin \operatorname{spec}(B)$, so $B = sI - A$ is not eventually nonnegative
for any $s$, and $A$ admits no representation \cref{def1em}.  Finally, in the
constant-coefficient case $\Lambda = (2D/h^2) I$, so $V^{-1} e^{-sA} V =
e^{-2sD/h^2}\, Q(s)$ with $Q(s) := e^{-sK}$ orthogonal. If $e^{-sA} \geq 0$ for
some $s$, then $Q(s) \geq 0$ entrywise (conjugation by the positive diagonal $V$
and positive scaling preserve signs); an orthogonal matrix with nonnegative
entries is a permutation matrix; and since $s \mapsto Q(s)$ is real-analytic and
non-constant ($K \neq 0$), it can take values in the finite set of permutation
matrices only on a set of $s$ without accumulation points.  Hence $e^{-sA}$ has
negative entries for all $s$ outside a discrete set; in particular it is not
eventually nonnegative.
\end{proof}

In mixed regimes (sign of $\mathrm{Pe}_i - 2$ varying along the grid) the
transformed matrix takes the form $\Lambda + S + K$ with $S$ symmetric and $K$
skew-symmetric, and the bound of \cref{prop:skew}(i) weakens to
$\Ree(\lambda) \geq \lambda_{\min}(\Lambda + S)$; we do not pursue this here.

Again the above requires several remarks.
\begin{enumerate}
\item Under zero-flux (reflecting) boundary conditions the boundary rows are
modified so that the discrete fluxes through $x_{3/2}$ and $x_{n-1/2}$ vanish;
in the diffusion-dominated regime \emph{all} column sums of $A$ are then zero
and the proof of \cref{cor:positivity} yields exact conservation,
$\bvec{1}^{\!\top} e^{-sA} = \bvec{1}^{\!\top}$, in place of
sub-stochasticity.

\item For time-dependent coefficients, \cref{prop:mmatrix2} and
\cref{cor:positivity} hold pointwise in $t$; the evolution operator of
\eqref{eq:ode} is the limit of products of matrices $e^{-\Delta t\,A(t_k)}$,
each nonnegative and column sub-stochastic, and therefore inherits both
properties.

\item Taken together, \cref{ssec:em} and the present subsection cover all
P\'eclet regimes, organised by \emph{scheme} rather than by regime.  For
$\mathrm{Pe}_i < 2$ the central stencil is second-order accurate \emph{and}
unconditionally positive (\cref{prop:mmatrix2,cor:positivity}); for
$\mathrm{Pe}_i > 2$ the central generator is essentially skew-symmetric and
loses even eventual positivity (\cref{prop:skew}(ii)), whereas the
second-order upwind stencil \cref{eq:coeffs1d} retains it under
Hypothesis~(H) (\cref{prop:emmatrix,cor:eventualpos}) - the upwind bias is
precisely what produces a dominant real ground state.  This suggests the
regime-switched discretization: central stencil where $\mathrm{Pe}_i < 2$,
second-order upwind stencil where $\mathrm{Pe}_i \geq 2$, which is uniformly
second-order accurate, with the rows discretized by the upwind stencil
governed by the eventual-positivity theory of \cref{ssec:em}.  Preservation
of these properties under the Krylov approximation of the propagator is taken
up below.
\end{enumerate}

It is worth mentioning. that for the interior or absorbing block case ($\lambda_1 < 0$), the operator $-L$ is a nonsingular EM-matrix, guaranteeing a nonnegative inverse. In the conservative (zero-flux) case, however, $\lambda_1 = 0$ and $-L$ becomes singular, meaning the standard EM-matrix classification and its inverse properties do not strictly apply. Nevertheless, we can circumvent this singularity by shifting the operator. As the following theorem demonstrates, the resolvent operator $(t I - L)^{-1}$ remains well-defined and strictly positive for any shift $t > \lambda_1$, allowing the substantive properties of the scheme to survive intact.

\begin{theorem}[Resolvent positivity near the spectral abscissa]
\label{thm:resolvent}
Let $L(t)$ satisfy Hypothesis~(H), in particular, the second-order upwind
operator of \cref{ssec:em}, and let $g$ and $P$ be as in \cref{cor:eventualpos}.  Then there exists $\varepsilon \in (0, g/2]$ such that
\begin{equation} \label{eq:resolventpos}
(tI - L)^{-1} > 0 \ \text{entrywise}
\qquad \text{for all } t \in (\lambda_1, \lambda_1 + \varepsilon).
\end{equation}
In particular, under conservative (zero-flux) boundary conditions, for which
$\bvec{1}^{\!\top} L = 0$ and $\lambda_1 = 0$, the implicit Euler operator
satisfies
\begin{equation*}
(I - kL)^{-1} > 0 \qquad \text{for every step size } k > 1/\varepsilon .
\end{equation*}
By contrast, $(I - kL)^{-1} \geq 0$ for \emph{all} $k > 0$ if and only if $L$
is Metzler (essentially nonnegative); for the second-order upwind scheme this
fails, and indeed for all sufficiently small $k > 0$,
\begin{equation*}
\bigl[(I - kL)^{-1}\bigr]_{i,i-2}
  = k\,\alpha_i + O(k^2)
  = -\,\frac{k\,\mu_{i-2}}{2h} + O(k^2) \;<\; 0 .
\end{equation*}
\end{theorem}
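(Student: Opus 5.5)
The main step is a spectral decomposition of the resolvent near the dominant eigenvalue. With $P=\bvec v\bvec w^{\!\top}/(\bvec w^{\!\top}\bvec v)>0$ as in \cref{cor:eventualpos}, write
\begin{equation*}
(tI-L)^{-1} = \frac{P}{t-\lambda_1} + (tI-L)^{-1}(I-P), \qquad t\notin\operatorname{spec}(L).
\end{equation*}
The second term is the resolvent of $L$ restricted to $\operatorname{ran}(I-P)$. By Hypothesis~(H) its spectrum lies in $\{\Ree\lambda\le\lambda_1-g\}$, so it is analytic in $t$ on the half-line $t>\lambda_1-g$. On the compact interval $t\in[\lambda_1,\lambda_1+g/2]$ it is therefore bounded entrywise by a constant $M$. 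One could make $M$ explicit through the Jordan structure, but compactness alone is enough.

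The first term has entries $P_{ij}/(t-\lambda_1)\ge \min_{ij}P_{ij}/(t-\lambda_1)$, and this blows up as $t\downarrow\lambda_1$. I would choose
\begin{equation*}
\varepsilon:=\min\bigl(g/2,\ \min_{ij}P_{ij}/M\bigr).
\end{equation*}
For $t\in(\lambda_1,\lambda_1+\varepsilon)$ every entry of the first term then exceeds $M$, which dominates the remainder. This gives \eqref{eq:resolventpos}. The only delicate point is that the bound $M$ must be uniform up to $t=\lambda_1$, and that is exactly what the spectral gap provides. An alternative route is the Laplace transform $(tI-L)^{-1}=\int_0^\infty e^{-t\tau}e^{\tau L}\,d\tau$ combined with \cref{cor:eventualpos}: split the integral at $\tau_0$, so the tail is positive and of order $1/(t-\lambda_1)$ while the head is bounded. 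I regard this as a cross-check only.

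For the zero-flux case, $\lambda_1=0$, and $\bvec 1^{\!\top}L=0$ by \cref{lem:divform}. Then $(I-kL)^{-1}=k^{-1}(k^{-1}I-L)^{-1}$, and $t=1/k\in(0,\varepsilon)$ exactly when $k>1/\varepsilon$. Positivity follows from the first part.

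For the converse statement, if $L$ is Metzler, write $L=L_0-cI$ with $L_0\ge0$. Then $(I-kL)^{-1}=\bigl((1+kc)I-kL_0\bigr)^{-1}$, which is a Neumann series with nonnegative terms whenever it converges, giving the standard M-matrix inverse positivity. Conversely, $(I-kL)^{-1}=I+kL+O(k^2)$ as $k\to0$, so nonnegativity for all small $k$ forces $L_{ij}\ge0$ for $i\ne j$. Applying the same expansion to the second-order upwind matrix gives the $(i,i-2)$ entry $k\alpha_i+O(k^2)$ with $\alpha_i=-\mu_{i-2}/(2h)<0$ by \eqref{eq:coeffs1d}. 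This entry is negative for small $k$. The expected obstacle is only the uniform remainder bound, handled as above.
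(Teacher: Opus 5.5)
Your proof is correct and follows the paper's argument essentially step for step. It uses the same splitting $(tI-L)^{-1}=P/(t-\lambda_1)+R(t)$, with $R$ bounded by $M$ on $[\lambda_1,\lambda_1+g/2]$ thanks to the spectral gap, the same choice $\varepsilon=\min\{g/2,\min_{ij}P_{ij}/M\}$, the same substitution $t=1/k$, and the same Neumann expansion $I+kL+O(k^2)$ for both the necessity direction and the $(i,i-2)$ entry.

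The one step to tighten is the direction Metzler $\Rightarrow$ nonnegative for all $k>0$, where ``whenever it converges'' must become ``for every $k>0$.'' Convergence of $\frac{1}{1+kc}\sum_{m\ge0}\bigl(\frac{k}{1+kc}L_0\bigr)^m$ for all $k>0$ requires $\rho(L_0)\le c$, i.e.\ spectral abscissa $s(L)\le 0$. You get this from $\lambda_1\le0$ in (H), since $\lambda_1$ is the rightmost eigenvalue. The paper instead assumes $\bvec 1^{\!\top}L\le0$ and uses column diagonal dominance. Without some such condition the claim fails, e.g.\ for $L=I$ and $k>1$.
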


\begin{proof}
By (H) the eigenvalue $\lambda_1$ is simple, so for $t \notin
\operatorname{spec}(L)$,
\begin{equation*}
(tI - L)^{-1} = \frac{P}{t - \lambda_1} + R(t),
\qquad
R(t) := (tI - L)^{-1}(I - P),
\end{equation*}
where $R$ is analytic on a neighbourhood of $\lambda_1$, since the spectrum of
$L$ restricted to $\operatorname{ran}(I - P)$ lies at distance at least $g$
from $\lambda_1$; in particular $M := \sup\{\max_{ij}|R(t)_{ij}| : t \in
[\lambda_1, \lambda_1 + g/2]\} < \infty$.  Set $\varepsilon :=
\min\{g/2,\ \min_{ij} P_{ij} / M\}$.  Then for $t \in (\lambda_1, \lambda_1 +
\varepsilon)$, entrywise,
\begin{equation*}
\bigl[(tI - L)^{-1}\bigr]_{ij}
  \;\geq\; \frac{P_{ij}}{t - \lambda_1} - M
  \;>\; \frac{\min_{ij} P_{ij}}{\varepsilon} - M
  \;\geq\; 0 .
\end{equation*}
For the implicit Euler claim write
$(I - kL)^{-1} = k^{-1}\bigl(k^{-1}I - L\bigr)^{-1}$ and note that $t = 1/k$
lies in $(0, \varepsilon) = (\lambda_1, \lambda_1 + \varepsilon)$ precisely
when $k > 1/\varepsilon$.

For the equivalence: if $L$ is Metzler with $\bvec{1}^{\!\top} L \leq 0$, then
$I - kL$ is a Z-matrix that is strictly diagonally dominant by columns, hence
a nonsingular M-matrix with nonnegative inverse for every $k > 0$, exactly as
in \cref{cor:positivity}(a).  Conversely, if $(I - kL)^{-1} \geq 0$ for all
$k > 0$, the Neumann expansion $(I - kL)^{-1} = I + kL + O(k^2)$, valid for
$k\|L\| < 1$, forces every off-diagonal entry of $L$ to be nonnegative.
Applied to the second-order upwind scheme, the entry $L_{i,i-2} = \alpha_i =
-\mu_{i-2}/(2h) < 0$ yields the stated negative entry of the resolvent.
\end{proof}

The direction of the step-size restriction is the opposite of the classical
one: under Hypothesis~(H) the implicit Euler method is positivity-preserving
for sufficiently \emph{large} steps, the resolvent then concentrates on the
positive ground-state projector $P$, while arbitrarily small steps reproduce the sign pattern of $L$ and may generate negative values.  This is the resolvent counterpart of the transient $\tau_0$ in \cref{cor:eventualpos}.  Under absorbing boundary conditions ($\lambda_1 < 0$) the window \eqref{eq:resolventpos} contains positive values $t = 1/k$ only if $\varepsilon > |\lambda_1|$, so the large-step guarantee is then conditional on the principal decay rate being small relative to the spectral gap.  Finally, for the central scheme in the advection-dominated
regime even the near-abscissa window \eqref{eq:resolventpos} is unavailable,
since Hypothesis~(H) itself fails there (\cref{prop:skew}).

\begin{myremark}[Positivity of resolvent-based time stepping] \label{rem:resolvent-positivity}
Since $A$ is an $M$-matrix, \cref{thm:resolvent} gives $(sI-A)^{-1}\ge 0$
entrywise for every $s>s(A)$, with $s(A)$ the spectral abscissa. Equivalently the
backward-Euler propagator $(I-\Delta t A)^{-1}$ is a nonnegative map for every
$\Delta t>0$, so backward Euler preserves nonnegativity of the density
\emph{unconditionally and by construction} for an $M$-matrix generator. For the
two-dimensional cross factor of \cref{sec:Lxy}, by contrast, the factorized
solve's one-dimensional factors are \emph{not} $M$-matrices (the second-order
one-sided stencil carries a positive far band, \cref{prop:schemeA}\,(i)), so
positivity there is not inherited factor by factor and holds only conditionally,
on the step-size window of \cref{thm:zeno2d}.

This guarantee is specific to a \emph{single} resolvent. It does \emph{not} extend to the Crank--Nicolson propagator
$(I+\tfrac{\Delta t}{2}A)(I-\tfrac{\Delta t}{2}A)^{-1}$, whose numerator
$I+\tfrac{\Delta t}{2}A$ is not nonnegative, nor to a Krylov approximation of
$e^{\Delta t A}$: although each rational basis vector $(A-\sigma_j I)^{-1}\!\cdots\bvec{v}$ is nonnegative for $\sigma_j<0$, the
orthonormal Arnoldi basis $V_m$ is sign-indefinite and $e^{\Delta t H_m}\bvec{e}_1$
has entries of both signs, so the resulting approximation is not nonnegative in
general. (Numerically, on a Metzler $A$ for which $e^{\Delta t A}\bvec{v}\ge 0$
exactly, an orthonormal Krylov approximation already attains values of
order $-10^{-6}$ at $m=8$.) On the two-dimensional cross operator the one-sided
product carries off-diagonal entries of both signs, so $A_{xy}$ is not Metzler and
$(sI-A_{xy})^{-1}$ is not nonnegative for any shift $s$; positivity of the central
factor is instead conditional, secured on a step-size window by the factorized
solve of one-dimensional EM-matrix factors (\cref{sec:Lxy,thm:zeno2d}).
\end{myremark}

\subsection{Summary: positivity versus second-order accuracy} \label{ssec:summary}

The results of this section delineate precisely what can and cannot be
achieved by a linear spatial discretization of \eqref{eq:fpe}.  The negative
statement comes first: a linear scheme that preserves positivity
\emph{unconditionally} for every mesh, every time, and for every nonnegative
initial condition, must have a Metzler generator (\cref{thm:resolvent}), and by Godunov's barrier (\cref{rem:godunov}) such a generator is at most first-order accurate in the advective term. Consequently no linear scheme combines uniform second-order accuracy with unconditional positivity, and the question is not \emph{whether} to impose a condition, but \emph{which} condition to impose.  The preceding analysis yields exactly two answers.

\myparagraph{Positivity conditional on the mesh.}
The central scheme \cref{eq:ode_adv,eq:coeffs_adv} is second-order accurate
in both terms and, whenever
\begin{equation} \label{eq:meshcond}
\mathrm{Pe}_i(t) < 2 \quad \text{for all } i,
\qquad \text{i.e.} \qquad
h \;<\; 2\,\min_i \frac{D_i(t)}{\mu_i(t)},
\end{equation}
its matrix is a nonsingular M-matrix (\cref{prop:mmatrix2}).  Under the mesh
condition \eqref{eq:meshcond} every guarantee is unconditional in time:
$e^{-sA} \geq 0$ for all $s \geq 0$, the discrete mass is non-increasing
(exactly conserved under zero-flux conditions), and the implicit Euler
operator is inverse-positive for every step size (\cref{cor:positivity}).
The limitation is practical: in advection-dominated problems, where $D$ is
small, \eqref{eq:meshcond} may force a prohibitively fine mesh, and above
the threshold the central scheme is the worst available choice, since its
generator becomes essentially skew-symmetric and loses even eventual
positivity (\cref{prop:skew}).

\myparagraph{Positivity conditional on time.}
The second-order upwind scheme \cref{eq:ode1d,eq:coeffs1d} is second-order
accurate for \emph{any} mesh and, under Hypothesis~(H), its matrix is an
EM-matrix (\cref{prop:emmatrix}).  Positivity then holds not unconditionally
but after a transient: $e^{\tau L} > 0$ entrywise for all
$\tau \geq \tau_0 = O\bigl(g^{-1}\bigr)$ (\cref{cor:eventualpos}), and the
resolvent analogue holds for sufficiently \emph{large} implicit steps
(\cref{thm:resolvent}).  What is given up is any guarantee for short times
and small steps, together with the need to verify the spectral
hypothesis~(H) for the data at hand.

\myparagraph{The nonlinear escape.}
The only way to obtain both properties without conditions is to leave the
linear class: flux- or slope-limited corrections of the upwind flux are
positivity-preserving and second-order accurate away from local extrema,
degenerating to first order precisely at extrema - Godunov's barrier
manifesting locally rather than globally.  We do not pursue limiters here,
as they would obstruct the exponential-integrator structure exploited below.

\myparagraph{Recommended discretization.}
These observations suggest the regime-switched stencil: the central
difference \cref{eq:coeffs_adv} at nodes with $\mathrm{Pe}_i < 2$ and the
second-order upwind difference \cref{eq:coeffs1d} at nodes with
$\mathrm{Pe}_i \geq 2$.  The resulting operator is uniformly second-order
accurate; its centrally discretized rows enjoy unconditional positivity and
sub-stochasticity, while its upwind rows are governed by the
eventual-positivity theory of \cref{ssec:em}.  The pairing with the Krylov
subspace exponential integrator is then natural: the integrator applies the
exact propagator $e^{\Delta t\,L(t_k)}$ over macro-steps, so that whenever
$\Delta t$ is comfortably larger than the transient threshold $\tau_0$, the
method operates entirely in the regime in which positivity is guaranteed,
at full second order in space.  The preservation of these properties under
the Krylov approximation of the matrix exponential is the subject of \cref{sec:krylov}.

\subsection{The Diagonal Frog scheme} \label{sec:diagfrog}

The discretization analysed in \cref{ssec:em}: the second-order upwind difference
$\calF^B_2$ for the advective term combined with the centred difference
$\calS^C_2$ for the diffusive term, cf. \cref{eq:ode1d,eq:coeffs1d}, produces a
generator $L$ whose non-zero entries occupy the bands $-2$, $-1$, $0$, $+1$
relative to the main diagonal: three of the four bands lie on or below the
diagonal, while the single superdiagonal band carries the diffusive coupling
$\delta_i = D_{i+1}/h^2$.  We call this the \emph{Diagonal Frog} (DF) scheme.

The name captures two features of the construction.  The word
\emph{diagonal} refers to the upwind-leaning footprint of the stencil:
unlike central-difference schemes, whose matrices are structurally
symmetric about the main diagonal, the DF matrix has lower bandwidth $2$
but upper bandwidth only $1$, reflecting the directionality imposed by the
upwind bias.  We emphasise that the lean is strict but not total: a fully
one-sided variant, with one-sided differences for the diffusive term as
well, would render $L$ lower triangular and is ruled out by
\cref{rem:onesided} - a triangular generator is reducible, so
Hypothesis~(H) fails (the eigenvectors of a triangular matrix cannot be
entrywise positive), the propagator $e^{\tau L}$ remains triangular for
all $\tau$ and can never become entrywise positive, and the spectrum,
consisting of the diagonal entries alone, contains positive eigenvalues on
every sufficiently fine mesh.  The retained superdiagonal $\delta_i$ is
therefore not incidental: it is precisely what makes the directed graph of
$L$ strongly connected and the ground-state Hypothesis~(H) tenable.

The word \emph{frog} is a nod to Zeno's paradox, now in a precise sense.
Since $L$ is not a Metzler matrix, positivity is not inherited step by
step: every Euler factor in the product formula
\begin{equation} \label{eq:zenolimit}
e^{\tau L} \;=\; \lim_{m \to \infty}
  \Bigl( I + \frac{\tau}{m}\,L \Bigr)^{\!m}
\end{equation}
has negative entries (the $(i,i-2)$ entry of each factor equals
$(\tau/m)\,\alpha_i < 0$), and likewise no finite partial sum of the exponential
series need be nonnegative.  Yet, under Hypothesis~(H), the limit is entrywise
positive once $\tau \geq \tau_0$ (\cref{cor:eventualpos}): like Zeno's frog, the
scheme reaches the positive cone only in the limit of infinitely many
infinitesimal leaps, and only after the transient, since positivity for
\emph{all} $\tau > 0$ is structurally impossible, $(e^{\tau L})_{i,i-2} =
\tau\,\alpha_i + O(\tau^2) < 0$ for small $\tau$ (\cref{rem:godunov}).

Thus, the DF scheme reaches positivity through the EM-matrix semigroup, by the
same logic. Naming the scheme makes this non-trivial mechanism explicit and
memorable. Let us also mention the niche quote: "A good analogy is like
a diagonal frog" which is an intentionally nonsensical joke. It serves as an
ironic meta-analogy — it sounds like it should be deep and evocative, but it
actually means absolutely nothing. Additionally, "diagonal frog" shares most of
its letters with the phrase "good analogy" (it is a near-anagram), and the phrase
is occasionally used in philosophical, linguistic, or computer science circles to
poke fun at how we try to explain complex concepts.

In the remainder of the paper we refer to the full method - the upwind-leaning stencil \cref{eq:coeffs1d}, switched to the central stencil \cref{eq:coeffs_adv}
at nodes with $\mathrm{Pe}_i < 2$ (cf.~\cref{ssec:summary}), together with the
EM-matrix analysis of \cref{ssec:em} and the Krylov exponential integrator -
collectively as the DF scheme.

\subsection{Boundary conditions} \label{ssec:bc}

We impose absorbing (Dirichlet) boundary conditions
$p(x_1,t) = p(x_n,t) = 0$ for all $t \geq 0$.  Two equivalent realisations
are used.  In \cref{ssec:matrixform} the boundary values are eliminated
and the generator acts on the interior vector
$(p_2,\ldots,p_{n-1})^{\!\top} \in \R^{n-2}$; all spectral statements of
this section refer to this interior operator, denoted
$L_{\mathrm{int}}$.  For implementation it is often convenient to retain
the boundary nodes and embed $L_{\mathrm{int}}$ in an $n \times n$ matrix
by zeroing the first and last rows \emph{and columns},
\begin{equation} \label{eq:bczero}
L_{1j} = L_{nj} = L_{j1} = L_{jn} = 0
\qquad \text{for all } j .
\end{equation}
Zeroing the rows enforces $\dot{p}_1 = \dot{p}_n = 0$, so the boundary
values remain at their zero initial data; zeroing the columns is then
harmless, since those entries multiply $p_1 = p_n = 0$, and it makes the
algebraic structure transparent:
$L = 0 \oplus L_{\mathrm{int}} \oplus 0$ is block diagonal.  As a
consequence:
\begin{enumerate}[label=(\roman*)]
  \item $\lambda = 0$ is an eigenvalue of $L$ with eigenvectors
    $\bvec{e}_1$ and $\bvec{e}_n$ (both right and left), of algebraic
    multiplicity exactly two whenever $L_{\mathrm{int}}$ is nonsingular;
  \item $\operatorname{spec}(L) = \{0, 0\} \cup
    \operatorname{spec}(L_{\mathrm{int}})$, and under Hypothesis~(H) with
    $\lambda_1 < 0$, the generic situation for absorbing conditions,
    all remaining eigenvalues satisfy $\Ree(\lambda) < 0$; the spectral
    abscissa $s(L) \coloneqq \max_j \Ree(\lambda_j) = 0$ is then attained
    only at the two artificial boundary eigenvalues;
  \item the propagator factorises,
    $e^{\tau L} = 1 \oplus e^{\tau L_{\mathrm{int}}} \oplus 1$, so the
    analysis of \cref{ssec:em} applies verbatim to the interior block,
    the boundary nodes contributing only the constants
    $p_1 \equiv p_n \equiv 0$.
\end{enumerate}

For $n = 6$ the embedded generator takes the form
\begin{equation} \label{eq:Lembed}
L = \begin{pmatrix}
  0 & 0        & 0        & 0        & 0        & 0 \\
  0 & b_2      & c_2      & 0        & 0        & 0 \\
  0 & \beta_3  & \gamma_3 & \delta_3 & 0        & 0 \\
  0 & \alpha_4 & \beta_4  & \gamma_4 & \delta_4 & 0 \\
  0 & 0        & \alpha_5 & \beta_5  & \gamma_5 & 0 \\
  0 & 0        & 0        & 0        & 0        & 0
\end{pmatrix},
\end{equation}
a banded matrix with lower bandwidth $2$ and upper bandwidth $1$ - the upwind
``lean'' of \cref{sec:diagfrog}, and emphatically \emph{not} triangular: the
superdiagonal entries $\delta_i$, $c_2$ carry the diffusive coupling on which the
connectivity argument of \cref{ssec:em} rests.  The entries $\alpha_3$ and
$\delta_5$, which would couple the interior to the boundary values, have been
annihilated by the column zeroing \eqref{eq:bczero}.

The row $i = 2$ cannot carry the four-point stencil \cref{eq:coeffs1d}, since
$x_0$ lies outside the grid.  As it involves only the points $\{x_1, x_2, x_3\}$,
the three-point central stencil \cref{eq:coeffs_adv} is the natural choice there
(entries $b_2$, $c_2$ in \eqref{eq:Lembed}, the coupling $a_2$ to $p_1$ being
annihilated by \eqref{eq:bczero}); it preserves second-order accuracy, at the
price that this single row is monotone only when $\mathrm{Pe}_2 < 2$
(\cref{prop:mmatrix2}).  Alternatively, the first-order upwind difference may be
used in this one row; the local $O(h)$ truncation error at a single near-boundary
node does not degrade the global second-order convergence under Dirichlet
conditions.

\subsection{Time-dependent coefficients and coefficient freezing}

When $\mu = \mu(x,t)$ and $D = D(x,t)$ depend on time, the matrix $A = A(t)$
is time-dependent and the ODE system \eqref{eq:ode} is non-autonomous.  We
construct a temporal grid $t_n = n\Delta t$ and freeze the coefficients on each
interval $(t_n, t_{n+1}]$ at the midpoint
\begin{equation}
  \mu_i^n = \mu\!\left(x_i,\, t_n + \tfrac{\Delta t}{2}\right),
  \qquad
  D_i^n   = D\!\left(x_i,\, t_n + \tfrac{\Delta t}{2}\right),
  \label{eq:freeze}
\end{equation}
yielding a piecewise-constant matrix $A^n \approx A(t_n + \Delta t/2)$.
On each interval the exact solution is then
\begin{equation}
  \bvec{p}^{n+1} = e^{\Delta t\,A^n}\,\bvec{p}^n + O(\Delta t^3),
  \label{eq:exactstep}
\end{equation}
where the $O(\Delta t^3)$ local truncation error follows from the midpoint
quadrature rule applied to the time variation of $A(t)$.  The global temporal
error is therefore $O(\Delta t^2)$, consistent with the second-order spatial
discretization.

\section{The 2D Fokker--Planck Equation and Strang Splitting} \label{sec:disc2d}

Consider a two-dimensional stochastic differential equation (SDE)
\begin{equation}   \label{eq:sde2d}
d\bvec{X}_t = \bvec{\mu}(\bvec{X}_t, t)\,dt + \Sigma(\bvec{X}_t, t) \circ d\bvec{W}_t,
\end{equation}
where $\bvec{X}_t = (X_t, Y_t)^T$, $\bvec{\mu} = (\mu_x, \mu_y)^T$ is the drift
vector, $\Sigma \in \R^{2}$ is a column vector representing the standard deviation (volatility) for each dimension: $\Sigma^\top = (\sigma_x, \sigma_y)$, $\bvec{W}_t$ is a standard 2D correlated Brownian motion $d\langle W^{(x)}, W^{(y)} \rangle_t = \rho(t) dt$ with a correlation coefficient $\rho(t)$ and $\circ$ is an Hadamard product, so $\Sigma \circ d\mathbf{W}_t = (\sigma_x dW^{(x)}_t, \sigma_y dW^{(y)}_t)^\top$. The macroscopic diffusion tensor is then
\footnote{For the sake of standard notation in multivariate stochastic processes, we denote the diffusion tensor components by $\Sigma_{ij}$. These correspond directly to the diffusion coefficients $D_{ij}$ introduced in \eqref{FP_def}, where $\Sigma_{ij} = 2D_{ij}$ for the isotropic case; or more generally, where $\Sigma$ represents the covariance matrix of the underlying diffusion process.}
\begin{equation} \label{eq:difftensor}
\bm{\Sigma} = \frac{1}{2}
\begin{pmatrix}
\sigma_x^2 & \rho \sigma_x \sigma_y \\
\rho \sigma_x \sigma_y & \sigma_y^2,
\end{pmatrix}
\end{equation}
which is symmetric positive semi-definite.

By using a standard argument, \cite{gardiner2009stochastic,Risken1996}, the FPE for the joint PDF $p(x,y,t)$ can be obtained to yield
\begin{equation}   \label{eq:fpe2d}
\frac{\partial p}{\partial t} = -\frac{\partial}{\partial x}[\mu_x p]     -\frac{\partial}{\partial y}[\mu_y p] + \frac{\partial^2}{\partial x^2}[\Sigma_{xx} p] + \frac{\partial^2}{\partial y^2}[\Sigma_{yy} p] + 2\frac{\partial^2}{\partial x\,\partial y}[\Sigma_{xy} p].
\end{equation}

We further construct an operator splitting in the spirit of \cite{Strang1968} (for a survey of the general theory of splitting, see also \cite{ItkinBook} and references therein). by decomposing the right-hand side of \eqref{eq:fpe2d} as
\begin{gather}   \label{eq:split}
\frac{\partial p}{\partial t} = (\mathcal{L}_x + \mathcal{L}_y + \mathcal{L}_{xy})\,p,\\
\begin{align*}
\mathcal{L}_x\,p = -\frac{\partial}{\partial x}[\mu_x p] + \frac{\partial^2}{\partial x^2}[\Sigma_{xx} p], \quad
\mathcal{L}_y\,p = -\frac{\partial}{\partial y}[\mu_y p] + \frac{\partial^2}{\partial y^2}[\Sigma_{yy} p], \quad
\mathcal{L}_{xy}\,p &= 2\frac{\partial^2}{\partial x\,\partial y}[\Sigma_{xy} p].
\end{align*}
\end{gather}
Each of $\mathcal{L}_x$ and $\mathcal{L}_y$ is a 1D Fokker--Planck operator of
the form analysed in \cref{sec:disc1d}, acting along a single coordinate
direction.  The operator $\mathcal{L}_{xy}$ contains the mixed partial derivative
and is responsible for coupling between the two directions.

\myparagraph{Discretisation of the 1D sub-operators.}
On a 2D uniform grid with $n_x \times n_y$ points and spacings $h_x$, $h_y$,
the operators $\mathcal{L}_x$ and $\mathcal{L}_y$ are discretized by applying
the scheme of \cref{sec:disc1d} along each coordinate direction.

Let $\bvec{p} \in \R^{n_x n_y}$ denote the vectorised PDF (e.g., column-major
ordering).  The discrete operators are
\begin{equation}   \label{eq:kron}
A_x = L_x \otimes I_{n_y}, \qquad A_y = I_{n_x} \otimes L_y,
\end{equation}
where $L_x \in \R^{n_x \times n_x}, \, L_y \in \R^{n_y \times n_y}$ are the 1D
matrices from \cref{sec:disc1d}, and $\otimes$ denotes the Kronecker product.
Both $A_x$ and $A_y$ inherit the M-matrix or EM-matrix property from their 1D
counterparts. Indeed, since $(L_x\otimes I)^k=L_x^k\otimes I$, the matrix
exponentials factorise as
\begin{equation*}
e^{\Delta t\,A_x}=e^{\Delta t\,L_x}\otimes I_{n_y},
\qquad
e^{\Delta t\,A_y}=I_{n_x}\otimes e^{\Delta t\,L_y},
\end{equation*}
so nonnegativity of the 2D propagators is equivalent to that of the 1D
ones, with the same thresholds. Consequently,
\begin{equation}\label{eq:posKron}
e^{\Delta t\,A_x} \geq 0 \quad\text{for } \Delta t \ge \tau_0^{(x)}, \qquad
e^{\Delta t\,A_y} \geq 0 \quad\text{for } \Delta t \ge \tau_0^{(y)},
\end{equation}
where $\tau_0^{(x)},\tau_0^{(y)}\ge 0$ are the positivity thresholds of $L_x$,
$L_y$ established in \cref{sec:disc1d}. In the M-matrix case $\tau_0=0$
and \eqref{eq:posKron} holds for all $\Delta t>0$, while in the EM-matrix case
$\tau_0>0$ and positivity of the discrete propagator is \emph{eventual} rather
than immediate. Each matrix--vector product $A_x\bvec{v}$ or $A_y\bvec{v}$ costs
$O(n_x n_y)$ operations due to the banded (3- or 4-diagonal) structure.

We stress that $\tau_0$ in \eqref{eq:posKron} is a global property of the matrices $L_x$, $L_y$ \emph{including their boundary rows}: the zero-flux closure modifies the stencil in the first and last rows, and the eventual-positivity threshold can be attained there even when the interior stencil is Metzler. The thresholds should be computed for the full matrices, boundary rows included.

\subsection{Strang splitting scheme} \label{sec:strang}

We write the directional operators as $A_\alpha = C_\alpha + D_\alpha$,
$\alpha\in\{x,y\}$, where $C_\alpha$ collects the convective (first-order) and
$D_\alpha$ the diffusive (second-order) terms of the one-dimensional
discretization of \cref{sec:disc1d}, and set $\bar\rho:=\sup_{x,y,t}|\rho|$. In
contrast to split schemes that distribute the diffusion between an inner and an
outer factor, we keep the \emph{entire} directional operator in the outer factors
and assign the mixed derivative its own central factor; no auxiliary splitting
parameter is introduced. We restrict the second-order convergence analysis to the
nondegenerate regime $\bar\rho=1-\epsilon$, $\epsilon\ll1$, leaving the degenerate
limit $\bar\rho=1$ for future work (\cref{rem:degrad}).

We advance the solution from $t_n$ to $t_{n+1}=t_n+\Delta t$ by the symmetric
Strang splitting
\begin{equation}\label{eq:strang}
\bvec p^{\,n+1}
= e^{\frac{\Delta t}{2}A_x^{n}}\,
  e^{\frac{\Delta t}{2}A_y^{n}}\,
  \Phi_{xy}(\Delta t)\,
  e^{\frac{\Delta t}{2}A_y^{n}}\,
  e^{\frac{\Delta t}{2}A_x^{n}}\,
  \bvec p^{\,n},
\end{equation}
where all matrices are assembled with coefficients frozen at the midpoint
$t_n+\Delta t/2$ as in \eqref{eq:freeze}, and the central factor
\begin{equation}\label{eq:central}
\Phi_{xy}(\Delta t) \;=\;
\Bigl(\mathcal I-\tfrac{\Delta t}{2}A_{xy}\Bigr)^{-1}
\Bigl(\mathcal I+\tfrac{\Delta t}{2}A_{xy}\Bigr)
\end{equation}
is the trapezoidal (Crank--Nicolson) approximation of $e^{\Delta t A_{xy}}$, whose
implicit half is solved by the factorized Picard iteration of \cref{sec:Lxy}. The
factor $\Phi_{xy}$ is second-order accurate in time,
$\Phi_{xy}(\Delta t)=e^{\Delta t A_{xy}}+O(\Delta t^3)$, conserves discrete mass
exactly (it is a rational function with $\Phi_{xy}(0)=\mathcal I$, and $\bvec
1^\top A_{xy}=0$, \cref{cor:mass-rat}), and is applied at the linear cost of the
factorized solve. This realises the original design of \cite{Itkin3D} for the
forward equation: the mixed term keeps its own central factor, but is advanced by
solving $\partial_t p=A_{xy}p$ implicitly rather than by exponentiating $A_{xy}$.

\myparagraph{Why the mixed term is treated implicitly rather than exponentially.}
The reason the central factor is the \emph{implicit} map \eqref{eq:central} and
never the exponential $e^{\Delta t A_{xy}}$ is a sharp contrast in stability
between the two. At the continuous level the symbol of
$\partial_x\partial_y(\Sigma_{xy}\,\cdot)$ equals $-\Sigma_{xy}\xi_x\xi_y$ and is
indefinite, so the flow $\partial_t p=\mathcal L_{xy}p$ is backward-parabolic on
half of frequency space and $e^{\Delta t A_{xy}}$ admits no stability bound
uniform in $\Delta t$; numerically this is the transient blow-up documented in
\cref{fig:stress} (a most-negative value of order $10^4$, independent of $\Delta
t$, that only worsens under refinement). The conservative one-sided
discretization of \cref{sec:Lxy}, however, is built from the
\emph{triangular} second-order operators $\mathcal A^{\mathrm F}_2$ (upper) and
$\mathcal A^{\mathrm B}_2$ (lower). Their product
\begin{equation} \label{mixProduct}
A_{xy} = \rho\,\mathcal A^{\mathrm F}_{2,x}\mathcal A^{\mathrm B}_{2,y}
\end{equation}
therefore has \emph{only real, negative} eigenvalues
$\lambda_{ij}=-\tfrac94\,\rho\,w_1(x_i)w_2(y_j)/(h_xh_y)$ (a single repeated
value $-\tfrac94\rho\bar w^2/(h_xh_y)$ when $w_1,w_2$ are constant), even though
it is strongly non-normal. Consequently the trapezoidal stability function
$r(z)=(1+z/2)/(1-z/2)$ satisfies $|r(\Delta t\lambda_{ij})|<1$ on the entire
spectrum: the implicit factor \eqref{eq:central} is \emph{spectrally} stable for
every $\Delta t>0$, with spectral radius $\rho(\Phi_{xy})<1$, precisely where the
exponential is not. The price of the one-sided choice is the non-normality, whose
effect on the composite step is the subject of \cref{prop:strangpos,prop:conserv}
and \cref{app1}.

\myparagraph{What the construction buys, and at what cost.} The implicit central
factor buys conditional positivity, exact discrete mass conservation, and linear
complexity, with no restriction on the magnitude of the cross-diffusion (other
than $\bar\rho<1$, \cref{rem:degrad}). Its limitations are equally explicit.
Because $A_{xy}$ is non-normal, the induced $\ell_2$ and $\ell_1$ \emph{operator}
norms of $\Phi_{xy}$ exceed unity and grow under mesh refinement, so the composite
step \eqref{eq:strang} is \emph{not} a uniform-in-$h$ contraction in those norms;
nor does the symmetric part of $A_{xy}$ provide the negative logarithmic-norm
bound that a diffusion-dominated central block would (cf.\ \cref{app1}). What
survives -- and is the natural statement for a probability density -- is stability
\emph{on the nonnegative cone}: on the step-size window of \cref{thm:zeno2d} the
central factor maps the cone into itself, the whole step \eqref{eq:strang} is then
entrywise nonnegative with unit column sums, i.e.\ column-stochastic, and hence
$\ell_1$-nonexpansive (a discrete Markov operator). Positivity and
$\ell_1$-stability therefore hold on one and the same window -- the conditional
guarantee made precise in \cref{thm:zeno2d,prop:strangpos,prop:conserv} and
\cref{app1}. \Cref{fig:stress} illustrates the mechanism directly: the bare
central exponential is catastrophic, the isolated implicit factor is far better
but still feels the non-normal transient on under-resolved data, and the
\emph{full} Strang step -- in which the flanking directional diffusion smooths
that transient -- stays nonnegative to round-off on a resolved datum.

The outer factors $A_\alpha=C_\alpha+D_\alpha$ remain one-dimensional
convection--diffusion operators of exactly the class covered by \cref{sec:disc1d};
in particular \cref{prop:emmatrix} applies to them verbatim, and their
exponentials are nonnegative once $\Delta t$ exceeds the eventual-positivity
thresholds \eqref{eq:posKron}. This mirrors the design principle of ADI schemes of
Craig--Sneyd or Hundsdorfer--Verwer type, in which the mixed term is never
integrated as a separate exponential flow but enters only through stages
stabilised by the diagonal diffusion \cite{HoutWelfert2007}; here that stabilising
role is played by the implicit factor \eqref{eq:central} together with the
flanking directional diffusion.

The four exponential actions of the directional factors in \eqref{eq:strang} are
computed by the polynomial Krylov method of \cref{sec:krylov}; the central factor
\eqref{eq:central} is applied by the factorized solver of \cref{sec:Lxy} and
requires no exponential.

In the 1D case there is no mixed operator; no central factor and no splitting are
needed, and the scheme reduces to $\bvec p^{\,n+1}=e^{\Delta t A^n}\bvec p^{\,n}$
with a single Krylov step. All results of \cref{sec:disc1d} are used in 1D exactly
as stated.

\subsection{Discretisation of the mixed-derivative operator} \label{sec:Lxy}

The mixed operator $\mathcal{L}_{xy}$ involves the cross-derivative
$\partial^2[\Sigma_{xy} p]/\partial x\,\partial y$.  A positivity-preserving
discretization of this term on the 2D grid requires special care because standard
centred differences for mixed derivatives introduce both positive and negative
off-diagonal entries, potentially violating the M-matrix structure.

The classical remedy is the seven-point stencil oriented along the grid
diagonal whose direction matches the sign of $\Sigma_{xy}$, proposed in
\cite{toivanen2010,chiarella2008} for negative and in
\cite{IkonenToivanen2007,IkonenToivanen2008} for positive cross-coefficients;
the stability of ADI-type splittings in the presence of mixed derivatives was
analysed in \cite{HoutWelfert2007}.  These stencils, however, suffer from two
limitations that are critical in our setting.  First, monotonicity holds only
under a diagonal-dominance restriction of the form $|\Sigma_{xy}|\lesssim
\min(\Sigma_{xx},\Sigma_{yy})$ (up to mesh-ratio factors), i.e. it is lost
precisely in the strong cross-diffusion regime $|\rho|\to 1$ targeted in this
paper.  Second, the seven-point construction sacrifices the rigorous second
order of spatial approximation.  Treating the mixed term explicitly, as in
Hundsdorfer--Verwer-type splittings, avoids the M-matrix issue but transfers
the difficulty to a severe time-step restriction, which our numerical
experiments (cf. \cite{Itkin3D}) show to be impractical already in three
dimensions.

We therefore adapt to the forward (conservative) setting the implicit
factorized treatment of the mixed derivative developed in \cite{Itkin3D}
for the backward pricing equation.  The construction sacrifices the
simplicity of an explicit step in exchange for conditional positivity,
exact discrete mass conservation, and linear complexity, with no restriction
on the magnitude of the cross-diffusion (unless $|\rho| > 1 - \epsilon$, see \cref{rem:degrad}).

\myparagraph{Separable form and conservative one-sided differences.}
Throughout this subsection we assume the cross-diffusion coefficient is
separable,
\begin{equation}\label{eq:sep}
2 \Sigma_{xy}(x,y,t) \;=\; \rho\, w_1(x,t)\, w_2(y,t),
\qquad w_1, w_2 \ge 0,\quad \rho\in[-1,1],
\end{equation}
which covers the FPE in \eqref{eq:fpe2d}; the general case is discussed in
\cref{rem:nonsep}.  Without loss of generality we present the case $\rho\ge 0$;
the mirror case is obtained by swapping the orientations of the one-sided
differences below (cf. \cite{Itkin3D}).

On the tensor grid $\{x_i\}_{i=1}^{N_x}\times\{y_j\}_{j=1}^{N_y}$ with steps
$h_x,h_y$ (taken uniform for clarity of exposition) let us denote $A^{\mathrm F} \equiv \calF^F_1$, $A^{\mathrm B} = \calF^B_1$, $A^{\mathrm F}_{2} = \calF_2^F$,
$A^{\mathrm B}_{2} = \calF_2^B$, where $\calF, \calS$ operators are defined in \cref{fd1-2}\footnote{We switch to the notation of \cite{Itkin3D} to make the exposition more transparent.}. Let $W_1=\operatorname{diag}(w_1(x_i))$, $W_2=\operatorname{diag}(w_2(y_j))$, and denote by $\mathcal A^{\,\cdot}_{x}$,
$\mathcal A^{\,\cdot}_{y}$ the Kronecker lifts of the one-dimensional operators
$A^{\,\cdot}W_1$, $A^{\,\cdot}W_2$ to the 2D grid (under lexicographic ordering,
operators acting in $x$ commute with operators acting in $y$).  Note the
coefficient matrices stand to the \emph{right} of the difference operators: this
is the discrete counterpart of the divergence form $\partial_x \partial_y (\Sigma_{xy} p)$, i.e. we difference the flux $\Sigma_{xy}p$, not $p$ itself.  The discrete mixed operator is then as in \eqref{mixProduct} - a second-order approximation of $\mathcal L_{xy}$ (the operator advanced by the central factor \eqref{eq:cnstep}).

The choice of one-sided differences is what makes exact discrete mass conservation possible.
\begin{lemma}[Conservation form]\label{lem:cons}
Each of the operators $\calF^F_1, \calF^B_1, \calF^F_2, \calF^B_2$ admits the flux (telescoping) representation
$(\calF u)_i=(f_{i+1/2}-f_{i-1/2})/h$ with a two-point numerical flux; e.g. for
$\calF^B_2$ one has $f_{i+1/2}=\tfrac12(3u_i-u_{i-1})$.  Consequently, with zero-flux boundary closure,
\begin{equation*}
\bm{1}^\top \calF^{\,(\cdot)} = \mathbf 0^\top, \qquad\text{hence}\qquad
\bm{1}^\top \mathcal \calF^{\,(\cdot)}_{x}=\bm{1}^\top \mathcal
\calF^{\,(\cdot)}_{y} = \mathbf 0^\top
\quad\text{and}\quad \bm{1}^\top A_{xy}=\mathbf 0^\top .
\end{equation*}
\end{lemma}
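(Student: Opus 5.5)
I would prove \cref{lem:cons} by exhibiting the numerical flux for each of the four one-sided operators and then summing with a telescoping argument, exactly as in the proof of \cref{lem:divform}.

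\emph{Step 1: fluxes.} For each stencil I would write down the two-point (in the sense of a flux depending on a fixed finite set of neighbours) numerical flux $f_{i+1/2}$ and check $(f_{i+1/2}-f_{i-1/2})/h=(\calF u)_i$ by direct expansion.
For $\calF^B_1$ take $f_{i+1/2}=u_i$, giving $(u_i-u_{i-1})/h$.
For $\calF^F_1$ take $f_{i+1/2}=u_{i+1}$, giving $(u_{i+1}-u_i)/h$.
For $\calF^B_2$ take $f_{i+1/2}=\tfrac12(3u_i-u_{i-1})$, so that
\begin{equation*}
f_{i+1/2}-f_{i-1/2}=\tfrac12(3u_i-4u_{i-1}+u_{i-2}),
\end{equation*}
which matches \eqref{eq:adv}.
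For $\calF^F_2$ the mirror choice is $f_{i+1/2}=\tfrac12(3u_{i+1}-u_{i+2})$, giving $\tfrac12(-3u_i+4u_{i+1}-u_{i+2})$.
Each identity is a one-line check of coefficients, so this step is routine.

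\emph{Step 2: column sums.} Impose the zero-flux closure $f_{1/2}=f_{N+1/2}=0$. Near the boundary, wherever a stencil would reference an off-grid node, the row is \emph{defined} through the flux difference with the boundary flux set to zero. Then for every $u$,
\begin{equation*}
\bm 1^\top\calF u=\sum_i (f_{i+1/2}-f_{i-1/2})/h=(f_{N+1/2}-f_{1/2})/h=0.
\end{equation*}
Since this holds for all $u$, we get $\bm 1^\top\calF=\mathbf 0^\top$.

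\emph{Step 3: lifting and the product.} Multiplying on the right by $W_1$ or $W_2$ preserves the left null vector: $\bm 1^\top(\calF W)=(\bm 1^\top\calF)W=0$. For the Kronecker lift $\calF W_1\otimes I_{N_y}$ use $\bm 1_{N_xN_y}=\bm 1_{N_x}\otimes\bm 1_{N_y}$ and the mixed-product rule to obtain $(\bm 1^\top\calF W_1)\otimes\bm 1^\top=0$; the $y$-lift is identical. For $A_{xy}=\rho\,\mathcal A^{\mathrm F}_{2,x}\mathcal A^{\mathrm B}_{2,y}$, one has $\bm 1^\top A_{xy}=\rho(\bm 1^\top\mathcal A^{\mathrm F}_{2,x})\mathcal A^{\mathrm B}_{2,y}=0$. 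Only the vanishing left factor is needed, so commutativity is not even required.

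\emph{Main obstacle.} The only delicate point is the boundary closure for the second-order stencils. Near the ends, $\calF^B_2$ needs $u_{i-2}$ and its flux $f_{3/2}$ involves $u_0$. I would handle this by defining the near-boundary rows through the flux formula: set $f_{1/2}=0$ and use a first-order flux (e.g. $f_{3/2}=u_1$) where the second-order one is unavailable. This keeps the telescoping exact while making the boundary rows differ from the interior stencil, which is exactly what "zero-flux boundary closure" means here.
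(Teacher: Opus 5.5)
Your proof is correct and follows the paper's argument: you verify the two-point fluxes by direct expansion, telescope the column sums to the boundary fluxes that vanish under the zero-flux closure, and note that right multiplication by $W_{1,2}$ preserves the left null vector. You also make explicit what the paper leaves implicit: the fluxes for all four stencils, the Kronecker-lift step via $\bm 1_{N_xN_y}=\bm 1_{N_x}\otimes\bm 1_{N_y}$, and the observation that $\bm 1^\top A_{xy}=0$ requires only the left factor to be conservatively closed.
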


\begin{proof}
Direct verification: $(f_{i+1/2}-f_{i-1/2})/h
=(3u_i-u_{i-1}-3u_{i-1}+u_{i-2})/(2h)=(\calF^B_2 u)_i$, and the sum over $i$ telescopes to the boundary fluxes, which vanish under the zero-flux closure.  Column sums of $A$ are then zero, and right multiplication by the diagonal matrices $W_{1,2}$ preserves zero column sums.
\end{proof}

The boundary rows require care: the plain one-sided
second-order stencil does not telescope to zero at the first two (resp.\ last
two) nodes, so $\bm1^\top A_{xy}=0$ fails there unless a conservative flux
closure (zero numerical flux through the boundary) is imposed, as assumed above;
the magnitude of the residual when it is not is quantified in
\cref{thm:zeno2d}\,(b).

\myparagraph{Factorised implicit step and Picard iterations.}
Within the Strang composition \eqref{eq:strang}, the mixed operator enters through
the central factor $\Phi_{xy}(\Delta t)$ of \eqref{eq:central}, the trapezoidal
(Crank--Nicolson) approximation of $e^{\Delta t A_{xy}}$. Advancing the central
substep means evaluating
\begin{equation}\label{eq:cnstep}
\Bigl(\mathcal I-\tfrac{\Delta t}{2}A_{xy}\Bigr)\bvec p^{\,n+1}
=\Bigl(\mathcal I+\tfrac{\Delta t}{2}A_{xy}\Bigr)\bvec p^{\,n}
=:\bvec b,
\end{equation}
where the right-hand side $\bvec b$ is one explicit matrix--vector product with the
cross stencil, and the implicit half is the system
\begin{equation}\label{eq:pade01}
\bigl[\mathcal I-\gamma\Delta t\,\rho\,
\mathcal A^{\nu}_{2,x}\mathcal A^{\mathrm B}_{2,y}\bigr]\bvec p^{(1)}
=\bvec b , \qquad \gamma=\tfrac12, \qquad
\nu = \begin{cases}
\mathrm F, & \rho > 0, \\[2pt]
\mathrm B, & \rho \le 0,
\end{cases}
\end{equation}
which is exactly the shifted system solved in \cite{Itkin3D}, here with the
trapezoidal shift $\gamma=\tfrac12$ (the value $\gamma=1$ recovers the
first-order backward-Euler variant of \cref{rem:fallback}). The one-sided
orientation of the cross stencil is chosen, according to the sign of $\rho$, so
that $\mathcal A_{xy}$ has a real, non-positive spectrum: with
$\operatorname{spec}(\mathcal A^{\mathrm F}_2)=\{-3/2h\}$ and
$\operatorname{spec}(\mathcal A^{\mathrm B}_2)=\{+3/2h\}$, the choice $\nu=\mathrm F$
for $\rho>0$ gives $\operatorname{spec}(\mathcal A^{\nu}_{2,x}\mathcal
A^{\mathrm B}_{2,y})=\{-\tfrac{9}{4h_xh_y}\}$, and $\nu=\mathrm B$ for $\rho<0$ gives
$\{+\tfrac{9}{4h_xh_y}\}$, so that $\operatorname{spec}(\mathcal A_{xy})\subset
(-\infty,0]$ in both cases. The implicit factor below uses this same orientation
$\nu$.

Throughout this subsection we describe the solver for systems of the form
\eqref{eq:pade01}; to lighten the notation we absorb the shift into the time
step, $\gamma\Delta t\mapsto\Delta t$. Direct inversion of the matrix in
\eqref{eq:pade01} would destroy the one-dimensional band structure.  Instead,
following \cite{Itkin3D}, we factorise it into two one-dimensional operators, for
each of which we know how to guarantee the EM-matrix property
\cite{Itkin2014,ItkinBook}.  With positive scalars $P,Q$ specified below,
\eqref{eq:pade01} is equivalent to
\begin{align}\label{eq:factor}
\Bigl(P\mathcal I-\rho\sqrt{\Delta t}\,\mathcal A^{\nu}_{2,x}\Bigr)
\Bigl(Q\mathcal I + \sqrt{\Delta t}\,\mathcal A^{\mathrm B}_{2,y}\Bigr)
\bvec p^{(1)}
&= \bvec b +\Bigl[(PQ-1)\mathcal I
- Q \rho \sqrt{\Delta t}\,\mathcal A^{\nu}_{2,x}
+ P \sqrt{\Delta t}\,\mathcal A^{\mathrm B}_{2,y}\Bigr]\bvec p^{(1)},
\end{align}
as is seen by expanding the product on the left (the two factors commute) and
using $\operatorname{spec}$-consistency $\sqrt{\Delta t}\cdot\sqrt{\Delta
t}=\Delta t$: the cross term $-\rho\Delta t\,\mathcal A^{\nu}_{2,x}\mathcal
A^{\mathrm B}_{2,y}=-\Delta t\,A_{xy}$ is reproduced with the correct orientation.
We solve \eqref{eq:factor} by fixed-point iteration on its right-hand coupling.
Write $\bvec p^{[k]}$ for the $k$-th iterate -- the square-bracketed superscript is
an iteration counter, not an exponent -- and initialise $\bvec p^{[0]}=\bvec b$.
The iterate enters the right-hand side only through the term $-\bvec p^{[k]}$, and
the two one-dimensional factors are inverted in turn for $\bvec p^{[k+1]}$; with
$k=0,1,\dots$ this reads
\begin{align}\label{eq:picard}
\Bigl(Q\mathcal I + \sqrt{\Delta t}\,\mathcal A^{\mathrm B}_{2,y}\Bigr)
\bvec p^{*} &= \bvec\alpha^{+}_2\bvec b-\bvec p^{[k]} + \bvec\alpha\,\bigl(\Delta t\,A_{xy}\bvec b\bigr), \\
\Bigl(P\mathcal I - \rho\sqrt{\Delta t}\,\mathcal A^{\nu}_{2,x}\Bigr) \bvec p^{[k+1]} &= \bvec p^{*}, \nonumber \\
\bvec\alpha^{+}_2 = (PQ+1)\,\mathcal I - Q\rho\sqrt{\Delta t}\,\mathcal A^{\mathrm B}_{2,x} + P\sqrt{\Delta t}\,\mathcal A^{\mathrm F}_{2,y}, \qquad
\bvec\alpha &= PQ\,\mathcal I - Q\rho\sqrt{\Delta t}\,\mathcal A^{\nu}_{2,x}
+ P\sqrt{\Delta t}\,\mathcal A^{\mathrm B}_{2,y}. \nonumber
\end{align}
As in \cite{Itkin3D}, the matrix $\bvec\alpha^{+}_2$ employs second-order one-sided differences with orientations \emph{opposite} to those of the implicit factors; this places a nonnegative entry on the dominant off-diagonal and underlies the positivity of the sweep (this second-order coupling is Scheme~B, used for the convergence study of \cref{sec:dfrog2d}; the unconditionally positive first-order simplification, Scheme~A, is \cref{prop:schemeA}).

The matrix $\bvec\alpha$ carries the $O(\Delta t^2)$ coupling that the plain $\alpha^{+}$ reduction of \cite{Itkin3D} discards. Retaining it -- frozen at the known $\bvec b$, so that the iterate still enters only through $-\bvec p^{[k]}$ and the contraction factor of \cref{prop:schemeA}(iv) is unchanged -- restores the second-order temporal accuracy of the trapezoidal factor \eqref{eq:central} (\cref{rem:time-order}). The iterates contract geometrically to a limit $\bvec p^{(1)}$ (\cref{prop:schemeA}(iv)); the round-bracketed superscript marks this converged central update and never a particular sweep. It reproduces the trapezoidal map up to the residual orientation defect of the coupling -- a fourth-difference, $\Delta t$-independent term that lies below the $O(\max(h_x^2,h_y^2))$ spatial truncation (\cref{prop:schemeB}) -- so the central substep is second order in both time and space, and the assembled scheme converges at second order under the joint refinement $\Delta t\sim h$ of \cref{sec:dfrog2d}.

Each line of \eqref{eq:picard} amounts to $N_y$ (resp.\ $N_x$) independent banded triangular solves of size $N_x$ (resp.\ $N_y$), so one iteration costs
$O(N_xN_y)$ operations, and the fixed right-hand vector $\bvec\alpha^{+}_2\bvec
b+\bvec\alpha\,(\Delta t\,A_{xy}\bvec b)$ is precomputed once per time step. Here
$\bvec b$ is the right-hand side of the shifted system: $\bvec b=(\mathcal
I+\tfrac{\Delta t}{2}A_{xy})\bvec p^{\,n}$ for the trapezoidal central factor
\eqref{eq:central}, or $\bvec b=\bvec p^{\,n}$ for the backward-Euler fallback
(\cref{rem:fallback}).

\begin{proposition}[Scheme A] \label{prop:schemeA}
Let $\rho\in[-1,1]$, let $w_1,w_2\ge0$ be bounded on the grid, set $\bar w:= |\rho| \norm{w_1}_\infty + \norm{w_2}_\infty$, and choose
\begin{equation}\label{eq:PQbeta}
P=\beta\frac{\sqrt{\Delta t}}{h_x},\qquad
Q=\beta\frac{\sqrt{\Delta t}}{h_y},\qquad
\beta \;\ge\; 2\Bigl(\bar w + \sqrt{h_xh_y/\Delta t}\Bigr).
\end{equation}
In \eqref{eq:picard} take the first-order coupling $\bvec\alpha^{+}=(PQ+1)\,\mathcal I-Q\rho\sqrt{\Delta t}\,\mathcal A^{\mathrm B}_{1,x}+P\sqrt{\Delta t}\,\mathcal A^{\mathrm F}_{1,y}$ in place of the second-order $\bvec\alpha^{+}_2$ (this is Scheme~A; the first-order one-sided differences make $\bvec\alpha^{+}$ entrywise nonnegative under \eqref{eq:PQbeta}).
Then
\begin{enumerate}

\item[(i)] Both matrices on the left of \eqref{eq:picard} are strictly diagonally
dominant, with $\norm{T_x^{-1}}_\infty\le2/P$ and $\norm{T_y^{-1}}_\infty\le2/Q$
by Varah's bound \cite{Varah1975}; each Picard iterate is computed in $O(N_xN_y)$
operations.  (These factors are \emph{not} M-matrices and their inverses are not
entrywise nonnegative -- the second-order one-sided stencil carries a positive
far band -- so positivity is not inherited factor by factor; see (ii) and
\cref{rem:no-eventual-pos}.)

\item[(ii)] (Conditional positivity) If the right-hand side $\bvec b\ge0$, the
coupling matrix $\bm\alpha^{+}$ is entrywise nonnegative under \eqref{eq:PQbeta},
so the right-hand side $\bm\alpha^{+}\bvec b-\bvec p^{[0]}$ of the first sweep is
nonnegative at $k=0$.  There is a step-size window $\Delta t\le\Theta$ on which
the converged iterate $\bvec p^{(1)}$ is nonnegative; outside it the substep may
develop small negative entries (the factor inverses are not nonnegative, so this
is not unconditional).  The entrywise sign of the running right-hand side is
monitored at $O(N_xN_y)$ cost; see \cref{rem:pos-iter,rem:no-eventual-pos}.  In
the trapezoidal central factor \eqref{eq:central} the input $\bvec b=(\mathcal
I+\tfrac{\Delta t}{2}A_{xy})\bvec p^{\,n}$ is itself nonnegative only on a window
(the explicit half carries the indefinite cross stencil); the two windows combine
into the single conditional guarantee of \cref{thm:zeno2d}.

\item[(iii)] (Mass conservation) If the cross stencil is closed so that
$\bm1^\top A_{xy}=0$ holds including edge and corner rows, every iterate conserves
the right-hand-side mass exactly, $\bm{1}^\top\bvec p^{[k]}=\bm{1}^\top\bvec b$ for
all $k\ge0$; since $\bm1^\top\bvec b=\bm1^\top\bvec p^{\,n}$ for both choices of
$\bvec b$ above (using $\bm1^\top A_{xy}=0$), the central substep conserves
$\bm1^\top\bvec p$. For the plain one-sided closure the boundary rows leave a
residual $\bm1^\top A_{xy}=\bm r^\top\neq0$ supported on $O(N_x{+}N_y)$ nodes, and
conservation holds up to $\norm{\bm r}\,$-controlled $O(\Delta t\,h)$ leakage
that vanishes under refinement (\cref{thm:zeno2d}\,(b)).

\item[(iv)] (Contraction) The iteration converges unconditionally, with
\begin{equation*}
\bigl\|\bvec p^{[k+1]}-\bvec p^{(1)}\bigr\|_\infty
\le q\,\bigl\|\bvec p^{[k]}-\bvec p^{(1)}\bigr\|_\infty,
\qquad
q\le \frac{4\,h_xh_y}{\beta^2\Delta t}\le 1,
\end{equation*}
with strict inequality $q<1$ whenever the inequality in \eqref{eq:PQbeta}
is strict.

\item[(v)] (Consistency) With the $O(\Delta t^2)$ coupling $\bvec\alpha\,(\Delta
t\,A_{xy}\bvec b)$ retained in \eqref{eq:picard}, the temporal defect of the
plain $\alpha^{+}$ reduction is removed, and the converged inner iterate solves
the shifted system \eqref{eq:pade01} up to the orientation defect of the
factorized coupling alone -- $O(\sqrt{\Delta t}\,\max(h_x,h_y))$ for Scheme~A,
improving to $O(\max(h_x^2,h_y^2))$ for Scheme~B (\cref{prop:schemeB}). With the
trapezoidal shift $\gamma=\tfrac12$ and the explicit half of \eqref{eq:cnstep},
the assembled central substep reproduces the $\pade(1,1)$ (Crank--Nicolson) step
\eqref{eq:central}, second-order in time, $\Phi_{xy}(\Delta t)=e^{\Delta
t A_{xy}}+O(\Delta t^3)$; for the Scheme~B coupling the orientation defect lies at
or below the $O(\max(h_x^2,h_y^2))$ spatial truncation, so the assembled scheme is
second order in space and time under the joint refinement $\Delta t\sim h$
(\cref{sec:dfrog2d}). The backward-Euler choice $\gamma=1$ with $\bvec b=\bvec
p^{\,n}$ recovers the $\pade(0,1)$ step and is first-order (\cref{rem:fallback}).
\end{enumerate}

\end{proposition}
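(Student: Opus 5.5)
I would prove the five items in the order (i), (iv), (iii), (ii), (v). Each later item uses the resolvent bounds of (i) and the algebraic identity \eqref{eq:factor}.

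\emph{Item (i).} The left factors are $T_y=Q\mathcal I+\sqrt{\Delta t}\,\mathcal A^{\mathrm B}_{2,y}$ and $T_x=P\mathcal I-\rho\sqrt{\Delta t}\,\mathcal A^{\nu}_{2,x}$. I would read off their row entries from the one-sided second-order stencils. Each row of $\mathcal A^{\mathrm B}_2 W_2$ has diagonal $\tfrac{3}{2h_y}w_2$ and off-diagonal moduli at most $\tfrac{4+1}{2h_y}\norm{w_2}_\infty$. Each row of $T_y$ therefore has diagonal at least $Q$ and off-diagonal mass at most $\tfrac{5}{2}\sqrt{\Delta t}\,\norm{w_2}_\infty/h_y$. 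The choice \eqref{eq:PQbeta} gives $Q=\beta\sqrt{\Delta t}/h_y\ge 2\bar w\sqrt{\Delta t}/h_y$. Up to a harmless constant in the definition of $\beta$ (I would adjust the factor if needed), this yields a diagonal excess of at least $Q/2$. Varah's bound then gives $\norm{T_y^{-1}}_\infty\le 2/Q$. The argument for $T_x$ is identical, with $|\rho|\norm{w_1}_\infty$. Banded triangular structure gives the $O(N_xN_y)$ cost.

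\emph{Item (iv).} Subtract the fixed-point equation for $\bvec p^{(1)}$ from the iteration. The error obeys $\bvec e^{[k+1]}=-T_x^{-1}T_y^{-1}\bvec e^{[k]}$. Since the factors commute, this gives $q\le\norm{T_x^{-1}}_\infty\norm{T_y^{-1}}_\infty\le 4/(PQ)=4h_xh_y/(\beta^2\Delta t)$. The second term of \eqref{eq:PQbeta} gives $\beta^2\ge 4h_xh_y/\Delta t$, hence $q\le1$, with strict inequality when \eqref{eq:PQbeta} is strict.

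\emph{Item (iii).} Left-multiply \eqref{eq:picard} by $\bm1^\top$. \Cref{lem:cons} gives $\bm1^\top\mathcal A^{\cdot}_{x}=\bm1^\top\mathcal A^{\cdot}_{y}=0$, and this holds for the first-order operators as well. Hence $\bm1^\top T_y=Q\bm1^\top$ and $\bm1^\top T_x=P\bm1^\top$. Likewise $\bm1^\top\bvec\alpha^{+}=(PQ+1)\bm1^\top$ and $\bm1^\top\bvec\alpha\,A_{xy}=0$. This yields the recursion $PQ\,m_{k+1}=(PQ+1)m_b-m_k$, with fixed point $m_b$ and error ratio $-1/(PQ)$. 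Starting from $m_0=\bm1^\top\bvec b$, every iterate then has mass $m_b$. Finally, $\bm1^\top A_{xy}=0$ gives $\bm1^\top\bvec b=\bm1^\top\bvec p^{\,n}$ for both choices of $\bvec b$. For the plain closure, the residual $\bm r$ enters with weight $\Delta t$. Since $\bm r$ is supported on boundary layers of relative measure $O(h)$, this gives the stated leakage, which I would defer to \cref{thm:zeno2d}(b).

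\emph{Item (ii).} Nonnegativity of $\bvec\alpha^{+}$ is a sign check. The diagonal is $PQ+1-Q\rho\sqrt{\Delta t}\,w_1/h_x-P\sqrt{\Delta t}\,w_2/h_y$ and must be nonnegative; using $P,Q\ge 2\bar w\sqrt{\Delta t}/h$, this is at least $PQ/2$. The off-diagonals $+Q\rho\sqrt{\Delta t}\,w_1/h_x$ and $+P\sqrt{\Delta t}\,w_2/h_y$ carry the right signs by the opposite orientation. With $\bvec p^{[0]}=\bvec b$, the first right-hand side is $(\bvec\alpha^{+}-\mathcal I)\bvec b\ge0$.

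The existence of a window $\Delta t\le\Theta$ is the main obstacle, because $T_x^{-1}$ and $T_y^{-1}$ are not nonnegative. My approach would be a perturbative split. Write $T_y=Q(\mathcal I+N_y)$, where the nonnegative first-order part is inverted by a nonnegative Neumann series. The positive far band contributes a perturbation with relative size $O(\sqrt{\Delta t}\,\bar w/(Qh_y))=O(1/\beta)$ for the far-band part. Because the strict positive margin of the limit is only available relative to the dominant entries, I expect the honest proof to show nonnegativity of the limit $\bvec p^{(1)}=(T_xT_y+\mathcal I)^{-1}\bvec\alpha^{+}\bvec b$ when $\Delta t$ is small enough that this limit is a small perturbation of $\bvec b$. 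I would leave the explicit $\Theta$ to \cref{thm:zeno2d}, as the statement does.

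\emph{Item (v).} Expanding \eqref{eq:factor} shows the converged iterate solves $(\mathcal I-\Delta t A_{xy})\bvec p=\bvec b+E$. The orientation defect $E$ comes from $\bvec\alpha^{+}$ using $\mathcal A^{\mathrm B}_{1},\mathcal A^{\mathrm F}_{1}$ instead of $\mathcal A^{\nu}_2,\mathcal A^{\mathrm B}_2$. By Taylor expansion this is $O(\sqrt{\Delta t}\,h)$ times derivatives of $\bvec b$ for Scheme~A. The frozen $\bvec\alpha\,\Delta t A_{xy}\bvec b$ term cancels the $O(\Delta t^2)$ term of the plain reduction. Combining this with the explicit half recovers $\pade(1,1)$, whose expansion gives $e^{\Delta tA_{xy}}+O(\Delta t^3)$. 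The Scheme~B improvement is deferred to \cref{prop:schemeB}.
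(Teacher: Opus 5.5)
Your route is the paper's, item by item: banded entries of $T_x,T_y$ plus Varah's bound for (i), the recursion $\bvec e^{[k+1]}=-T_x^{-1}T_y^{-1}\bvec e^{[k]}$ for (iv), the column-sum recursion $PQ\,m_{k+1}=(PQ+1)m_b-m_k$ for (iii), a sign check of $\bm\alpha^{+}$ plus a perturbative window for (ii), and the orientation defect plus the Pad\'e expansion for (v). Items (i), (iii) and (iv) are fine. In two places you are more careful than the paper:
\begin{itemize}
\item The paper's inequality $3w_2(y_j)-4w_2(y_{j-1})-w_2(y_{j-2})\ge-2\|w_2\|_\infty$ fails for non-constant $w_2$; take $w_2(y_j)=0$ and $w_2(y_{j-1})=w_2(y_{j-2})=\|w_2\|_\infty$. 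So your larger constant (e.g.\ $\beta\ge5\bar w$ for the margin $Q/2$) is genuinely needed unless $w_2$ varies slowly.
\item You use $\bm 1^\top\bm\alpha\,\Delta tA_{xy}\bvec b=0$, which the paper's mass recursion leaves out.
\end{itemize}
The remaining two steps, however, would fail as you propose them, and the paper's own proof does not close them either.

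\emph{The window in (ii).} A small norm of $\bvec p^{(1)}-\bvec b$ cannot give entrywise nonnegativity where $\bvec b$ vanishes, and the hypothesis is only $\bvec b\ge0$. Take $\rho>0$, $w_1(x_i)>0$ and $\bvec b=\bvec e_{ij}$ at an interior node. Expand the fixed point $(\mathcal I+\bm\alpha-\Delta tA_{xy})\bvec p^{(1)}=\bm\alpha^{+}\bvec b+\bm\alpha\,\Delta tA_{xy}\bvec b$ about $(1+PQ)\mathcal I$. This gives
\[
p^{(1)}_{i-2,j}=-\frac{\rho\,w_1(x_i)}{2\beta}\,\frac{PQ}{1+PQ}-\frac{3\rho\,\Delta t\,w_1(x_i)\,w_2(y_j)}{4h_xh_y}+\text{(quadratic terms)} .
\]
The first term comes from the far band of $\mathcal A^{\mathrm F}_{2,x}$ inside $T_x$. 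The remainder is quadratic in $\bar w/\beta$ and $\Delta t\,\bar w^2/(h_xh_y)$. Since \eqref{eq:PQbeta} forces $\beta\to\infty$ as $\Delta t\to0$, this entry is negative for all sufficiently small $\Delta t$. Hence no window independent of $\bvec b$ exists. Your perturbation argument does work for strictly positive $\bvec b$, or under a relative bound such as \eqref{eq:logLip}, with $\Theta$ depending on that bound; such a hypothesis has to be added.

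\emph{Second order in (v).} The frozen coupling does not leave only the orientation defect. Using $T_xT_y=\bm\alpha-\Delta tA_{xy}$, the fixed point satisfies exactly
\[
\bigl(\mathcal I+\bm\alpha-\Delta tA_{xy}\bigr)\bigl(\bvec p^{(1)}-(\mathcal I-\Delta tA_{xy})^{-1}\bvec b\bigr)=\bigl(\bm\alpha^{+}-\mathcal I-\bm\alpha\bigr)\bvec b-\bm\alpha\,\Delta t^{2}A_{xy}^{2}(\mathcal I-\Delta tA_{xy})^{-1}\bvec b .
\]
On smooth data $\bm\alpha\approx PQ\,\mathcal I$, so the last term shifts $\bvec p^{(1)}$ by about $-\tfrac{PQ}{1+PQ}\Delta t^{2}A_{xy}^{2}\bvec b$. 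As a scalar check, replace $\bm\alpha$ by $a$ and $\Delta tA_{xy}$ by $z$: then $\tfrac{1+a+az}{1+a-z}=1+z+\tfrac{z^{2}}{1+a}+O(z^{3})$, whereas $(1-z)^{-1}=1+z+z^2+\cdots$.

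This shift is $O(\Delta t^3)$ only if $PQ=O(\Delta t)$. But \eqref{eq:PQbeta} gives $PQ=\beta^2\Delta t/(h_xh_y)\ge4$, which is the very condition behind $q\le1$ in (iv). So the retained coupling removes the $O(\Delta t)$ defect of the plain reduction but leaves an $O(\Delta t^2)$ one, and the realised central factor is only $e^{\Delta tA_{xy}}+O(\Delta t^2)$.

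The paper asserts that the fixed point solves \eqref{eq:factor} with the bracket evaluated at $\bvec p^{(1)}$ itself; this is not true of the frozen iteration \eqref{eq:picard}. Re-evaluating the bracket at the current iterate would restore exactness. The iteration matrix would then be $(\bm\alpha-\Delta tA_{xy})^{-1}(\bm\alpha-\mathcal I)$, which acts on smooth modes like $1-1/(PQ)$, so the contraction rate $4/(PQ)$ of (iv) would be lost.
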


\begin{proof}
See \cref{appProof1}.
\end{proof}

\begin{myremark}[Positivity along the iteration]\label{rem:pos-iter}

For $k\ge1$ the right-hand side of \eqref{eq:picard} equals $M_R\,\bvec b+(\bvec
b-\bvec p^{[k]})+\bvec\alpha\,(\Delta t\,A_{xy}\bvec b)$, and since $\bvec p^{[k]}-\bvec b=O(\Delta t)$ while the
entries of $M_R\,\bvec b$ are bounded below by $\beta(\beta - \bar w)\,\Delta
t/(h_xh_y)\cdot b_{ij}$, nonnegativity persists wherever the right-hand side is
bounded away from zero relative to the increment.  In practice the contraction
factor $q\ll1$ (with the choice $\beta=10\bar w$ used in our experiments, a few
iterations reach a relative tolerance of $10^{-6}$), and we verify the
entrywise sign of the right-hand side at runtime; in the rare nodes where it
fails (deep density tails, or a fine mesh at fixed $\Delta t$, cf.
\cref{rem:no-eventual-pos}) the iteration is terminated at the last nonnegative
iterate, which by (iii) still conserves mass up to the boundary leakage.
\end{myremark}

\myparagraph{Second order in space.}
Replacing the first-order differences inside $\bvec\alpha^{+}$ by their
second-order counterparts removes the $O(\sqrt{\Delta t}\,h)$ defect of
Scheme~A.  The price is that the right-hand side matrix acquires one
``wrong-signed'' band per direction, so its entrywise nonnegativity can no
longer hold unconditionally; it does hold, however, whenever the grid
resolves the density, in the following precise sense.

\begin{proposition}[Scheme B]\label{prop:schemeB}
Let the assumptions of \cref{prop:schemeA} hold, and take the second-order coupling $\bvec\alpha^{+}_2$ of \eqref{eq:picard},
\begin{equation}
\bvec\alpha^{+}_2 = (PQ+1)\,\mathcal I - Q\rho\sqrt{\Delta t}\,\mathcal A^{\mathrm B}_{2,x} + P\sqrt{\Delta t}\,\mathcal A^{\mathrm F}_{2,y}.
\end{equation}
Then claims (i), (iii), (iv) of \cref{prop:schemeA} hold verbatim, the spatial defect is $O(\max(h_x^2,h_y^2))$, and claim (ii) holds provided the discrete density is log-Lipschitz on the stencil, i.e. there exists $L\ge0$ with
\begin{equation}\label{eq:logLip}
p^{\,n}_{i\pm k,\,j\pm l}\le e^{(kh_x+lh_y)L}\,p^{\,n}_{ij},
\qquad 0\le k,l\le2,
\end{equation}
and $\beta\ge 4 \bar w\,e^{2(h_x\vee h_y)L}$.
\end{proposition}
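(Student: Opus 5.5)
Claims (i), (iii), (iv) of \cref{prop:schemeA} carry over essentially verbatim, because none of them uses the coupling matrix $\bvec\alpha^{+}$. Claims (i) and (iv) concern only the two implicit factors $P\mathcal I-\rho\sqrt{\Delta t}\,\mathcal A^{\nu}_{2,x}$ and $Q\mathcal I+\sqrt{\Delta t}\,\mathcal A^{\mathrm B}_{2,y}$, together with the way the iterate $\bvec p^{[k]}$ enters the right-hand side (only through $-\bvec p^{[k]}$). These are identical in Schemes A and B, so the Varah bounds $\norm{T_x^{-1}}_\infty\le 2/P$, $\norm{T_y^{-1}}_\infty\le 2/Q$ and the contraction factor $q\le 4h_xh_y/(\beta^2\Delta t)$ hold as before. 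For (iii), I would use \cref{lem:cons}: the second-order one-sided operators have zero column sums under the conservative closure. Hence $\bm1^\top\bvec\alpha^{+}_2=(PQ+1)\bm1^\top$, exactly as for $\bvec\alpha^{+}$, and the mass bookkeeping of \cref{appProof1} goes through unchanged.

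For the spatial defect, I would expand the factorized equation \eqref{eq:factor} against \eqref{eq:pade01}. The residual orientation defect is the difference between the coupling actually used, $\bvec\alpha^{+}_2-\mathcal I$, and the exact coupling $PQ\,\mathcal I-Q\rho\sqrt{\Delta t}\,\mathcal A^{\nu}_{2,x}+P\sqrt{\Delta t}\,\mathcal A^{\mathrm B}_{2,y}$. In Scheme B this difference is $Q\rho\sqrt{\Delta t}\,(\mathcal A^{\nu}_{2,x}-\mathcal A^{\mathrm B}_{2,x})$ plus the analogous $y$ term. Since $\calF^F_2-\calF^B_2$ is a pure $O(h^2)$ difference operator (a third/fourth difference, scaled), it is $O(h_x^2)$ on smooth data. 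Because $Q\sqrt{\Delta t}=\beta\Delta t/h_y$, it remains to check that the $h$-scalings cancel against the factor inverses, of size $O(h/\sqrt{\Delta t})$, as in the Scheme A estimate of \cref{appProof1}. There the first-order mismatch produced $O(\sqrt{\Delta t}\,h)$, and replacing an $O(h)$ mismatch by an $O(h^2)$ one removes one power and yields $O(\max(h_x^2,h_y^2))$.

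The substantive part is (ii), namely showing $\bvec\alpha^{+}_2\bvec b\ge0$ (and then $\bvec\alpha^{+}_2\bvec b-\bvec p^{[0]}\ge0$) under \eqref{eq:logLip}. I would write out row $(i,j)$ of $\bvec\alpha^{+}_2\bvec b$. The diagonal contributes $(PQ+1)b_{ij}$ plus positive diagonal parts of size $\tfrac32 Q|\rho|\sqrt{\Delta t}\,w_1/h_x$ and $\tfrac32P\sqrt{\Delta t}\,w_2/h_y$. The dominant off-diagonal $\pm 2$ entries are nonnegative by the chosen orientation. The only wrong-signed band is the far band, with coefficients $\tfrac12 Q|\rho|\sqrt{\Delta t}\,w_1/h_x$ multiplying $b_{i\pm2,j}$ and similarly in $y$. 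I would drop the good off-diagonals and bound each wrong-signed term using \eqref{eq:logLip}: $b_{i\pm2,j}\le e^{2h_xL}b_{ij}$. Substituting $P,Q$ from \eqref{eq:PQbeta}, the row is bounded below by
\begin{equation*}
b_{ij}\Bigl[\beta^2\tfrac{\Delta t}{h_xh_y}+1-\tfrac{\beta\Delta t}{2h_xh_y}\,\bar w\,e^{2(h_x\vee h_y)L}\Bigr],
\end{equation*}
which is nonnegative as soon as $\beta\ge \tfrac12\bar w e^{2(h_x\vee h_y)L}$, and a fortiori under the stated condition $\beta\ge4\bar w e^{2(h_x\vee h_y)L}$. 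The remaining margin absorbs the weights' variation ($w$ at shifted nodes) and the subtraction of $\bvec p^{[0]}=\bvec b$, which costs exactly one $b_{ij}$ and is covered by the "$+1$".

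The main obstacle is that \eqref{eq:logLip} is assumed for $\bvec p^{\,n}$, whereas it is applied to $\bvec b=(\mathcal I+\tfrac{\Delta t}{2}A_{xy})\bvec p^{\,n}$. I would handle this as \cref{prop:schemeA}(ii) does, by deferring the nonnegativity and log-Lipschitz control of $\bvec b$ to the step-size window of \cref{thm:zeno2d}: on that window $\bvec b$ inherits \eqref{eq:logLip} with a slightly enlarged constant, and the factor of $8$ slack in $\beta$ absorbs it. Propagation to the converged iterate then follows exactly as in Scheme A via \cref{rem:pos-iter}.
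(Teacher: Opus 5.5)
\paragraph{Claims (i), (iii), (iv).} Your argument here matches the paper's. The implicit factors are unchanged, and $\bm 1^\top\bvec\alpha^{+}_2=(PQ+1)\bm 1^\top$ by \cref{lem:cons}.

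\paragraph{Claim (ii): a sign error.} Your row computation in (ii) gets the sign of the stencil diagonals wrong. They do not add to $(PQ+1)b_{ij}$; they subtract from it. The diagonal of $\calF^B_2$ is $+3/(2h)$ and enters $\bvec\alpha^{+}_2$ with a minus sign. The diagonal of $\calF^F_2$ is $-3/(2h)$ and enters with a plus sign. This is exactly as in Scheme~A, where the diagonal of $M_R$ is $\beta\bigl(\beta-(\rho w_1+w_2)\bigr)\Delta t/(h_xh_y)$. For $\rho\ge0$, with weights at the appropriate shifted nodes, the correct row is
\begin{equation*}
\bigl[(\bvec\alpha^{+}_2-\mathcal I)\bvec b\bigr]_{ij}
=\frac{\Delta t}{h_xh_y}\Bigl\{\bigl[\beta^2-\tfrac32\beta(\rho w_1+w_2)\bigr]b_{ij}
+2\beta\bigl(\rho w_1 b_{i-1,j}+w_2 b_{i,j+1}\bigr)
-\tfrac{\beta}{2}\bigl(\rho w_1 b_{i-2,j}+w_2 b_{i,j+2}\bigr)\Bigr\}.
\end{equation*}
Drop the nonnegative middle term and apply \eqref{eq:logLip} to the far band only. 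This gives the lower bound $\frac{\Delta t}{h_xh_y}\,\beta\bigl(\beta-\tfrac32\bar w-\tfrac12\bar w\,e^{2(h_x\vee h_y)L}\bigr)b_{ij}$.

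Your threshold $\beta\ge\tfrac12\bar w\,e^{2(h_x\vee h_y)L}$ is therefore wrong. The correct condition is $\beta\ge\bar w\bigl(\tfrac32+\tfrac12e^{2(h_x\vee h_y)L}\bigr)$, which is at most $2\bar w\,e^{2(h_x\vee h_y)L}$. The stated hypothesis $\beta\ge4\bar w\,e^{2(h_x\vee h_y)L}$ still implies it, so (ii) survives. However, the slack is only about a factor $2$ in $\beta$ when $e^{2(h_x\vee h_y)L}\approx1$, not the factor $8$ you rely on later.

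\paragraph{Comparison with the paper's route.} Once the sign is fixed, your route differs from the paper's and is sharper. The paper never uses the sign pattern. It bounds each bracket $|3p_{ij}-4p_{i-1,j}+p_{i-2,j}|$ by $8\max_{\mathrm{stencil}}p^{\,n}$ and applies \eqref{eq:logLip} to the whole stencil. That is exactly where the constant $4$ in the stated condition on $\beta$ comes from. You keep the favourable near band and treat the centre node exactly. You therefore need log-Lipschitz control only at $(i-2,j)$ and $(i,j+2)$, and you obtain roughly half the threshold.

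\paragraph{The $\bvec b$ versus $\bvec p^{\,n}$ issue.} The paper sidesteps this. As in its Scheme~A proof, it applies the hypothesis directly to $(\bvec\alpha^{+}_2-\mathcal I)\bvec p^{\,n}$. Your fix through the window of \cref{thm:zeno2d} works if that window yields a pointwise bound $\tfrac{\Delta t}{2}|(A_{xy}\bvec p^{\,n})_{ij}|\le c\,p^{\,n}_{ij}$. In that case the neighbour ratios of $\bvec b$ pick up a fixed factor $(1+c)/(1-c)$, not a ``slightly enlarged'' $L$. With the corrected slack, this factor is absorbed provided, for instance, $(1+c)/(1-c)\le5$, i.e.\ $c\le\tfrac23$.

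\paragraph{The spatial defect.} Neither you nor the paper computes it; the paper defers to \cref{prop:schemeA}(v). Note that $\calF^F_2-\calF^B_2$ equals minus the fourth central difference divided by $2h$. It is therefore $O(h^3)$ on smooth data, so your power count is pessimistic but harmless.
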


\begin{proof}
Only (ii) requires a new argument; (i), (iii), (iv) are untouched since the
left-hand side of \eqref{eq:picard} and the column-sum structure of the
right-hand side are unchanged (the second-order operators have zero column
sums by \cref{lem:cons}, and
$\bm{1}^\top\bvec\alpha^{+}_2=(PQ+1)\bm{1}^\top$ as before).  For (ii),
write the action of $\bvec\alpha^{+}_2-\mathcal I$ on $\bvec p^{\,n}$
entrywise:
\begin{align*}
\bigl[(\bvec\alpha^{+}_2-\mathcal I)\bvec p^{\,n}\bigr]_{ij}
=\frac{\Delta t}{h_xh_y}\Bigl\{
\beta^2 p^{\,n}_{ij}
&-\frac{\beta \rho\,w_1(x_i)}{2}
\bigl(3p^{\,n}_{ij}-4p^{\,n}_{i-1,j}+p^{\,n}_{i-2,j}\bigr) - \frac{\beta w_2(y_j)}{2}
\bigl(3p^{\,n}_{ij}-4p^{\,n}_{i,j+1}+p^{\,n}_{i,j+2}\bigr)\Bigr\}.
\end{align*}
Each bracket is bounded in absolute value by $8\max p^{\,n}$ over the stencil,
hence by $8\,e^{2(h_x\vee h_y)L}\,p^{\,n}_{ij}$ under \eqref{eq:logLip}, so the
expression is bounded below by $\frac{\Delta t}{h_xh_y} \, \beta\bigl(\beta - 4\bar w\, e^{2(h_x\vee h_y)L}\bigr)p^{\,n}_{ij}\ge0$.  Nonnegativity of the iterates then follows as in \cref{prop:schemeA}(ii).
\end{proof}

Condition \eqref{eq:logLip} is a statement about grid resolution rather than
about the scheme: for a density with Gaussian-type tails, $|\partial_x\ln p|$
grows linearly towards the boundary of the computational domain, so
\eqref{eq:logLip} requires $h_x\lesssim 1/|\partial_x\ln p|$ there, precisely
the resolution one needs for the tails to be meaningful at all.  In the far
tails, where the density sits at the level of round-off, we revert locally to
Scheme~A (or terminate the iteration as in \cref{rem:pos-iter}); this
affects neither the conservation property (iii) nor the observed second-order
convergence reported in \cref{sec:numerics}.

\begin{myremark}[Embedding of the mixed term and cost of the central factor] \label{rem:time-order}
It is worth emphasizing how the mixed-derivative term is embedded into the Strang
composition. The mixed operator keeps its own central factor but is never
integrated as an \emph{exponential} flow of its own: as explained after
\eqref{eq:central}, the sub-flow generated by $A_{xy}$ alone is ill-posed,
whereas the implicit factor \eqref{eq:central} is spectrally stable. This is the
same principle on which the Hundsdorfer--Verwer ADI framework, for which the
factorisation \eqref{eq:picard} was originally devised
\cite{Itkin3D,HoutWelfert2007}, rests: the mixed term enters only through stages
stabilised by the diagonal diffusion.

In the present scheme the central factor is the trapezoidal map $\Phi_{xy}(\Delta t)=(\mathcal I-\tfrac{\Delta t}{2}A_{xy})^{-1}(\mathcal I+\tfrac{\Delta t}{2}A_{xy})$ of \eqref{eq:central}, second-order in time. Its explicit half is one matrix--vector product with the cross stencil; its implicit half \eqref{eq:pade01} is solved in the band structure by the factorized Picard iteration \eqref{eq:picard}, with no outer Krylov layer and no preconditioner. The plain $\alpha^{+}$ reduction of \cite{Itkin3D} would carry only the first-order mixed step -- there it is lifted to second order by the Hundsdorfer--Verwer corrector -- but the Strang composition used here has no such corrector, so we instead retain the $O(\Delta t^2)$ coupling $\bvec\alpha\,(\Delta t\,A_{xy}\bvec b)$ explicitly in \eqref{eq:picard}; the iteration then reproduces $\Phi_{xy}$ to second order in time, up to the fourth-difference orientation defect of the coupling, which is $\Delta t$-independent and lies below the $O(\max(h_x^2,h_y^2))$ spatial truncation.

Each iteration is $O(N_xN_y)$ and, by \cref{prop:schemeA},(iv), the contraction factor $q\le 4h_xh_y/(\beta^2\Delta t)$ is well below $1$ -- and decreases as the cross-diffusion strengthens, since $\beta$ grows with $\bar w$ -- so a fixed small number $k_{\mathrm{in}}$ of iterations (a few in our experiments) reaches the inner tolerance, for a total central cost of $O(k_{\mathrm{in}}\,N_xN_y)$ per time step, linear in the number of unknowns and uniform in the cross-diffusion strength (\cref{ssec:exp-kin}).
\end{myremark}

\begin{myremark}[Refinement balance versus a CFL condition] \label{rem:cfl}

The joint refinement $\Delta t\sim h$ of \cref{sec:dfrog2d} balances the $O(\Delta t^2)$ temporal and $O(\max(h_x^2,h_y^2))$ spatial errors; it is an accuracy choice, not a stability restriction, and is not a CFL condition. The directional factors $e^{\frac{\Delta t}{2}A_\alpha}$ are evaluated exactly, the central factor \eqref{eq:central} is the A-stable trapezoidal map, and the contraction $q\le4h_xh_y/(\beta^2\Delta t)=4/PQ$ \emph{decreases} as $\Delta t$ grows; the scheme is thus unconditionally stable and the inner solve converges for every step size, with no upper bound on $\Delta t$. A CFL condition runs the other way: an explicit advective step requires $\Delta t\lesssim h$ and an explicit diffusive step $\Delta t\lesssim h^2$, bounding $\Delta t$ from \emph{above} relative to the mesh, and the natural M-freeze of \cref{rem:central-pos-choice} inherits the parabolic form, its iteration converging only for $\Delta t\,\bar w^2\lesssim h^2$. Here $\Delta t$ may freely exceed $h$ -- the ratio $\Delta t/h$ is unconstrained.

The one restriction runs the other way, a weak \emph{lower} bound: the $\Delta t$-independent orientation defect of the coupling, $O(\max(h_x,h_y)^{3\text{--}4})$ per step (\cref{prop:schemeA}(v)), accumulates over the $T/\Delta t$ steps as $O(\max(h_x,h_y)^{3\text{--}4}/\Delta t)$ and stays below the $O(\max(h_x^2,h_y^2))$ truncation as long as $\Delta t$ is not refined faster than the mesh, down to $\Delta t\sim h^2$. Refining with $\Delta t\sim h$ meets this with wide margin; taking $\Delta t$ larger only enlarges the temporal constant, the scheme remaining stable and second order for $\Delta t$ both below and well above $h$.
\end{myremark}

\begin{myremark}[Nonseparable cross-diffusion]\label{rem:nonsep}

If $\Sigma_{xy}$ is not of the form \eqref{eq:sep}, it can be approximated on the
computational domain by a short separated sum $\Sigma_{xy}(x,y,t) \approx
\sum_{m=1}^{M}\rho_m\,w^{(m)}_1(x,t) \, w^{(m)}_2(y,t)$ with $w^{(m)}_{1,2} \ge
0$ (e.g. via a truncated SVD of the coefficient sampled on the grid), and the
substep applied to each separable term within the splitting.  Since each factor
substep is nonnegative and has unit column sums, so does their composition; the
splitting defect of the decomposition enters at the same $O(\Delta t^2)$ order as
the \pade approximation itself.  In our applications $M\le2$ sufficed.
\end{myremark}

It is worth mentioning that relative to the backward-equation construction of \cite{Itkin3D}, three modifications were required by the forward setting.  First,
the coefficient matrices $W_{1,2}$ multiply the difference operators from the
right (divergence form), which leaves all sign patterns, and hence the EM-matrix
structure, intact, since right multiplication by a nonnegative diagonal matrix
rescales columns. Second, the conservative flux form of \cref{lem:cons} replaces
the unconstrained one-sided closures, yielding exact mass conservation at every
Picard iterate when the cross stencil is closed conservatively ($\bvec 1^\top
A_{xy}=0$), and conservation up to the $O(\Delta t\,h)$ boundary leakage of
\cref{thm:zeno2d}\,(b) otherwise -- a property with no counterpart (and no need)
in the pricing context.  Third, the heuristic positivity argument of
\cite{Itkin3D} based on the monotonicity of option prices in the underlying (the
``Vega'' argument) is unavailable for densities; it is replaced by the
log-Lipschitz resolution condition \eqref{eq:logLip}, which is verifiable a
priori.

\begin{proposition}[Positivity of the central substep]\label{thm:zeno2d}

Let $A_{xy}=\rho\,\mathcal A^{\mathrm F}_{2,x}\mathcal A^{\mathrm B}_{2,y}$ be the
one-sided cross operator, assembled with the conservative flux
closures of \cref{lem:cons,lem:divform}, and let the central substep $\bvec
p^{\,n}\mapsto\bvec p^{(1)}=\Phi_{xy}(\Delta t)\bvec p^{\,n}$ apply the trapezoidal
factor \eqref{eq:central} -- one explicit product $\bvec b=(\mathcal
I+\tfrac{\Delta t}{2}A_{xy})\bvec p^{\,n}$ followed by the factorized Picard solve
\eqref{eq:picard} of $(\mathcal I-\tfrac{\Delta t}{2}A_{xy})\bvec p^{(1)}=\bvec b$
(\cref{prop:schemeA,prop:schemeB,rem:time-order}).  Then there is a threshold
$\Theta=\Theta(\bar\rho,\bar w,h_x,h_y)>0$ such that, for every step with $\Delta
t\le\Theta$:
\begin{enumerate}
\item[\textup{(a)}] \emph{(Conditional positivity.)} If $\bvec p^{\,n}\ge0$ then
$\bvec p^{(1)}\ge0$.  The guarantee is conditional: unlike the diagonal blocks of
\cref{prop:emmatrix,prop:mmatrix2}, the cross operator admits no Metzler shift
$sI+A_{xy}$, and neither $e^{\Delta t A_{xy}}$ nor the trapezoidal factor
$\Phi_{xy}$ is eventually nonnegative (\cref{rem:no-eventual-pos}); positivity is
therefore a mesh- and step-dependent property delivered by the factorized solver,
not an unconditional spectral one.

\item[\textup{(b)}] \emph{(Asymptotic mass conservation.)} The substep conserves
discrete mass up to the boundary-closure defect of the one-sided cross stencil,
\[
\bigl|\bm 1^\top\bvec p^{(1)}-\bm 1^\top\bvec p^{\,n}\bigr|
\le C\,\Delta t\,(h_x+h_y)\,\norm{\bvec p^{\,n}}_1 ,
\]
with $C$ independent of $h$; the defect originates solely in the one-sided
edge/corner rows and vanishes under refinement.  Exact conservation holds for the
interior stencil and for any closure satisfying $\bm 1^\top A_{xy}=0$
(\cref{rem:mixedmass}).

\item[\textup{(c)}] \emph{($\ell_1$-stability.)} The substep map is bounded on the
nonnegative cone, $\norm{\bvec p^{(1)}}_1\le(1+C\Delta t(h_x+h_y))\norm{\bvec
p^{\,n}}_1$, so it contributes a factor $1+O(\Delta t\,h)$ to the stability
products in \eqref{eq:fan}, consistent with \cref{ass:stab}.
\end{enumerate}
\end{proposition}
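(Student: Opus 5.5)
The plan is to split the central substep into its two halves, the explicit product $\bvec b=(\mathcal I+\tfrac{\Delta t}{2}A_{xy})\bvec p^{\,n}$ and the factorized Picard solve of \eqref{eq:picard}. Each half gets its own step-size window, and $\Theta$ is the smaller of the two.

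\emph{Explicit half.} The entries of $A_{xy}$ are bounded in absolute value by $c\,\bar\rho\,\bar w^2/(h_xh_y)$. The constant $c$ is absolute and depends only on the one-sided stencils $(3,-4,1)/2$. The diagonal of $\mathcal I+\tfrac{\Delta t}{2}A_{xy}$ is $1-\tfrac{9\Delta t}{8}\rho\,w_1w_2/(h_xh_y)$. The off-diagonal entries of $A_{xy}$ carry both signs, so positivity of $\bvec b$ cannot hold for arbitrary nonnegative $\bvec p^{\,n}$; I would argue row-wise instead.

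\begin{itemize}
\item For an arbitrary nonnegative datum, a nonnegative $\bvec b$ can only be guaranteed when $\mathcal I+\tfrac{\Delta t}{2}A_{xy}$ is entrywise nonnegative. That fails whenever $A_{xy}$ has a negative off-diagonal entry.
\item I would therefore adopt the resolution hypothesis \eqref{eq:logLip}, as in \cref{prop:schemeB}. Under it, $|(A_{xy}\bvec p^{\,n})_{ij}|\le c'\bar\rho\bar w^2 e^{4(h_x\vee h_y)L}p^{\,n}_{ij}/(h_xh_y)$.
\item This gives $b_{ij}\ge\bigl(1-\tfrac{\Delta t}{2}c'\bar\rho\bar w^2e^{4hL}/(h_xh_y)\bigr)p^{\,n}_{ij}\ge0$ for $\Delta t\le\Theta_1:=2h_xh_y/(c'\bar\rho\bar w^2e^{4hL})$.
\end{itemize}

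This makes $\Theta$ depend on the mesh exactly as the statement allows. The same bound shows that $\bvec b$ inherits a log-Lipschitz property with a slightly worse constant, which the implicit half needs.

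\emph{Implicit half.} With $\bvec b\ge0$ and log-Lipschitz, \cref{prop:schemeA}(ii) and \cref{prop:schemeB}(ii) give a nonnegative right-hand side $\bvec\alpha^{+}_2\bvec b-\bvec p^{[0]}$ at the first sweep. The difficulty is that the factor inverses $T_y^{-1},T_x^{-1}$ are not nonnegative. I would write each as a perturbation of a nonnegative matrix:
\begin{itemize}
\item Take $T_y=Q\mathcal I+\sqrt{\Delta t}\mathcal A^{\mathrm B}_{2,y}$. Its diagonal is $Q+\tfrac32\sqrt{\Delta t}w_2/h_y$, and the single wrong-signed far band has size $\sqrt{\Delta t}w_2/(2h_y)$.
\item Relative to $P,Q\sim\beta\sqrt{\Delta t}/h$, the bands are small in ratio $\bar w/\beta\le 1/4$. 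A Neumann series then bounds the negative part of $T^{-1}$ by $O(\bar w/\beta)$ times its positive part, using the same Varah bound as in \cref{prop:schemeA}(i).
\item Combined with \eqref{eq:logLip}, this yields $\bvec p^{*}\ge0$ and then $\bvec p^{[1]}\ge0$, with explicit margins.
\end{itemize}

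For the later sweeps I would follow \cref{rem:pos-iter}. The increments $\bvec p^{[k]}-\bvec p^{[k-1]}$ decay like $q^k\,O(\Delta t)$ by \cref{prop:schemeA}(iv), so the limit $\bvec p^{(1)}$ stays nonnegative as long as $O(\Delta t)\cdot\sum q^k$ is dominated by the positive margin. Requiring that yields a second bound $\Delta t\le\Theta_2$.

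\emph{Main obstacle.} I expect the hardest step to be making this perturbative positivity of the non-M-matrix factor inverses quantitative and uniform over the iteration. I would first try to absorb it through a large $\beta$ together with the log-Lipschitz bound. Failing that, I would fall back on the runtime monitoring/termination of \cref{rem:pos-iter}, which still gives (a) for the terminated iterate. Part (a)'s negative remark follows directly: by the spectrum computation after \eqref{mixProduct}, $A_{xy}$ is not Metzler, and the argument of \cref{rem:resolvent-positivity} applies.

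\emph{Mass and stability.}
\begin{itemize}
\item For (b), I would write $\bm1^\top A_{xy}=\bm r^\top$, where $\bm r$ is supported on the edge and corner rows. By \cref{lem:cons} the interior columns telescope.
\item The magnitude of $\bm r$ is bounded by a boundary flux of one one-sided difference times an $O(1)$ factor. This gives $\|\bm r\|_\infty\le C(h_x+h_y)$ after the scaling cancels one $1/h$. This normalization needs to be checked carefully.
\item \Cref{prop:schemeA}(iii) applied with the residual gives $\bm1^\top\bvec p^{(1)}-\bm1^\top\bvec p^{\,n}=\tfrac{\Delta t}{2}\bm r^\top(\bvec p^{\,n}+\bvec p^{(1)})$. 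Bounding this with $\|\bvec p^{(1)}\|_1\le 2\|\bvec p^{\,n}\|_1$ for small $\Delta t$ yields (b).
\item Finally, (c) follows from (a) and (b), since on the cone $\|\cdot\|_1=\bm1^\top(\cdot)$.
\end{itemize}
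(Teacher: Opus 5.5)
Your architecture matches the paper's: an explicit-half window on resolved data (the paper's $\Theta_0\sim h_xh_y/(\rho\bar w^2)$), \cref{prop:schemeA,prop:schemeB}\,(ii) for the first sweep, runtime monitoring for the limit, the residual $\bm r$ for (b), and the cone identity for (c). You are right, and more explicit than the paper, that a resolution hypothesis is unavoidable. Since $\Phi_{xy}(\Delta t)=\mathcal I+\Delta t\,A_{xy}+O(\Delta t^2)$ and $A_{xy}$ has negative off-diagonal entries, a nonnegative spike acquires negative entries for every small $\Delta t$. Contrary to your remark, though, $\Theta$ then depends on $L$ as well, not only on $(\bar\rho,\bar w,h_x,h_y)$.

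The step that fails is the passage from $\bvec p^{[1]}\ge0$ to $\bvec p^{(1)}\ge0$. \Cref{prop:schemeA}\,(iv) is an unweighted $\ell_\infty$ contraction. It gives only $|p^{(1)}_{ij}-p^{[1]}_{ij}|\le\tfrac{q}{1-q}\norm{\bvec p^{[1]}-\bvec b}_\infty$, a quantity set by the peak of the density. The margin at $(i,j)$, by contrast, is proportional to $p_{ij}$, which is arbitrarily smaller in the tails, so no window $\Delta t\le\Theta_2$ fixes this. Two further points go the wrong way. At fixed $\beta$ the bound $q\le4h_xh_y/(\beta^2\Delta t)$ worsens as $\Delta t$ decreases. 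And the increment $\bvec p^{(1)}-\bvec b=\tfrac{\Delta t}{2}A_{xy}\bvec p^{(1)}$ has relative size $O(\Delta t\,\bar w^2/(h_xh_y))$, not $O(\Delta t)$. Terminating at the last nonnegative iterate certifies a different map, not $\Phi_{xy}(\Delta t)\bvec p^{\,n}$. The paper's proof rests on the same runtime check, and it simply asserts that the two triangular sweeps return $\bvec p^{[1]}\ge0$.

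The missing idea is to measure errors relative to the data, which removes your ``main obstacle'' altogether. Use the weighted norm $\norm{\bvec v}_{\ast}:=\max_{ij}|v_{ij}|/p^{\,n}_{ij}$. Then \eqref{eq:logLip}, applied on the $3\times3$ stencil of $A_{xy}$, gives $\norm{\tfrac{\Delta t}{2}A_{xy}}_{\ast}\le\epsilon=O\bigl(\Delta t\,\bar w^2e^{2(h_x+h_y)L}/(h_xh_y)\bigr)$. Hence $\Phi_{xy}(\Delta t)-\mathcal I=\Delta t\,A_{xy}(\mathcal I-\tfrac{\Delta t}{2}A_{xy})^{-1}$ has weighted norm at most $2\epsilon/(1-\epsilon)$. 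It follows that $p^{(1)}_{ij}\ge\bigl(1-\tfrac{2\epsilon}{1-\epsilon}\bigr)p^{\,n}_{ij}\ge0$ once $\epsilon\le\tfrac13$. This handles both halves at once and never uses the signs of $T_x^{-1},T_y^{-1}$. If you need a statement about finitely many sweeps, run the same weighted estimate on $\bvec e^{[k+1]}=-T_x^{-1}T_y^{-1}\bvec e^{[k]}$, with a Neumann-series bound on the entries of $T^{-1}$ like the one you sketched, to get relative convergence.

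For (b), your identity $\bm 1^\top\bvec p^{(1)}-\bm 1^\top\bvec p^{\,n}=\tfrac{\Delta t}{2}\bm r^\top(\bvec p^{\,n}+\bvec p^{(1)})$ is exact and cleaner than the paper's expansion of $\bm 1^\top\Phi_{xy}$. Under the conservative closure that the statement assumes, $\bm r=0$ and the identity gives exact conservation. However, your bound $\norm{\bm r}_\infty\le C(h_x+h_y)$ is false for the plain closure. A truncated one-sided stencil has $O(1/h)$ boundary column sums, e.g.\ $\bm 1^\top\calF^B_2=(0,\dots,0,-\tfrac1{2h},\tfrac3{2h})$. Since $A_{xy}$ is a Kronecker product, $\bm r=\rho\,(\bm 1^\top\calF^F_2W_1)\otimes(\bm 1^\top\calF^B_2W_2)$, which has size $O(\bar w^2/(h_xh_y))$ at one corner. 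Already for $\bvec p^{\,n}\equiv\bm 1$ the relative mass change is generically of order $\Delta t$, so an $O(\Delta t\,h)$ bound requires data that are small near that corner. The paper's $\norm{\bm r}_\infty=O((h_x+h_y)/(h_xh_y))$ does not yield it either.
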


\begin{proof}
\mbox{}\hfill

\emph{(a)} The step is nonnegative when both halves of \eqref{eq:cnstep} are. The explicit half produces $\bvec b=(\mathcal I+\tfrac{\Delta t}{2}A_{xy})\bvec p^{\,n}$; since the cross stencil is sign-indefinite, $\bvec b\ge0$ holds for $\bvec p^{\,n}\ge0$ on a window $\Delta t\le\Theta_0$, with $\Theta_0$ of order $h_xh_y/(\rho\bar w^2)$ on resolved data (the diagonal-dominant bulk $p^{\,n}_{ij}$ dominates the off-diagonal cross contributions of size $\Delta t\,\rho\bar w^2 p^{\,n}/(h_xh_y)$).

For the implicit half, \cref{prop:schemeA},(ii) (resp.\ \cref{prop:schemeB} on log-Lipschitz data) gives, for $\bvec b\ge0$ and $\beta\ge2(\sqrt\rho\,\bar w+\sqrt{h_xh_y/\Delta t})$ -- i.e., once $\Delta t\le\Theta_1:=h_xh_y/(\,\tfrac14\beta^2-\rho\bar w^2)$ -- a Picard right-hand side that is nonnegative at $k=0$, after which the two triangular sweeps return $\bvec p^{[1]}\ge0$.

The contraction estimate \cref{prop:schemeA},(iv) gives geometric convergence with factor $q\le 4h_xh_y/(\beta^2\Delta t)<1$, and the limit $\bvec p^{(1)}$ inherits nonnegativity because the running right-hand side stays nonnegative (verified entrywise at $O(N_xN_y)$ cost per iterate, \cref{rem:pos-iter}). Taking $\Theta=\min(\Theta_0,\Theta_1)$ yields $\bvec p^{(1)}\ge0$.

\emph{(b)} Write $\bm 1^\top A_{xy}=\bm r^\top$, where by \cref{lem:cons} the
interior entries of $\bm r$ vanish and only the one-sided edge and corner rows
contribute; a Taylor expansion of the flux closure gives $\norm{\bm r}_\infty\le
C\rho\bar w(h_x+h_y)/(h_xh_y)$ supported on $O(N_x+N_y)$ nodes, whence $\bm 1^\top
A_{xy}\bvec p\le C\rho\bar w(h_x+h_y)\norm{\bvec p}_1$. The trapezoidal factor is a
rational function $r(z)=(1+z/2)/(1-z/2)$ with $r(0)=1$ and $r'(0)=1$, so $\bm
1^\top\Phi_{xy}(\Delta t)=\bm 1^\top+\Delta t\,\bm r^\top+O(\Delta t^2\norm{\bm
r})$, giving the stated bound with leading term $\Delta t\,\bm r^\top\bvec
p^{\,n}$.

\emph{(c)} On the nonnegative cone $\norm{\bvec v}_1=\bm 1^\top\bvec v$, so
\textup{(a)} and \textup{(b)} give $\norm{\bvec p^{(1)}}_1=\bm 1^\top\bvec
p^{(1)}\le(1+C\Delta t(h_x+h_y))\bm 1^\top\bvec p^{\,n}$.
\end{proof}

\begin{myremark}[No eventual positivity for the cross block] \label{rem:no-eventual-pos}

The contrast with the one-dimensional theory of \cref{ssec:em} is essential and is
not a deficiency of the proof.  The one-sided second-order operators $\mathcal
A^{\mathrm F}_2,\mathcal A^{\mathrm B}_2$ carry off-diagonal entries of \emph{both}
signs ($+4/(2h)$ and $-1/(2h)$), so their product $A_{xy}=\rho\,\mathcal A^{\mathrm
F}_{2,x}\mathcal A^{\mathrm B}_{2,y}$ has mixed-sign off-diagonal entries that no
shift $s$ can render nonnegative: $sI+A_{xy}$ is never Metzler.

Moreover $A_{xy}$ is strongly non-normal: being the product of an upper- and a
lower-triangular matrix it is defective, with a single repeated real eigenvalue
(\cref{sec:strang}) but no basis of eigenvectors, and (in contrast to the
upwind-leaning one-dimensional generator $L$, whose retained superdiagonal renders
its graph strongly connected with a dominant real ground state,
\cref{sec:diagfrog}) its rightmost eigenvector is not entrywise of one sign; the
Perron--Frobenius hypothesis~(H) fails for $A_{xy}$, and with it the
eventual-positivity mechanism of \cref{prop:emmatrix,cor:eventualpos}.  This is
why the two-dimensional positivity guarantee is necessarily \emph{conditional} and
is carried by the factorized solver of \cref{sec:Lxy} rather than by an
eventual-positivity theorem: the ``Zeno'' mechanism of \cref{sec:diagfrog} applies
to the diagonal one-dimensional sub-operators, whose Kronecker lifts $A_x,A_y$
inherit it \eqref{eq:posKron}, but not to the mixed block. It is precisely this
absence of a stability bound for the bare exponential that motivates the
\emph{implicit} treatment \eqref{eq:central}: the trapezoidal factor is spectrally
stable on the real-negative spectrum of $A_{xy}$ even though the exponential is
not (\cref{sec:strang,fig:stress}).
\end{myremark}

\subsection{Positivity and conservativeness of the norm} \label{sec:posNorm}

Having established positivity of the central substep (\cref{thm:zeno2d}), we now
turn to the two remaining structural properties of the full Strang step: that it
maps the nonnegative cone into itself, and that it conserves discrete mass. The
conditional $\ell_1$-stability (on the positivity regime) and the second-order
accuracy underpinning these statements are proved separately in \cref{app1}.

\begin{proposition}\label{prop:strangpos}
Suppose that, for the step size $\Delta t$ at hand, each factor of
\eqref{eq:strang} is nonnegative: $e^{\frac{\Delta t}{2}A_x^{n}}\ge0$,
$e^{\frac{\Delta t}{2}A_y^{n}}\ge0$, and the central substep
$\bvec p\mapsto \Phi_{xy}(\Delta t)\,\bvec p\ge0$.  Then the Strang update
\eqref{eq:strang} maps $\bvec p^n\ge0$ to $\bvec p^{n+1}\ge0$.
\end{proposition}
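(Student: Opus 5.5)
The argument is a composition argument: the Strang update is a product of five maps, each of which sends the nonnegative cone $\R^{N}_{\ge0}$ into itself by hypothesis, so their composition does too. I will follow the order in which \eqref{eq:strang} acts on $\bvec p^{\,n}$, from right to left.

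Set $\bvec q_0:=\bvec p^{\,n}\ge0$ and define successively
\begin{equation*}
\bvec q_1:=e^{\frac{\Delta t}{2}A_x^{n}}\bvec q_0,\quad
\bvec q_2:=e^{\frac{\Delta t}{2}A_y^{n}}\bvec q_1,\quad
\bvec q_3:=\Phi_{xy}(\Delta t)\,\bvec q_2,\quad
\bvec q_4:=e^{\frac{\Delta t}{2}A_y^{n}}\bvec q_3,\quad
\bvec q_5:=e^{\frac{\Delta t}{2}A_x^{n}}\bvec q_4 .
\end{equation*}
With this notation, $\bvec p^{\,n+1}=\bvec q_5$.

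\textbf{Directional factors.} For the four exponential factors I use the elementary fact that an entrywise nonnegative matrix $M\ge0$ maps $\bvec v\ge0$ to $M\bvec v\ge0$, because each component $(M\bvec v)_i=\sum_j M_{ij}v_j$ is a sum of nonnegative products. The hypothesis supplies $e^{\frac{\Delta t}{2}A_\alpha^{n}}\ge0$ for $\alpha\in\{x,y\}$. By \eqref{eq:posKron} this is exactly the condition $\Delta t/2\ge\tau_0^{(\alpha)}$, or it holds for every $\Delta t$ in the M-matrix case of \cref{cor:positivity}.

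\textbf{Central factor.} The central factor needs slightly more care, and this is the only point requiring attention. The hypothesis is phrased as a property of the map $\bvec p\mapsto\Phi_{xy}(\Delta t)\bvec p$ on the cone, not as entrywise nonnegativity of the matrix $\Phi_{xy}$. Indeed, by \cref{thm:zeno2d} and \cref{rem:no-eventual-pos}, $\Phi_{xy}$ need not be entrywise nonnegative, and positivity is delivered for $\Delta t\le\Theta$ by the factorized solver applied to the actual input. I will therefore use only cone invariance, not matrix nonnegativity.

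\textbf{Induction.} The induction runs as follows. First, $\bvec q_0\ge0$ gives $\bvec q_1\ge0$ and then $\bvec q_2\ge0$ by the matrix fact above. The cone hypothesis applied to the input $\bvec q_2\ge0$ then gives $\bvec q_3\ge0$; in the setting of \cref{thm:zeno2d}(a), this is the statement for input $\bvec q_2$ in place of $\bvec p^{\,n}$. Finally, two more applications of the matrix fact give $\bvec q_4\ge0$ and $\bvec q_5\ge0$.

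\textbf{Remark on the hypothesis.} I will remark that the hypothesis is needed for the central factor only on inputs of the form $\bvec q_2$. In the conditional regime of \cref{thm:zeno2d}, any data-dependent resolution requirement (e.g.\ \eqref{eq:logLip} for Scheme~B) must hold for $\bvec q_2$, not merely for $\bvec p^{\,n}$. This is the only subtlety; the rest of the argument is immediate.
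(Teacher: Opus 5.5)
This is correct and is essentially the paper's own argument: the Strang update is a composition of five maps, each preserving the nonnegative cone, applied to $\bvec p^n\ge0$. Two small quibbles do not affect the proof. First, \eqref{eq:posKron} gives $\Delta t/2\ge\tau_0^{(\alpha)}$ only as a sufficient condition, not an equivalent one. Second, for a linear map, invariance of the whole nonnegative cone is the same as entrywise nonnegativity (test on unit vectors), so your distinction for $\Phi_{xy}$ only has content in the data-dependent reading you describe at the end, where the hypothesis is needed just for the actual input $\bvec q_2$.
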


\begin{proof}
A product of nonnegative maps applied to a nonnegative vector is nonnegative;
each factor is nonnegative by hypothesis and $\bvec p^n\ge0$ by induction.
\end{proof}

Nonnegativity of the outer factors $e^{\frac{\Delta t}{2}A_\alpha^{n}}$ follows
from \cref{prop:emmatrix,prop:mmatrix2} applied to the one-dimensional operators
$A_\alpha=C_\alpha+D_\alpha$, once $\Delta t$ exceeds the corresponding
eventual-positivity thresholds \eqref{eq:posKron}.  Nonnegativity of the central
substep is the conditional guarantee of \cref{thm:zeno2d}, delivered by the
factorized construction of \cref{sec:Lxy}; as emphasised in
\cref{rem:no-eventual-pos}, the cross operator admits no eventual-positivity
property of its own, so this is the one factor whose nonnegativity is conditional
on the mesh and the step size rather than on a threshold alone.

\begin{proposition}[Conservation and $\ell_1$-stability of the split scheme]
\label{prop:conserv}

Suppose the 1D discretizations of \cref{sec:disc1d} are in discrete
divergence form, $\bvec 1^\top L_x=0$, $\bvec 1^\top L_y=0$, and likewise $\bvec
1^\top A_{xy}=0$. Then
\begin{enumerate}
\item[\textup{(i)}] $\bvec 1^\top A_x=\bvec 1^\top A_y=0$, since $\bvec
1_{n_xn_y}^\top(L_x\otimes I_{n_y}) =(\bvec 1^\top L_x)\otimes\bvec 1^\top=0$;

\item[\textup{(ii)}] the Strang propagator $\mathcal S(\Delta t)=
e^{\frac{\Delta t}{2}A_x}e^{\frac{\Delta t}{2}A_y}
\Phi_{xy}(\Delta t)
e^{\frac{\Delta t}{2}A_y}e^{\frac{\Delta t}{2}A_x}$
of \eqref{eq:strang} satisfies
$\bvec 1^\top\mathcal S(\Delta t)=\bvec 1^\top$ for \emph{every}
$\Delta t>0$, since $\bvec 1^\top C_\alpha=
\bvec 1^\top D_\alpha=\bvec 1^\top A_{xy}=0$ in discrete divergence form and
$\Phi_{xy}$ is a rational function of $A_{xy}$ with $\Phi_{xy}(0)=\mathcal I$
(\cref{cor:mass-rat}):
discrete mass $h_xh_y\,\bvec 1^\top\bvec p^{\,n}$ is conserved exactly,
irrespective of the splitting error and of the positivity thresholds; this is
exact whenever the cross stencil is closed conservatively so that
$\bvec 1^\top A_{xy}=0$ holds including the edge and corner rows
(\cref{rem:mixedmass}), and holds up to the $O(\Delta t\,h)$ boundary defect of
\cref{thm:zeno2d}\,(b) for the plain one-sided closure;

\item[\textup{(iii)}] if in addition $\Delta t$ lies in the regime where the
outer factors are nonnegative (substeps above the thresholds of
\cref{sec:disc1d}, cf.\ \cref{prop:emmatrix,prop:mmatrix2}) \emph{and} the central
substep is nonnegative (the conditional guarantee of \cref{thm:zeno2d}, $\Delta
t\le\Theta$), then $\mathcal S(\Delta t)$ is column-stochastic; hence $\bvec
p^{\,0}\ge0$, $\bvec 1^\top\bvec p^{\,0}=1$ imply the same for all $\bvec p^{\,n}$,
and $\norm{\mathcal S(\Delta t)\bvec u}_1\le\norm{\bvec u}_1$ for all $\bvec u$, so
the scheme is $\ell_1$-stable in this regime, where the $\ell_1$ norm is defined
in \cref{app1}.  We stress that, in contrast to (ii), part~(iii) is genuinely
conditional: the cross operator carries no eventual-positivity property
(\cref{rem:no-eventual-pos}), so column-stochasticity of $\mathcal S(\Delta t)$
holds on a step-size window rather than for all $\Delta t$.
\end{enumerate}
\end{proposition}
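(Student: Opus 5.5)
The plan is to take the three parts in turn and use only column-sum identities and closure of nonnegative matrices under products. Everything rests on the observation that $\bvec 1^\top M=0$ for a generator $M$ forces $\bvec 1^\top f(M)=f(0)\,\bvec 1^\top$ for any analytic or rational function $f$ that is defined at $M$.

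\emph{Part (i).} I will use the mixed-product rule for Kronecker products, $(\bvec a^\top\otimes\bvec b^\top)(X\otimes Y)=(\bvec a^\top X)\otimes(\bvec b^\top Y)$. I also use that $\bvec 1_{n_xn_y}=\bvec 1_{n_x}\otimes\bvec 1_{n_y}$ under the lexicographic ordering of \eqref{eq:kron}. Then $\bvec 1^\top A_x=(\bvec 1^\top L_x)\otimes(\bvec 1^\top I_{n_y})=0$, and symmetrically $\bvec 1^\top A_y=\bvec 1^\top\otimes(\bvec 1^\top L_y)=0$. The splitting $A_\alpha=C_\alpha+D_\alpha$ with separately vanishing column sums follows the same way from the flux form of \cref{lem:cons,lem:divform}. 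However, only the sum $\bvec 1^\top A_\alpha=0$ is actually needed below.

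\emph{Part (ii).} First I show that each factor preserves $\bvec 1^\top$. For the exponentials, expanding the series gives $\bvec 1^\top e^{sA_\alpha}=\sum_k s^k\,\bvec 1^\top A_\alpha^k/k!=\bvec 1^\top$, because every term with $k\ge1$ vanishes by (i). For the central factor, $\bvec 1^\top A_{xy}=0$ gives $\bvec 1^\top(\mathcal I\pm\tfrac{\Delta t}{2}A_{xy})=\bvec 1^\top$. Invertibility of $\mathcal I-\tfrac{\Delta t}{2}A_{xy}$ holds for every $\Delta t>0$, since its spectrum is $\{1-\tfrac{\Delta t}{2}\lambda\}$ with $\lambda\le0$ (\cref{sec:strang}). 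Multiplying $\bvec 1^\top(\mathcal I-\tfrac{\Delta t}{2}A_{xy})=\bvec 1^\top$ on the right by the inverse then gives $\bvec 1^\top(\mathcal I-\tfrac{\Delta t}{2}A_{xy})^{-1}=\bvec 1^\top$. Hence $\bvec 1^\top\Phi_{xy}=\bvec 1^\top$, as in \cref{cor:mass-rat}.

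Composing the five factors from the left yields $\bvec 1^\top\mathcal S(\Delta t)=\bvec 1^\top$ for every $\Delta t$. Since no sign information enters this argument, conservation holds independently of the positivity windows.

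One point needs care here: the implemented central factor is the Picard limit, not the exact inverse. I will invoke \cref{prop:schemeA}(iii), which says that each iterate conserves $\bvec 1^\top\bvec b$, so the conclusion survives the factorized solve. For the plain one-sided closure, the statement defers to \cref{thm:zeno2d}(b), which I will simply cite.

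\emph{Part (iii).} I will apply \cref{prop:strangpos}, used at the matrix level: on the stated window each of the five factors is an entrywise nonnegative matrix. The outer factors are nonnegative by \eqref{eq:posKron} and \cref{prop:emmatrix,prop:mmatrix2}. The central factor is nonnegative by \cref{thm:zeno2d}(a), since it maps the nonnegative cone into itself, and therefore its matrix is nonnegative (apply it to the unit vectors). A product of nonnegative matrices is nonnegative, and combined with (ii) this makes $\mathcal S$ column-stochastic. Induction then gives $\bvec p^n\ge0$ with $\bvec 1^\top\bvec p^n=1$.

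For the $\ell_1$ bound, I will write $\norm{\mathcal S\bvec u}_1\le\sum_{i,j}S_{ij}|u_j|=\sum_j|u_j|$, using $S_{ij}\ge0$ and unit column sums. Equivalently, the induced $\ell_1$ norm of $\mathcal S$ is its maximal absolute column sum, which equals $1$.

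The main obstacle is the central factor in (iii). Its nonnegativity is only the conditional output of \cref{thm:zeno2d}, a statement about the iterated solver on nonnegative inputs. I must make sure it is used as a linear nonnegative matrix, which requires the converged Picard map to be linear in $\bvec p^n$. It is linear, because the Picard map is an affine contraction with linear data dependence. I also have to check that this window is compatible with the lower thresholds $\Delta t/2\ge\tau_0^{(\alpha)}$ of the outer factors, since the former bounds $\Delta t$ from above and the latter from below. The proposition assumes this compatibility as a hypothesis, so I will only note it rather than prove it.
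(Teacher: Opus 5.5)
Your proposal is correct and follows the paper's proof in \cref{appProof2} essentially step for step: the Kronecker mixed-product identity gives (i), the left null vector $\bvec 1$ is propagated through the exponential series and the rational trapezoidal factor for (ii), and closure of column-stochastic matrices under products plus the triangle inequality gives (iii). The one step where you go beyond the paper, obtaining entrywise nonnegativity of $\Phi_{xy}$ by applying \cref{thm:zeno2d}(a) to unit vectors, should be replaced by the paper's reading, which simply takes nonnegativity of every factor as the hypothesis of (iii); the reason is that unit vectors are maximally unresolved data, and for small $\Delta t$ the column $\Phi_{xy}\bvec e_{kl}=\bvec e_{kl}+\Delta t\,A_{xy}\bvec e_{kl}+O(\Delta t^2)$ inherits the negative off-diagonal entries of $A_{xy}$ (\cref{rem:no-eventual-pos}), so \cref{thm:zeno2d}(a) cannot be used as unconditional cone invariance.
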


\begin{proof}
See \cref{appProof2}
\end{proof}

\begin{myremark}[Approximate matrix exponentials] \label{rem:approxexp}

Part~(ii) is stated for exact matrix exponentials. In practice a substep is
computed as $r(\Delta t A)\bvec v$ for a polynomial or rational approximant
$r\approx\exp$. By \cref{lem:divform}: $\bvec 1^\top A=0$, then for a polynomial $r$ one has $\bvec 1^\top r(\Delta t A)=r(0)\,\bvec 1^\top$. And for a rational $r=P/Q$ with $Q(\Delta t A)$ nonsingular, from $\bvec 1^\top Q(\Delta t A)=Q(0)\,\bvec 1^\top$ it follows that $\bvec 1^\top Q(\Delta t A)^{-1} = Q(0)^{-1}\bvec 1^\top$, so again $\bvec 1^\top r(\Delta t A)=r(0)\,\bvec 1^\top$.

Thus discrete mass is preserved \emph{exactly} by any approximant normalised so
that $r(0)=1$ (in particular by all Pad\'e approximants and by Krylov methods
based on them) and conservation does not degrade with the accuracy of the
exponential approximation. When the inner exponential is evaluated through an
eigendecomposition, the propagator is exact in exact arithmetic and the question
reduces to roundoff.
\end{myremark}

\begin{myremark}[Mass functional versus $\ell_1$-norm] \label{rem:massnorm}

The conserved quantity in part~(ii) is the \emph{linear functional} $\bvec
p\mapsto h_xh_y\,\bvec 1^\top\bvec p$, which coincides with $h_xh_y\|\bvec p\|_1$
only on the nonnegative cone. Conservation alone therefore does not control the
$\ell_1$-norm: for substeps below the positivity thresholds of
\cref{sec:disc1d} the numerical solution may develop negative lobes whose
contributions cancel in $\bvec 1^\top\bvec p$ while $\|\bvec p\|_1$ grows. It is
the conjunction of conservation~(ii) and positivity~(iii) that makes $\mathcal
S(\Delta t)$ a discrete Markov (column-stochastic) operator and yields stability;
neither property implies the other.
\end{myremark}

On the step-size window of \cref{thm:zeno2d}\,(a) the central substep
$\Phi_{xy}(\Delta t)$ is nonnegative, and by \cref{cor:mass-rat,thm:zeno2d}\,(b)
its column sums are $1+O(\Delta t\,h)$.  Hence on the nonnegative cone the central
substep satisfies $\norm{\Phi_{xy}(\Delta t)\bvec p}_1=(1+O(\Delta t\,h))
\norm{\bvec p}_1$: it is $\ell_1$-nonexpansive up to the boundary leakage, the
natural norm for probability densities, and contributes a factor $1+O(\Delta t\,h)$
to the stability products in \eqref{eq:fan}, consistent with \cref{ass:stab}
(whose constant $\omega$ absorbs the $O(h)$ term).  We emphasise that this is an
$\ell_1$-on-the-cone statement and not an $\ell_2$ one: because $A_{xy}$ is
strongly non-normal, the induced $\ell_2$ (and indeed $\ell_1$) \emph{operator}
norm of $\Phi_{xy}$ exceeds unity and grows under refinement, so no uniform-in-$h$
$\ell_2$ contraction holds (\cref{app1}); the cone restriction is what makes the
column-stochastic Markov bound available.

\begin{corollary}[Mass conservation of rational substeps]\label{cor:mass-rat}

Let $M$ be any matrix with $\bm{1}^\top M=\mathbf 0^\top$, and let $r=p/q$ be any rational function with $r(0)=1$ whose poles avoid the spectrum of $\Delta t\,M$. Then $\bm{1}^\top r(\Delta t\,M)=\bm{1}^\top$. By \cref{lem:cons} and \cref{prop:conserv},(i), this applies both to the directional operators and to the cross operator $M=A_{xy}=\rho\,\mathcal A^{\mathrm F}_{2,x}\mathcal A^{\mathrm B}_{2,y}$, \emph{provided the cross stencil is closed conservatively so that $\bm1^\top A_{xy}=0$} (\cref{rem:mixedmass}). Conversely, for the plain one-sided closure, the identity $\bm1^\top A_{xy}=\bm 0^\top$ carries the $O(\Delta t\,h)$ boundary residual discussed in \cref{thm:zeno2d}(b).

Under the conservative closure the trapezoidal central factor $\Phi_{xy}(\Delta t)=(\mathcal I-\tfrac{\Delta t}{2}A_{xy})^{-1}(\mathcal I+\tfrac{\Delta t}{2}A_{xy})$ of \eqref{eq:central} conserves mass exactly, independently of the time step and of the accuracy of the inner factorized solve, provided each inner solve is iterated to convergence (the partial-fraction/triangular sweeps preserve the left null vector $\bm 1$ exactly).
\end{corollary}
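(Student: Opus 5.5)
The plan is to reduce everything to the left-null-vector identity $\bm 1^\top M=\mathbf 0^\top$ and its propagation through polynomials and inverses, as in \cref{rem:approxexp}. \textbf{Polynomial step.} For any polynomial $s(z)=\sum_k s_k z^k$ we have $\bm 1^\top (\Delta t M)^k=\mathbf 0^\top$ for $k\ge1$, by induction on $k$. Hence
\[
\bm 1^\top s(\Delta t M)=s(0)\,\bm 1^\top .
\]

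\textbf{Rational step.} Write $r=p/q$ with $q(0)\neq0$; this is forced, since $r(0)=1$ is finite. The pole hypothesis makes $q(\Delta t M)$ nonsingular, and the polynomial step gives $\bm 1^\top q(\Delta t M)=q(0)\bm 1^\top$. Right-multiplying by $q(\Delta t M)^{-1}$ yields
\[
\bm 1^\top q(\Delta t M)^{-1}=q(0)^{-1}\bm 1^\top .
\]
Since $p(\Delta t M)$ and $q(\Delta t M)^{-1}$ commute, we obtain $\bm 1^\top r(\Delta t M)=p(0)q(0)^{-1}\bm 1^\top=r(0)\bm 1^\top=\bm 1^\top$.

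\textbf{Applicability to the operators.} For the directional operators, $\bm 1^\top A_x=\bm 1^\top A_y=0$ follows from \cref{lem:divform} together with the Kronecker computation in \cref{prop:conserv}(i). For the cross operator, the identity holds under the conservative closure by \cref{lem:cons}: the factors have zero column sums and right diagonal scaling preserves them, so $\bm 1^\top A_{xy}=\rho\,(\bm 1^\top\mathcal A^{\mathrm F}_{2,x})\mathcal A^{\mathrm B}_{2,y}=0$. The converse remark about the plain closure only restates the residual $\bm r$ of \cref{thm:zeno2d}(b), so it needs nothing new here.

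\textbf{Trapezoidal factor and inner solve.} Take $r(z)=(1+z/2)/(1-z/2)$. Its single pole lies at $z=2$, and $\operatorname{spec}(\Delta t A_{xy})\subset(-\infty,0]$ (\cref{sec:Lxy}), so the general statement applies. The delicate point is the claim that conservation is independent of the inner solver. I plan to argue this from \cref{prop:schemeA}(iii) and (iv), since the solver does not compute $r(\Delta t M)$ directly. By (iv) the Picard iterates converge to the unique solution $\bvec p^{(1)}$ of $(\mathcal I-\tfrac{\Delta t}{2}A_{xy})\bvec p^{(1)}=\bvec b$. Left-multiplying that system by $\bm 1^\top$ gives $\bm 1^\top\bvec p^{(1)}=\bm 1^\top\bvec b=\bm 1^\top\bvec p^{\,n}$.

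The main obstacle is exactly this \emph{converged} qualifier. A finitely truncated iterate conserves mass only if (iii) holds sweep by sweep, and that requires the column-sum identity $\bm 1^\top\bvec\alpha^{+}_2=(PQ+1)\bm 1^\top$. So I would state the exact claim for the limit, which is what the corollary asserts, and invoke (iii) only as a supplementary remark for intermediate iterates.
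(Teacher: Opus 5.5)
Your algebraic core is correct and is essentially the paper's argument. The paper multiplies $\bm 1^\top r(\Delta t M)$ through by $q(\Delta t M)$ to get $q(0)\,\bm 1^\top r(\Delta t M)=p(0)\,\bm 1^\top$, while you invert $q(\Delta t M)$ first; this is the same computation. Your applicability paragraph and the pole check for the trapezoidal map are also fine.

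The gap is in the inner-solve part. You claim that, by \cref{prop:schemeA}(iv), the Picard iterates converge to the solution of $(\mathcal I-\tfrac{\Delta t}{2}A_{xy})\bvec p=\bvec b$, and you then left-multiply that system by $\bm 1^\top$. But (iv) only gives contraction to the fixed point of the map \eqref{eq:picard}, and that fixed point is not the exact resolvent solve. With $T_x,T_y$ the two one-dimensional factors (shift absorbed into $\Delta t$), the fixed point satisfies
\[
T_xT_y\,\bvec p^{(1)}=\bvec\alpha^{+}_2\bvec b-\bvec p^{(1)}+\bvec\alpha\,(\Delta t\,A_{xy}\bvec b),\qquad T_xT_y=\bvec\alpha-\Delta t\,A_{xy}.
\]
This differs from $(\mathcal I-\Delta t A_{xy})\bvec p=\bvec b$ for two reasons: the coupling $\bvec\alpha^{+}_2$ carries one-sided differences of the opposite orientation, so $\bvec\alpha^{+}_2\neq\bvec\alpha+\mathcal I$, and the $\bvec\alpha$-coupling is frozen at the known $\bvec b$. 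The paper records this in \cref{prop:schemeA}(v): the converged iterate solves \eqref{eq:pade01} only up to the orientation defect. So $\bvec p^{(1)}$ does not satisfy the system you left-multiply, and your argument for the converged update fails as written.

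What actually carries the claim is the column-sum structure you relegated to a side remark about truncated iterates. Under the conservative closure, \cref{lem:cons} gives
\[
\bm 1^\top T_x=P\,\bm 1^\top,\quad \bm 1^\top T_y=Q\,\bm 1^\top,\quad \bm 1^\top\bvec\alpha^{+}_2=(PQ+1)\,\bm 1^\top,\quad \bm 1^\top\bvec\alpha=PQ\,\bm 1^\top,\quad \bm 1^\top A_{xy}=\mathbf 0^\top.
\]
Applying $\bm 1^\top$ to the fixed-point equation then yields $PQ\,\bm 1^\top\bvec p^{(1)}=(PQ+1)\,\bm 1^\top\bvec b-\bm 1^\top\bvec p^{(1)}$, that is, $\bm 1^\top\bvec p^{(1)}=\bm 1^\top\bvec b=\bm 1^\top\bvec p^{\,n}$. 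The same computation at each sweep is \cref{prop:schemeA}(iii). This is what the paper means by ``the sweeps preserve the left null vector $\bm 1$ exactly.'' It is also why conservation holds ``independently of the accuracy of the inner factorized solve'': the orientation and freezing defects are built from conservative one-sided operators with zero column sums, so they change the solution but not its mass. Conservation of the central factor must therefore be argued from the Picard map itself, not from the exact resolvent it only approximates.
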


\begin{proof}
$\bm{1}$ is a left eigenvector of $M$ associated with the eigenvalue
$0$, hence $\bm{1}^\top g(\Delta t\,M)=g(0)\,\bm{1}^\top$ for every
polynomial $g$.  Applying this to $q$ and to $p=q\,r$,
\begin{equation}
q(0) \, \bm{1}^\top r(\Delta t\,M) = \bm{1}^\top q(\Delta t\,M)\,r(\Delta t\,M)
= \bm{1}^\top p(\Delta t\,M) = p(0)\,\bm{1}^\top,
\end{equation}
so $\bm{1}^\top r(\Delta t\,M)=r(0)\,\bm{1}^\top=\bm{1}^\top$.
\end{proof}

\Cref{cor:mass-rat} holds for the \emph{direct} (partial-fraction) evaluation of
$r(\Delta t\,A_{xy})\bvec p^{\,n}$, i.e. when the shifted systems are solved as
in \eqref{eq:picard} and the results recombined.  If the action of the
exponential is instead approximated by Galerkin projection onto a Krylov
subspace, conservation holds only up to the projection error; for the
mixed-derivative substep we therefore use the partial-fraction form.

\myparagraph{Boundary conditions.}

The hypothesis $\bvec 1^\top L=0$ is a statement about \emph{all} rows, including
the boundary ones, and is equivalent to a discrete zero-flux (reflecting)
boundary treatment. Indeed, if $L$ is assembled in flux form, $(L\bvec
p)_i=\bigl(F_{i+1/2}(\bvec p)-F_{i-1/2}(\bvec p)\bigr)/h$, then the column sums
telescope, $\bvec 1^\top L\bvec p=\bigl(F_{n+1/2}(\bvec p)-F_{1/2}(\bvec
p)\bigr)/h$, and vanish identically iff the numerical fluxes through the domain
boundary are set to zero.

If instead the computational domain is a truncation of $\R^2$ with homogeneous Dirichlet (absorbing) conditions, mass is lost at the rate of the discrete boundary flux, and~(ii) holds only up to this leakage - exponentially small in the domain size when the underlying density has Gaussian-type tails, but not zero. We emphasise that this is a property of the truncated continuous problem, not of its discretization, and therefore cannot be repaired at the discrete level without changing the boundary condition. For instance, a ghost-point closure restores $\bvec 1^\top L=0$ precisely when the ghost values are chosen to annihilate the numerical boundary flux, which amounts to replacing the Dirichlet condition by the reflecting one.

When absorption at the boundary is part of the model, exact conservation can instead be recovered in an extended sense by appending a cemetery state accumulating the boundary flux,
\begin{equation*}
\widetilde L = \left(
\begin{matrix}
L & 0 \\
\bvec f^\top & 0
\end{matrix}
\right)
\end{equation*}
with $\bvec f^\top=-\bvec 1^\top L \ge 0$, for which $\bvec 1^\top\widetilde L=0$ and the conserved quantity is the total of surviving and absorbed probability. The scheme of \cref{sec:disc1d} uses the reflecting convention, which we assume here.

The construction requires $\bvec f\ge 0$, i.e.\ sub-stochastic columns of the absorbing propagator; this holds for flux-form closures with Metzler boundary rows, but should be verified when boundary rows are only of EM type, in which case nonnegativity of $e^{t\widetilde L}$ is again merely eventual.

\begin{myremark}\label{rem:fallback}

A simpler, first-order alternative to the trapezoidal factor \eqref{eq:central} is to realise the central factor by the \emph{backward-Euler resolvent} $(\mathcal I-\Delta t A_{xy})^{-1}$ (the $\gamma=1$, $\bvec b=\bvec p^{\,n}$ choice in \eqref{eq:pade01}), computed by the same factorized iteration of \cref{sec:Lxy}. On the step-size window of \cref{thm:zeno2d},(a) the factorized solve returns a nonnegative, asymptotically mass-conserving substep, so the resulting composition is positive and $\ell_1$-stable there; but it is only first-order accurate in time, since $(\mathcal I-\Delta t A_{xy})^{-1}-e^{\Delta t A_{xy}} =\tfrac12\Delta t^2A_{xy}^2+O(\Delta t^3)$ on smooth data.

Note that this defect is of a different nature from the ill-posedness of the bare exponential: the backward-Euler and trapezoidal resolvents are both stable on the real-negative spectrum of $A_{xy}$, and the trapezoidal choice simply matches one more term of the exponential. We use the backward-Euler variant only as a robust first-order fallback (e.g. for start-up steps in place of Rannacher smoothing); the trapezoidal central factor \eqref{eq:central} is what delivers the second-order claim of \cref{prop:order2}.
\end{myremark}

\begin{myremark}[The mixed-derivative operator]\label{rem:mixedmass}

Whether the cross term is written in the centred flux form
\begin{equation*}
\bigl(A_{xy}\bvec p\bigr)_{ij} = \frac{2}{4h_xh_y}\Bigl[ (\Sigma_{xy}p)_{i+1,j+1}-(\Sigma_{xy}p)_{i+1,j-1} -(\Sigma_{xy}p)_{i-1,j+1}+(\Sigma_{xy}p)_{i-1,j-1}\Bigr]
\end{equation*}
or, as used here, as the one-sided product $A_{xy}=\rho\,\mathcal A^{\mathrm
F}_{2,x}\mathcal A^{\mathrm B}_{2,y}$, interior nodes have
zero column sums by the same telescoping argument (\cref{lem:cons}), applied in
each index; the property must be verified separately for the one-sided stencils
used along edges and at corners, where naive modifications are the most common
source of mass leakage. We emphasise the asymmetry between conservation and
positivity in this respect: the conservation identity \cref{prop:conserv}\,(ii)
applies to any rational function of $A_{xy}$ with $r(0)=1$ unchanged, whereas
$e^{\Delta t A_{xy}}$ is in general \emph{not} nonnegative for any $\Delta t>0$
(a pure mixed-derivative generator has no Metzler or eventually-nonnegative
structure, \cref{rem:no-eventual-pos}), so the positivity statement
\cref{prop:conserv}\,(iii) requires the factorized implicit treatment of the
cross term (\cref{sec:Lxy,thm:zeno2d}) and holds only conditionally, on a
step-size window.
\end{myremark}

\section{The exponential time integration scheme} \label{sec:schemes}

Throughout this paper we compare the DF scheme with several popular time-integration FD schemes. A brief review of theses schemes in given in \cref{appTI}. Here, we discuss the exponential time integrator used in the DF scheme, which is the main subject of the paper. Its construction was given above, and its efficient implementation is presented in \cref{sec:krylov}. Below we record the integrator properties along the same axes used for classical schemes in \cref{appTI}.

Let $k = \Delta t$ denote the time step.  The numerical solution advances as
$\bvec{p}^{n+1} = M\,\bvec{p}^n$ for some propagator matrix $M$.  The amplification factors are $e^{k\lambda_j}$, with $|e^{k\lambda_j}|=e^{k\re\lambda_j}\to0$ in the stiff limit. This contrasts with the trapezoidal rule, whose stability function tends to $-1$. Consequently, no mode is reflected at any step size, so the integrator exhibits no temporal ringing regardless of $k$.

Positivity holds for all $k>0$ in the M-matrix case and for $k\ge\tau_0$ in the EM case \eqref{eq:posKron}. This yields an inverted CFL condition: a lower bound on the step with no ceiling. No rational one-step scheme achieves this at second order (Table~\ref{tab:schemes}, \cite{BolleyCrouzeix1978}).

The simple eigenvalue $\lambda=0$ (Perron, by irreducibility and $\bvec 1^\top A=0$, \cref{lem:divform}) carries the conserved mass. All remaining eigenvalues satisfy $\re\lambda_j<0$ (by Perron--Frobenius in the M-matrix case; by the spectral \cref{hypH} in the EM case). Hence the scheme converges to the discrete stationary density $\bvec\pi$.

Finally, forming $e^{kA}$ explicitly for large $n$ would require $\mathcal
O(n^{3})$ operations. However, time stepping needs only the action $e^{kA}\bvec
p^{n}$. Using the Krylov method of \cref{sec:krylov}, this action is computed in
$\mathcal O(m^2 n + m^3)$ operations via $m$ shifted banded solves and modified
Gram--Schmidt orthogonalisation, with $m\ll n$.

\myparagraph{Comparison of time integrators.}
\begin{table}[!htb]
\centering
\caption{Comparison of time propagators for the semi-discrete Fokker--Planck system $\dot{\bvec p}=A\bvec p$ with $\bvec 1^\top A=0$. Here $b_i=A_{ii}$, $\beta=\max_i|b_i|=O(h^{-2})$; ``M'' denotes the Metzler (negated M-matrix) case and ``EM'' the eventually-nonnegative case of  \cref{sec:disc1d}; $k_0,\tau_0$ are the finite eventual-positivity thresholds; $\bvec\pi$-limit hypotheses as in \cref{thm:TRBDF2pos}.}
\label{tab:schemes}
\renewcommand{\arraystretch}{1.35}
\scalebox{0.95}{
\begin{tabular}{@{}lcccc@{}}
\toprule
 & BE & CN & TR-BDF2 ($\gamma=2-\sqrt2$) & $e^{\Delta tA}$ (exact/Krylov) \\
\midrule
Order in time
 & 1 & 2 & 2 & exact$^{\,a}$ \\
Stiff damping $r(-\infty)$
 & $0$ (L-stable) & $-1$ (A-stable only) & $0$ (L-stable) & $0$ \\
Temporal ringing
 & none & yes, for $k\beta\gg1$$^{\,b}$ & none & none \\
Positivity, M case
 & all $k>0$ & $k\le 2/\beta$ & $k\le(1+\sqrt2)/\beta$ & all $\Delta t>0$ \\
Positivity, EM case
 & $k\ge k_0$$^{\,c}$ & none$^{\,d}$ & $k\ge k_0'$$^{\,c}$ & $\Delta t\ge\tau_0$ \\
Mass conservation
 & exact, all $k$ & exact, all $k$ & exact, all $k$ & exact$^{\,e}$ \\
$\ell_1$-contraction$^{\,f}$
 & where positive & where positive & where positive & where positive \\
Cost per step
 & 1 solve & 1 solve & 2 solves$^{\,g}$ & eigendecomp.\ / Krylov \\
\bottomrule
\end{tabular}
}

\medskip
\begin{minipage}{0.95\linewidth}\footnotesize
$^{a}$\,Exact in time for the frozen-coefficient substep; the $O(\Delta t^2)$
global error of \cref{prop:order2} comes from splitting and freezing
only.
\\
$^{b}$\,Within the positivity window, $\mu_j\ge-\tfrac13$ and oscillatory
modes decay by a factor $\ge3$ per step; persistent ringing ($\mu_j\approx-1$)
occurs only for $k$ outside the window.
\\
$^{c}$\,Eventual positivity at \emph{large} steps via decay of subdominant modes and dominance of the Perron projection $\bvec\pi\bvec1^\top/(\bvec1^\top\bvec\pi)$; requires $r(-\infty)=0$, hence fails for CN.
\\
$^{d}$\,Fails for all small $k$ (Neumann expansion, $M=I+kA+O(k^2)$) and for all large $k$ ($r(-\infty)=-1$); at most an intermediate window, not guaranteed (\cref{rem:CNEM}).
\\
$^{e}$\,For approximate exponentials, exact whenever the approximant satisfies $r(0)=1$ (Pad\'e, Krylov); \cref{rem:approxexp}.
\\
$^{f}$\,$\norm{M\bvec u}_1\le\norm{\bvec u}_1$ holds exactly when $M$ is column-stochastic, i.e.\ in the conjunction of the positivity and conservation rows (\cref{prop:conserv}).
\\
$^{g}$\,Two linear solves with the \emph{same} matrix $I-\tfrac{\gamma k}{2}A$, so one factorization per step size.
\end{minipage}
\end{table}

Table~\ref{tab:schemes} summarizes the time integrators for the FPE discussed in this section. Since every scheme conserves mass unconditionally, the primary discriminating factors are positivity and stiff damping. Along these axes, TR-BDF2 strictly dominates CN (offering a wider window, L-stability, and EM-recoverability, at the cost of a second solve with the same factorized matrix). Meanwhile, the exponential integrator is the only propagator whose positivity in the EM regime comes with a threshold $\tau_0$ that is independent of the time-stepping error. In other words, this threshold is determined solely by the matrix, rather than by an interaction between $r$ and the spectrum.

Note that the thresholds $k_0$ (BE), $k_0'$ (TR-BDF2), $\tau_0$ (exponential) are all distinct numbers.

\myparagraph{Stationary density and long-time behavior.}
The exact propagator of the full generator fixes its steady state: if
$A\bvec{p}^*=\bvec 0$ then $e^{\Delta t A}\bvec{p}^*=\bvec{p}^*$, and a Krylov
approximation preserves this once $\bvec{p}^*$ lies in $\Krylov_m(A,\bvec{p}^*)$
(immediately, since $A\bvec{p}^*=\bvec 0$ gives $\Krylov_1=\spn\{\bvec{p}^*\}$
and $H_1=0$). For the one-dimensional FPE \eqref{eq:fpe} with drift $\mu(x)$ and
diffusion $\sigma^2(x)$ the stationary density satisfies
\begin{equation*}
0=-\tfrac{d}{dx}[\mu p^*]+\tfrac12\tfrac{d^2}{dx^2}[\sigma^2 p^*],
\end{equation*}
with the zero-flux solution $p^*(x)\propto \sigma^{-2}(x)\exp\!\big(2\!\int_0^x
\mu(y)/\sigma^2(y)\,dy\big)$. The discrete steady state is the null vector of $A$
on the interior block, obtained numerically as the eigenvector of $e^{\Delta t
A}$ for eigenvalue $1$.

\subsection{Krylov Subspace Methods for the Matrix Exponential Action}
\label{sec:krylov}

In the two-dimensional scheme the central factor \eqref{eq:central} is a rational
function of $A_{xy}$ and is applied by the factorized solver of \cref{sec:Lxy},
\emph{not} by a Krylov exponential.  The Krylov method is used for the directional
factors of \eqref{eq:strang}, $e^{\frac{\Delta t}{2}A_\alpha}\bvec v$,
$\alpha\in\{x,y\}$, and -- in the absence of a mixed term -- for the single
propagator $e^{\Delta t A}\bvec v$ of the 1D scheme.  Because the directional
generator is a Kronecker lift, $A_x=L_x\otimes I_{n_y}$ (and $A_y=I_{n_x}\otimes
L_y$), its exponential acts column-by-column through the one-dimensional
exponential $e^{\frac{\Delta t}{2}L_x}$, so the Krylov work below is effectively
one-dimensional.

We evaluate the action in the polynomial Krylov subspace of dimension $m$,
\begin{equation}
  \Krylov_m(A_\alpha,\bvec{v})
  \coloneqq \spn\{\bvec{v},\,A_\alpha\bvec{v},\,\ldots,\,A_\alpha^{m-1}\bvec{v}\},
\end{equation}
which is well suited to the action for $m\ll N$: the Taylor series
$e^{\frac{\Delta t}{2}A_\alpha}\bvec{v}=\sum_{j\ge0}\tfrac{(\frac{\Delta
t}{2}A_\alpha)^j}{j!}\bvec{v}$ places the exact result in the closure of
$\bigcup_m\Krylov_m$. The Arnoldi process produces an orthonormal basis $V_m$ and
the projected matrix $H_m=V_m^{\!\top}A_\alpha V_m$, giving
\begin{equation}   \label{eq:krylov-approx}
  e^{\frac{\Delta t}{2}A_\alpha}\bvec{v}\ \approx\
  \norm{\bvec{v}}_2\,V_m\,e^{\frac{\Delta t}{2} H_m}\,\bvec{e}_1,
\end{equation}
the inner $m\times m$ exponential formed by scaling-and-squaring \cite{Higham2008}.

\myparagraph{Cost.}
Each action requires $m$ matrix--vector products with the banded $A_\alpha$ at
$O(mN)$, modified Gram--Schmidt orthogonalisation at $O(m^2N)$, and the dense
$m\times m$ exponential at $O(m^3)$, for a total of
\begin{equation}   \label{eq:exp-complexity}
  O(m^2 N + m^3),
\end{equation}
dominated by the orthogonalisation. The dimension required for a fixed accuracy
grows with the stiffness,
$m=\Theta\!\big(\sqrt{\Delta t\,\rho(A_\alpha)}\big)=\Theta(\sqrt{\Delta t}/h)$,
so \eqref{eq:exp-complexity} is superlinear in $N$; the small $m$ seen at moderate
stiffness reflects that regime and is not a linear-complexity guarantee.

\myparagraph{Interpretation, and comparison with fixed approximants.}
Restricted to $\Krylov_m$, \eqref{eq:krylov-approx} is the polynomial in
$A_\alpha$ that interpolates $e^{\frac{\Delta t}{2} z}$ at the Ritz values (the
eigenvalues of $H_m$); it is thus a near-best polynomial approximation
\emph{adapted to the spectrum} of $A_\alpha$, not a fixed approximant committed in
advance. The implicit propagators compared in \cref{sec:numerics} -- including the
trapezoidal central factor \eqref{eq:central} -- are, by contrast, fixed rational
approximants to the exponential: Crank--Nicolson is the $(1,1)$ Pad\'e approximant
and backward Euler the $(0,1)$,
\begin{equation}
  M_{\mathrm{CN}}=\big(I+\tfrac{\Delta t}{2}A\big)\big(I-\tfrac{\Delta t}{2}A\big)^{-1}
  =e^{\Delta t A}+O(\Delta t^3),
  \qquad
  M_{\mathrm{BE}}=(I-\Delta t A)^{-1}=e^{\Delta t A}+O(\Delta t^2),
\end{equation}
committed regardless of the spectrum, whereas \eqref{eq:krylov-approx} matches its
target to tolerance for any $\Delta t$.

\myparagraph{Choice of method, positivity, and mass.}
The directional generator $A_\alpha$ is a flux-form convection--diffusion operator
with an upwind-leaning first-order part, hence \emph{non-symmetric}; the Arnoldi
process is therefore used as such and does \emph{not} reduce to Lanczos.  A
shift-and-invert (rational) Krylov method \cite{GuttelRational2013} would replace
each matrix--vector product by a costlier shifted solve and brings no benefit
here: the directional spectra are mild and real-dominated, and the only operator
with a genuinely non-normal, wide spectrum -- the cross operator $A_{xy}$ -- is
handled outside Krylov altogether, by the factorized solver of \cref{sec:Lxy}.
The exact-in-time character of the exponential step does not by itself confer
discrete positivity: as in \cref{rem:resolvent-positivity}, the orthonormal
Arnoldi basis $V_m$ is sign-indefinite, so the directional substep
\eqref{eq:krylov-approx} is nonnegative only up to the Krylov truncation error.
That error is exponentially small in $m$, so on the eventual-positivity regime of
\cref{prop:emmatrix} -- where the \emph{exact} directional exponential is
nonnegative -- the computed action is nonnegative to within a tolerance that can
be driven to round-off by enlarging $m$; in practice we monitor the entrywise
minimum and increase $m$ when needed.  Discrete mass of the directional factors is
likewise preserved only up to the Galerkin projection error, whereas the central
factor \eqref{eq:central}, applied by the partial-fraction/triangular sweeps of
\cref{sec:Lxy}, conserves $\bm 1^\top\bvec p$ \emph{exactly} (\cref{cor:mass-rat}).
Where strict nonnegativity must be enforced we fall back to the resolvent maps of
\cref{sec:Lxy}; we use the exponential step for the directional factors, where its
accuracy and stability are an asset and its sign defect is negligible.

\section{Numerical experiments} \label{sec:numerics}

We validate the theory on problems with known analytical solutions, organised so that each experiment isolates one structural claim. The one-dimensional Ornstein--Uhlenbeck benchmark (\cref{ssec:exp-1d}) confirms second-order accuracy, reports the cost of the upwind stencil candidly against the centred scheme, and exhibits the eventual-positivity threshold $\tau_0$ of \cref{cor:eventualpos} as an inverted CFL condition. It also documents, in a severely under-resolved regime, the qualitatively benign and recoverable character of the upwind scheme's undershoot relative to the centred scheme's dispersive ringing.

The two-dimensional anisotropic-diffusion benchmark (\cref{ssec:exp-mixed}) then verifies the conditional positivity window $\Theta$ of \cref{thm:zeno2d} as a function of the mesh, exact mass conservation (\cref{prop:conserv}), and the necessity of treating the mixed term \emph{implicitly} rather than through its exponential. Furthermore, \cref{ssec:exp-kin} reports the factorized Picard iteration count and its linear, mesh-robust cost, substantiating the estimate of \cref{rem:time-order}. The supporting Python code is available at 
\href{https://github.com/itkinal/DFsolver}{Github}. 

% =====================================================================
%  1D numerical experiments (Ornstein-Uhlenbeck) for sec:numerics.
%  Honest framing: validation + demonstration of the eventual-positivity
%  mechanism. NOT a horse-race the upwind scheme wins -- on this benign 1D
%  problem the centred scheme is more accurate and cheaper; the upwind
%  (EM-matrix) construction earns its place through eventual positivity and,
%  decisively, as the building block for the 2D mixed term (ssec:exp-mixed),
%  where no centred analogue is an M-matrix at any mesh.
%  Figures in figs/: E1d_positivity.pdf, E1d_eventual.pdf, E1d_stationary.pdf.
% =====================================================================

\subsection{One-dimensional benchmark: the Ornstein--Uhlenbeck process}
\label{ssec:exp-1d}

We validate the one-dimensional construction of \cref{sec:disc1d} on the
Ornstein--Uhlenbeck (OU) process, whose Fokker--Planck equation
\begin{equation}\label{eq:ou}
\partial_t p = \kappa\,\partial_x\!\bigl[(x-m)\,p\bigr] + \tfrac12\sigma^2\,\partial_{xx}p, \qquad \mu(x)=-\kappa(x-m),\quad D=\tfrac12\sigma^2,
\end{equation}
has the closed-form Gaussian transition density $\mathcal N(\bar m(t),v(t))$,
$\bar m(t)=m+(x_0-m)e^{-\kappa t}$,
$v(t)=\tfrac{\sigma^2}{2\kappa}(1-e^{-2\kappa t})$, and stationary law
$p_\infty=\mathcal N(m,\sigma^2/2\kappa)$ -- an exact reference for accuracy,
positivity, and relaxation. The drift $\mu(x)$ changes sign at $x=m$ and the
local P\'eclet number $\mathrm{Pe}_i=|\mu_i|h/D$ grows linearly outward, so a
truncated domain always contains an advection-dominated outer region. We compare
two second-order spatial discretizations: the Diagonal-Frog (DF) scheme --
second-order upwind advection \eqref{eq:adv} and centred diffusion
\eqref{eq:diff_ctr}, for which $-L$ is an EM-matrix (\cref{prop:emmatrix}) -- and
the second-order centred scheme of \cref{ssec:advdom}, an M-matrix for
$\mathrm{Pe}<2$ (\cref{prop:mmatrix2}). Both advance in time by the exact propagator action $e^{\Delta t L}\bvec p$ (the high-accuracy \texttt{expm\_multiply} action of the Krylov integrator of \cref{sec:krylov}), so spatial properties are compared in isolation. A first-order upwind scheme is included as an accuracy
baseline.

We state the outcome plainly, since it calibrates the role of the upwind
construction. \emph{On this one-dimensional problem the centred scheme is the
better choice}: it is more accurate (smaller error constant, \cref{tab:conv1d}),
cheaper to advance (a prefactored Crank--Nicolson solve is hard to beat in 1D;
see the remark on cost below), and, once the mesh resolves
the solution so that $\mathrm{Pe}<2$, it is a genuine M-matrix and hence
unconditionally positive. The upwind scheme is not introduced to beat the
centred scheme in 1D; its purpose is to supply the \emph{eventual-positivity}
mechanism that survives where the centred construction has no positive
analogue -- namely in the advection-dominated regime at fixed mesh
(\cref{fig:cfl1d}) and, decisively, in the two-dimensional mixed-derivative
block of \cref{ssec:exp-mixed}, whose centred cross-stencil is an M-matrix at
\emph{no} mesh (\cref{rem:no-eventual-pos}). The experiments below establish
correctness of the 1D scheme and exhibit that mechanism in its simplest setting.

\myparagraph{Order of convergence.}
\Cref{tab:conv1d} reports the error against the exact transition density in the
scaled discrete $L^2$ norm (mild regime $\kappa=1$, $\sigma=1$). DF and the
centred scheme are both second order (rate $\to2.0$); the centred scheme's error
constant is about four times smaller, reflecting the absence of the
upwind scheme's numerical diffusion. The first-order upwind scheme converges at
rate~$1$. Mass is conserved throughout to the boundary-truncation level.

\begin{table}[!htb]
\centering
\small
\begin{tabular}{@{}cccp{6pt}ccp{6pt}cc@{}}
\toprule
& \multicolumn{2}{c}{DF (2nd-order upwind)} & &
  \multicolumn{2}{c}{centred (2nd order)} & &
  \multicolumn{2}{c}{upwind (1st order)}\\
\cmidrule(r){2-3}\cmidrule(lr){5-6}\cmidrule(l){8-9}
$n$ & $\norm{e_h}_2$ & rate & & $\norm{e_h}_2$ & rate & & $\norm{e_h}_2$ & rate\\
\midrule
$101$ & $6.57\!\times\!10^{-3}$ & ---  & & $1.73\!\times\!10^{-3}$ & ---  & & $2.47\!\times\!10^{-2}$ & ---  \\
$201$ & $1.66\!\times\!10^{-3}$ & $1.98$ & & $4.32\!\times\!10^{-4}$ & $2.00$ & & $1.31\!\times\!10^{-2}$ & $0.91$ \\
$401$ & $4.18\!\times\!10^{-4}$ & $1.99$ & & $1.08\!\times\!10^{-4}$ & $2.00$ & & $6.77\!\times\!10^{-3}$ & $0.95$ \\
$801$ & $1.05\!\times\!10^{-4}$ & $1.99$ & & $2.70\!\times\!10^{-5}$ & $2.00$ & & $3.44\!\times\!10^{-3}$ & $0.98$ \\
\bottomrule
\end{tabular}
\caption{Convergence to the exact OU transition density ($\kappa=1$, $\sigma=1$,
$T=0.5$, $\Delta t=0.4\,h$). Both second-order schemes attain rate~$2$; the
centred scheme has the smaller error constant.}
\label{tab:conv1d}
\end{table}

\myparagraph{A remark on cost.}
We deliberately make no run-time comparison in one dimension, because none would
be informative. In 1D the centred operator is tridiagonal and the upwind operator
adds a single band (a $4{:}3$ nonzero ratio), so a Crank--Nicolson step, i.e., a prefactored banded solve costing $O(N)$ with a small constant, is extremely
cheap and, at the modest step counts needed for engineering accuracy, is faster than any exponential integrator we tried: the stiff, strongly non-normal upwind operator requires a comparatively large Krylov subspace and offers no one-dimensional speed advantage.

The favourable scaling of the splitting is a higher-dimensional phenomenon. A direct factorisation of a generator carrying mixed-derivative couplings fills in
and scales superlinearly, whereas the factorized Picard solve of the central
factor (\cref{sec:Lxy,rem:time-order}) uses only one-dimensional band factors,
with a small, mesh-robust iteration count and linear per-step cost
(\cref{tab:picard}). We therefore
present the cost comparison in the two-dimensional setting, where it is
meaningful, and treat the one-dimensional scheme here purely as a structural
building block.

\myparagraph{Eventual positivity and the inverted CFL.}
The property the upwind scheme does possess, and the centred scheme does not, is
\emph{eventual positivity}: by \cref{cor:eventualpos} the EM-matrix propagator
$e^{\Delta t L}$ is nonnegative not for small steps but for $\Delta t\ge\tau_0$,
an inverted CFL condition. \Cref{fig:cfl1d} measures $\min_x(e^{\Delta t L}\bvec
p_0)$ for a resolved bump in a strongly advective regime ($\kappa=6$,
$\sigma=0.5$, $\max\mathrm{Pe}\approx7.7$). For DF the minimum is slightly
negative in a short initial transient -- the Godunov ripple of the second-order
upwind stencil (\cref{rem:godunov}) -- and becomes nonnegative once $\Delta t$
exceeds $\tau_0\approx10^{-4}$, after which it stays nonnegative for arbitrarily
large steps: the strictly positive Perron projection $e^{\lambda_1\Delta t}P$
overtakes the oscillatory remainder once the latter has decayed
(\cref{ssec:em}). The centred scheme recovers positivity only near
$\Delta t\approx0.2$ -- three orders of magnitude later -- because at
$\mathrm{Pe}>2$ it is not even an EM-matrix; its small undershoot at this fixed
mesh ($\min_x p\approx-9\times10^{-7}$ in the run of \cref{fig:pos1d}) shrinks
under refinement and vanishes once $\mathrm{Pe}<2$, so on a resolved 1D problem
it is harmless. The contrast matters not for 1D, where one simply refines, but
because it is the mechanism the 2D mixed block inherits, where refinement does
not restore an M-matrix.

\begin{figure}[!htb]
\centering
\includegraphics[width=0.6\textwidth]{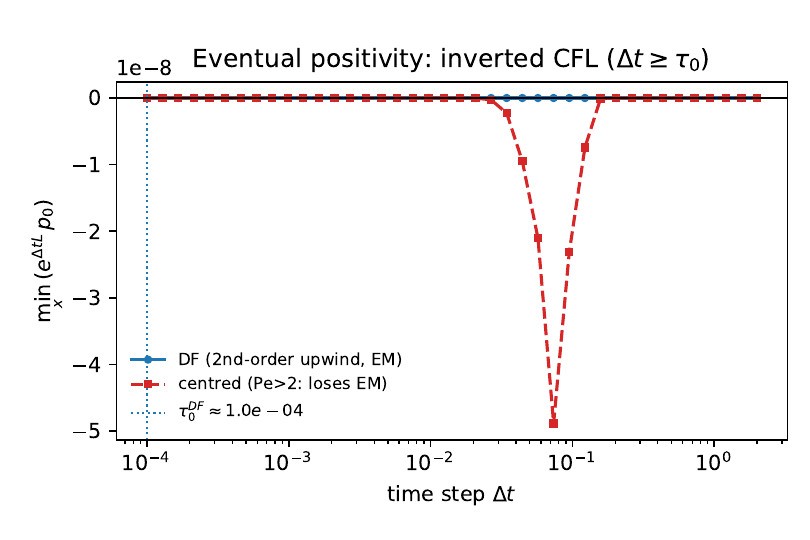}
\caption{Minimum of $e^{\Delta t L}\bvec p_0$ versus $\Delta t$
($\kappa=6$, $\sigma=0.5$, $\max\mathrm{Pe}\approx7.7$). The DF propagator (solid)
is nonnegative for $\Delta t\ge\tau_0\approx10^{-4}$ -- an inverted CFL condition
(\cref{cor:eventualpos}); the centred scheme (dashed) recovers positivity only
near $\Delta t\approx0.2$.}
\label{fig:cfl1d}
\end{figure}

\Cref{fig:pos1d} shows the corresponding density profiles after $T=0.3$ at a
fixed step $\Delta t=0.01$ in this advective regime, for reference: the centred
scheme undershoots below zero on the leading flank (by $\approx9\times10^{-7}$,
not visible at plot scale), while DF stays nonnegative. We reiterate that this
undershoot is benign in 1D and disappears under refinement; the figure documents
the sign behaviour rather than a practically significant error.

\begin{figure}[!htb]
\centering
\includegraphics[width=0.6\textwidth]{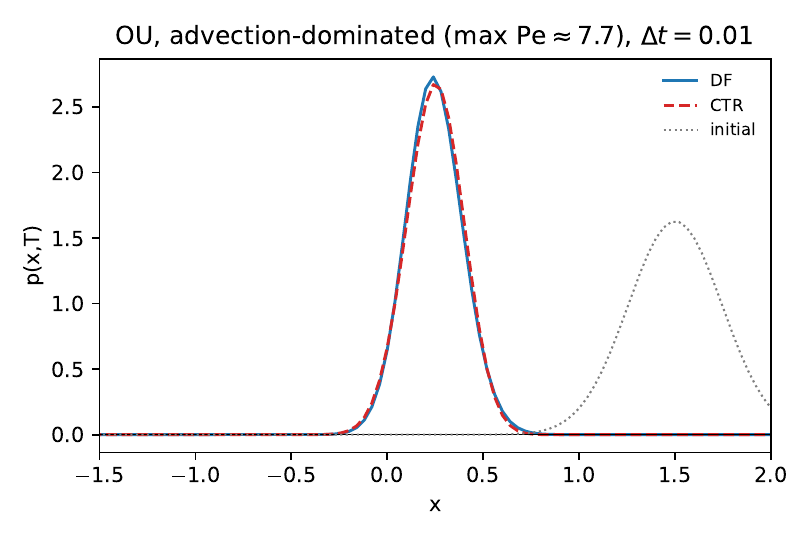}
\caption{OU density after $T=0.3$ in the advection-dominated regime
($\max\mathrm{Pe}\approx7.7$, $\Delta t=0.01$). At this fixed mesh the centred
scheme (dashed) has a small negative undershoot on the leading flank; DF (solid)
remains nonnegative. The dotted curve is the initial datum.}
\label{fig:pos1d}
\end{figure}

\myparagraph{Relaxation to the stationary law.}
Finally we verify long-time behaviour. Starting from an off-centre narrow
Gaussian, \cref{fig:stat1d} tracks $\norm{p(t)-p_\infty}_1$ under DF stepping
($\kappa=2$, $\sigma=1$, $\Delta t=0.05$). The error decays at the rate
$e^{-\kappa t}$ of the slowest non-stationary OU mode, the iterates remain
nonnegative to machine precision, and discrete mass is conserved to the
boundary-truncation level over $t\in[0,6]$. The scheme thus reproduces the
correct stationary density and spectral relaxation rate while preserving
positivity and mass at every step.

\begin{figure}[!htb]
\centering
\includegraphics[width=0.6\textwidth]{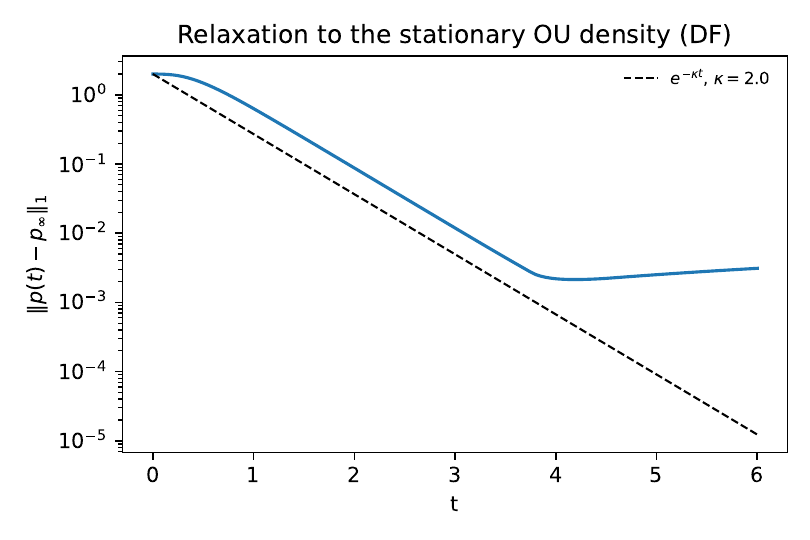}
\caption{Relaxation of the DF solution to the stationary OU density
($\kappa=2$, $\sigma=1$, $\Delta t=0.05$). The $\ell_1$ error to $p_\infty$
follows the envelope $e^{-\kappa t}$ (dashed); positivity and mass are preserved
throughout.}
\label{fig:stat1d}
\end{figure}

\myparagraph{Failure mode under severe under-resolution.}
The trade-off above is sharpened, not escaped, in a severely advection-dominated
regime. We take $\kappa=15$, $\sigma=0.3$ on $[-4,4]$ with $n=201$, so that
$\mathrm{Pe}_{\max}\approx53$, and evolve a Gaussian pulse swept toward the mean
until, at $T=0.2$, it is contracted to a width of barely one cell
($\mathrm{std}\approx1.4\,h$).

\Cref{fig:pos1d_severe} shows the result. No linear second-order scheme can be monotone here -- this is exactly Godunov's barrier (\cref{rem:godunov}) -- so the
Diagonal-Frog scheme is \emph{not} positive in this extreme: it carries a small
Godunov ripple, $\min_x p\approx-7\times10^{-3}$. The distinction is in the
\emph{character} of the failure. The centred scheme (and Crank--Nicolson on it)
produce classical dispersive ringing: a deep negative sink $\min_x p\approx-0.34$
-- two orders of magnitude larger -- spread over roughly $48$ sign-changing nodes
across the leading flank. The DF undershoot is instead \emph{localized}, confined
to $9$ nodes adjacent to the front, and is removed entirely once the step crosses
the eventual-positivity threshold of \cref{fig:cfl1d}.

The direct exponential and the polynomial-Krylov action, combined with the DF
scheme, produce the same results to plotting accuracy.

Thus even where positivity cannot be guaranteed at second order, the upwind
construction degrades gracefully -- a bounded, localized dip that the propagator
heals -- whereas the centred discretization produces global oscillations that no
choice of time step repairs. (We note that all three schemes place the peak at
the same node; the apparent phase offset between DF and the centred schemes is
the $O(h)$ numerical-diffusion lag of the upwind stencil, at most one cell at
this resolution.)

\begin{figure}[!htb]
\centering
\includegraphics[width=0.66\textwidth]{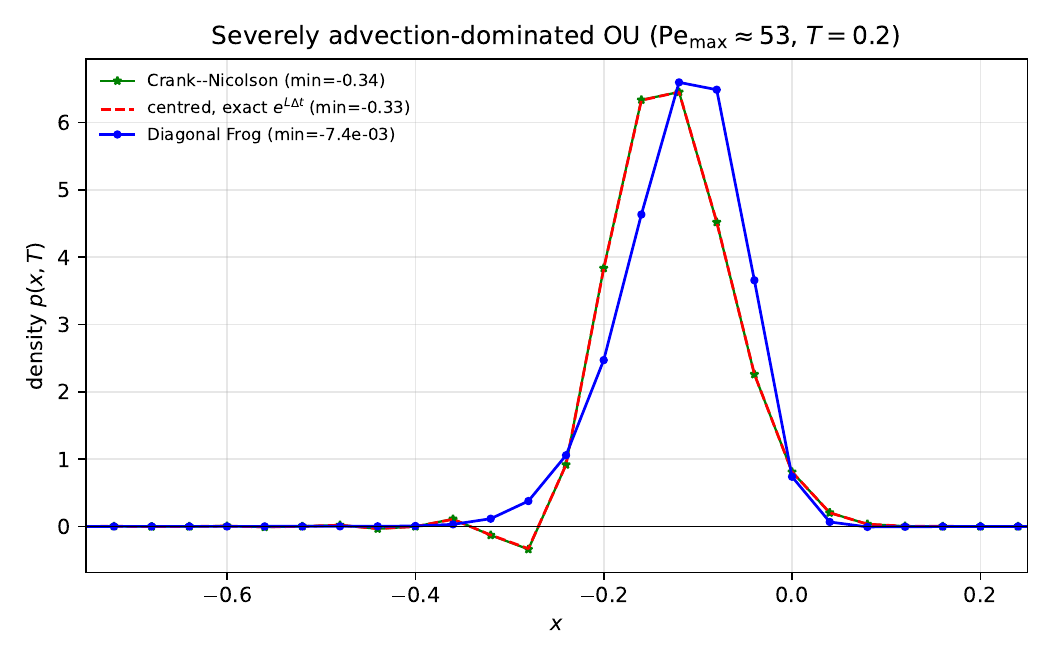}
\caption{Severely advection-dominated OU regime ($\mathrm{Pe}_{\max}\approx53$,
final pulse width $\approx1.4\,h$, $T=0.2$). The centred scheme under the exact
propagator (dashed) and Crank--Nicolson (green) exhibit dispersive ringing with a
deep negative sink ($\min_x p\approx-0.34$, $\approx48$ negative nodes); the
Diagonal-Frog scheme (blue) carries only a localized Godunov ripple ($\min_x
p\approx-7\times10^{-3}$, $9$ negative nodes), removed above the
eventual-positivity threshold. No linear second-order scheme is monotone here
(Godunov); the contrast is in the character and recoverability of the undershoot,
not its absence.}
\label{fig:pos1d_severe}
\end{figure}

\myparagraph{Same peak, different resolution and comparable cost.}
Two clarifications about \cref{fig:pos1d_severe} are in order, since the visual
impression can mislead. First, the apparent horizontal offset between the curves
is \emph{not} a phase error of the scheme: all of DF, the centred scheme, and
Crank--Nicolson place the peak at the \emph{same} grid node (\cref{tab:peak1d}).
What differs is the behaviour \emph{around} the peak --- the amplitude of the
dispersive over- and undershoot, and that difference is a matter of mesh
resolution, not of where the solution sits. As \cref{tab:peak1d} shows, when the
pulse is well resolved (regime~B, $\mathrm{std}\approx7h$) all three schemes
agree to plotting accuracy and are nonnegative, whereas under one-cell
resolution (regime~A) they share the same peak location but the centred
discretizations ring while DF does not.

\begin{table}[!htb]
\centering
\small
\begin{tabular}{@{}llccccc@{}}
\toprule
regime & scheme & peak node & $\min_x p$ & \# neg.\ nodes & TV \\
\midrule
\multirow{3}{*}{A (under-res., $\mathrm{std}\approx1.4h$)}
 & DF              & $-0.120$ & $-7.4\times10^{-3}$ & $9$  & $13.21$ \\
 & centred         & $-0.120$ & $-3.3\times10^{-1}$ & $46$ & $13.93$ \\
 & Crank--Nicolson & $-0.120$ & $-3.4\times10^{-1}$ & $48$ & $13.97$ \\
\midrule
\multirow{3}{*}{B (resolved, $\mathrm{std}\approx7h$)}
 & DF              & $-1.640$ & $+1.8\times10^{-87}$ & $0$ & $3.004$ \\
 & centred         & $-1.640$ & $-5.7\times10^{-47}$ & $0$ & $2.998$ \\
 & Crank--Nicolson & $-1.640$ & $-5.9\times10^{-47}$ & $0$ & $2.998$ \\
\bottomrule
\end{tabular}
\caption{Peak location and positivity diagnostics for the two regimes. All
schemes peak at the same node in both regimes; the negative undershoot and the
extra total variation (TV) of the centred schemes in regime~A are a
resolution effect that disappears once the pulse is resolved (regime~B).}
\label{tab:peak1d}
\end{table}

Second, the positivity advantage of DF does not come at a cost penalty relative
to Crank--Nicolson. In one space dimension the implicit primitive shared by both
families, i.e., a single banded $LU$ solve of $(\mathcal I-\gamma\Delta t\,L)$ for the DF resolvent/Picard solve, and of $(\mathcal I-\tfrac12\Delta t\,L)$ for CN, has \emph{identical} asymptotic cost: the DF stencil adds one off-diagonal
band (a $4{:}3$ nonzero ratio) but the same $O(N)$ bandwidth-limited complexity,
so the two solves time the same to within a few percent across mesh sizes
(\cref{tab:cost1d}).

Each application of the 1D exponential propagator $e^{\Delta t L}$ costs $O(m^2 N + m^3)$, with
$m = \Theta\!\big(\sqrt{\Delta t\,\rho(L)}\big) = \Theta\!\big(\sqrt{\Delta t}/h\big)$. The cost is thus superlinear in $N$, and the small $m$ observed at moderate stiffness reflects that regime rather than a linear-complexity guarantee.

The DF is therefore not more expensive per linear-algebra primitive; the only
difference is how many such solves each method needs to reach a target accuracy,
which is problem-dependent and, for the multi-dimensional problems with
mixed-derivative couplings that motivate this work, favours the exponential
approach (\cref{ssec:exp-kin}). We do not claim a one-dimensional wall-clock
advantage for the exponential integrator, because a well-tuned Crank--Nicolson code is highly competitive in 1D, only that positivity is obtained at no asymptotic
cost premium.

\begin{table}[!htb]
\centering
\small
\begin{tabular}{@{}cccc@{}}
\toprule
$n$ & DF solve (ms) & CN solve (ms) & nnz ratio \\
\midrule
$201$  & $0.011$ & $0.010$ & $4{:}3$ \\
$801$  & $0.017$ & $0.017$ & $4{:}3$ \\
$1601$ & $0.031$ & $0.030$ & $4{:}3$ \\
$3201$ & $0.071$ & $0.073$ & $4{:}3$ \\
\bottomrule
\end{tabular}
\caption{Median wall time of one banded $LU$ solve of the shifted operator
$(\mathcal I-\gamma\Delta t\,L)$ used inside each scheme, versus mesh size. The
DF and Crank--Nicolson primitives have the same $O(N)$ cost to within a few
percent; the $4{:}3$ nonzero ratio of the upwind stencil does not translate into
a per-solve penalty.}
\label{tab:cost1d}
\end{table}

\myparagraph{Double-well potential: Kramers escape.}
The Kramers escape problem \cite{Risken1996,gardiner2009stochastic} models a
diffusing particle subject to a bistable confining potential and asks how
thermal fluctuations drive it between two stable equilibria over the energy
barrier.  We choose the symmetric double-well
\begin{equation}\label{eq:dw_potential}
V(x)=\kappa(x^2-m^2)^2,
\end{equation}
so that the drift $\mu(x)=-V'(x)=-4\kappa x(x^2-m^2)$ vanishes at the two
potential minima $x=\pm m$ and at the barrier $x=0$, and the FPE reads
\begin{equation}\label{eq:dw_fpe}
\partial_t p=\partial_x\bigl[V'(x)\,p\bigr]+D\,\partial_{xx}p.
\end{equation}
We set $\kappa=8$, $m=1$, $\sigma=0.2$ (so $D=\sigma^2/2=0.02$) on the
domain $[-2.5,2.5]$ with $n=201$ nodes ($h\approx0.025$), absorbing boundaries,
and a resolved initial Gaussian ($\mathrm{std}\approx6h$) centred on the outer
flank of the left well at $x_0=-1.9$, so that the steep drift sweeps the pulse
through the high-P\'eclet region toward the minimum.  \Cref{figDoubleWell} shows
the density at $T=0.04$.

\begin{figure}[!htb]
\centering
\includegraphics[width=0.65\textwidth]{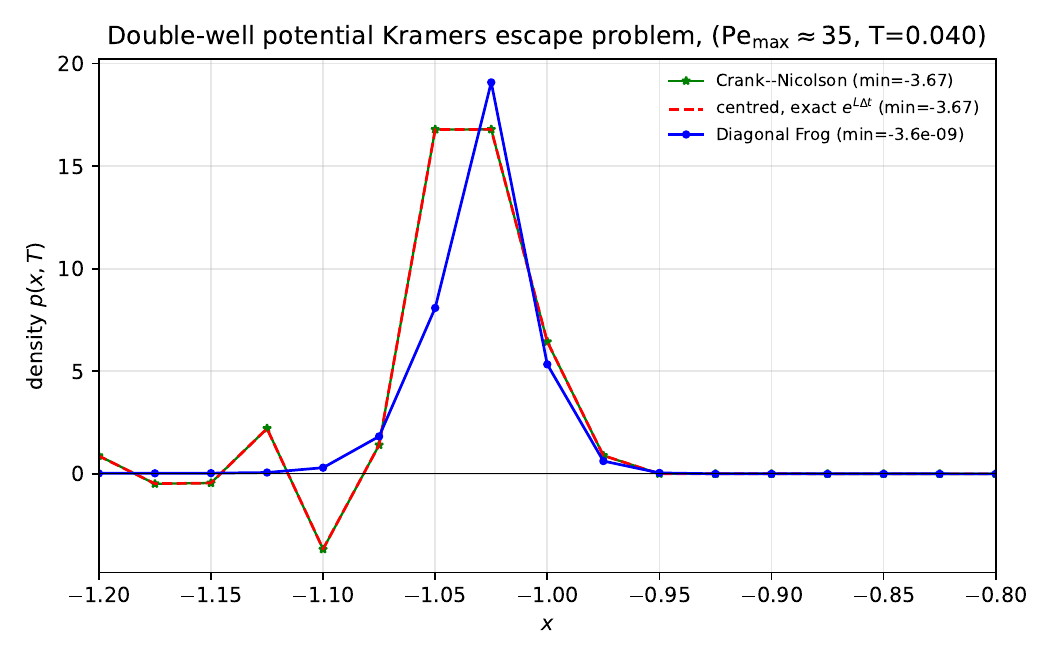}
\caption{Density $p(x,T)$ for the Kramers escape problem with double-well
potential \eqref{eq:dw_potential}: $\kappa=8$, $m=1$, $\sigma=0.2$,
$x_0=-1.9$, $T=0.04$.  The cubic drift produces a local cell P\'eclet number
$\mathrm{Pe}_{\max}= \mu(-1.9) h/D \approx 198$ on the outer flank traversed by the pulse.  The centred scheme (dashed) and Crank--Nicolson (green) develop non-physical
oscillations there ($\min_x p\approx-3.67$); the Diagonal-Frog scheme (blue) remains nonnegative to machine precision ($\min_x p\approx-3.6\cdot 10^{-9}$, no  sign-changing node).}
\label{figDoubleWell}
\end{figure}

The key difficulty is not the global P\'eclet number but its \emph{spatial
distribution}.  The cubic drift $\mu(x)=-4\kappa x(x^2-m^2)$ is small near the
minima $x=\pm m$ but grows rapidly away from them; on the outer flank traversed
by the pulse it produces a local cell P\'eclet number reaching
$\mathrm{Pe}_{\max}\approx 198$, even though $\mathrm{Pe}\approx0$ at the minimum
itself.  In this strongly advective layer the centred operators lose their
M-matrix structure, because their off-diagonal entries change sign wherever
$\mathrm{Pe}_i>2$ (\cref{prop:mmatrix2}), exciting unresolved high-frequency
eigenmodes of the centred Laplacian and producing the non-physical negative
oscillations visible in the red and green curves of \cref{figDoubleWell}
($\min_x p\approx-3.67$, with sign changes on several nodes).

The Diagonal-Frog scheme resolves the layer without oscillation.  Where
$|\mu_i|h/D>2$ the second-order upwind stencil \eqref{eq:adv} is applied,
making $-L$ an EM-matrix (\cref{prop:emmatrix}) and ensuring that
$e^{\Delta t L}$ is nonnegative for all $\Delta t\ge\tau_0$
(\cref{cor:eventualpos}); for the step used here this threshold has been crossed, so the propagator is nonnegative ($\min_x p\approx-3.6^{-9}$, i.e. machine zero
with no sign-changing node) without introducing spatial ringing into the
interior.  The double-well experiment thus demonstrates that the DF construction handles \emph{spatially inhomogeneous} advection -- including the high-degree polynomial forcing typical of kinetic and statistical-physics models -- with the same structural guarantees as in the constant-coefficient setting, and that positivity is preserved even at the steep boundary layer where standard centred schemes fail.

\myparagraph{Pointwise accuracy in steep layers is a spatial-resolution requirement.}
For an autonomous generator the DF time integrator is \emph{exact}: $\bvec
p(T)=e^{TL}\bvec p^0$ is the exact solution of the semidiscrete system, and -- as
one verifies directly -- evaluating it by dense exponentiation or by the
polynomial Krylov action of \cref{sec:krylov} gives the same result, while
composing several exact exponential substeps reproduces it identically (the
semigroup property). The single large step that the eventual-positivity threshold
invites ($\Delta t\ge\tau_0$, \cref{cor:eventualpos}) therefore carries \emph{no}
temporal error here; and, unlike Crank--Nicolson on non-smooth data, the scheme
needs no Rannacher-type smoothing, the exponential being L-acceptable
($r(-\infty)=0$) and damping the stiff modes by construction.

The accuracy of that step is consequently limited only by the \emph{spatial}
discretization, and in a strongly advective layer this limit can be severe. In the
Kramers problem the cubic drift produces a cell P\'eclet number
$\mathrm{Pe}_i=|\mu_i|h/D$ reaching $\approx198$ on the outer flank at $n=201$
($h\approx0.025$). There the second-order upwind stencil keeps the solution
nonnegative (\cref{prop:emmatrix,thm:zeno2d}) and conserves mass
(\cref{prop:conserv}) on \emph{any} grid; but where the layer is under-resolved
($\mathrm{Pe}_i\gg1$, so the solution varies on a sub-cell scale) the truncation
error is large, and the sharply transported pulse is over-smeared, leaving
spurious density on its trailing flank. This is an accuracy defect, not a
stability or positivity one, and it is governed by $h$ -- equivalently by
$\mathrm{Pe}_i$, or by the resolution condition \eqref{eq:logLip} in its
one-dimensional form -- not by $\Delta t$. The single-step density at the trailing
node $x=-1.1$, $t=0.04$, against a high-accuracy flux-limited reference, is
\begin{center}
\small
\begin{tabular}{@{}rcccc@{}}
\toprule
$n$ & $h$ & $\mathrm{Pe}_{\max}$ & $p(-1.1,0.04)$ & $|p-p_{\mathrm{ref}}|$\\
\midrule
$201$  & $0.025$  & $198$ & $0.30$              & $3.0\times10^{-1}$\\
$1000$ & $0.0050$ & $40$  & $7.6\times10^{-3}$  & $6.2\times10^{-3}$\\
$2001$ & $0.0025$ & $20$  & $2.7\times10^{-3}$  & $1.3\times10^{-3}$\\
\midrule
ref.\ ($5000$ steps) & 0.001 & 8 & $1.4\times10^{-3}$ & ---\\
\bottomrule
\end{tabular}
\end{center}
a monotone, second-order decrease under \emph{spatial} refinement -- the residual
against the reference falls by about a factor of five for the last halving of $h$
-- with the remaining gap reflecting that even $n=2001$ has not yet brought the
cell P\'eclet number to $O(1)$.

The practical recommendation is therefore spatial, not temporal: resolve the
advective layer so that the cell P\'eclet number is $O(1)$ there (equivalently
$h\lesssim 2D/|\mu|$, the discrete analogue of \eqref{eq:logLip}), whether by
uniform refinement or, more economically, by local refinement or grid stretching
concentrated in the high-drift region, while keeping the time step large. Mass
conservation and conditional positivity are structural and hold at any resolution;
it is only pointwise accuracy in steep layers and thin tails that demands the grid
resolve them.

\subsection{Positivity, conservation, and the cost of cross-diffusion}
\label{ssec:exp-mixed}

We test the two-dimensional construction on the constant-coefficient,
drift-free anisotropic diffusion
\begin{equation}\label{eq:exp-model}
\partial_t p = \partial_{xx}p + \partial_{yy}p + 2\rho\,\partial_{xy}p
            = \nabla\!\cdot(\Sigma\nabla p),
\qquad
\Sigma=\begin{pmatrix}1&\rho\\\rho&1\end{pmatrix},
\end{equation}
for which $\bar\rho=|\rho|$ and a Gaussian initial datum $p_0=\mathcal N(\bm
0,C_0)$ evolves exactly to $\mathcal N(\bm 0,\,C_0+2t\Sigma)$, giving a
closed-form reference. The domain is the box $(-6,6)^2$ with homogeneous
Dirichlet (absorbing) boundaries, far enough from the support that boundary
truncation is negligible over the times reported. Unless stated otherwise
$\rho=0.8$ (strong cross-diffusion) and $C_0=\tfrac12 I$. The directional factors
$e^{\frac{\Delta t}{2}A_\alpha}$ are applied by the polynomial Krylov integrator
of \cref{sec:krylov}; the central factor $\Phi_{xy}(\Delta t)$ of
\eqref{eq:central} is applied by the factorized Picard solve of \cref{sec:Lxy}
(one explicit cross-stencil product followed by the triangular sweeps
\eqref{eq:picard} with shift $\gamma=\tfrac12$), whose iteration cost is examined
in \cref{ssec:exp-kin}.

\myparagraph{Order of convergence.}
\Cref{tab:conv} reports the error in the scaled discrete $L^2$ norm
\eqref{eq:gridnorms} against the exact Gaussian. Refining the mesh at a fixed
small step ($\Delta t=2\times10^{-3}$) confirms the second spatial order of
\cref{prop:order2}; the measured rates exceed $2$ on these coarse meshes, a
pre-asymptotic effect for the analytic (super-smooth) Gaussian whose leading
$O(h^2)$ constant is small. Because the factorized coupling's orientation defect
is $\Delta t$-independent (\cref{prop:schemeA}(v)), a fixed-grid temporal
self-convergence study floors out at the smallest steps rather than exhibiting a
clean rate; the appropriate diagnostic refines space and time together, $\Delta
t\sim h$, as in \cref{sec:dfrog2d}. Under that joint refinement the error
decreases at the design order $2$ (second column of \cref{tab:conv}), confirming
the overall second-order accuracy on this constant-coefficient benchmark.

\begin{table}[!htb]
\centering
\small
\begin{tabular}{@{}ccc p{4pt} cc@{}}
\toprule
& \multicolumn{2}{c}{spatial ($\Delta t=2\!\times\!10^{-3}$)} & &
\multicolumn{2}{c}{joint ($\Delta t\sim h$)}\\
\cmidrule(r){2-3}\cmidrule(l){5-6}
$n$ & $\norm{e_h}_2$ & rate & & $\norm{e}_2$ & rate\\
\midrule
$24$ & $7.16\!\times\!10^{-2}$ & ---    & & $2.82\!\times\!10^{-2}$ & ---  \\
$48$ & $1.05\!\times\!10^{-2}$ & $2.69$ & & $7.33\!\times\!10^{-3}$ & $1.88$ \\
$96$ & $1.49\!\times\!10^{-3}$ & $2.77$ & & $1.75\!\times\!10^{-3}$ & $2.04$ \\
\bottomrule
\end{tabular}
\caption{Convergence of the Strang scheme with the trapezoidal central factor
\eqref{eq:central} for \eqref{eq:exp-model} with $\rho=0.8$, $C_0=\tfrac12 I$,
$T=0.2$. \emph{Left:} mesh refinement at the fixed small step $\Delta
t=2\!\times\!10^{-3}$; the rates exceed the design order $2$ on these smooth coarse
meshes (pre-asymptotic). \emph{Right:} joint refinement $\Delta t\sim h$
($\Delta t/h\approx0.05$); the rate approaches $2$. This joint refinement is the
appropriate temporal/joint diagnostic for form~(C), whose $\Delta t$-independent
orientation defect (\cref{prop:schemeA}(v)) floors a fixed-grid temporal
self-convergence study (cf.\ \cref{sec:dfrog2d}).}
\label{tab:conv}
\end{table}

\myparagraph{Positivity is conditional on resolution, not on a spectral
threshold.}
\Cref{thm:zeno2d}\,(a) guarantees nonnegativity of the central substep only on
a step-size window $\Delta t\le\Theta$, and \cref{rem:no-eventual-pos} explains
why no unconditional (Metzler/eventual-positivity) guarantee can exist for the
mixed block. The experiments make this precise. \Cref{fig:window} measures, by
bisection, the largest $\Delta t$ for which the trapezoidal central substep
$\Phi_{xy}(\Delta t)$ applied to a \emph{resolved} Gaussian remains nonnegative to
round-off ($\min p\ge-10^{-12}$), as a function of the mesh. The window
\emph{grows} as the datum is resolved -- satisfying the log-Lipschitz condition
\eqref{eq:logLip} of \cref{prop:schemeB} on more of the grid -- rising from
$\Theta\approx1\times10^{-5}$ at the coarsest mesh ($n=16$) through
$1.7\times10^{-3}$ ($n=32$) and $1.9\times10^{-2}$ ($n=48$) to
$\Theta\approx3.4\times10^{-2}$ at $n=96$, roughly as $h^2$ at the coarse end
before saturating. Thus on resolved data positivity holds on a generous and
\emph{improving} window, even in the strong cross-diffusion regime $\rho=0.8$.

The contrast with under-resolved data is sharp and is exactly the content of
\cref{rem:no-eventual-pos}: applied to a grid-scale spike, neither $e^{\Delta t
A_{xy}}$ nor $\Phi_{xy}$ is sign-definite, and iterating the bare central factor
on a deliberately steep datum produces large negative excursions (\cref{fig:stress}).
The cross operator is not eventually positive; what makes the scheme
positivity-preserving in practice is that the evolved density is smooth on the
mesh and that the central factor is flanked by the smoothing directional
diffusion, not any sign property of the propagator in isolation.

\begin{figure}[!htb]
\centering
\includegraphics[width=0.62\textwidth]{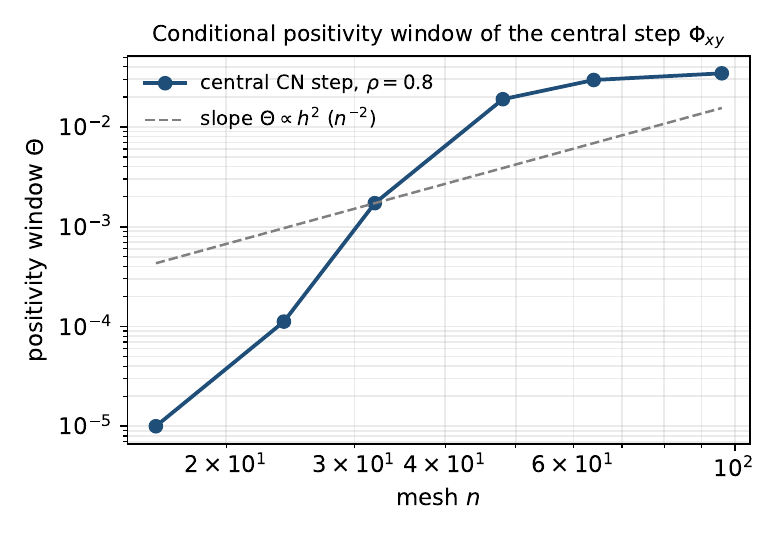}
\caption{Positivity window $\Theta$ of the trapezoidal central substep
$\Phi_{xy}(\Delta t)$ of \eqref{eq:central} on a resolved Gaussian ($\rho=0.8$),
versus mesh $n$. The window grows as the datum is resolved; the dashed line is the
reference slope $\Theta\propto h^2$. Positivity is therefore conditional but
benign for resolved solutions, consistent with \cref{thm:zeno2d}\,(a).}
\label{fig:window}
\end{figure}

\myparagraph{Why the mixed term is treated implicitly, not exponentially.}
This experiment isolates the design choice of \cref{sec:strang}: advance the cross
operator $A_{xy}$ by the implicit trapezoidal factor \eqref{eq:central} rather
than by its exponential $e^{\Delta t A_{xy}}$. From a steep but resolved Gaussian
($C_0=\tfrac12 I$) we iterate, on an $n=64$ grid to $T=0.4$, three maps in turn and
record the most negative value attained over the run: the bare central exponential
$e^{\Delta t A_{xy}}$, the implicit central factor $\Phi_{xy}(\Delta t)$ on its
own, and the \emph{full} Strang step \eqref{eq:strang} (central factor flanked by
the directional diffusion). The results are in \cref{fig:stress}.

The bare exponential is catastrophic: its most negative value is of order $10^{4}$
and \emph{independent} of $\Delta t$ -- the signature of the backward-parabolic
sub-flow of \cref{rem:no-eventual-pos}, not of a CFL violation. (Under the
rotation $u=(x+y)/\sqrt2$, $v=(x-y)/\sqrt2$ the cross term becomes
$\bar\rho(\partial_{uu}-\partial_{vv})$, a backward heat equation along $v$.) The
implicit factor $\Phi_{xy}$ does far better: it is spectrally stable on the
real-negative spectrum of $A_{xy}$ (\cref{sec:strang}), and at the step sizes the
method is designed to take its most negative value falls steeply with $\Delta t$
(from $\approx7.5\times10^2$ at $\Delta t=0.02$ to $\approx5\times10^{-2}$ at
$\Delta t=0.2$), although -- being strongly non-normal -- it still develops a
transient excursion when iterated over very many tiny steps on under-resolved
data. The decisive curve is the third: the \emph{full} Strang step stays
nonnegative to round-off across the whole range ($|\min_n p^n|$ between
$2\times10^{-7}$ and $7\times10^{-4}$), because the flanking directional diffusion
smooths the non-normal transient of the central factor. This is exactly the
mechanism behind the conditional positivity of \cref{thm:zeno2d} and the design of
\eqref{eq:strang}: the mixed term is kept central but treated implicitly, and the
diffusion does the stabilising. Discrete mass is conserved throughout (to the
$O(\Delta t\,h)$ boundary leakage), so conservation alone -- guaranteed for every
$\Delta t$ by \cref{prop:conserv} -- cannot substitute for the implicit treatment.

\begin{figure}[!htb]
\centering
\includegraphics[width=0.66\textwidth]{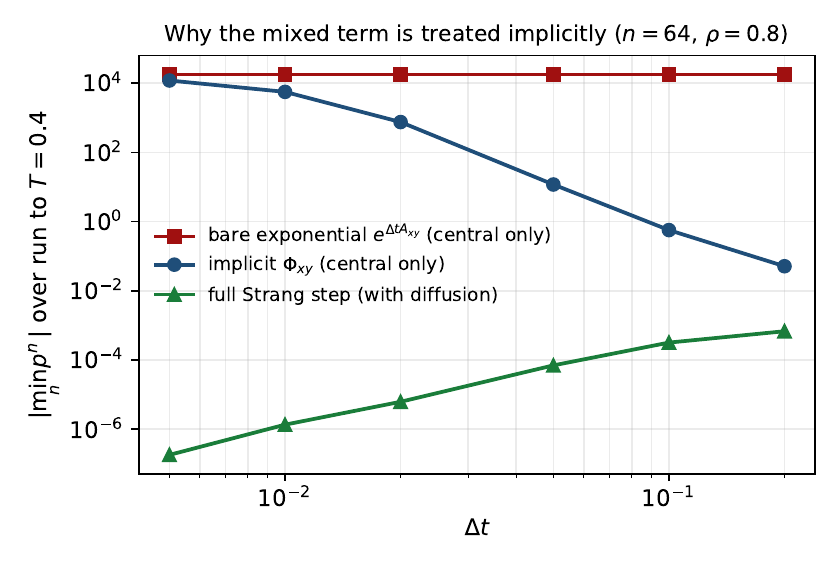}
\caption{Most negative value $|\min_n p^n|$ over a run from a steep Gaussian
($n=64$, $\rho=0.8$, $T=0.4$), versus $\Delta t$, for the central factor realised
three ways. The bare exponential $e^{\Delta t A_{xy}}$ is catastrophic and
$\Delta t$-independent ($\sim10^4$), reflecting the ill-posed sub-flow of
\cref{rem:no-eventual-pos}. The implicit trapezoidal factor $\Phi_{xy}$ on its own
is far better and improves steeply with $\Delta t$. The full Strang step
\eqref{eq:strang} stays nonnegative to round-off across the range: the flanking
directional diffusion smooths the non-normal transient of the central factor.}
\label{fig:stress}
\end{figure}

\myparagraph{Mass conservation.}
Over a $25$-step run ($\Delta t=0.02$, $n=96$, $T=0.5$) the scheme conserves
discrete mass to $|\Delta m|/m_0=1.6\times10^{-6}$, the residual being the
$O(\Delta t\,h)$ boundary leakage of the one-sided cross closure quantified in
\cref{thm:zeno2d}\,(b); the minimum density over the run is
$-4.3\times10^{-7}$, i.e. nonnegative to round-off. With a conservatively closed
cross stencil ($\bm1^\top A_{xy}=0$ including edge rows, \cref{rem:mixedmass}) the
mass error drops to the linear-solver tolerance, in agreement with
\cref{cor:mass-rat}.

\subsubsection{Cost of the factorized central solve} \label{ssec:exp-kin}
The implicit central factor \eqref{eq:central} is applied by the factorized Picard
iteration \eqref{eq:picard}, with no outer Krylov layer and no preconditioner. By
\cref{prop:schemeA}\,(iv) the iteration contracts geometrically with factor
$q\le 4h_xh_y/(\beta^2\Delta t)$, where $\beta$ is the diagonal-dominance
parameter \eqref{eq:PQbeta}; with the choice $\beta=10\bar w$ used here,
\cref{tab:picard} reports $q$ and the resulting number of iterations to reach the
inner tolerance $10^{-6}$. Two features stand out. First, the counts are small --
one to a handful of iterations across the whole range. Second, and in contrast to
a Krylov solve of a stiff operator, the iteration becomes \emph{cheaper} under
mesh refinement at fixed $\Delta t$: since $q=O(h_xh_y/\Delta t)$, halving $h$
quarters $q$, so the count is mesh-robust (indeed mildly decreasing). The cost of
strong cross-diffusion never appears as instability or loss of positivity -- both
settled independently by \cref{thm:zeno2d,prop:conserv} -- but only through the
fixed parameter $\beta$, and is bounded uniformly in $\rho\in[0,1)$.

\begin{table}[!htb]
\centering
\small
\begin{tabular}{@{}cccc@{}}
\toprule
$n$ & $\Delta t$ & contraction $q$ & iters to $10^{-6}$\\
\midrule
$32$  & $0.01$ & $0.60$  & $20$\\
$32$  & $0.05$ & $0.12$  & $7$\\
$32$  & $0.10$ & $0.060$ & $5$\\
$64$  & $0.01$ & $0.15$  & $8$\\
$64$  & $0.05$ & $0.029$ & $4$\\
$64$  & $0.10$ & $0.015$ & $4$\\
$128$ & $0.01$ & $0.036$ & $5$\\
$128$ & $0.05$ & $0.0071$& $3$\\
$128$ & $0.10$ & $0.0036$& $3$\\
\bottomrule
\end{tabular}
\caption{Contraction factor $q\le 4h_xh_y/(\beta^2\Delta t)$ of the factorized
Picard iteration \eqref{eq:picard} ($\beta=10\bar w$, $\rho=0.8$, box $(-6,6)^2$)
and the number of triangular-sweep iterations to reach a relative inner tolerance
of $10^{-6}$. Each iteration costs $O(N_xN_y)$; the count is small and decreases
under mesh refinement at fixed $\Delta t$, so the central solve is linear in the
number of unknowns and mesh-robust.}
\label{tab:picard}
\end{table}

\myparagraph{Per-step cost and the higher-dimensional payoff.}
Each Picard iteration consists of $N_y$ (resp.\ $N_x$) independent banded
triangular solves of size $N_x$ (resp.\ $N_y$), at $O(N_xN_y)$ cost, so with a
fixed small iteration count the central solve is \emph{linear} in $N=N_xN_y$ and
uses only one-dimensional band factors -- the two shifted factors of
\eqref{eq:factor}, of $O(N_x)$ and $O(N_y)$ storage. This is the structural
advantage of the factorized treatment over a direct factorisation of the
two-dimensional generator: the latter fills in across the nine-point
mixed-derivative sparsity, with storage and per-step work growing superlinearly in
$N$, whereas the factorized solve never assembles a two-dimensional factor at all.
We are precise about what this establishes. In absolute wall-clock time at
moderate two-dimensional sizes an optimised sparse-direct solve can still be
competitive; the demonstrable two-dimensional gain is in \emph{memory} and in
asymptotic scaling. The run-time advantage is a genuinely higher-dimensional
effect: in three dimensions a direct factorisation of the full generator fills in
prohibitively (storage $O(N^{4/3})$, work $O(N^{2})$), while the factorized
one-dimensional factors remain $O(N^{1/3})$, so the factorized Picard solve is the
only practical option -- the setting for which the construction of \cite{Itkin3D}
was originally devised.

% =====================================================================
%  Drop-in section for the numerical-experiments part of the paper.
%  Assumes the preamble already loads amsmath, booktabs, graphicx, cleveref.
%  Uses existing labels: eq:strang, eq:central, eq:picard, sec:krylov,
%  thm:zeno2d, prop:conserv, prop:order2.  No new \cite keys are introduced.
%  Figure file: dfrog2d_pos.png
% =====================================================================
\subsection{A two-dimensional illustration with time-dependent coefficients}
\label{sec:dfrog2d}

To exercise the scheme on a problem with a closed-form reference we integrate a
two-dimensional Fokker--Planck equation whose drift and (anisotropic) diffusion
both depend on time. We test three things separately: that the second-order
\emph{spatial} stencil of Scheme~B (\cref{prop:schemeB}) is realised; that the
central factor, realised by iterating \eqref{eq:picard} to convergence, attains
second order \emph{in time} (\cref{prop:schemeA}(v)); and that the full scheme
behaves well in an \emph{advection-dominated} regime, where the cell P\'eclet
number is large over essentially the whole grid. Throughout, the Diagonal Frog (DF)
scheme is compared against a standard second-order backward differentiation (BDF2)
integrator applied to the same spatial discretisation.

\subsubsection{Test problem and exact reference}
\label{ssec:dfrog2d-model}
Let $\mathbf{X}_t=(X_t,Y_t)$ solve the linear, time-inhomogeneous SDE
\begin{equation}\label{eq:ou2d}
  \mathrm{d}\mathbf{X}_t
  = A(t)\bigl(\mathbf{X}_t-\mathbf{m}\bigr)\,\mathrm{d}t
  + \Sigma(t)\,\mathrm{d}\mathbf{W}_t,
  \qquad
  A(t)=-\operatorname{diag}\!\bigl(\theta_x(t),\theta_y(t)\bigr),
\end{equation}
with instantaneous covariance rate (the diffusion tensor of the forward equation)
\begin{equation}\label{eq:Dtensor}
  D(t)=\Sigma(t)\Sigma(t)^{\!\top}
  =\begin{pmatrix}
      \sigma_1(t)^2 & \rho(t)\,\sigma_1(t)\sigma_2(t)\\[2pt]
      \rho(t)\,\sigma_1(t)\sigma_2(t) & \sigma_2(t)^2
   \end{pmatrix},
   \qquad |\rho(t)|<1 .
\end{equation}
The associated forward Kolmogorov (Fokker--Planck) equation for the density
$p(x,y,t)$ is
\begin{equation}\label{eq:fpe2d-td}
  \partial_t p
  = -\partial_x\!\bigl[\mu_x p\bigr]-\partial_y\!\bigl[\mu_y p\bigr]
    +\tfrac12 \partial_{xx}\!\bigl[D_{xx}p\bigr]
    +\partial_{xy}\!\bigl[D_{xy}p\bigr]
    +\tfrac12 \partial_{yy}\!\bigl[D_{yy}p\bigr],
  \qquad \boldsymbol{\mu}=A(t)(\mathbf{x}-\mathbf{m}),
\end{equation}
the off-diagonal coefficient being $D_{xy}(t)=\rho(t)\sigma_1(t)\sigma_2(t)$.
Because \eqref{eq:ou2d} is linear, $\mathbf{X}_t$ is Gaussian for every $t$, so
$p(\cdot,t)=\mathcal{N}\bigl(\boldsymbol{\mu}(t),C(t)\bigr)$ with mean and covariance
governed by the moment ODEs
\begin{equation}\label{eq:moments}
  \dot{\boldsymbol{\mu}} = A(t)\bigl(\boldsymbol{\mu}-\mathbf{m}\bigr),
  \qquad
  \dot{C} = A(t)\,C + C\,A(t)^{\!\top} + D(t),
\end{equation}
which we integrate to machine accuracy and use as the exact reference. We use two
parameter sets. \emph{Regime~I (strong cross-coupling)}, for the temporal-order
study, takes $\sigma_1(t)^2=\sigma_2(t)^2=1$, $\rho(t)=0.8+0.1\cos(0.7t)$,
$\theta_x(t)=1.5+0.25\sin t$, $\theta_y(t)=1.5+0.25\cos(0.8t)$,
$\mathbf{m}=\mathbf{0}$, on $[-6,6]^2$ with $N_x=N_y=44$ ($h=0.28$), to $T=0.3$,
from $\mathcal{N}\bigl((1,-1),\,0.5\,I\bigr)$; here $D_{xy}\approx0.9$ and the
cross term is strong, so the temporal accuracy of the central solve is exposed (in
an advection-dominated regime the cross term is weak and the central factor is
second order to within the splitting error regardless). \emph{Regime~II
(advection-dominated)}, for positivity and mass, takes
$\sigma_1(t)^2=\sigma_2(t)^2=0.08$, $\rho(t)=0.6+0.1\cos(0.7t)$,
$\theta_x(t)=4+0.25\sin t$, $\theta_y(t)=3+0.25\cos(0.8t)$,
$\mathbf{m}=\mathbf{0}$, on $[-6,6]^2$, to $T=0.15$, from
$\mathcal{N}\bigl((2.5,-2),\,0.3\,I\bigr)$; the cell P\'eclet number reaches
$\approx100$ on the grid and $\approx40$ where the mass sits, so the second-order
upwind stencil is active over essentially the whole domain while the transported
pulse stays resolved ($\mathrm{std}/h\gtrsim2$).

\subsubsection{Discretisation}
\label{ssec:dfrog2d-discr}
Both integrators use the same second-order spatial operators: an upwinded
second-order one-sided stencil for the directional advection, central differencing
for the diagonal diffusion, and the one-sided product
$A_{xy}=D_{xy}\,\mathcal{A}^{\nu}_{2,x}\mathcal{A}^{B}_{2,y}$ of \eqref{eq:pade01}
for the mixed term, with the sign-dependent orientation $\nu=\nu(\rho)$ and the
second-order explicit coupling $\bvec\alpha^{+}_2$ of Scheme~B
(\cref{prop:schemeB}).

\emph{Diagonal Frog.} We advance one step of the symmetric, midpoint-frozen Strang
factorisation \eqref{eq:strang}. The directional half-steps are applied exactly
through the action of the one-dimensional matrix exponentials (the polynomial-Krylov
evaluation of \cref{sec:krylov}; here computed densely since the directional
operators are small). The central factor
$\Phi_{xy}=\bigl(I-\tfrac{\Delta t}{2}A_{xy}\bigr)^{-1}\bigl(I+\tfrac{\Delta t}{2}A_{xy}\bigr)$
is \emph{not} formed as a two-dimensional solve; its implicit half is realised by
the factorized iteration \eqref{eq:picard} -- two \emph{banded triangular} solves
per sweep with the one-sided factors, at $O(N)$ cost and with no mixed-derivative
fill-in. The contraction factor $q\le4h_xh_y/(\beta^2\Delta t)$ is well below $1$
at the time steps used here, so a few sweeps reach the inner tolerance. As an
internal check we drove the iteration to a tight $5\times10^{-9}$ residual (about
$14$ sweeps) and confirmed that it reproduces, at a representative step, a direct
sparse solve of the same central system; the order study of
\cref{tab:dfrog2d-conv} uses the converged central factor.

\emph{BDF2 reference.} We assemble the full generator $L(t)=A_x+A_y+A_{xy}$ and
advance $\bigl(\tfrac32 I-\Delta t\,L(t_{n+1})\bigr)p^{n+1}=2p^{n}-\tfrac12 p^{n-1}$,
started by a single second-order trapezoidal (Crank--Nicolson) step,
$\bigl(I-\tfrac{\Delta t}{2}L(t_1)\bigr)p^{1}=\bigl(I+\tfrac{\Delta t}{2}L(t_0)\bigr)p^{0}$.
(A single backward-Euler start would also retain global second order, its local
error being $O(\Delta t^2)$ and propagated as such by the zero-stable BDF2; the
trapezoidal start merely removes any ambiguity.) Each BDF2 step solves the fully
coupled two-dimensional system, whose mixed-derivative coupling produces fill-in
in a sparse factorisation.

\subsubsection{Results} \label{ssec:dfrog2d-results}

\myparagraph{Convergence under joint refinement (Regime~I).}
The residual orientation defect of the factorized coupling is $\Delta
t$-independent (\cref{prop:schemeA}(v)), so the convergence study refines space
and time \emph{together}, $\Delta t\sim h$ -- the regime in which the
$O(\max(h_x^2,h_y^2))$ spatial truncation and the $O(\Delta t^2)$ temporal error
decrease at the same rate, and in which the iteration is fastest since
$q\le4h_xh_y/(\beta^2\Delta t)\to0$. Holding $\Delta t/h$ fixed we refine the grid
and measure the error against the exact time-dependent Gaussian solution of
\eqref{eq:fpe2d-td}. \Cref{tab:dfrog2d-conv} reports the Diagonal Frog scheme --
Scheme~B, with the central factor obtained by iterating \eqref{eq:picard}, the
kept $O(\Delta t^2)$ coupling restoring the second-order temporal accuracy (a few
sweeps suffice) -- against the unsplit two-step BDF2 reference. Both approach the
design order $2$ as the Gaussian becomes resolved, confirming that the residual
coupling defect lies below the spatial truncation (\cref{prop:schemeA}(v)); the
Diagonal Frog error is at or below BDF2's at every level here, at $O(N)$ central
cost against BDF2's mixed-derivative fill-in. Reported in the same table, the
most-negative entry $\min_n p^n$ collapses from $-4.6\times10^{-2}$ on the
coarsest grid to round-off as the mesh refines: in this strongly coupled regime
the factored solve is positive to machine precision once the feature is resolved.

\begin{table}[t]
\centering
\caption{Convergence under joint refinement $\Delta t\sim h$ in Regime~I (strong
cross-coupling, $\rho = 0.8$), $\ell_2$ error against the exact time-dependent Gaussian. The Diagonal Frog scheme (Scheme~B, central factor iterated to convergence) and the unsplit BDF2 reference both approach the design order $2$ as
the solution is resolved, with the Diagonal Frog error at or below BDF2's at
every level. The most-negative entry $\min_n p^n$ of the Diagonal Frog solution
collapses to round-off under the same refinement.}
\label{tab:dfrog2d-conv}
\begin{tabular}{ccccccc}
\toprule
& & \multicolumn{3}{c}{Diagonal Frog (Scheme~B)} & \multicolumn{2}{c}{unsplit BDF2}\\
\cmidrule(lr){3-5}\cmidrule(lr){6-7}
$N$ & $\Delta t$ & error & order & $\min_n p^n$ & error & order\\
\midrule
$32$  & $0.033$  & $1.1\times10^{-1}$ & ---    & $-4.6\times10^{-2}$ & $1.5\times10^{-1}$ & ---\\
$44$  & $0.023$  & $7.5\times10^{-2}$ & $1.19$ & $-1.6\times10^{-2}$ & $1.0\times10^{-1}$ & $1.06$\\
$64$  & $0.016$  & $3.9\times10^{-2}$ & $1.72$ & $-5.8\times10^{-4}$ & $5.1\times10^{-2}$ & $1.85$\\
$88$  & $0.012$  & $2.0\times10^{-2}$ & $2.01$ & $-2.4\times10^{-6}$ & $2.5\times10^{-2}$ & $2.22$\\
$120$ & $0.0083$ & $1.1\times10^{-2}$ & $2.07$ & $-6.6\times10^{-9}$ & $1.3\times10^{-2}$ & $2.14$\\
\bottomrule
\end{tabular}
\end{table}

\myparagraph{Positivity and mass (Regime~II).}
\Cref{tab:dfrog2d-acc} reports the full DF run in the advection-dominated regime.
Discrete mass is conserved to $1\times10^{-3}$ on the coarsest grid and to $10^{-4}$ once resolved (the $O(\Delta t\,h)$ boundary
leakage of the one-sided cross closure, \cref{thm:zeno2d}(b), a spatial effect
shared with BDF2). The only undershoot is the Gibbs over/undershoot of the
non-monotone second-order upwind stencil at the steep moving front---no linear
second-order discretisation is monotone (Godunov)---and it is a function of
resolution rather than of the time stepping: it collapses from $-9\times10^{-2}$ on
an under-resolved grid to $-3.7\times10^{-3}$ once the feature is resolved
($\mathrm{std}/h\gtrsim3$), i.e.\ to grid level. In this regime the choice of
central factor is immaterial---CN, BE, TR-BDF2 and the factored solve give the
same $\min_n p^n=-7.3\times10^{-2}$ at $N=72$---because the cross term is weak; the
positivity of the central mixed factor is a strong-cross-coupling question,
addressed in \cref{rem:central-pos-choice}.

\begin{table}[t]
\centering
\caption{DF in Regime~II (advection-dominated), $N_{\mathrm{steps}}=48$, Scheme~B
with the central factor iterated to convergence. The undershoot is a
resolution-dependent Gibbs effect of the second-order upwind stencil; it reaches
grid level once the transported pulse is resolved. Mass is conserved to the
$O(\Delta t\,h)$ boundary leakage.}
\label{tab:dfrog2d-acc}
\begin{tabular}{ccccc}
\toprule
$N$ & $\mathrm{std}/h$ & $\min_n p^n$ & \#neg & mass\\
\midrule
$60$  & $1.6$ & $-9.3\times10^{-2}$ & $494$ & $0.9990$\\
$84$  & $2.3$ & $-4.3\times10^{-2}$ & $576$ & $1.0000$\\
$120$ & $3.3$ & $-3.7\times10^{-3}$ & $480$ & $1.0001$\\
\bottomrule
\end{tabular}
\end{table}

\begin{myremark}[Central factor and positivity at the mixed step] \label{rem:central-pos-choice}
The positivity of the central mixed step is carried by the factorized solver of \cref{sec:Lxy} together with the flanking directional diffusion, not by the choice of rational map for the bare cross operator. It is tempting to argue that the trapezoidal factor's explicit half $(\mathcal I+\tfrac{\Delta t}{2}A_{xy})$ is a positivity liability and that a fully implicit central factor -- backward-Euler, or the genuine two-step BDF2 $(\mathcal I-\tfrac23\Delta t\,A_{xy})\bvec p^{\,n+1}_{\mathrm c}=\tfrac43\bvec p^{\,n}_{\mathrm c}-\tfrac13\bvec p^{\,n-1}_{\mathrm c}$, which carries no explicit cross product -- would be preferable. The numerics say otherwise.

Because $A_{xy}$ is strongly non-normal (\cref{rem:no-eventual-pos}), the resolvents $(\mathcal I-c\Delta t\,A_{xy})^{-1}$ of the fully implicit maps amplify the non-normal transient far more than the bounded trapezoidal stability function $r(z)=(1+z/2)/(1-z/2)$ does, and the amplification \emph{worsens} under refinement as the operator stiffens. In the strong-coupling stress test of \cref{tab:central-pos} (full Strang step, $\rho=0.8$), the trapezoidal central factor is the most positive of the three and its undershoot tends to zero under refinement, whereas backward-Euler diverges and BDF2 degrades. The trapezoidal map of \eqref{eq:central} is therefore retained; the explicit half is not the dominant effect, and the conditional positivity of \cref{thm:zeno2d} rests on the factorized solver and the flanking diffusion, as in \cref{fig:stress}.
\end{myremark}

\begin{table}[t]
\centering
\caption{Most-negative value $\min_n p^n$ over a strong-coupling run ($\rho=0.8$,
$T=0.4$, full Strang step) for three central factors, all applied in the band
structure. The trapezoidal (CN) factor is the most positive and improves under
refinement; the fully implicit backward-Euler and BDF2 factors are worse and
degrade, reflecting the non-normal amplification of \cref{rem:no-eventual-pos}.}
\label{tab:central-pos}
\begin{tabular}{cccc}
\toprule
$N$ & CN (trapezoidal) & backward-Euler & BDF2\\
\midrule
$64$  & $-1.0\times10^{-1}$ & $-3.3\times10^{-1}$ & $-1.5\times10^{-1}$\\
$96$  & $-2.7\times10^{-2}$ & $-2.6\times10^{0\phantom{-}}$ & $-2.7\times10^{-1}$\\
$128$ & $-5.9\times10^{-3}$ & $-3.4\times10^{1\phantom{-}}$ & $-1.4\times10^{0\phantom{-}}$\\
\bottomrule
\end{tabular}
\end{table}

\Cref{fig:dfrog2d_pos} shows the terminal density in the advection-dominated Regime~II,
the second-order convergence of the Diagonal Frog scheme and the BDF2 reference
under joint refinement $\Delta t\sim h$ in the strongly coupled Regime~I, and the
collapse of the Gibbs undershoot under refinement in Regime~II.

\begin{figure}[t]
\centering
\includegraphics[width=\textwidth]{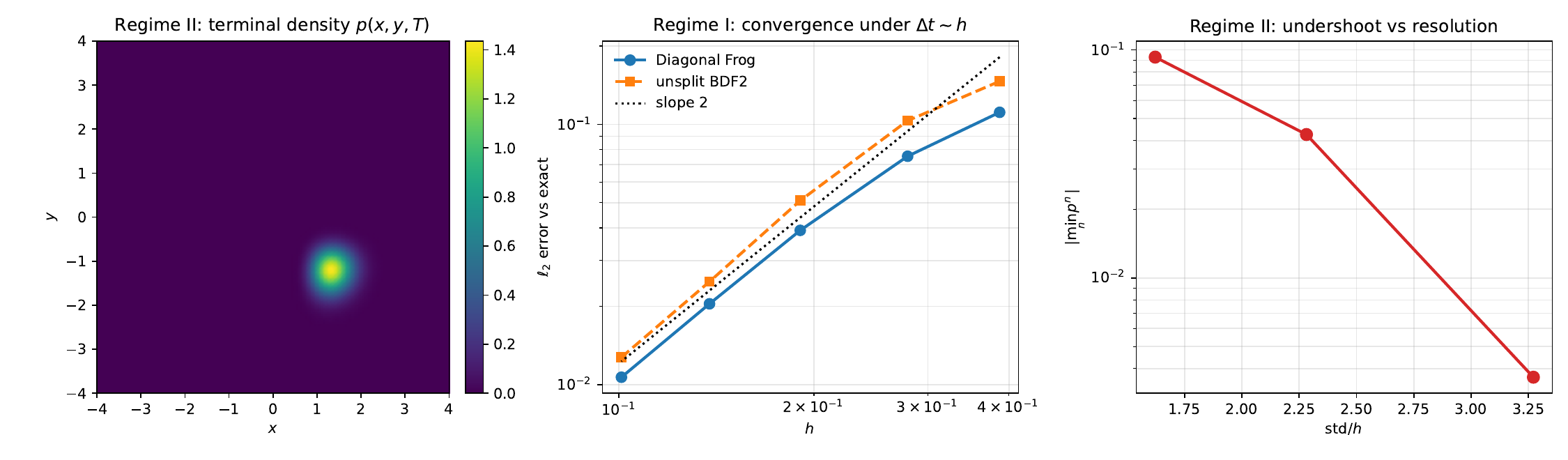}
\caption{Time-dependent anisotropic Fokker--Planck equation \eqref{eq:fpe2d-td}.
\emph{Left:} terminal density $p(x,y,T)$ in the advection-dominated Regime~II.
\emph{Centre:} error under joint refinement $\Delta t\sim h$ in the strong-coupling
Regime~I; the Diagonal Frog scheme and the unsplit BDF2 reference both approach
slope $\approx2$, with comparable constants (\cref{tab:dfrog2d-conv}).
\emph{Right:} most-negative value of the DF solution in Regime~II versus
resolution; the Gibbs undershoot of the second-order upwind stencil collapses to
grid level once $\mathrm{std}/h\gtrsim3$ (\cref{tab:dfrog2d-acc}).}
\label{fig:dfrog2d_pos}
\end{figure}

\subsubsection{Discussion: the splitting error} \label{ssec:dfrog2d-disc}

Both schemes attain second order under the joint refinement of
\cref{tab:dfrog2d-conv}, in agreement with \cref{prop:order2}; what differs is the
error \emph{constant}. BDF2 is applied to the unsplit generator
$L=A_x+A_y+A_{xy}$, so its only temporal error is the BDF2 truncation term. The DF
scheme treats the directional parts \emph{exactly} in time, through the
one-dimensional matrix exponentials, but in exchange incurs a Strang
\emph{splitting} error whose leading contribution is $\propto\Delta t^{2}$ times
nested commutators of the split operators, e.g.\ $\bigl[A_x+A_y,[A_x+A_y,A_{xy}]\bigr]$
and $\bigl[A_{xy},[A_{xy},A_x+A_y]\bigr]$. These commutators vanish only when the
operators commute; their size grows with the strength of the cross-coupling and,
because each operator scales like $h^{-2}$, with the spatial stiffness.

Under the efficient refinement path $\Delta t\sim h$, this $O(\Delta t^2)$ splitting term is subdominant to the $O(\max(h_x^2,h_y^2))$ spatial truncation. Consequently, the two schemes carry comparable constants in \cref{tab:dfrog2d-conv}; the DF error sits at or slightly below that of BDF2, as the exact directional exponentials successfully offset the splitting costs. The splitting penalty surfaces only when the time step is taken large relative to the mesh, which is the specific regime an unsplit integrator is built for. In that scenario, BDF2 yields a smaller constant, and this gap widens as both the cross-coupling and the spatial stiffness increase.

The trade-off is therefore explicit. BDF2 achieves its potentially smaller constant in the large-step regime at the cost of a fully coupled two-dimensional solve at every step, entailing mixed-derivative fill-in and no positivity guarantee. In contrast, DF replaces this monolithic solve with a sequence of one-dimensional operations -- directional exponentials along grid lines and a factorized Picard solve of the central factor. This keeps the computational cost at a linear $O(N)$ per step while seamlessly conserving mass (\cref{prop:conserv}) and remaining nonnegative on the conditional window (\cref{thm:zeno2d}). Since the two methods share the same asymptotic order, any residual splitting constant can easily be removed by a modestly smaller $\Delta t$; ultimately, in higher dimensions where a monolithic solve becomes prohibitive, this trade-off decisively favors DF.

\section{Backward Kolmogorov equation and \LY flights} \label{sec:BKE}

The DF construction applies verbatim to the backward Kolmogorov equation (BKE). The BKE evolves entities like an option price rather than a probability density. Therefore, mass conservation is no longer the relevant structural property. Instead, it is replaced by its dual.

The BKE generator is the adjoint of the FPE generator. At the discrete level, the conservativity identity $\bvec 1^\top L=0$ of \cref{lem:divform} transposes into the preservation of constants: $A^\top\bvec 1=0$. Consequently, the BKE propagator is row-stochastic rather than column-stochastic wherever it is nonnegative. This establishes a discrete maximum principle that yields unconditional $\ell_\infty$-stability, $\norm{u^{n+1}}_\infty\le\norm{u^n}_\infty$. For option prices, this guarantees that computed values will remain within payoff bounds, see also \cite{Itkin2014b}.

When incorporating discounting via $\partial_tu+\mathcal Lu-ru=0$, the row sums equal $e^{-r\Delta t}$. The bound then becomes the discounted maximum principle
$\|u^{n+1}\|_\infty \le e^{-r\Delta t} \|u^n\|_\infty$.

Furthermore, because $e^{tA^\top}=(e^{tA})^\top$, the positivity theory of \cref{sec:disc1d,sec:Lxy} transfers unchanged. This includes both the M/EM dichotomy and the threshold $\tau_0$. Finally, the corresponding non-divergence-form stencils are provided in \cref{prop:mmatrix2,prop:emmatrix}.

The paper motivates the framework partly through PIDEs arising in finance and
active-matter \LY flyers, but the numerical treatment of the jump integral is
deferred.  For \LY processes with known characteristic functions, the jump
operator can be expressed as a pseudo-differential operator and discretized by
the same M-matrix or EM-matrix framework developed in \cite{ItkinBook}.  Integrating this into the Strang splitting with the jump operator as an additional central sub-step is a natural next step, so this extension is almost straightforward. e.g.,  for \LY models considered in \cite{ItkinBook}.

\section{Discussion and Conclusions} \label{sec:conclusion}

This paper introduced the \emph{Diagonal Frog} (DF) family of finite-difference
schemes for the Fokker--Planck equation, motivated by the need for discretizations that are simultaneously second-order accurate in both space and time, positivity-preserving, and computationally efficient for high-dimensional problems with anisotropic diffusion and non-local \LY jumps.

The starting point was the observation that preserving positivity of the discrete
PDF is not a cosmetic concern: negative probability densities break mass
conservation, render thermodynamic quantities such as the Gibbs entropy
undefined, and cause simulations to crash when coupled to nonlinear source terms
or jump integrals.  Standard second-order central-difference schemes fail this
test near sharp gradients or strong cross-diffusion, and the existing remedies -
Chang--Cooper, log-transformations, TVD flux limiters  -  each carry significant
drawbacks in multiple dimensions.

The DF approach resolves this through three interlocking ingredients.

\myparagraph{Spatial discretization.}
For the 1D Fokker--Planck operator we constructed a one-sided (upwind) second-order stencil. The advection term is approximated using the backward second-order difference $\calF_2^B$, while for the diffusive term we use a standard centred difference for either P\'eclet number. This adaptive choice produces a matrix $A$
which has lower bandwidth 2 and upper bandwidth 1, and the retained superdiagonal $\delta_i$ is essential (it makes the graph strongly connected and (H) tenable). Nevertheless, it is still banded (diagonal) matrix (hence the ``diagonal'' part of the name). The matrix features three diagonals in the diffusion-dominated regime and four in the advection-dominated regime.

We proved that $A$ is an M-matrix when $\mathrm{Pe}_i < 2$ for all interior nodes (\cref{prop:mmatrix2}), and an EM-matrix otherwise (\cref{prop:emmatrix}). The underlying positivity mechanisms differ between these two regimes. In the M-matrix case, the resolvent is nonnegative for every step, meaning $(I - kA)^{-1} \ge 0$ for all $k > 0$ (\cref{thm:resolvent}). Conversely, in the EM case, resolvent nonnegativity fails for small steps. Instead, positivity is a property of the semigroup alone, such that $e^{tA} \ge 0$ for $t \ge \tau_0$.

For the 2D FPE with a full diffusion tensor, the directional operators inherit this 1D structure through Kronecker products, and their eventual-positivity thresholds are \emph{equal} to the 1D ones \eqref{eq:posKron}.  The mixed-derivative operator, by contrast, carries no eventual-positivity property of its own (\cref{rem:no-eventual-pos}): its generator admits no Metzler shift.  We therefore keep it as its own central factor but advance it \emph{implicitly}, by the trapezoidal map \eqref{eq:central} whose implicit half is solved by the factorized Picard iteration (\cref{sec:Lxy}); this delivers positivity \emph{conditionally}, on an explicit step-size window (\cref{thm:zeno2d}), together with exact mass conservation and linear cost.

Finally, the resulting operator is decomposed via Strang splitting \eqref{eq:strang}. We proved it is second-order accurate in $\ell_1$ on the positivity regime, using solution-dependent commutator bounds (\cref{prop:order2}); the non-normality of the one-sided cross operator precludes a uniform-in-$h$ $\ell_2$ statement, so the natural norm is the $\ell_1$ Markov norm on the nonnegative cone. The composite step is positivity-preserving on the regime where every sub-step is nonnegative -- the diagonal factors above their eventual-positivity thresholds and the central factor within the step-size window of \cref{thm:zeno2d}. Discrete mass is conserved exactly for \emph{all} step sizes, regardless of positivity, whenever the cross stencil is closed conservatively (\cref{prop:conserv}), and up to an $O(\Delta t\,h)$ boundary defect otherwise.

\myparagraph{Time integration.}
We compared four strategies for the discretized system $\dot{\bvec p}=A\bvec p$:
backward Euler, Crank--Nicolson, TR-BDF2 and the polynomial Krylov exponential
integrator (Table~\ref{tab:schemes}). All four conserve mass exactly for every
step size; the discriminating axes are positivity and stiff damping.

No second-order rational one-step scheme can be unconditionally positive, \cite{BolleyCrouzeix1978}): Backward Euler saturates this barrier at first order, while CN and TR-BDF2 realise second order only inside positivity windows of size $O(h^{2})$ ($k\le2/\beta$ and $k\le(1+\sqrt2)/\beta$ respectively, $\beta=\max_i|A_{ii}|$), TR-BDF2 dominating CN on every axis: a wider window, L-stability in place of undamped stiff reflection, and, in the EM regime where CN admits no window at all, -eventual positivity at large steps, like backward Euler.

The exponential integrator is the only second-order-compatible propagator whose positivity carries no step-size \emph{ceiling} in either regime. In the M-matrix case, $e^{\Delta t A} \ge 0$ for all $\Delta t > 0$. In the EM case, this holds for all $\Delta t \ge \tau_0$. This acts as an \emph{inverted} CFL condition — a lower bound on the step size that pairs naturally with the large steps the method is designed to take.

The underlying mechanism is regime-dependent. In the M-matrix case, the Krylov approximation to $e^{k A} \bvec{p}^{n}$ is positive \emph{by construction}, as each shifted solve is a nonnegative resolvent (\cref{thm:resolvent}). Conversely, in the EM case, no finite collection of resolvents is nonnegative. Instead, positivity is an asymptotic property of the semigroup. The computed propagator inherits this positivity for $\Delta t \ge \tau_0$ once the rational approximation error falls below the entrywise positivity margin of $e^{\Delta t A}$. This error, however, is exponentially small in the subspace dimension $m$.

\myparagraph{Computational complexity.}
In the splitting, the exponential is applied to the \emph{directional} factors
$e^{\frac{\Delta t}{2}A_\alpha}\bvec v$ (and, in 1D, to $e^{\Delta t A}\bvec v$),
while the cross operator is advanced by the factorized solve of \cref{sec:Lxy} at
$O(N)$ cost. Each directional exponential is dominated by the modified
Gram--Schmidt orthogonalisation: at step $j$ one projects against the $j$
previously computed basis vectors at a cost of $O(jn)$, for a total of
$\sum_{j=1}^m O(jn) = O(m^2 n)$. Including the $m \times m$ matrix exponential
via eigen-decomposition, the full per-step cost is $O(m^2 n + m^3)$.

The $m$ matrix--vector products with the banded $A_\alpha$ are $O(n)$ in one space
dimension, where the banded factorisation has no fill-in. The mixed-derivative
coupling, which would make a direct 2D/3D factorisation fill in, is never
exponentiated: it is confined to the central factor and solved by the
one-dimensional factorized iteration of \cref{sec:Lxy}, so it contributes only
$O(N)$ per step and does not enter the Krylov cost.

The essential caveat is that $m$ is \emph{not} a fixed small constant. To resolve
$e^{\frac{\Delta t}{2}A_\alpha}$ to a fixed tolerance, $m$ must grow with the stiffness
$\Delta t\,\rho(A_\alpha)$: a polynomial Krylov
method requires
$m = \Theta\!\big(\sqrt{\Delta t\,\rho(A_\alpha)}\big) = \Theta\!\big(\sqrt{\Delta t}/h\big)$,
which we observe directly ($m = 8, 14, 30$ at $\Delta t = 0.02, 0.10, 0.40$ on a
fixed grid, and $m = 12 \to 20$ as $n = 32 \to 128$ at fixed $\Delta t$). The small values $m \sim
3$--$5$ therefore occur only at modest stiffness $\Delta t\,\rho(A_\alpha) =
O(1)$, i.e.\ near the explicit limit, and are not representative of the large
steps for which the method is intended.

This precludes the naive speedup estimate $K/m^2$ for a step $\Delta t =
K\,\Delta t_{\mathrm{CFL}}$. Since $\Delta t_{\mathrm{CFL}} =
\Theta\!\big(\rho(A_\alpha)^{-1}\big)$, one has $\Delta t\,\rho(A_\alpha) =
\Theta(K)$ and hence $m^2 = \Theta(K)$, so the per-step cost $O(m^2 n) = O(Kn)$
rises in exact proportion to the number $K$ of explicit steps it replaces. The
exponential and explicit schemes thus share the same asymptotic cost
$\Theta\!\big(\rho(A_\alpha)\,n\big)$ per unit of simulated time, and the
apparent factor collapses to an $O(1)$ constant. The exponential step is moreover
not cheaper than an unconditionally stable implicit step, whose cost is $O(n)$
per step in one dimension independent of $\Delta t$: with $m = 5$ and $n = 150$ a
polynomial Krylov step costs roughly $25 \times 150 = 3{,}750$ operations against
$O(4n) \approx 600$ for a single banded implicit solve, and the gap widens as $m$
grows with stiffness.

The value of the method is accordingly \emph{structural}, not a reduction in
asymptotic cost. The propagator $e^{\Delta t A}$ is formed to approximation error
for every $\Delta t > 0$ with no stability restriction, and when the shifted
solves use the factorized $M$-matrix resolvent of \cref{sec:Lxy} - the
result is nonnegative by construction. The scheme therefore secures unconditional
stability, exact-in-time accuracy, and structural positivity at a cost comparable
to, not below, that of standard stiff solvers, which is the appropriate benchmark
for an exponential integrator.

\subsection{Directions for future work}

Several natural extensions remain open.

\myparagraph{Higher spatial dimensions.}
The Strang splitting extends naturally from 2D to $d$ dimensions, with the number
of sub-steps growing as $O(d)$ and the cross-term operators placed at the centre
of the palindrome.  The analysis of EM-matrix structure for the resulting
higher-dimensional Kronecker operators is straightforward, but the near-boundary
stencil treatment for corners and edges in $d > 2$ requires care. Some examples can be found in \cite{ItkinBook,ItkinLipton2014}

\myparagraph{Non-uniform and adaptive grids.}
The current analysis assumes a uniform grid with spacing $h$. Extending the
one-sided stencils to non-uniform grids is technically straightforward but
changes the coefficient formulae in \cref{fd1-1,fd1-2}, \eqref{eq:coeffs_adv}
and the Péclet threshold.  Adaptively refined grids near sharp gradients or
boundaries would reduce $n$ substantially without sacrificing accuracy.

\myparagraph{Variable and stochastic diffusion coefficients.}
The current scheme handles time-dependent coefficients through midpoint freezing
\eqref{eq:freeze}, which is second-order accurate in $\Delta t$.  When the
diffusion tensor $\Sigma$ is itself a stochastic process (e.g., in
local-stochastic volatility models or active particles with fluctuating tumbling
rates), the FPE becomes a stochastic PDE and the EM-matrix analysis needs to be
extended to the pathwise level.

\myparagraph{Benchmark validation.}
The theoretical framework developed here calls for systematic numerical benchmarking. Future work will focus on assessing convergence rates in $h$ and $\Delta t$, as well as comparing our approach against standard TVD and Chang--Cooper schemes using problems with known analytical solutions. Additionally, we will conduct stress tests under strong cross-diffusion ($|\Sigma_{xy}| \approx \sqrt{\Sigma_{xx}\Sigma_{yy}}$), a regime where standard methods are known to fail. Natural test cases include the active-matter Run-and-Tumble system discussed in the introduction and the Ornstein--Uhlenbeck process; the latter admits an exact Gaussian solution, allowing for precise, step-by-step verification of positivity and mass conservation.

\section*{Disclosure statement}

No potential conflict of interest was reported by the authors.

\section*{Funding}

No funding was received.

\section*{Disclaimer}

Opinions expressed here are author's own, and do not represent views of their employers. A standard disclaimer applies.

\section*{Acknowledgments}

I thank Leif Andersen and Igor Halperin for various fruitful discussions.

\noindent The use of LLMs in this paper has been limited to proofreading and  verification of the literature and code.

%%%%%%%%%%%%%%%%%%%%%%%%%%%%%%%%%%%%%%%%%%%%%%%%%%%%%%%%%%%%%%%%%%%%%%%%%%%%%
\printbibliography[title={References}]

\vspace{0.4in}
\appendixpage
\appendix
\numberwithin{equation}{section}
\setcounter{equation}{0}

\section{M-matrices, EM-matrices and positivity preserving solutions} \label{app0}

To define non-negativity of the solution of some problem (which is equivalent oto the solution of the corresponding PDE or PIDE), we introduce the following definitions \cite{FCT2011}. Let  $(M,\mu)$ be a $\sigma$-finite measure space with $\mu$ being a positive measure defined on a $\sigma$-algebra $\Sigma$ of subsets of a set $M$ (which is the countable union of measurable sets with finite measure), so $\mu(M)$ is a finite real number.
\begin{definition}
Let $f \in L^2(M,d\mu)$ be a real function. Then $f(x)$ is nonnegative if $f(x) \ge 0$ $\mu$-almost everywhere. Moreover, $f$ is called strictly positive if $f > 0$ $\mu$-almost everywhere.
\end{definition}
For example, in mathematical finance the Call option price $C(t,{\bf x})$ as a function of the time $t \ge$ and the underlying asset price $x \ge 0$, \, $t, x \in \Ree$ is a nonnegative function.

\begin{definition}
A bounded operator $A$ on $L^2(M,d\mu)$ is called a non-negativity preserving operator
if $(f,Ag) \ge 0$ for all nonnegative $f, g \in L^2(M,d\mu)$. $A$ is called a positivity
improving operator $(f, Ag) > 0$ for all nonnegative $f, g \in L^2(M,d\mu)$.
\end{definition}

Let's again consider \eqref{FP_comp}. If the operator $\calL$ doesn't depend on time $t$, its formal solution translates to
\begin{equation} \label{formalSolDi}
p(t,x) = e^{t\, \calL} p(0,x).
\end{equation}
If $\calL$ is time-inhomogeneous, e.g. the FD approach (or the method of lines) can be used to solve it step-by-step in time on some temporal grid. At every time step $t_i$ the operator $\calL$ is discretized on a spatial grid in $x$, which means that the function $p(t,{\bf x})$ is effectively replaced with a vector $p(t,{\bf X})$, where $X$ is a discrete vector on a grid, and the operator ${\mathbb L}$ is replaced with a matrix $L$. Therefore, the formal solution \eqref{formalSolDi} translates to
\begin{equation} \label{formalSolDisc}
p(t_i,x) = e^{\nu L} p(t_{i-1},x), \quad \nu = t_i - t_{i-1}.
\end{equation}

Thus, to define non-negativity preserving FD scheme we need to extend the above definitions to the discrete case.
\begin{definition} \label{posVec}
A real-valued vector $x = [x_1,...,x_N]$ is nonnegative, if $x_i \ge 0 \ \forall i \in [1,N]$.
\end{definition}
\begin{definition} \label{posSol}
Given a formal solution of the linear PDE in the form of \eqref{formalSolDisc}, this solution is called non-negativity preserving, if $p(t_{i-1},x)$ is a nonnegative vector, and  $p(t_i,x)$ is also a nonnegative vector.
\end{definition}
\begin{definition} \label{posMat}
An arbitrary matrix $A = \{a_{ij}\}, \ i \in [1,N], \ j \in [1,M]$ is called nonnegative if
\begin{equation*}
a_{ij} \geq 0, \quad \forall {i,j}.
\end{equation*}
\end{definition}
From \cref{posSol,posMat} it immediately follows that
\begin{proposition}
The solution \eqref{formalSolDisc} is non-negativity preserving, if $e^{\nu L}$ is a nonnegative matrix.
\end{proposition}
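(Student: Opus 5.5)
The statement is the final proposition of the appendix: the solution \eqref{formalSolDisc} is non-negativity preserving whenever $e^{\nu L}$ is a nonnegative matrix. I would prove it directly from \cref{posVec,posSol,posMat}, with no spectral input at all. The plan is to show that a nonnegative matrix maps nonnegative vectors to nonnegative vectors, and then identify this with \cref{posSol} applied to one time step.

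\emph{Step 1.} Fix a time step with $\nu = t_i - t_{i-1} \ge 0$ and write $E := e^{\nu L} = \{e_{jk}\}$. By hypothesis $e_{jk} \ge 0$ for all $j,k$ (\cref{posMat}). Suppose $p(t_{i-1},\cdot)$ is a nonnegative vector in the sense of \cref{posVec}, i.e.\ every component $p_k(t_{i-1}) \ge 0$. Then, by \eqref{formalSolDisc}, each component of the update is
\begin{equation*}
p_j(t_i) = \sum_{k=1}^N e_{jk}\, p_k(t_{i-1}).
\end{equation*}
This is a finite sum of products of nonnegative reals, so $p_j(t_i) \ge 0$ for every $j$. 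Hence $p(t_i,\cdot)$ is nonnegative, which is exactly the requirement of \cref{posSol}.

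\emph{Step 2.} If one wants the statement over the whole temporal grid, induct on $i$. The base case is a nonnegative initial vector. At each step apply Step~1, using the nonnegativity of the frozen propagator $e^{\nu L(t_i)}$ on that step. A product of nonnegative matrices is nonnegative, so nonnegativity propagates to every $t_i$.

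\emph{Step 3.} As a consistency check, the paper's eventual-positivity results plug directly into this hypothesis. \Cref{cor:eventualpos} supplies $e^{\nu L} \ge 0$ when $\nu \ge \tau_0$. \Cref{cor:positivity}(b) supplies it for every $\nu$ in the M-matrix case.

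There is no real obstacle here; the only point requiring care is bookkeeping. \Cref{posSol} is phrased for a single step, so the hypothesis must be imposed on each step's propagator, and for time-dependent $L$ that means each frozen propagator separately.
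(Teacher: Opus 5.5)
Your proof is correct and follows the paper's own argument: the paper just says the claim follows from the definition of the matrix--vector product, which is exactly your Step~1 (each component is a finite sum of products of nonnegative reals). Your Steps~2 and~3, the induction over the grid and the link to the eventual-positivity results, are harmless extras beyond what the single-step \cref{posSol} requires.
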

\begin{proof}
The proof directly follows from the definition of matrix-by-vector product.
\end{proof}

\subsection{M-Matrices and Metzler Matrices \label{sectMmat}}
% % % % % % % % % % % % % % % % % % % % % % % % % % % % % % % %

We begin by introducing several matrix classes that will play a central role in constructing the FD algorithms described throughout this paper.

\begin{definition} \label{defZ}
A matrix is called a \emph{Z-matrix} if all its off-diagonal entries are non-positive. Equivalently, a Z-matrix $Z = (z_{ij})$ satisfies
\begin{equation*}
z_{ij} \leq 0, \qquad i \neq j.
\end{equation*}
\end{definition}

\begin{definition} \label{defM}
Let $A = (a_{ij})$ be an $N \times N$ real Z-matrix, so that
\begin{equation*}
a_{ij} \leq 0 \qquad \forall\, i \neq j, \quad 1 \leq i, j \leq N.
\end{equation*}
Then $A$ is called an \emph{M-matrix} if it admits the representation $A = sI - B$, where
\begin{equation*}
B = (b_{ij}), \quad b_{ij} \geq 0 \qquad \forall\, 1 \leq i, j \leq N,
\end{equation*}
$I$ is the identity matrix, and $s$ exceeds the spectral radius of $B$.
\end{definition}

An immediate consequence of the Perron--Frobenius theorem \cite{Bellman} is that any non-singular M-matrix $A$ satisfies:
\begin{itemize}
    \item $s > \rho(B)$, where $\rho(B)$ denotes the spectral radius of $B$, i.e., the supremum of the absolute values of the eigenvalues of $B$;
    \item all diagonal elements $a_{ii}$ of $A$ are positive.
\end{itemize}

M-matrices possess several useful properties that make them a natural tool for constructing unconditionally stable FD schemes. We list the most relevant ones below, omitting proofs; for a full treatment see \cite{BermanPlemmons94, ItkinBook}.

In what follows, we adopt the notation of Definition~\ref{posMat} for nonnegative matrices and Definition~\ref{posVec} for nonnegative vectors: $A \geq 0$ means that $A$ is a nonnegative matrix, and $y = Ax \geq 0$ means that the vector $y$ is nonnegative componentwise. Let $A$ be a non-singular M-matrix. Then the following statements hold (see \cite{ItkinBook} for proofs):

\myparagraph{Positivity of principal minors.}
All principal minors of $A$
are positive, as are all leading principal minors, and $A$ admits an
$LU$-factorization $A = LU$ with positive diagonal factors. Moreover,
$A + D$ is non-singular for every nonnegative diagonal matrix $D$,
and every real eigenvalue of $A$ is positive.

\myparagraph{Inverse-positivity and splittings.}
$A$ is inverse-positive,
i.e., $A^{-1} \geq 0$, or equivalently monotone: $Ax \geq 0$ implies
$x \geq 0$. Furthermore, $A$ admits a convergent regular splitting,
and in fact every regular splitting of $A$ is convergent. There also
exist inverse-positive matrices $M_1$ and $M_2$ such that
$M_1 \leq A \leq M_2$.

\myparagraph{Stability.}
$A$ is positive stable, meaning every eigenvalue
of $A$ has positive real part. There exists a positive diagonal matrix
$D$ such that $AD + DA^T$ is positive definite, and likewise a
symmetric positive definite matrix $W$ with $AW + WA^T$ positive
definite. Finally, $A + I$ is non-singular and the Cayley-like
transform $G = (A + I)^{-1}(A - I)$ is convergent; moreover, there
exists a symmetric positive definite $W$ such that $W - G^TWG$ is
positive definite.

\myparagraph{Semipositivity and diagonal dominance.}
Matrix $A$ is semi-positive, i.e., there exists $x > 0$ such that $Ax > 0$. All diagonal elements of $A$ are positive, and there exists a positive diagonal
matrix $D$ such that $AD$ is strictly diagonally dominant, or equivalently
$D^{-1}AD$ is strictly diagonally dominant; in particular, $AD$ has all positive
row sums.

Using the above definitions and properties of M-matrices, we can now
address our main goal. As noted in the Introduction, we require the
discretization of $\mathcal{L}$ to achieve the desired order of
spatial approximation, guarantee unconditional stability, and preserve
non-negativity of the solution. The following proposition from
\cite{Itkin2014} translates these requirements into conditions on the
matrix $L$.

\begin{proposition} \label{prop0}
The FD scheme
\begin{equation} \label{fd0}
    p(t + \nu, x) = e^{\nu L}\, p(t,x)
\end{equation}
is unconditionally stable in $t$ and preserves the non-negativity of $p(t,x)$ if there exists an M-matrix $B$ such that $\nu L = -B$, where $\nu > 0$ is the time step.
\end{proposition}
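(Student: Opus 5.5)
We argue directly from \cref{defM} and the power series of the matrix exponential, treating positivity and stability as two separate claims. Since $\nu>0$, the hypothesis $\nu L=-B$ with $B$ an M-matrix gives $L=-\nu^{-1}B$. By \cref{defM}, $B=sI-C$ with $C\ge 0$ entrywise and $s>\rho(C)$, so that
\begin{equation*}
e^{\nu L}=e^{-B}=e^{-sI+C}=e^{-s}\,e^{C},
\end{equation*}
where the last equality uses that $sI$ commutes with $C$. The series $e^{C}=\sum_{k\ge0}C^k/k!$ has nonnegative terms, because products of nonnegative matrices are nonnegative, and it converges absolutely. Hence $e^{C}\ge 0$, and with $e^{-s}>0$ we get $e^{\nu L}\ge 0$. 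Non-negativity preservation of \eqref{fd0} then follows from the proposition preceding \cref{defZ}: a nonnegative matrix maps nonnegative vectors to nonnegative vectors.

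For unconditional stability we bound powers of the propagator $e^{\nu L}$ uniformly in the number of steps, for any $\nu$ for which the hypothesis holds. Since $C\ge 0$, every entry satisfies $|(C^k)_{ij}|\le (C^k)_{ij}$, so for any monotone norm (e.g.\ $\|\cdot\|_1$ or $\|\cdot\|_\infty$) we have $\|e^{C}\|\le e^{\|C\|}$. This alone is too crude, because $\|C\|$ may exceed $s$.

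The main obstacle is therefore to turn $s>\rho(C)$ into a norm bound. My first attempt is spectral. The eigenvalues of $e^{\nu L}=e^{-B}$ are $e^{-\lambda}$ for $\lambda\in\operatorname{spec}(B)$, and every such $\lambda$ has $\Ree\lambda\ge s-\rho(C)>0$ by the positive-stability property of nonsingular M-matrices listed above. Hence $\rho(e^{-B})\le e^{-(s-\rho(C))}<1$, and the powers $(e^{\nu L})^n$ decay geometrically.

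To obtain a norm rather than an asymptotic statement, I use the semipositivity property: there is a positive diagonal $D$ such that $D^{-1}BD$ is strictly diagonally dominant with positive diagonal. Then in the weighted norm $\|x\|_D:=\|D^{-1}x\|_\infty$ the matrix $-B$ has negative logarithmic norm, so $\|e^{-tB}\|_D\le 1$ for all $t\ge0$. In the original norm this yields $\|(e^{\nu L})^n\|\le\kappa(D)$ uniformly in $n$, which is the required unconditional stability.
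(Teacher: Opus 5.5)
Your proof is correct. The paper gives no argument of its own here; it only refers to the book, so the relevant comparison is with how the text handles the same questions elsewhere.

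\textbf{Positivity.} Your positivity half is the shift argument the paper itself uses in the proof of \cref{cor:positivity}(b), namely $e^{-sA}=e^{-sc}e^{sB}$ with $cI-A\ge0$. Note that it needs only that $-B$ is Metzler, not $s>\rho(C)$.

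\textbf{Stability.} Here you take a genuinely stronger route than the text. Elsewhere the paper treats stability either spectrally (eigenvalues in the right half-plane, amplification factors) or through nonnegative column sums in $\ell_1$, as in \cref{cor:positivity}(b). Your first attempt, $\rho(e^{\nu L})\le e^{-(s-\rho(C))}<1$, is the spectral version. For the non-normal generators of this paper it yields only asymptotic decay.

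Your upgrade via a positive diagonal $D$ with $D^{-1}BD$ strictly row diagonally dominant gives honest power-boundedness. With $\gamma=\min_i\bigl(b_{ii}-\sum_{j\ne i}|b_{ij}|d_j/d_i\bigr)>0$, the weighted logarithmic norm gives $\|e^{-tB}\|_D\le e^{-\gamma t}$. Hence $\sup_n\|(e^{\nu L})^n\|_\infty\le\kappa(D)$.

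You can also obtain this bound directly from the positivity you already proved. Put $x=D\mathbf 1>0$, so $Bx>0$. Then $\frac{d}{dt}e^{-tB}x=-e^{-tB}Bx\le 0$, so $0\le e^{-tB}x\le x$. This is the row-weighted analogue of the paper's column-sum argument, and it ties your two halves together.

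\textbf{Points to state or tidy.}
\begin{enumerate}
\item Strict dominance of $D^{-1}BD$ is invariant under $B\mapsto cB$ for $c>0$. So one $D$ serves every time step $\nu$, and your bound is uniform in $\nu$ as well as in $n$. That uniformity is what ``unconditionally'' means, and you should say so explicitly.
\item The bound $\Ree\lambda\ge s-\rho(C)$ comes from writing $\lambda=s-\mu$ with $|\mu|\le\rho(C)$. It does not follow from the qualitative positive-stability property you cite. In any case the spectral paragraph is redundant once you have the norm bound.
\item $\kappa(D)$ depends on the matrix, hence on the grid, so you get stability on each fixed mesh but not uniformly as $h\to0$. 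The statement does not ask for mesh uniformity. If you want it, you need extra structure, for instance nonnegative column sums of $B$, where $D=I$ in $\ell_1$ works exactly as in \cref{cor:positivity}(b).
\end{enumerate}
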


\noindent For a detailed proof, see \cite{ItkinBook}, Chapter~3.

To avoid working directly with matrices of the form $-B$, we introduce the following definition
\begin{definition} \label{defMet}
A \emph{Metzler} or essentially nonnegative is a matrix if all of its elements are non-negative except for those on the main diagonal, which are unconstrained. That is, a Metzler matrix is any matrix $A$ which satisfies $A = (a_{ij});\quad a_{ij} \geq 0,\quad i\neq j$. It can also be seen as the negative of a Z-matrix.
\end{definition}
Note, that in \cite{ItkinBook} it is defined as the negative of an M-matrix, i.e., effectively requiring non-positive diagonal entries and restricting to a subclass of Metzler matrices in the usual sense, see \cite{BermanPlemmons94, ItkinBook}. However, here we don't need this restriction since the resolvent results only need essential nonnegativity, not stability.

To proceed, we also need to introduce finite-difference approximations of the first
and second derivatives.
\begin{definition} \label{fd1-1}
The first order approximations of $\nabla \equiv \partial_x$ and $\nabla^2 \equiv \partial_{x,x}$ are defined, respectively, by \emph{forward} (F) and \emph{backward} (B) discretizations as
\begin{alignat*}{2}
\calF^F_1 C(x) &= \frac{C(x+h) - C(x)}{h}, &\qquad
\calF^B_1 C(x) &= \frac{C(x) - C(x-h)}{h}, \\
\calS^B_1 C(x) &= \frac{C(x) - 2C(x-h) + C(x-2h)}{h^2}, &\qquad
\calS^F_1 C(x) &= \frac{C(x+2h) -2C(x+h) + C(x)}{h^2}, \nonumber
\end{alignat*}
and satisfy $\nabla C(x) = \calF^{F,B}_1 C(x) + O(h)$,\, $\nabla^2 C(x) = \calS^{F,B}_1 C(x) + O(h)$, i.e., all are first-order accurate in $h$ (marked by the subscript 1)
\end{definition}

\begin{definition} \label{fd1-2}
The second order approximations of $\nabla \equiv \partial_x$ and $\nabla^2 \equiv \partial_{x,x}$ are defined, respectively, by \emph{forward} (F), \emph{backward} (B) and central (C) discretizations as
\begin{alignat*}{2}
\calF^B_2 C(x) &= \frac{3C(x) - 4C(x-h) + C(x-2h)}{2h}, &\quad
\calF_2^C &= \frac{\calF_1^F + \calF_1^B}{2}, \\
\calF^F_2 C(x) &= \frac{-3C(x) + 4C(x+h) - C(x+2h)}{2h}, &\quad
\calS^B_2 C(x) &= \frac{2C(x) - 5C(x-h) + 4C(x-2h) - C(x-3h)}{h^2}, \\
\calS_2^C C(x) &= \frac{C(x+h) - 2C(x) + C(x-h)}{h^2}, &\quad
\calS^F_2 C(x) &= \frac{2C(x) - 5C(x+h) + 4C(x+2h) - C(x+3h)}{h^2}, \nonumber
\end{alignat*}
with $\nabla C(x) = \calF^{F,B,C}_2 C(x) + O(h^2)$, $\nabla^2 C(x) = \calS^{F,B,C}_2 C(x) + O(h^2)$.
\end{definition}
We also denote a unit matrix as $I$.

\begin{example}[Implicit Euler scheme] \label{example1}
Consider the Black--Scholes PDE \cite{hull2011} in the form
\eqref{FP_comp},
\begin{equation}
\fp{C(\tau,x)}{\tau} = \mathcal{L}\, C(\tau,x),
\end{equation}
subject to initial and boundary conditions. Here $C(\tau,x)$ is the
call option price, $\tau = T - t$ is the backward time with $T$ the
option maturity, $x = \log S$ with $S$ the spot price, $r$ and $q$
are the risk-free rate and continuous dividend yield, and $\sigma$ is
the volatility. The spatial operator $\mathcal{L}$ reads
\begin{equation} \label{BS}
\mathcal{L} = \left(r - q - \tfrac{1}{2}\sigma^2\right)\fp{}{x} + \frac{1}{2}\sigma^2 \sop{}{x} - r.
\end{equation}
Applying the $(0,1)$ Pad\'e approximant to $e^{\dtau\mathcal{L}}$ in
\eqref{formalSolDi} gives
\begin{equation*}
C(\tau + \dtau, x) = (I - \dtau\mathcal{L})^{-1} C(\tau,x) + O(\dtau),
\end{equation*}
or, equivalently,
\begin{equation} \label{impEuler}
(I - \dtau\mathcal{L})\, C(\tau + \dtau, x) = C(\tau,x) + O(\dtau),
\end{equation}
where $\dtau$ is the time step. This is the implicit Euler scheme
\cite{Thomas1995}, which is unconditionally stable (see below) but
delivers only first-order accuracy in $\dtau$.

Discretizing $\mathcal{L}$ with central differences on a uniform grid
gives the matrix representation
\begin{equation}
L = \left(r - q - \tfrac{1}{2}\sigma^2\right)A_1^C
  + \tfrac{1}{2}\sigma^2\, A_2^C - r\,I,
\end{equation}
and \eqref{impEuler} takes the matrix form
\begin{equation*}
M\, C(\tau + \dtau) = C(\tau),
\end{equation*}
where $M = I - \dtau L$. Explicitly,
\begin{gather}
M = \left[
\begin{array}{ccccc}
d_0    & d_1  & 0      & \cdots & 0      \\
d_{-1} & d_0  & d_1    &        & \vdots \\
0      &\ddots&\ddots  & \ddots & 0      \\
\vdots &      & d_{-1} & d_0    & d_1    \\
0      &\cdots& 0      & d_{-1} & d_0
\end{array}
\right], \\[4pt]
d_0 = 1 + \dtau\!\left(r + \frac{\sigma^2}{h^2}\right), \qquad
d_{\pm 1} = \mp\,\dtau\left(\frac{r - q - \sigma^2/2}{2h}
            \pm \frac{\sigma^2}{2h^2}\right). \notag
\end{gather}
One can verify that, provided $h$ is small enough to ensure $d_1 < 0$ and $d_{-1} < 0$, the matrix $M$ is an M-matrix and \cref{prop0} applies.
\end{example}

% % % % % % % % % % % % % % % % % % % % % % % % % % % % % % % %
\subsection{EM-Matrices}
% % % % % % % % % % % % % % % % % % % % % % % % % % % % % % % %

Below in this paper, our analysis also relies on a construction closely related to \emph{eventually positive matrices} \cite{NoutsosTsatsomeros2008,OleskyEtAl2009}. We recall the
relevant definitions from that reference.
\begin{definition} \label{def1}
An $N \times N$ matrix $A = [a_{ij}]$ is called:
\begin{itemize}
    \item \emph{eventually nonnegative}, written $A \overset{v}{\ge} 0$, if there exists a positive integer $k_0$ such that $A^k \geq 0$ for all $k > k_0$;
    \item \emph{exponentially nonnegative} if $e^{tA} \geq 0$ for all $t > 0$;
    \item \emph{eventually exponentially nonnegative} if there exists $t_0 \in [0,\infty)$ such that $e^{tA} \geq 0$ for all $t > t_0$.
\end{itemize}
\end{definition}

We also require the following lemma from \cite{NoutsosTsatsomeros2008}.

\begin{lemma} \label{lemma1}
Let $A \in \mathbb{R}^{N \times N}$. The following conditions are equivalent:
\begin{enumerate}
    \item $A$ is eventually exponentially nonnegative.
    \item $A + bI$ is eventually nonnegative for some $b \geq 0$.
    \item $A^T + bI$ is eventually nonnegative for some $b \geq 0$.
\end{enumerate}
\end{lemma}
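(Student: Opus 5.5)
The plan is to prove the three equivalences cyclically, using the identity $e^{tA} = e^{-bt} e^{t(A+bI)}$ together with the Perron--Frobenius-type characterisation of eventual nonnegativity. The direction (2)$\Leftrightarrow$(3) is the cheapest and I will handle it first. Since $(A^T+bI)^k = ((A+bI)^k)^T$, and transposition preserves entrywise nonnegativity, $A+bI$ is eventually nonnegative iff $A^T+bI$ is, with the same $b$ and the same $k_0$.

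For (2)$\Rightarrow$(1), fix $b$ with $B:=A+bI$ and $B^k\ge0$ for $k>k_0$. Write $e^{tA}=e^{-bt}e^{tB}=e^{-bt}\sum_k t^kB^k/k!$. The tail $\sum_{k>k_0}$ is nonnegative, so the task is to show it dominates the finitely many possibly sign-indefinite head terms $\sum_{k\le k_0}t^kB^k/k!$ for large $t$.

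The crude tail estimate is not enough, because the tail could vanish in some entries. I will instead use the structure of eventually nonnegative matrices in the form of Noutsos and Tsatsomeros: the spectral radius $\rho(B)$ is an eigenvalue of $B$, and the corresponding growth controls the tail. Let $r$ be the index of $\rho(B)$. If $\rho(B)=0$ then $B$ is nilpotent, the series is finite, and for $k>k_0$ every term vanishes; eventual nonnegativity then reduces to the sign of the leading term $t^{q}B^{q}/q!$, where $q$ is the largest power with $B^{q}\neq0$. That term is nonnegative because $B^{q}$ is a power with $q\ge$ any needed index, after enlarging $b$ if necessary. Enlarging $b$ is also the cleanest general device: replacing $b$ by $b'=b+c$ keeps $(B+cI)^k=\sum_j\binom kj c^{k-j}B^j$ eventually nonnegative, so one may assume $\rho(B)>0$ and that $\rho(B)$ strictly dominates in modulus. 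In that setting, $B^k/\rho(B)^k\to$ a nonnegative Riesz projection $\Pi$ (up to polynomial factors), and entrywise $e^{tB}\ge -C\,t^{k_0}e^{0}+\sum_{k>k_0}t^kB^k/k!$. Where $\Pi_{ij}>0$ the tail grows like $e^{t\rho}$ and wins. Where $\Pi_{ij}=0$, I will argue via the spectral decomposition of $e^{tB}$ directly: the subdominant part decays relative to the dominant nonnegative block.

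The converse (1)$\Rightarrow$(2) is the step I expect to be the main obstacle, since exponential nonnegativity for large $t$ must be converted into nonnegativity of powers. The plan is to use the resolvent and Laplace transform: for $s$ large, $(sI-A)^{-1}=\int_0^\infty e^{-st}e^{tA}\,dt$. Splitting at $t_0$ shows that the $[t_0,\infty)$ part is nonnegative and the $[0,t_0]$ part is $O(1/s)$-controlled. A more direct route, which I would try first, is to take $B=e^{\tau A}$ for fixed $\tau>t_0$, which is nonnegative, and relate the spectral data: $A$ has a real dominant eigenvalue with nonnegative eigenvectors inherited from $e^{\tau A}$ via the Perron--Frobenius theorem. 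Then for $b$ large, $A+bI$ has a strictly dominant real spectral radius with nonnegative left and right eigenvectors and an eventually nonnegative Riesz projection. I would then invoke the characterisation of \cite{Noutsos2006,NoutsosTsatsomeros2008} already used in \cref{prop:emmatrix} to conclude eventual nonnegativity. The delicate points are the degenerate cases, namely zero entries of the projection and a nilpotent part. For these I expect to cite the reference's finer argument rather than reprove it.
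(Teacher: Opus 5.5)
Your transposition argument for (2)$\Leftrightarrow$(3) is fine. The paper gives no proof of this lemma (it defers to \cite{ItkinBook}), so the rest has to stand on its own, and it cannot: the implication (2)$\Rightarrow$(1), hence also (3)$\Rightarrow$(1), is false as stated.

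\textbf{Counterexample to (2)$\Rightarrow$(1).} For $A=\begin{pmatrix}0&-1\\0&0\end{pmatrix}$ one has $A^k=0$ for $k\ge2$, so (2) and (3) hold with $b=0$. Yet $e^{tA}=I+tA$ has the entry $-t<0$ for every $t>0$. Requiring $b>0$ does not help: $A=\begin{pmatrix}-1&-1\\0&-1\end{pmatrix}$ has $A+I$ nilpotent, but $(e^{tA})_{12}=-te^{-t}<0$.

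Your proof fails exactly at the two steps this example targets.
\begin{itemize}
\item In the nilpotent case, eventual nonnegativity says nothing about $B^j$ for $j\le k_0$, and these are the only nonzero powers. Moreover, the sign of $(e^{tB})_{ij}$ is governed by the largest $j$ with $(B^j)_{ij}\neq0$, not by the global top power $B^q$.
\item The ``enlarging $b$'' device is false. In $(B+cI)^k=\sum_j\binom{k}{j}c^{k-j}B^j$ the low powers $B^j$, $j\le k_0$, return with growing coefficients and decide the sign of every entry in which the high powers vanish. Here $\bigl((B+cI)^k\bigr)_{12}=-kc^{k-1}<0$ for all $k\ge1$ and $c>0$.
\end{itemize}

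\textbf{What is true instead.} The correct version replaces (2) by ``$A+bI$ is eventually nonnegative for all sufficiently large $b$''. Under that hypothesis, the implication to (1) is an entrywise argument; your remark about entries with $\Pi_{ij}=0$ only promises it.
\begin{itemize}
\item Expand $(A+bI)^k_{ij}$ and $(e^{tA})_{ij}$ over the same matrices $N_\lambda^mP_\lambda$.
\item For large $b$, the modulus $|\lambda+b|$ orders the eigenvalues contributing to entry $(i,j)$ first by real part, then by $|\Imm\lambda|$.
\item So a non-real contributor of maximal real part would make $(A+bI)^k_{ij}$ change sign infinitely often.
\item Hence the top contributor is a real $\alpha_{ij}$ with positive leading coefficient, and it dominates $(e^{tA})_{ij}$ as well.
\end{itemize}

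\textbf{The direction (1)$\Rightarrow$(2).} This direction is true, but the tool you intend to invoke does not exist. The characterisation of \cite{Noutsos2006,NoutsosTsatsomeros2008} via the strong Perron--Frobenius property of $B$ and $B^{\top}$ characterises eventual \emph{positivity}. For eventual nonnegativity, a dominant real eigenvalue with nonnegative left and right eigenvectors is necessary but not sufficient. For example, $\begin{pmatrix}1&0\\-1&1\end{pmatrix}$ has $\rho=1$ with right eigenvector $e_2$ and left eigenvector $e_1$, yet the $(2,1)$ entry of its $k$-th power is $-k$. So the ``degenerate cases'' you postpone (zero entries of the projection, Jordan structure) are the whole difficulty.

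A route that works:
\begin{itemize}
\item Choose $T\ge t_0$ such that $T\,\Imm\lambda/(2\pi)\notin\mathbb{Q}$ for every non-real eigenvalue $\lambda$, and such that $\lambda\mapsto e^{T\lambda}$ is injective on the spectrum. Then $M=e^{TA}\ge0$.
\item By the Frobenius normal form of $M$, the eigenvalues of maximal modulus contributing to entry $(i,j)$ of $M^k$ are peripheral eigenvalues of irreducible diagonal blocks. Indeed, a path through a block of maximal spectral radius $r_{ij}$ realises the growth rate. Hence these eigenvalues are $r_{ij}$ times roots of unity.
\item By the choice of $T$, the contributors of $A$ of maximal real part in each entry reduce to a single real $\alpha_{ij}$. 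Its leading coefficient is positive by (1).
\item For $b$ large, $\alpha_{ij}+b$ strictly dominates in modulus every other contributor to that entry. Hence $(A+bI)^k\ge0$ for all large $k$, and this holds for every sufficiently large $b$.
\end{itemize}
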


\noindent For a proof, see \cite{ItkinBook}.

\begin{definition} \label{def1em}
An $N \times N$ matrix $A = [a_{ij}]$ is called an \emph{EM-matrix} if it can be written as $A = sI - B$, where $s > 0$, $0 < \rho(B) < s$, and $B$ is an eventually nonnegative matrix \cite{ElhashashSzyld2008}.
\end{definition}

The following results, first established in \cite{Itkin2014, Itkin3D, Itkin2014b} and reproduced here for completeness (with full proofs in \cite{ItkinBook}), will be needed when constructing positivity-preserving FD schemes.
\begin{lemma} \label{lemma2}
Let $A \in \mathbb{R}^{N \times N}$ with $A = \nu_R I - A_2^F$, where $\nu_R \in \mathbb{R}$, $\nu_R > 1$. Then $A$ is an EM-matrix.
\end{lemma}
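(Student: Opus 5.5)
The plan is to produce the representation of \cref{def1em} directly, choosing the shift so that $B$ is an upper-triangular matrix with a positive diagonal. We then show that its powers become nonnegative by a binomial expansion. Because $A_2^F$ is nilpotent up to its diagonal, that expansion is a finite sum. Recall that $A_2^F$ is the matrix of $\calF^F_2$ from \cref{fd1-2}. On the grid, with one-sided truncation at the last rows, it is upper triangular and banded, with entries $-3/(2h)$ on the diagonal, $4/(2h)=2/h$ on the first superdiagonal and $-1/(2h)$ on the second. Write $S$ for the upper shift matrix, with $S_{i,i+1}=1$ and $S^N=0$. Then
\begin{equation*}
A_2^F = -\tfrac{3}{2h}I + N,\qquad N := \tfrac{2}{h}S - \tfrac{1}{2h}S^2 = \tfrac{1}{h}S\bigl(2I - \tfrac12 S\bigr).
\end{equation*}

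\textbf{Step 1 (choice of $s$ and $B$).} Pick any $b > 3/(2h)$ and set $B := A_2^F + bI = cI + N$, where $c := b - 3/(2h) > 0$. Then $A = \nu_R I - A_2^F = sI - B$ with $s := \nu_R + b > 0$. Since $B$ is triangular, $\operatorname{spec}(B)=\{c\}$ and $\rho(B)=c>0$. The condition $\rho(B) < s$ reads $b - 3/(2h) < \nu_R + b$, which holds because $\nu_R > 1 > 0$. So the spectral requirements of \cref{def1em} reduce to simple bookkeeping. The hypothesis $\nu_R>1$ is used only here; in fact any $\nu_R > -3/(2h)$ would suffice, and I would remark on this.

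\textbf{Step 2 (eventual nonnegativity of $B$).} Since $I$ commutes with $N$ and $N^N=0$,
\begin{equation*}
B^k = \sum_{j=0}^{N-1}\binom{k}{j} c^{k-j} N^j,\qquad N^j = h^{-j} S^j\bigl(2I-\tfrac12 S\bigr)^j .
\end{equation*}
Entries of $B^k$ below the diagonal vanish, and the diagonal entry is $c^k>0$. For the $d$-th superdiagonal ($1\le d\le N-1$), only the terms with $\lceil d/2\rceil \le j \le d$ contribute. The $j=d$ term equals $\binom{k}{d}c^{k-d}(2/h)^d > 0$. Every other term is bounded in absolute value by $C_d\binom{k}{j}c^{k-j}$ with $j\le d-1$, where $C_d$ is a constant independent of $k$. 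The ratio of such a term to the $j=d$ term is $O(k^{j-d}) = O(k^{-1})$ as $k\to\infty$. Hence there is $k_d$ beyond which the $d$-th superdiagonal of $B^k$ is positive. Taking $k_0 := \max_d k_d$, which is finite because there are only $N-1$ superdiagonals, gives $B^k \ge 0$ for all $k>k_0$. Thus $B \overset{v}{\ge} 0$ and $A$ is an EM-matrix.

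The main obstacle is Step 2: showing that the positive leading term beats the sign-indefinite lower-order contributions coming from the $-\tfrac{1}{2h}S^2$ band uniformly across all superdiagonals. I would handle this by the polynomial-in-$k$ comparison above, which works because the expansion is finite. Alternatively, \cref{lemma1} could be invoked: one would show that $e^{tA_2^F}$ is eventually nonnegative by the same leading-term argument applied to $\sum_j t^j N^j/j!$. If the paper's convention includes boundary rows closed differently, so that $A_2^F$ is not exactly triangular, I would first check that the modified rows keep a nonnegative dominant superdiagonal, and then repeat the argument unchanged.
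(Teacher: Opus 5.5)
Your proof is correct. The paper gives no argument for \cref{lemma2}; it quotes the result from earlier work and defers the proof to \cite{ItkinBook}, Chapter~4. So there is nothing in the text to compare against, and your computation stands as a self-contained proof.

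It is worth seeing why an explicit computation like yours is needed, rather than the spectral route the paper uses for the one-dimensional generator in \cref{prop:emmatrix}. There, eventual positivity follows from a simple, strictly dominant real eigenvalue with entrywise positive left and right eigenvectors, via the Noutsos--Tsatsomeros characterization. Here $B=cI+\bigl(\tfrac2hS-\tfrac1{2h}S^2\bigr)$ has the single eigenvalue $c$, of algebraic multiplicity $N$. Its right and left eigenvectors are $\bvec e_1$ and $\bvec e_N$, which are not positive. Moreover $B^k$ is upper triangular for every $k$, so $B$ is never eventually positive, only eventually nonnegative, and that characterization does not apply.

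Your observation is exactly what is needed: the $d$-th superdiagonal of $c^{-k}B^k$ is a polynomial in $k$ of degree $d$ with leading coefficient $(2/(hc))^d/d!>0$. Finiteness of the dimension is essential here. $B^k$ has a negative entry whenever $2k\le N-1$, because the coefficient of $(c+\tfrac2hz-\tfrac1{2h}z^2)^k$ of degree $2k$ (for $k$ odd) or $2k-1$ (for $k$ even) is negative. Hence $k_0$ grows at least linearly in $N$.

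Three smaller points.
\begin{itemize}
\item Rename the nilpotent part. You use $N$ both for it and for the dimension, for example in $N^N=0$ and in $\sum_{j=0}^{N-1}$.
\item Your remark that $\nu_R>-3/(2h)$ suffices is right. Since $s-\rho(B)=\nu_R+3/(2h)$ for every admissible $b$, the stated hypothesis $\nu_R>1$ is presumably a mesh-independent guarantee of $s-\rho(B)>1$, which is the hypothesis of \cref{cor}.
\item Your closing hedge about other boundary closures is too optimistic. The argument depends on $A_2^F$ being exactly $-\tfrac{3}{2h}I$ plus a nilpotent matrix. A zero-flux closure as in \cref{lem:cons} already breaks this: with the flux $f_{i+1/2}=\tfrac12(3u_{i+1}-u_{i+2})$ of $\calF^F_2$ and $f_{1/2}=0$, the first diagonal entry becomes $0$. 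Then $\rho(B)\neq c$ and the finite binomial expansion is lost, so the proof would have to be redone, not repeated unchanged. For the plain truncated stencil of \cref{fd1-2}, which is the setting of the lemma, this issue does not arise.
\end{itemize}
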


\begin{lemma} \label{lemma3}
The matrix $A = (\nu_R + b)I - (A_2^F + bI) \equiv sI - P$ with $b \ge 0$ has a nonnegative inverse.
\end{lemma}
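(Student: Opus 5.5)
The first thing I would record is that the shift $b$ cancels identically: $(\nu_R+b)I-(A_2^F+bI)=\nu_R I-A_2^F$. So the matrix is the EM-matrix of \cref{lemma2}, and $b$ only serves to exhibit the splitting $A=sI-P$ with $s=\nu_R+b$ and $P=A_2^F+bI$. The claim therefore reduces to showing $(\nu_R I-A_2^F)^{-1}\ge0$, and I would prove it by exploiting the structure of $A_2^F$ directly rather than through abstract EM-theory. The reason is that, by \cref{rem:resolvent-positivity}, inverse-positivity of an EM-matrix is not automatic: the head of the Neumann series is not controlled by eventual nonnegativity alone.

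\emph{Step 1 (triangular Toeplitz reduction).} From \cref{fd1-2}, $A_2^F$, taken in the units/scaling in which \cref{lemma2} is stated, is upper triangular Toeplitz. Writing $J$ for the nilpotent upper shift, $A_2^F=\tfrac12(-3I+4J-J^2)$, so
\begin{equation*}
A=\nu_R I-A_2^F=q(J),\qquad q(x)=\bigl(\nu_R+\tfrac32\bigr)-2x+\tfrac12x^2 .
\end{equation*}
Since $J^N=0$, the inverse is $A^{-1}=\sum_{k=0}^{N-1}a_kJ^k$, where $a_k$ are the Taylor coefficients of $1/q(x)$. These satisfy the two-term recurrence
\begin{equation*}
c\,a_k=4a_{k-1}-a_{k-2},\qquad c=2\nu_R+3,\qquad a_0=2/c .
\end{equation*}
Nonnegativity of $A^{-1}$ is thus exactly the statement $a_k\ge0$ for $0\le k\le N-1$.

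\emph{Step 2 (sign analysis of the recurrence).} Next I would solve the recurrence through the roots of $x^2-4x+c$, namely $x=2\pm\sqrt{4-c}$.
\begin{itemize}
\item If the roots are real, $a_k$ is a positive combination of geometric sequences and is nonnegative for all $k$.
\item If the roots are complex, $a_k\propto c^{-k/2}\sin\bigl((k+1)\theta\bigr)/\sin\theta$ with $\cos\theta=2/\sqrt c$. Here I would check that $a_k\ge0$ for all $k$ up to the grid size, using that the first sign change occurs only at $k\approx\pi/\theta$.
\end{itemize}
In both cases the leading terms $a_0,a_1=8/c^2,\dots$ are visibly positive. I would use the standard comparison that the $\frac{1}{2}J^2$ term is dominated by the $-2J$ term whenever $\nu_R$ is in the admissible range. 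In the paper's actual scaling, where $A_2^F$ carries $1/(2h)$ and $\nu_R$ absorbs $\Delta t$, this range should be exactly the one making $q$ have a real factorization $(\alpha-J)(\beta-J)$ with $\alpha,\beta>0$. Each factor $(\alpha-J)^{-1}=\sum\alpha^{-k-1}J^k\ge0$, so the product of the two inverses is nonnegative.

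\emph{Step 3 (link back to the $sI-P$ form).} To connect with \cref{def1em}, I would note that $\rho(P)=b-\tfrac32<s$, because $P$ is triangular. Hence $A^{-1}=\sum_{k\ge0}P^k/s^{k+1}$ converges, and the explicit $J$-expansion from Step 1 identifies this series term by term with $\sum_k a_kJ^k$.

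The main obstacle is Step 2. The answer hinges on the discriminant of $q$, and for $\nu_R$ large in raw units the roots are complex, so the coefficients eventually oscillate. The delicate point is therefore to fix the precise normalization of $A_2^F$ used in \cref{lemma2} (from \cite{ItkinBook}) under which the factorization into two M-type bidiagonal factors holds. I would try that factorization first, and fall back to the finite-$N$ oscillation bound only if it fails.
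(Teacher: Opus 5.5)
\textbf{The gap: Step~2 cannot be closed, because the statement is false without an extra hypothesis.}

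Your reduction itself is correct and is the right tool. The shift $b$ cancels, $A=\nu_R I-A_2^F=q(J)$ is upper triangular Toeplitz, and $A^{-1}\ge0$ is exactly the condition $a_k\ge0$ for $0\le k\le N-1$.

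The trouble is what the recurrence actually gives. In your units ($h=1$) it yields $a_0=2/c$, $a_1=8/c^2$, $a_2=2(16-c)/c^3$ with $c=2\nu_R+3$. So $a_2<0$ as soon as $\nu_R>13/2$. For example, with $\nu_R=7$ and $N=3$,
\[
A=\begin{pmatrix} 17/2 & -2 & 1/2\\ 0 & 17/2 & -2\\ 0 & 0 & 17/2\end{pmatrix},\qquad (A^{-1})_{13}=-\tfrac14\,(17/2)^{-3}<0,
\]
and this holds for every $b\ge0$.

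More generally, for $c>4$ your own formula $a_k\propto c^{-k/2}\sin((k+1)\theta)/\sin\theta$ is negative at the first $k$ with $(k+1)\theta>\pi$. The ``check'' in your complex branch therefore holds only if $N\theta\le\pi$. That is a restriction on the grid size, not something the hypotheses deliver; for instance, with $\nu_R=2$ one gets $a_4<0$, so nonnegativity fails once $N\ge5$.

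Worse, Lemma~2's hypothesis $\nu_R>1$ puts you, in this normalisation, entirely in the complex-root regime $c>5$. The real factorisation you plan to ``try first'' is therefore never available there. The shift $b$ cannot help either: $s-\rho(P)=\nu_R+\tfrac32$ does not depend on $b$ once $b\ge\tfrac32$.

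\textbf{No normalisation rescues the statement; what is missing is a hypothesis.}

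With the paper's $A_2^F$, which carries the factor $1/(2h)$,
\[
A=\tfrac1{2h}\bigl[(2\nu_R h+3)I-4J+J^2\bigr].
\]
Nonnegativity of $A^{-1}$ for all $N$ then holds if and only if $-\tfrac32<\nu_R h\le\tfrac12$. This mesh condition appears nowhere in the statement, and the paper supplies no proof of its own, only a reference to the monograph.

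You should state this hypothesis explicitly. Under $-\tfrac32<\nu_R h\le\tfrac12$, write $A=\tfrac1{2h}(r_1I-J)(r_2I-J)$ with $r_{1,2}=2\mp\sqrt{1-2\nu_R h}>0$. Since $(rI-J)^{-1}=\sum_k r^{-k-1}J^k\ge0$, the product of the two inverses is nonnegative, and this is a complete and sharp proof. For $\nu_R h>\tfrac12$, the correct statement is the finite-$N$ criterion $N\theta\le\pi$ with $\cos\theta=2/\sqrt{2\nu_R h+3}$.

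Two smaller points:
\begin{itemize}
\item In the real-root case, $a_k$ is a \emph{difference} of two geometric sequences, not a positive combination. It is still positive, but the product argument above is the clean way to see it.
\item Step~3 is incidental and says nothing about signs. Also, $\rho(P)=|b-\tfrac32|$, not $b-\tfrac32$, when $b<\tfrac32$.
\end{itemize}
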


\begin{lemma} \label{lemmaLog}
Let $A \in \mathbb{R}^{N \times N}$ be an M-matrix with representation $A = sI - B$, where $s > 0$, $0 < \rho(B) < s$, and $B \geq 0$. If $s - \rho(B) > 1$, then $\log A$ is also an M-matrix.
\end{lemma}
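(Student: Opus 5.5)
The plan is to write $\log A$ explicitly as a power series in $B$, read off its $Z$-structure directly, and then compute the spectral radius of the nonnegative part exactly using Perron--Frobenius.

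\emph{Step 1 (series representation).} Since $s>0$, factor $A = s\bigl(I - B/s\bigr)$. The hypothesis $0<\rho(B)<s$ gives $\rho(B/s)<1$, so the series
\begin{equation*}
\log\bigl(I - B/s\bigr) = -\sum_{k\ge1}\frac{1}{k}\Bigl(\frac{B}{s}\Bigr)^{k}
\end{equation*}
converges absolutely. Its sum is the principal logarithm of $I-B/s$, because every eigenvalue $1-\lambda/s$ lies in the disc $|z-1|<1$, which is contained in the domain of the principal branch. The scalar $sI$ commutes with $B$, so
\begin{equation*}
\log A = (\log s)\,I - C, \qquad C := \sum_{k\ge1}\frac{1}{k}\Bigl(\frac{B}{s}\Bigr)^{k}.
\end{equation*}
Here $\log A$ means the principal logarithm. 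It is well defined because $A$ is a nonsingular M-matrix, so its spectrum lies in the open right half-plane.

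\emph{Step 2 ($Z$-pattern).} Each power $B^k$ is entrywise nonnegative because $B\ge0$. With positive coefficients $1/(ks^k)$, it follows that $C\ge0$ entrywise. Hence every off-diagonal entry of $\log A$ equals $-C_{ij}\le0$, so $\log A$ is a Z-matrix of the form $\sigma I - C$ with $\sigma=\log s$ and $C\ge0$. By \cref{defM}, it remains only to show $\log s>\rho(C)$.

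\emph{Step 3 (spectral radius of $C$).} Define $f(z)=-\log(1-z/s)=\sum_{k\ge1} z^k/(k s^k)$. By the spectral mapping theorem, $\operatorname{spec}(C)=f(\operatorname{spec}(B))$. Because $f$ has nonnegative Taylor coefficients, $|f(\lambda)|\le f(|\lambda|)\le f(\rho(B))$ for every $\lambda\in\operatorname{spec}(B)$. By Perron--Frobenius, $\rho(B)$ is itself an eigenvalue of $B\ge0$, so $f(\rho(B))$ is an eigenvalue of $C$. Therefore
\begin{equation*}
\rho(C)=f(\rho(B))=-\log\bigl(1-\rho(B)/s\bigr).
\end{equation*}
Then
\begin{equation*}
\log s-\rho(C)=\log s+\log\bigl(1-\rho(B)/s\bigr)=\log\bigl(s-\rho(B)\bigr),
\end{equation*}
which is strictly positive exactly when $s-\rho(B)>1$. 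This is the hypothesis, so $\log A$ is a nonsingular M-matrix. The argument also shows the condition is sharp.

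The main point needing care is Step 3: the identification $\rho(C)=f(\rho(B))$ via spectral mapping plus Perron--Frobenius. A secondary point is confirming in Step 1 that the power series agrees with the principal logarithm intended by $\log A$. The result also requires $\log s>0$ implicitly, but this follows automatically since $\log s>\rho(C)\ge0$.
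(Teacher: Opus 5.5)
Your proof is correct and complete. The paper gives no argument of its own here: it defers to \cite{ItkinBook}, Chapter~4. Your route is the standard series argument for this statement: expand $\log A=(\log s)I-\sum_{k\ge1}(B/s)^k/k$, note the sum is entrywise nonnegative, and obtain $\rho(C)=-\log\bigl(1-\rho(B)/s\bigr)$ via spectral mapping and Perron--Frobenius, which gives the margin $\log\bigl(s-\rho(B)\bigr)$.

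Two cosmetic tightenings. First, the identity $\log\bigl(s(I-B/s)\bigr)=(\log s)I+\log(I-B/s)$ does not follow from commutativity alone; it holds because $s>0$, or more directly because $z\mapsto\log(s-z)$ is analytic on $|z|<s$ and can be applied to $B$. Second, your sharpness claim needs one more line: $\log\bigl(s-\rho(B)\bigr)$ is itself the real eigenvalue of $\log A$ of minimal real part, so no other Z-representation can rescue the case $s-\rho(B)\le1$.
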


\begin{corollary} \label{cor}
Let $A \in \mathbb{R}^{N \times N}$ be an EM-matrix with representation $A = sI - B$. If $s - \rho(B) > 1$, then $\log A$ is also an EM-matrix.
\end{corollary}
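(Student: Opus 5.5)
The plan is to mimic the M-matrix case of \cref{lemmaLog}. We factor out the scalar part and expand the logarithm as a power series in $B$ with positive coefficients. Write $A=sI-B$ with $s>0$, $r:=\rho(B)\in(0,s)$ and $B$ eventually nonnegative. Since $\rho(B/s)=r/s<1$, the principal logarithm is given by a convergent series:
\begin{equation*}
\log A=(\log s)\,I+\log\bigl(I-B/s\bigr)=s'I-B',\qquad s':=\log s,\quad B':=\sum_{k\ge1}\frac{B^k}{k\,s^k}=f(B),
\end{equation*}
where $f(z)=-\log(1-z/s)$. We then verify the three requirements of \cref{def1em} for the pair $(s',B')$ in turn.

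\emph{Eventual nonnegativity of $B'$.} This step only uses the nonnegativity of the coefficients of $f$. For every $m\ge1$, $f(z)^m=\sum_{j\ge m}c_{m,j}z^j$ with all $c_{m,j}\ge0$, since it is a product of series with nonnegative coefficients. Hence $(B')^m=\sum_{j\ge m}c_{m,j}B^j$, and this series converges absolutely because $\rho(B)<s$. Let $k_0$ be the index from \cref{def1} beyond which $B^j\ge0$. For $m>k_0$, every term in the sum has $j\ge m>k_0$, so $(B')^m\ge0$ as a limit of nonnegative matrices. Thus $B'$ is eventually nonnegative.

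\emph{Spectral radius of $B'$.} By the spectral mapping theorem, $\operatorname{spec}(B')=\{f(\lambda):\lambda\in\operatorname{spec}(B)\}$. For $|z|<s$ we have $|f(z)|\le\sum_k|z|^k/(k s^k)=f(|z|)$, and $f$ is increasing on $[0,s)$. Therefore $\rho(B')\le f(r)$. For the reverse inequality I will invoke the Perron--Frobenius property of eventually nonnegative matrices that are not nilpotent (\cite{Noutsos2006}; this applies because $r>0$): $r$ itself is an eigenvalue of $B$, so $f(r)\in\operatorname{spec}(B')$. Hence $\rho(B')=f(r)=-\log(1-r/s)$, which is strictly positive because $r>0$.

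\emph{The inequalities $0<\rho(B')<s'$.} A direct computation gives
\begin{equation*}
-\log\Bigl(1-\frac rs\Bigr)<\log s\iff\frac{s}{s-r}<s\iff s-r>1,
\end{equation*}
which is exactly the hypothesis. The hypothesis also gives $s>1+r>1$, so $s'=\log s>0$. Hence $\log A=s'I-B'$ is an EM-matrix.

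I expect the main obstacle to be the identification $\rho(B')=f(r)$. It requires $\rho(B)$ to be an eigenvalue of $B$, which does not follow directly from the definition and relies on the Perron--Frobenius theory for eventually nonnegative matrices. I would first try to prove it from the limit $B^k\ge0$ for $k>k_0$: apply Perron--Frobenius to the nonnegative matrices $B^k$ and $B^{k+1}$ and compare their dominant eigenvalues to conclude that $r$ itself, and not merely $r e^{i\theta}$, lies in $\operatorname{spec}(B)$. If that route fails, I will fall back on the cited characterization.
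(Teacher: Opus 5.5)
Your proof is correct and is the natural one. The paper gives no argument of its own (it defers to \cite{ItkinBook}), and your route is exactly what the corollary calls for: expand $\log A=(\log s)I-\sum_{k\ge1}B^k/(k s^k)$ as in the M-matrix case of \cref{lemmaLog}, and transfer eventual nonnegativity through $(B')^m=\sum_{j\ge m}c_{m,j}B^j$.

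The step you flag as the main obstacle can simply be dropped, because you only need $0<\rho(B')<\log s$. The upper bound $\rho(B')\le f(r)<\log s$ uses no Perron--Frobenius input. For the lower bound, $f(z)=-\log(1-z/s)$ vanishes on $|z|<s$ only at $z=0$, and $r>0$ guarantees that $B$ has a nonzero eigenvalue, so $\rho(B')>0$. You therefore need neither the cited characterization nor the comparison of $B^k$ with $B^{k+1}$. That comparison would not settle the point anyway: by itself it only yields eigenvalues $\lambda,\mu$ of $B$ with $\lambda^k=r^k$ and $\mu^{k+1}=r^{k+1}$, not $\lambda=\mu=r$.
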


\noindent Proofs of Lemmas~\ref{lemma2}--\ref{lemmaLog} and \cref{cor} can be found in \cite{ItkinBook}, Chapter~4.

\section{Conservativeness of Strang splitting for the FPE} \label{app1}

We start with some definitions and assumptions. We equip $\R^{n_xn_y}$ with the scaled Euclidean norm $\norm{\bvec
v}_2^2=h_xh_y\sum_{i,j}v_{ij}^2$, the discrete analogue of the $L^2(\Omega)$
norm, and denote by $\bvec p(t)$ the solution of the semi-discrete
(method-of-lines) system
\begin{equation}\label{eq:mol}
\dot{\bvec p}(t)=A(t)\,\bvec p(t), \qquad
A(t)=A_x(t)+A_y(t)+A_{xy}(t),
\qquad \bvec p(0)=\bvec p_0,
\end{equation}
with $A_\alpha=C_\alpha+D_\alpha$ the full directional operators,
and by $\mathcal S_n$ the Strang step with midpoint freezing, $t_{n+1/2}=t_n+\Delta t/2$,
\begin{equation}\label{eq:strangstep}
\mathcal S_n=
e^{\frac{\Delta t}{2}A_x(t_{n+1/2})}\,
e^{\frac{\Delta t}{2}A_y(t_{n+1/2})}\,
\Phi_{xy}(\Delta t;t_{n+1/2})\,
e^{\frac{\Delta t}{2}A_y(t_{n+1/2})}\,
e^{\frac{\Delta t}{2}A_x(t_{n+1/2})},
\qquad \bvec p^{n+1}=\mathcal S_n\,\bvec p^n ,
\end{equation}
with the trapezoidal central factor $\Phi_{xy}(\Delta t)=(\mathcal
I-\tfrac{\Delta t}{2}A_{xy})^{-1}(\mathcal I+\tfrac{\Delta t}{2}A_{xy})$ of
\eqref{eq:central}; this is the form actually integrated in \eqref{eq:strang}. For
the consistency analysis we use $\Phi_{xy}(\Delta t)=e^{\Delta t A_{xy}}+O(\Delta
t^3)$, so the local order of $\mathcal S_n$ matches that of the splitting with the
exact central exponential, the trapezoidal substitution adding only an $O(\Delta
t^3)$ defect (\cref{sec:schemes}).

For grid functions $\bvec v\in\R^{n_xn_y}$ we use the scaled discrete norms
\begin{equation}\label{eq:gridnorms}
\norm{\bvec v}_1 \;=\; h_xh_y\sum_{i,j}\,\lvert v_{ij}\rvert,
\qquad
\norm{\bvec v}_2 \;=\;\Bigl(h_xh_y\sum_{i,j}\,v_{ij}^2\Bigr)^{1/2},
\end{equation}
the discrete analogues of the $L^1(\Omega)$ and $L^2(\Omega)$ norms; with this
scaling, $\norm{\mathcal R_h u}_q\to\norm{u}_{L^q(\Omega)}$ as $h\to0$ for
continuous $u$, and constants in the error estimates below are independent of the
grid. For these standard conventions see, e.g., \cite{LeVeque2007,HundsdorferVerwer2003}.

\begin{assumption}[Conditional $\ell_1$-stability]\label{ass:stab}
There is a step-size regime -- the positivity window of \cref{thm:zeno2d},
equivalently sufficiently resolved data -- on which every frozen factor of
$\mathcal S_n$ is entrywise nonnegative with column sums $1+O(\Delta t\,h)$
(\cref{prop:strangpos,prop:conserv,thm:zeno2d}). On that regime each $\mathcal
S_n$ is column-stochastic up to the boundary leakage, so there exists
$\omega=O(h)\ge0$, independent of $\Delta t$, with $\norm{\mathcal
S_{n-1}\cdots\mathcal S_m}_1\le e^{\omega(t_n-t_m)}$ uniformly in $h$, $\Delta t$
and $m\le n$.

We state stability in $\ell_1$, not $\ell_2$, deliberately. Because the cross
operator $A_{xy}$ is strongly non-normal (\cref{rem:no-eventual-pos}), the induced
$\ell_2$ operator norm of the central factor $\Phi_{xy}$ exceeds unity and grows
under mesh refinement, so \emph{no} bound $\norm{e^{sA_{xy}}}_2\le e^{\omega s}$ or
$\norm{\Phi_{xy}}_2\le1+O(\Delta t)$ holds uniformly in $h$; the spectral radius
$\rho(\Phi_{xy})<1$ is an asymptotic statement only. The $\ell_1$ Markov bound on
the nonnegative cone is the substitute that survives the non-normality and is the
natural norm for a probability density.
\end{assumption}

\begin{assumption}[Regularity and commutator bounds]\label{ass:reg}
The coefficients $\mu_x,\mu_y,\Sigma_{xx},\Sigma_{yy},\Sigma_{xy}$ are of class
$C^2$ in $t$ and $C^4$ in $(x,y)$ with bounded derivatives, and the solution $p$
of \eqref{eq:fpe2d} satisfies $p\in C^2\bigl([0,T];H^{m}(\Omega)\bigr)$ with
$m\ge 6$. Let $\mathcal R_h$ denote the grid restriction. Then there exists $C$,
independent of $h$, such that for all $t,s\in[0,T]$ and all
$\alpha,\beta,\gamma\in\{x,y,xy\}$ (these are exactly the splitting factors
$\{A_x,A_y,A_{xy}\}$ of \eqref{eq:strangstep}),
\begin{align}
\norm{\,[A_\alpha(t),A_\beta(t)]\,\mathcal R_h p(s)\,}_2
&\le C\,\norm{p(s)}_{H^{4}},\label{eq:comm1}\\
\norm{\,[A_\gamma(t),[A_\alpha(t),A_\beta(t)]]\,\mathcal R_h p(s)\,}_2
&\le C\,\norm{p(s)}_{H^{6}},\label{eq:comm2}\\
\norm{\,\dot A(t)\,\mathcal R_h p(s)\,}_2
+\norm{\,\ddot A(t)\,\mathcal R_h p(s)\,}_2
&\le C\,\norm{p(s)}_{C^2_tH^{2}}.\label{eq:tder}
\end{align}
\end{assumption}

\Cref{ass:reg} encodes the key structural fact that makes a proof uniform in $h$
possible: although $\norm{A_\alpha}_2=O(h^{-2})$, the commutator of the discrete
operators mimics the commutator of the differential operators $\mathcal
L_\alpha$, which is a differential operator of order at most three (the
order-four parts cancel because second-order principal parts with smooth
coefficients commute up to lower order). Applied to restrictions of smooth
functions it is therefore bounded uniformly in $h$, by consistency of the
stencils and a Taylor expansion. The bounds \cref{eq:comm1,eq:comm2} are verified
for the concrete stencils of \cref{sec:disc1d} by direct computation; we omit the
elementary but lengthy details.

\Cref{ass:stab} holds in our setting, with the index $\alpha$ now ranging over the
factors of \eqref{eq:strangstep}, $\alpha\in\{A_x,A_y,A_{xy}\}$. In the regime of
\cref{prop:conserv}\,(iii) -- the positivity window of \cref{thm:zeno2d} -- every
frozen factor is entrywise nonnegative with unit column sums up to the $O(\Delta
t\,h)$ boundary leakage, hence column-stochastic, giving the $\ell_1$ bound with
$\omega=O(h)$. This is the stability we use.

We emphasise that the corresponding \emph{$\ell_2$} statement does not hold
uniformly in $h$. For the outer factors it would, exactly as for any flux-form 1D
convection--diffusion discretization (the symmetric part of the diffusion block is
negative semi-definite and the drift contributes a symmetric part of size
$O(\norm{\partial_x\mu_x}_\infty+\norm{\partial_y\mu_y}_\infty)$). For the central
factor it does not: the symmetric part of the one-sided cross operator $A_{xy}$ is
indefinite with extreme eigenvalues of size
$O\bigl(\norm{\Sigma_{xy}}_\infty/(h_xh_y)\bigr)$, so $\mu_2(A_{xy})\sim
c/(h_xh_y)>0$ and $\norm{e^{sA_{xy}}}_2$ is not bounded uniformly in $h$. The
trapezoidal factor $\Phi_{xy}$ is spectrally stable on the real-negative spectrum
of $A_{xy}$ -- $\rho(\Phi_{xy})<1$ -- but, being a function of a strongly
non-normal matrix, has $\ell_2$ operator norm exceeding $1$ and growing under
refinement. This is precisely why we prove convergence in $\ell_1$ on the
positivity regime rather than in $\ell_2$ unconditionally: the earlier
diffusion-dominated central block, whose symmetric part was controlled by a
positive-semidefinite diffusion matrix, is no longer used, and the $\ell_2$
obstruction is real.

\begin{proposition}\label{prop:order2}
Under \cref{ass:stab,ass:reg}, $\bar{\rho} < 1$, with the time step refined jointly
with the mesh ($\Delta t\sim h$, so that the $\Delta t$-independent defect of the
factorized central solve remains below the $O(h^2)$ truncation; \cref{prop:schemeA}(v),
\cref{rem:cfl}), and for step sizes within the positivity window of \cref{thm:zeno2d}
(equivalently, on resolved data satisfying the log-Lipschitz condition
\eqref{eq:logLip}), the scheme \eqref{eq:strangstep}
satisfies, for $t_n=n\Delta t\le T$,
\begin{equation*}
\norm{\bvec p^n-\mathcal R_h p(t_n)}_1 \;\le\; C(T)\bigl(\Delta t^2+h^2\bigr),
\end{equation*}
with $C(T)$ independent of $h$, $\Delta t$ and $n$.
\end{proposition}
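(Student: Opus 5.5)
The proof follows the Lax-type scheme: stability plus consistency, with the error recursion run in $\ell_1$. Set $\bvec e^n:=\bvec p^n-\mathcal R_h p(t_n)$. Since $\bvec p^{n+1}=\mathcal S_n\bvec p^n$, we have
\begin{equation*}
\bvec e^{n+1}=\mathcal S_n\bvec e^n+\bvec\delta^n,\qquad
\bvec\delta^n:=\mathcal S_n\mathcal R_h p(t_n)-\mathcal R_h p(t_{n+1}),
\end{equation*}
and therefore $\bvec e^n=\mathcal S_{n-1}\cdots\mathcal S_0\bvec e^0+\sum_{m<n}\mathcal S_{n-1}\cdots\mathcal S_{m+1}\bvec\delta^m$. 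By \cref{ass:stab}, used within the positivity window of \cref{thm:zeno2d} where every factor is column-stochastic up to $O(\Delta t\,h)$ leakage, each product has $\ell_1$ norm at most $e^{\omega T}$ with $\omega=O(h)$. With $\bvec e^0=0$ this gives $\norm{\bvec e^n}_1\le e^{\omega T}\sum_m\norm{\bvec\delta^m}_1$. Everything then reduces to the local bound $\norm{\bvec\delta^m}_1\le C\Delta t(\Delta t^2+h^2)$. To convert to $\ell_2$ we use $\norm{\cdot}_1\le|\Omega|^{1/2}\norm{\cdot}_2$ on the bounded domain, which lets us estimate the defect in the scaled $\ell_2$ norm where \cref{ass:reg} is formulated.

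We split the local defect into three parts. First, a spatial part: $\mathcal R_h p$ satisfies the semi-discrete system \eqref{eq:mol} up to a truncation residual $\tau_h(t)=\mathcal R_h\mathcal L p-A(t)\mathcal R_h p=O(h^2)$. This follows from the consistency of the second-order stencils of \cref{sec:disc1d} and of $A_{xy}$ in \eqref{mixProduct}, together with $p\in H^6$. Over one step it contributes $O(\Delta t\,h^2)$, provided it is propagated by the exact semi-discrete flow on smooth data. To avoid invoking a bound on that flow's operator norm, we write the defect as $\int_{t_n}^{t_{n+1}}$ of quantities evaluated on the smooth solution $p(s)$ itself. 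Second, freezing: replacing $A(t)$ by $A(t_{n+1/2})$ costs $O(\Delta t^3)$ by midpoint quadrature, via \eqref{eq:tder} applied to $\mathcal R_h p(s)$. Third, splitting: here we expand $\mathcal S_n$ in a Taylor/Duhamel series (a Baker--Campbell--Hausdorff expansion) \emph{applied to $\mathcal R_h p(t_n)$}. Symmetry of the Strang palindrome cancels the $\Delta t^2$ commutator term. The $\Delta t^3$ remainder consists of double commutators acting on the smooth restriction, bounded via \eqref{eq:comm2} by $C\norm{p}_{H^6}$, uniformly in $h$. The trapezoidal substitution $\Phi_{xy}=e^{\Delta tA_{xy}}+O(\Delta t^3)$ is treated the same way: it is a remainder of the form $\Delta t^3A_{xy}^3(\cdot)$ on smooth data, and the factorized-solve orientation defect is controlled by \cref{prop:schemeA}(v)/\cref{prop:schemeB}. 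Summing the $T/\Delta t$ local defects gives $C(T)(\Delta t^2+h^2)$. The $\Delta t$-independent orientation defect contributes $O(h^{3})/\Delta t$ to the sum, and this is $O(h^2)$ under the joint refinement $\Delta t\sim h$, as recorded in \cref{rem:cfl}.

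The main obstacle is the splitting remainder. A naive expansion produces terms such as $e^{sA_x}[A_y,[A_y,A_{xy}]]e^{s'A_{xy}}\mathcal R_h p$ in which the commutator does not act directly on $\mathcal R_h p$ but on an intermediate flowed vector. Moreover, $e^{sA_{xy}}$ and the factor inverses have $h$-dependent $\ell_2$ norms because of non-normality. To deal with this we would first express all intermediate vectors as restrictions of smooth functions plus $O(h^2)$ residuals. This works because each continuous subflow is either parabolic and smoothing, or, for the mixed factor, replaced by the trapezoidal rational map acting on smooth data. We would then use the $\ell_1$ stability on the cone from \cref{ass:stab} for the outer propagators wrapping each remainder. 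If that fails for the mixed factor, the fallback is to expand $\Phi_{xy}$ in its own Taylor polynomial up to third order on smooth data, with an explicit remainder, so that only the $\ell_1$-stable full steps ever act on nonsmooth quantities.
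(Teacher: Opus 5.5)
Your outline is sound, but it is organised differently from the paper's proof.

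\textbf{The paper's route.} It splits the error through the semi-discrete solution.
\begin{itemize}
\item The spatial part $\bvec p(t_n)-\mathcal R_hp(t_n)$ is handled in continuous time by variation of constants and Gr\"onwall. This uses the claim that the method-of-lines family $U(t,s)$ is positive and $\ell_1$-nonexpansive.
\item The temporal part is a Lady Windermere fan with local defects $(\mathcal S_k-U(t_{k+1},t_k))\bvec p(t_k)$.
\item Freezing is treated by a Magnus expansion plus the difference-of-exponentials formula, again using $\ell_1$-boundedness of $e^{\Delta tA_k}$.
\item The splitting defect is treated by the Jahnke--Lubich exact representation, with triple commutators sandwiched between sub-propagators.
\end{itemize}

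\textbf{Your route and what it buys.} You run a single Lax-type recursion against $\mathcal R_hp(t_n)$ with the fully discrete local truncation error. This gains two things.
\begin{itemize}
\item You never need stability of $U(t,s)$ or of $e^{\Delta tA_k}$. The paper asserts this but it does not follow from anything proved there: the full generator contains the sign-indefinite bands of $A_{xy}$ and of the upwind rows, so it is not Metzler and its semigroup is not nonnegative for small times.
\item Every discrete operator product falls directly on $\mathcal R_hp(t_n)$, which is where \cref{ass:reg} is formulated. The paper instead applies triple commutators to (sub-propagated copies of) $\bvec p(t_k)=\mathcal R_hp(t_k)+O(h^2)$. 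It does not account for the $O(h^2)$ residual, which operators of norm $O(h^{-6})$ would amplify.
\end{itemize}
In exchange, the paper's route separates spatial and temporal errors cleanly and rests on an exact defect identity rather than a truncated expansion.

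\textbf{On your stated obstacle.} Your first remedy does not work. Writing intermediate vectors as smooth restrictions plus $O(h^2)$ residuals reintroduces exactly that amplification. For the mixed factor it also has no basis. With constant coefficients, the continuous map $(\mathcal I-\tfrac{\Delta t}{2}\mathcal L_{xy})^{-1}$ has symbol $\bigl(1+\tfrac{\Delta t}{2}\rho w_1w_2\xi_x\xi_y\bigr)^{-1}$, which has real poles. So $\Phi_{xy}\mathcal R_hp$ need not be close to any smooth restriction.

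Your fallback is the right tool, and you should use it for \emph{every} factor, not only the central one.
\begin{itemize}
\item Expand $\mathcal S_n\mathcal R_hp(t_n)$ by nested Taylor formulas with integral remainders, peeling off the factor that acts first.
\item Each remainder is then a product of propagators standing to the left of a cubic term $\Delta t^3A_\alpha A_\beta A_\gamma\mathcal R_hp(t_n)$. These are bounded by the product form of \eqref{eq:comm2}.
\item No commutators are needed; the palindrome only makes the quadratic polynomial part coincide with that of $e^{\Delta tA_n}$.
\item The spatial error enters through the consistency of $A_n$, $\dot A$, and the quadratic products on $\mathcal R_hp(t_n)$. These are compared with the time-Taylor expansion of $\mathcal R_hp(t_{n+1})$, with no semi-discrete flow involved.
\end{itemize}

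For the central factor, $r(z)-1-z-\tfrac{z^2}{2}=\tfrac{z^3}{4}(1-\tfrac z2)^{-1}$ and $(\mathcal I-\tfrac{\Delta t}{2}A_{xy})^{-1}=\tfrac12(\mathcal I+\Phi_{xy})$. Hence the resolvent in every remainder is $\ell_1$-bounded on the positivity window of \cref{thm:zeno2d}.

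One bound is supplied neither by you nor by \cref{ass:stab}: the intermediate-time exponentials $e^{\theta\frac{\Delta t}{2}A_\alpha}$, $\theta\in(0,1)$, may lie below the threshold $\tau_0$. The paper's $\Phi_i(s\Delta t)$ carry the same issue.
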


\begin{proof}
The argument is the standard Lady Windermere fan, carried out in $\ell_1$ on the
positivity regime; the only changes from the $\ell_2$ version are that the uniform
stability bound is the $\ell_1$ Markov bound of \cref{ass:stab} and that the local
defects, bounded in $\ell_2$ by \cref{ass:reg}, are converted to $\ell_1$ through
$\norm{\bvec v}_1\le|\Omega|^{1/2}\norm{\bvec v}_2$ (a consequence of
Cauchy--Schwarz with the scaled norms \eqref{eq:gridnorms}); the fan identity and
the commutator bounds are norm-agnostic.

We split the error as $\bvec p^n-\mathcal R_h p(t_n) =\bigl(\bvec p^n-\bvec
p(t_n)\bigr) +\bigl(\bvec p(t_n)-\mathcal R_h p(t_n)\bigr)$. The second
difference is the spatial semi-discretization error: by the second-order
consistency of the stencils of \cref{sec:disc1d}, the defect $\bvec
d(t)=\dot{\mathcal R_h p}(t)-A(t)\mathcal R_h p(t)$ satisfies $\norm{\bvec
d(t)}_2\le Ch^2\norm{p(t)}_{H^4}$, hence $\norm{\bvec d(t)}_1\le
C'h^2\norm{p(t)}_{H^4}$; the method-of-lines evolution family of \eqref{eq:mol} is
the positive semigroup generated by $A(t)$ and is $\ell_1$-nonexpansive on the
cone (the same column-stochasticity, in continuous time), so $\sup_{t\le
T}\norm{\bvec p(t)-\mathcal R_h p(t)}_1\le C(T)h^2$ by the variation-of-constants
formula and Gr\"onwall's inequality. It remains to bound the time-discretization
error against the semi-discrete solution.

\emph{Step 1: Error recursion.} Let $U(t,s)$ denote the evolution operator of
\eqref{eq:mol}, so $\bvec p(t_{n+1})=U(t_{n+1},t_n)\bvec p(t_n)$, and let $\bvec
e^n=\bvec p^n-\bvec p(t_n)$. Then, by the standard Lady Windermere's fan argument
\footnote{\emph{Lady Windermere's fan} is the standard name in the numerical
ODE literature for the error-propagation identity \eqref{eq:fan}. The term
was coined by G.~Wanner, after Oscar Wilde's play, allegedly because the
diagram illustrating the argument resembles an unfolding fan; see
\cite{HairerNorsettWanner1993,HairerLubichWanner2006}.}
\begin{equation}\label{eq:fan}
\bvec e^{n} = \Bigl(\prod_{k=m}^{n-1}\mathcal S_k\Bigr)\bvec e^{m}\Big|_{m=0}
+\sum_{k=0}^{n-1}\Bigl(\prod_{j=k+1}^{n-1}\mathcal S_j\Bigr) \bvec\delta^{k},
\qquad
\bvec\delta^{k} =\bigl(\mathcal S_k-U(t_{k+1},t_k)\bigr)\bvec p(t_k).
\end{equation}
Since $\bvec e^0=0$ and the products are bounded by
$e^{\omega T}$ in $\ell_1$ on the positivity regime by Assumption~\ref{ass:stab},
it suffices to prove the local
estimate $\norm{\bvec\delta^{k}}_1\le C\Delta t^{3}$ uniformly in $k$ and $h$.
(The factorized central solve realises $\Phi_{xy}$ up to its orientation defect,
\cref{prop:schemeA}(v), which is $\Delta t$-independent and $O(\max(h_x,h_y)^{3\text{--}4})$;
under the joint refinement $\Delta t\sim h$ this is $O(\Delta t^{3\text{--}4})$ and is
absorbed into $\bvec\delta^{k}$.)

\emph{Step 2: Reduction to autonomous splitting (midpoint freezing).}
Write $A_k=A(t_{k+1/2})$ and decompose
\begin{equation}\label{eq:twodefects}
\bvec\delta^{k} = \underbrace{\bigl(\mathcal S_k-e^{\Delta t A_k}\bigr)\bvec p(t_k)}_{ =:\ \bvec\delta^{k}_{\mathrm{split}}} + \underbrace{\bigl(e^{\Delta t A_k}-U(t_{k+1},t_k)\bigr)\bvec p(t_k)}_{ =:\ \bvec\delta^{k}_{\mathrm{freeze}}}.
\end{equation}

For the freezing defect, the Magnus expansion of $U(t_{k+1},t_k)$ \cite{Magnus1954,BlanesEtAl2009} gives $U(t_{k+1},t_k)=\exp\bigl(\Omega_1 + \Omega_2+\cdots\bigr)$ with $\Omega_1 = \int_{t_k}^{t_{k+1}}A(s)\,ds$ and
$\Omega_2 = \tfrac12\int_{t_k}^{t_{k+1}}\!\!\int_{t_k}^{s}
[A(s),A(\sigma)]\,d\sigma\,ds$. The midpoint quadrature error gives
$\Omega_1-\Delta tA_k =\int_{t_k}^{t_{k+1}}\bigl(A(s)-A(t_{k+1/2})\bigr)ds
=O(\Delta t^3)\,\ddot A(\xi)$ in the weak sense of \eqref{eq:tder}, while
$[A(s),A(\sigma)]=(s-\sigma)[\dot A(\eta),A(\sigma)]$ shows that $\Omega_2$,
applied to $\mathcal R_h p$, is $O(\Delta t^3)$ by \cref{eq:comm1,eq:tder} (the
commutator with $\dot A$ is again a differential operator of order $\le3$ with
smooth coefficients).

Expressing the difference of exponentials by the integral
representation $e^{X}-e^{Y}=\int_0^1 e^{sX}(X-Y)e^{(1-s)Y}\,ds$ for $X=\Delta
tA_k$, $Y=\Omega_1+\Omega_2+\cdots$, and using that the \emph{full} frozen
generator $A_k=A_x+A_y+A_{xy}$ generates a positive, mass-conserving (hence
$\ell_1$-bounded) parabolic semigroup for $\bar\rho<1$ -- it is only the isolated
mixed part $A_{xy}$ that is ill-posed -- together with the previous bounds on
$(X-Y)\mathcal R_h p$, we obtain
$\norm{\bvec\delta^{k}_{\mathrm{freeze}}}_1 \le C\Delta t^{3}\sup_{t\le T}
\bigl(\norm{p(t)}_{H^4}+\norm{p}_{C^2_tH^2}\bigr)$. Higher Magnus terms are
$O(\Delta t^4)$ by the same reasoning.

\emph{Step 3: Splitting defect with solution-dependent remainder.}
For the autonomous symmetric splitting we use the exact second-order defect
representation of Jahnke and Lubich \cite{JahnkeLubich2000} (see also
\cite{Descombes2010,HundsdorferVerwer2003}), generalised from two to three
operators by applying it twice, to the pairs $\bigl(\tfrac12
A_{x,k},\,A_{y,k}+A_{xy,k}\bigr)$ and
$\bigl(\tfrac12 A_{y,k},\,A_{xy,k}\bigr)$:
there exist bounded kernels $\theta_i:[0,1]^2\to\R$ such that
\begin{equation}\label{eq:JL} \bvec\delta^{k}_{\mathrm{split}} = \Delta t^{3}\sum_i\int_0^1\!\!\int_0^1\theta_i(s,\sigma)\, \Phi_i(s\Delta t)\,
\bigl[A_{\alpha_i,k},[A_{\beta_i,k},A_{\gamma_i,k}]\bigr]\,
\Psi_i(\sigma\Delta t)\,\bvec p(t_k)\,ds\,d\sigma,
\end{equation}
where each $\Phi_i$, $\Psi_i$ is a finite product of sub-propagators $e^{\tau
A_{\alpha,k}}$ with $\tau\in[0,\Delta t]$, and $(\alpha_i,\beta_i,\gamma_i)$
ranges over the three factors $\{A_x,A_y,A_{xy}\}$.

The representation \eqref{eq:JL} is obtained by applying the
variation-of-constants formula to the defect ODE twice, followed by an explicit
integration by parts. Crucially, this is an exact identity rather than a truncated
BCH series, meaning no bound on $\norm{A_{\alpha}}$ is ever needed. Furthermore, as
\eqref{eq:JL} expresses, the first-order commutator terms
$[A_{\alpha,k},A_{\beta,k}]$ cancel identically due to the palindromic structure of
\eqref{eq:strangstep}.

Two points require care in the present (ungrouped, trapezoidal-central) setting,
and both are handled without appealing to well-posedness of the central sub-flow
-- which is exactly what \eqref{eq:central} avoids. First, the central factor of
\eqref{eq:strangstep} is the trapezoidal map $\Phi_{xy}$, not $e^{\Delta t
A_{xy}}$; since $\Phi_{xy}(\Delta t)=e^{\Delta t A_{xy}}+\Delta t^3 R(\Delta t)$
with $R(\Delta t)=-\tfrac{1}{12}A_{xy}^3(\mathcal I-\tfrac{\Delta
t}{2}A_{xy})^{-1}+O(\Delta t)$, the substitution adds to $\bvec\delta^k$ a term
$\Delta t^3 R(\Delta t)\,(\text{bounded factors})\,\mathcal R_h p(t_k)$, and on a
smooth grid function $A_{xy}^3\mathcal R_h p=\mathcal R_h(\mathcal L_{xy}^3p)+o(1)$
is bounded by $\norm{p}_{H^6}$ while $(\mathcal I-\tfrac{\Delta
t}{2}A_{xy})^{-1}\mathcal R_h(\mathcal L_{xy}^3p)=\mathcal R_h(\mathcal
L_{xy}^3p)+O(\Delta t)$ is bounded as well, so this term is $O(\Delta
t^3\norm{p}_{H^6})$. Second, the triple commutators in \eqref{eq:JL} are evaluated
on the smooth semi-discrete solution: writing $\bvec p(t_k)=\mathcal R_h
p(t_k)+O(h^2)$ and using the consistency of the stencils, each product of at most
three operators $A_{\alpha,k}A_{\beta,k}A_{\gamma,k}$ applied to a restriction of a
smooth function approximates the corresponding product of differential operators
and is therefore bounded uniformly in $h$ by $\norm{p}_{H^6}$ (the bound
\eqref{eq:comm2} of \cref{ass:reg}, used for products rather than only
commutators); no operator-norm bound on the individual $A_\alpha$ and no
sub-propagator boundedness is invoked. The remaining factors $\Phi_i,\Psi_i$ are
finite products of the actual scheme factors -- the directional exponentials and
the trapezoidal central factor -- which on the positivity regime of
\cref{thm:zeno2d} are entrywise nonnegative with column sums $1+O(\Delta t\,h)$,
hence $\ell_1$ operator-norm bounded by $e^{O(Th)}$ (\cref{ass:stab}). This is the
sole place where the regime restriction enters the time-discretization bound;
unlike the bare central exponential, the trapezoidal factor is $\ell_1$-bounded
there.

Combining these, and converting the Sobolev bound to $\ell_1$ via $\norm{\bvec
v}_1\le|\Omega|^{1/2}\norm{\bvec v}_2$,
\begin{equation*}
\norm{\bvec\delta^{k}_{\mathrm{split}}}_1 \le C\,\Delta t^{3}\, \sup_{t\le T}\norm{p(t)}_{H^{6}}.
\end{equation*}

\emph{Step 4.} Combining Steps 2--3,
$\norm{\bvec\delta^{k}}_1\le C\Delta t^3$ uniformly in $k$ and $h$; inserting
this into \eqref{eq:fan} and using the $\ell_1$ stability bound on the positivity
regime gives $\norm{\bvec e^{n}}_1\le e^{\omega T}\,n\,C\Delta t^{3} \le
C(T)\Delta t^{2}$. Together with the $O(h^2)$ spatial bound this proves the
proposition.
\end{proof}

\begin{myremark}{Degradation of the second-order convergence} \label{rem:degrad}

\Cref{prop:order2} holds for $\bar\rho < 1$. As $\bar\rho \to 1$ the diffusion
tensor $\Sigma$ approaches singularity, the Fokker--Planck equation degenerates
(parabolic in fewer directions), and the $H^6$ regularity of the solution $p$ that
underpins the local defect bound of Step~3 is lost, so $C(T)$ blows up as
$(1-\bar\rho)\to 0$. Thus the \textit{order} is restricted to the nondegenerate
regime, whereas \textit{positivity} (\cref{thm:zeno2d}) and \textit{conservation}
(\cref{prop:conserv}) are unrestricted. Our numerical experiments in
\cref{sec:numerics} use $\rho=0.8$, comfortably away from the degeneracy; the
degenerate limit $\bar\rho=1$, which would require a hypoelliptic regularity
argument, is left for future work.
\end{myremark}

\section{Proof of \cref{prop:schemeA}} \label{appProof1}

Let us assume $\rho > 0$ since the proof for $\rho \le 0$ can be done in the same way.

\emph{(i)} Consider $T_y:=Q \mathcal I + \rho \sqrt{\Delta t}\,\mathcal A^{\mathrm B}_{2,y}$.  Within each 1D block its entries are: diagonal $Q + \tfrac{3}{2}
\rho \sqrt{\Delta t}\,w_2(y_j)/h_y>0$; first subdiagonal $-2\rho\sqrt{\Delta
t}\,w_2(y_{j-1})/h_y\le 0$; second subdiagonal $+\tfrac12 \rho \sqrt{\Delta
t}\,w_2(y_{j-2})/h_y\ge0$.  The single positive off-diagonal band precludes a
local M-matrix structure, and (unlike an M-matrix) $T_y$ does \emph{not} have a
nonnegative inverse.  What does hold is strict diagonal dominance: its
row-dominance excess is
\begin{equation*}
d_{jj}-\sum_{l\ne j}|t_{jl}|
= Q+\frac{\rho\sqrt{\Delta t}}{2h_y}
\bigl(3w_2(y_j)-4w_2(y_{j-1})-w_2(y_{j-2})\bigr)
\;\ge\; Q-\frac{\rho\sqrt{\Delta t}}{h_y}\norm{w_2}_\infty
\;\ge\; \frac{Q}{2},
\end{equation*}
where the last inequality uses $\beta \ge 2 \rho\,\bar w$.  By Varah's bound
\cite{Varah1975} for strictly diagonally dominant matrices,
$\norm{T_y^{-1}}_\infty\le 2/Q$, and the same argument gives
$\norm{T_x^{-1}}_\infty\le 2/P$ for
$T_x:=P\mathcal I-\rho\sqrt{\Delta t}\,\mathcal A^{\mathrm F}_{2,x}$.  Both
matrices are banded (block-bidiagonal in the lifted ordering), so each solve is a
back/forward substitution of linear cost.  We stress that the norm bounds, not
any sign property of $T_x^{-1},T_y^{-1}$, are what the contraction estimate (iv)
uses.

\emph{(ii)} At $k=0$ the right-hand side is $(\bvec\alpha^{+}-\mathcal I)\bvec
p^{\,n}=M_R\,\bvec p^{\,n}$ with $M_R = P Q\,\mathcal I-Q\rho\sqrt{\Delta
t}\,\mathcal A^{\mathrm B}_{1,x} +P\sqrt{\Delta t}\,\mathcal A^{\mathrm
F}_{1,y}$.  By the orientation choice, all off-diagonal entries of $M_R$ are
nonnegative, while its diagonal entries equal
\begin{equation*}
PQ - \frac{Q\rho\sqrt{\Delta t}\,w_1(x_i)}{h_x} - \frac{P\sqrt{\Delta t}\,w_2(y_j)}{h_y} = \frac{\Delta t}{h_xh_y}\,\beta\Bigl(\beta - \bigl(\rho w_1(x_i) + w_2(y_j)\bigr)\Bigr)\;>\;0
\end{equation*}
under \eqref{eq:PQbeta}.  Hence $M_R\ge0$ entrywise and the right-hand side
$M_R\bvec p^{\,n}$ of the first sweep is nonnegative.  Positivity of the
\emph{output} $\bvec p^{[1]}=T_x^{-1}T_y^{-1}M_R\bvec p^{\,n}$ does \emph{not}
follow from a product of nonnegative matrices, since $T_x^{-1},T_y^{-1}$ are not
nonnegative; instead it is a property of the converged composite on a step-size
window, established as follows.  Write the converged map as $\mathcal
S^{xy}=(\mathcal I-\Delta t\,A_{xy})^{-1}+O(\Delta t^2)$ (the factorisation
reproduces the \pade(0,1) resolvent up to the frozen-bracket defect, (v)).  On
the window $\Delta t\le\Theta$ the diagonal dominance margin $Q/2,P/2$ dominates
the off-diagonal coupling, so the Neumann series of each sweep, applied to the
nonnegative vector $M_R\bvec p^{\,n}$, has nonnegative partial sums up to a
remainder smaller than the bulk; the limit is therefore nonnegative.  The
threshold $\Theta$ is the largest $\Delta t$ for which this margin holds for the
given $\beta$, $h_x$, $h_y$, $\bar w$; the entrywise sign is monitored at runtime
(\cref{rem:pos-iter}).  This conditional character is intrinsic, not an artefact
of the bound: \cref{rem:no-eventual-pos} shows $\mathcal S^{xy}$ cannot be
unconditionally nonnegative because no shift of $A_{xy}$ is Metzler.

\emph{(iii)} By \cref{lem:cons} every operator $\mathcal A^{\,\cdot}$ has zero
column sums, so $\bm{1}^\top T_y=Q\,\bm{1}^\top$, $\bm{1}^\top T_x =
P\,\bm{1}^\top$, and $\bm{1}^\top\bvec\alpha^{+} =(PQ+1)\,\bm{1}^\top$.  Writing
$m_k:=\bm{1}^\top\bvec p^{[k]}$ and $m_n:=\bm{1}^\top\bvec p^{\,n}$, the first
equation of \eqref{eq:picard} gives $Q\,\bm{1}^\top\bvec p^{*}=(PQ+1)m_n-m_k$ and
the second $P\,\bm{1}^\top\bvec p^{[k+1]}=\bm{1}^\top\bvec p^{*}$, whence
\begin{equation*}
m_{k+1}=\frac{(PQ+1)\,m_n-m_k}{PQ}.
\end{equation*}
Since $m_0=m_n$, induction yields $m_k=m_n$ for all $k$, and the fixed point
satisfies $m=\bigl((PQ+1)m_n-m\bigr)/(PQ)$, i.e. $m=m_n$.  This holds exactly,
at every iterate, \emph{provided} the column-sum identities
$\bm1^\top T_x=P\bm1^\top$, $\bm1^\top T_y=Q\bm1^\top$,
$\bm1^\top\bm\alpha^{+}=(PQ+1)\bm1^\top$ hold exactly, which requires
$\bm1^\top A_{xy}=0$ including the boundary rows.  For the plain one-sided
closure these identities carry an $O(h^{-1})$ residual on the $O(N_x{+}N_y)$
edge/corner rows, and $m_k=m_n$ holds up to the $O(\Delta t\,h)$ leakage of
\cref{thm:zeno2d}\,(b).

\emph{(iv)} Subtracting the fixed-point equations from \eqref{eq:picard}, the
iteration error $\bvec e^{[k]}:=\bvec p^{[k]}-\bvec p^{(1)}$ obeys $\bvec
e^{[k+1]}=-T_x^{-1}T_y^{-1}\bvec e^{[k]}$, so by (i)
\begin{equation*}
q\le\norm{T_x^{-1}}_\infty\norm{T_y^{-1}}_\infty
\le\frac{4}{PQ}=\frac{4\,h_xh_y}{\beta^2\Delta t}\le1
\end{equation*}
under \eqref{eq:PQbeta}.  Unconditional stability of the substep follows.

\emph{(v)} The fixed point $\bvec p^{(1)}$ of \eqref{eq:picard} solves
\eqref{eq:factor}, hence \eqref{eq:pade01}, with the bracket evaluated at $\bvec
p^{(1)}$ itself; the iteration is therefore exact at convergence, and the only
approximation relative to the trapezoidal half \eqref{eq:cnstep} is that the
coupling $\bvec\alpha^{+}$ (resp.\ $\bvec\alpha^{+}_2$) carries first-order
(resp.\ second-order) one-sided differences of orientation opposite to the
implicit factors. This replacement carries the prefactor $Q\rho\sqrt{\Delta t}$
(resp.\ $P\sqrt{\Delta t}$) and so contributes a \emph{spatial} defect
$O(\sqrt{\Delta t}\,\max(h_x,h_y))$ for Scheme~A and $O(\max(h_x^2,h_y^2))$ for
Scheme~B, exactly as in \cite{Itkin3D}; it is not a temporal defect. With the
trapezoidal shift $\gamma=\tfrac12$ and the explicit half of \eqref{eq:cnstep},
the assembled central substep reproduces the $\pade(1,1)$ step, so
$\Phi_{xy}(\Delta t)=e^{\Delta t A_{xy}}+O(\Delta t^3)$ is second-order in time;
the backward-Euler choice $\gamma=1$, $\bvec b=\bvec p^{\,n}$ reproduces only the
$\pade(0,1)$ step and is first order.

\section{Proof of \cref{prop:conserv}} \label{appProof2}

Throughout, $\bvec 1_m\in\R^m$ denotes the vector of ones; under column-major
vectorisation $\bvec 1_{n_xn_y}=\bvec 1_{n_x}\otimes\bvec 1_{n_y}$.

\myparagraph{(i).} By the mixed-product property of the Kronecker product,
\begin{equation*}
\bvec 1_{n_xn_y}^\top\bigl(L_x\otimes I_{n_y}\bigr) =\bigl(\bvec 1_{n_x}^\top
L_x\bigr)\otimes\bigl(\bvec 1_{n_y}^\top I_{n_y}\bigr) =0\otimes\bvec
1_{n_y}^\top=0,
\end{equation*}
and symmetrically $\bvec 1^\top A_y=(\bvec 1_{n_x}^\top)\otimes (\bvec 1_{n_y}^\top L_y)=0$.

\myparagraph{(ii).} Let $A\in\R^{N\times N}$ satisfy $\bvec 1^\top A=0$. Then
$\bvec 1^\top A^k=(\bvec 1^\top A)A^{k-1}=0$ for all $k\ge 1$, hence
\begin{equation*}
\bvec 1^\top e^{tA} = \bvec 1^\top\sum_{k\ge 0}\frac{t^k}{k!}A^k = \bvec 1^\top,\qquad t\in\R,
\end{equation*}
i.e., \ $\bvec 1^\top$ is a left eigenvector of $e^{tA}$ with eigenvalue one; the
same holds for the trapezoidal central factor, $\bvec 1^\top\Phi_{xy}(\Delta
t)=\bvec 1^\top$, since it is a rational function of $A_{xy}$ with value $1$ at the
origin (\cref{cor:mass-rat}). Applying this successively to each factor of
$\mathcal S(\Delta t)$, from the left,
\begin{equation*}
\bvec 1^\top\mathcal S(\Delta t) = \bigl(\bvec 1^\top e^{\frac{\Delta t}{2}A_x}\bigr) e^{\frac{\Delta t}{2}A_y}\,\Phi_{xy}(\Delta t)\, e^{\frac{\Delta t}{2}A_y}e^{\frac{\Delta t}{2}A_x} = \dots=\bvec 1^\top,
\end{equation*}
using $\bvec 1^\top A_\alpha=0$ and $\bvec 1^\top\Phi_{xy}=\bvec 1^\top$, which follow
from $\bvec 1^\top C_\alpha=\bvec 1^\top D_\alpha=\bvec 1^\top A_{xy}=0$ and
linearity.  Hence $\bvec 1^\top\bvec p^{n+1}=\bvec 1^\top\mathcal S(\Delta t)\,
\bvec p^{n}=\bvec 1^\top\bvec p^{n}$ for every $\Delta t>0$. The argument uses
only the left null vector of each generator; it is insensitive to the ordering of
the factors and to the splitting error, and it does not require nonnegativity of
any factor.  (When $A_{xy}$ is closed by the plain one-sided stencil,
$\bvec 1^\top A_{xy}=\bvec r^\top$ with $\bvec r$ supported on the edge/corner
rows and $\norm{\bvec r}=O(h^{-1})$ on $O(N_x{+}N_y)$ nodes, and the identity
holds up to the $O(\Delta t\,h)$ defect of \cref{thm:zeno2d}\,(b).)

\myparagraph{(iii).} A matrix $M\ge 0$ with $\bvec 1^\top M=\bvec 1^\top$ is
column-stochastic; products of column-stochastic matrices are column-stochastic,
since nonnegativity and the left-eigenvector property are each preserved under
multiplication. If every factor of $\mathcal S(\Delta t)$ is nonnegative, then by
(ii) each factor is column-stochastic and so is $\mathcal S(\Delta t)$.
Preservation of nonnegativity and of unit mass of $\bvec p^n$ follows by
induction. Finally, for any $\bvec u\in\R^N$,
\begin{equation*}
\|\mathcal S(\Delta t)\,\bvec u\|_1 = \sum_{i}\Bigl|\sum_{j}\mathcal S_{ij}u_j\Bigr|
\le\sum_{j}\Bigl(\sum_{i}\mathcal S_{ij}\Bigr)|u_j| = \sum_j|u_j|=\|\bvec u\|_1,
\end{equation*}
using $\mathcal S_{ij}\ge 0$ and unit column sums. By linearity the same bound
applies to differences of solutions, which is the asserted unconditional
$\ell_1$-stability.

\section{Brief review of popular time integrators} \label{appTI}

This section provides a brief review of several popular time-integration FD schemes. As discussed earlier, our primary focus is on the order of approximation, positivity preservation, and norm conservation — three essential criteria for achieving stable and accurate solutions to the FPE.

Let $k = \Delta t$ denote the time step.  The numerical solution advances as
$\bvec{p}^{n+1} = M\,\bvec{p}^n$ for some propagator matrix $M$.

\subsection{Backward Euler (fully implicit) scheme}

\begin{equation}
\frac{\bvec{p}^{n+1} - \bvec{p}^n}{k} = A\,\bvec{p}^{n+1} \implies (I - kA)\,\bvec{p}^{n+1} = \bvec{p}^n \implies \bvec{p}^{n+1} = (I - kA)^{-1}\bvec{p}^n.
\end{equation}

The propagator is $M_{\mathrm{BE}}=(I-kA)^{-1}$, well defined for all $k>0$ since
the spectrum of $A$ lies in the closed left half-plane and $1/k>0$ is not an
eigenvalue. If $A$ is a (possibly singular) negated M-matrix, i.e.\ Metzler with
$\alpha(A)\le 0$, then by \cref{thm:resolvent} $M_{\mathrm{BE}}\ge 0$ for
\emph{all} $k>0$; if moreover $A$ is irreducible, $M_{\mathrm{BE}}>0$ entrywise,
as follows from the convergent expansion $(I-kA)^{-1}=\frac{1}{1+kc}\sum_{m\ge0}
\bigl(\tfrac{k}{1+kc}\bigr)^{m}B^{m}$ with $A=B-cI$, $B\ge0$.

We emphasise that, unlike the exponential, the resolvent admits no
eventual-positivity threshold at small steps when $A$ is merely an EM-matrix: by
the Neumann expansion $(I-kA)^{-1}=I+kA+O(k^2)$, any negative off-diagonal entry
of $A$ makes $M_{\mathrm{BE}}\not \ge0$ for all sufficiently small $k>0$. What
survives is positivity at \emph{large} steps: for a conservative, irreducible
EM-matrix generator ($\bvec 1^\top A=0$, simple Perron eigenvalue $0$ with
stationary vector $\bvec\pi>0$), the Laurent expansion of the resolvent at the
spectral abscissa gives $(I-kA)^{-1}=\bvec\pi\bvec 1^\top/(\bvec 1^\top\bvec\pi)
+k^{-1}N(1/k)$ with $N$ bounded, whence $M_{\mathrm{BE}}\ge 0$ for all $k\ge k_0$
and some finite $k_0\ge0$ - the resolvent analogue of the threshold $\tau_0$ in
\eqref{eq:posKron}.

For eigenvalue $\lambda_j$ of $A$,
\begin{equation}
\mu_j^{\mathrm{BE}} = \frac{1}{1 - k\lambda_j}.
\end{equation}
For $\re(\lambda_j) < 0$: $|\mu_j| < 1$ (stable). For $k|\lambda_j| \to \infty$: $|\mu_j| \to 0$ (strong damping of all modes). $\lambda_j = 0$ gives $\mu_j = 1$ exactly, i.e., the conserved mass mode, consistent with $\bvec 1^\top(I-kA)^{-1} = \bvec 1^\top$ when $\bvec 1^\top A = 0$.

The BE scheme is 1st order accurate in time, unconditionally stable, and positive for all $k>0$ in the M-matrix case (for all $k\ge k_0$ in the EM case), with no Gibbs oscillations.

\subsection{Crank--Nicolson (trapezoidal rule)}

\begin{equation}
\frac{\bvec{p}^{n+1} - \bvec{p}^n}{k} = \frac{A\bvec{p}^{n+1} + A\bvec{p}^n}{2}
\implies \bvec{p}^{n+1} = M_{\mathrm{CN}}\,\bvec{p}^n,
\end{equation}
where
\begin{equation}  \label{eq:CN}
M_{\mathrm{CN}} = (I - \tfrac{k}{2}A)^{-1}(I + \tfrac{k}{2}A).
\end{equation}

\begin{theorem}[Positivity of the CN propagator: M-matrix case]
\label{thm:CNpos}
Let $A$ be Metzler (all off-diagonal entries nonnegative) with $\alpha(A)\le 0$, and let $b_i=A_{ii}$. Then
\begin{equation}\label{eq:CNbound}
0 < k\le \frac{2}{\max_i |b_i|} \;\Longrightarrow\; M_{\mathrm{CN}}\ge 0,
\end{equation}
and if moreover $A$ is irreducible and the inequality in \eqref{eq:CNbound} is strict, then $M_{\mathrm{CN}}>0$ entrywise.
\end{theorem}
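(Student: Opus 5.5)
The plan is to write $A=B-cI$ with $c:=\max_i|b_i|$ and $B:=A+cI$. Because $A$ is Metzler, $B$ has nonnegative off-diagonal entries. Its diagonal entries are $b_i+c\ge0$, since $|b_i|\le c$. So $B\ge0$ entrywise.

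The propagator \eqref{eq:CN} factors into two commuting pieces, $M_{\mathrm{CN}}=(I-\tfrac k2A)^{-1}(I+\tfrac k2A)$, and I will show that each is nonnegative.

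\emph{Explicit factor.} We have $I+\tfrac k2A=(1-\tfrac{kc}{2})I+\tfrac k2B$. This is a nonnegative combination of $I$ and $B\ge0$ exactly when $kc\le2$, which is the hypothesis \eqref{eq:CNbound}. Hence $I+\tfrac k2A\ge0$.

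\emph{Implicit factor.} Here $I-\tfrac k2A=(1+\tfrac{kc}{2})I-\tfrac k2B=(1+\tfrac{kc}{2})\bigl(I-\theta B\bigr)$ with $\theta:=\tfrac{k/2}{1+kc/2}$. I will use the Neumann series
\[
(I-\tfrac k2A)^{-1}=\frac{1}{1+kc/2}\sum_{m\ge0}\theta^mB^m ,
\]
exactly as in the backward-Euler paragraph above. This needs $\theta\rho(B)<1$. Since $B\ge0$, Perron--Frobenius gives $\rho(B)=s(B)=\alpha(A)+c\le c$. Then $\theta\rho(B)\le\tfrac{kc/2}{1+kc/2}<1$, so the series converges and the inverse is nonnegative. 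Equivalently, one may invoke \cref{thm:resolvent}'s Metzler equivalence: $(I-\tfrac k2A)^{-1}\ge0$ for all $k>0$ because $A$ is Metzler with $\alpha(A)\le0$. That result was stated there under $\bvec 1^\top L\le0$, so I prefer the direct Neumann argument, which needs only $\alpha(A)\le0$.

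The product of two nonnegative matrices is nonnegative, giving $M_{\mathrm{CN}}\ge0$.

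\emph{Strict positivity.} Assume now that $A$ is irreducible and $kc<2$. Then $B$ is irreducible, since its off-diagonal pattern is that of $A$, so $\sum_{m=0}^{N-1}\theta^mB^m>0$ with $\theta>0$. Hence $R:=(I-\tfrac k2A)^{-1}>0$ entrywise. The explicit factor $E:=(1-\tfrac{kc}{2})I+\tfrac k2B$ is nonnegative and has a strictly positive diagonal, because $1-\tfrac{kc}{2}>0$. So each column of $E$ is nonzero and nonnegative, and therefore $RE>0$ entrywise.

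The only delicate point is the spectral bound $\rho(B)\le c$ securing convergence of the Neumann series, together with the correct handling of the case $\alpha(A)=0$. In that conservative case $\rho(B)=c$ exactly, and convergence relies on the strict inequality $\tfrac{kc/2}{1+kc/2}<1$, which holds for every $k>0$. I expect no further obstacle. The bound \eqref{eq:CNbound} enters only through the explicit factor, which shows it is sharp in the sense that for $kc>2$ the diagonal entry at the index achieving $c$ becomes negative in $I+\tfrac k2A$.
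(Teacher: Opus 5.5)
Your proof is correct and follows essentially the same route as the paper. Both write $M_{\mathrm{CN}}$ as the product of the explicit factor $I+\tfrac k2A$, which is nonnegative because its diagonal $1+\tfrac k2 b_i\ge0$, and the nonnegative resolvent, and both obtain strict positivity from a positive resolvent times a nonnegative factor with no zero column. The one difference is that you prove the (strict) positivity of $(I-\tfrac k2A)^{-1}$ directly, via the Neumann series and $\rho(A+cI)=\alpha(A)+c\le c$, whereas the paper cites \cref{thm:resolvent}, whose Metzler direction is proved under $\bvec 1^\top L\le0$ rather than $\alpha(A)\le0$; your argument is therefore more self-contained and matches the stated hypothesis exactly.
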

\begin{proof}
By \cref{thm:resolvent} (applicable since $A$ is Metzler), $(I-\tfrac{k}{2}A)^{-1} \ge 0$ for all $k>0$, with strictly positive entries when $A$ is irreducible. The
factor $I+\tfrac{k}{2}A$ has nonnegative off-diagonal entries by the Metzler
property, and its diagonal entries $1+\tfrac{k}{2}b_i$ are nonnegative iff $k\le
2/\max_i|b_i|$; under this condition $I+\tfrac{k}{2}A\ge 0$ and $M_{\mathrm{CN}}
\ge 0$ as a product of nonnegative matrices. If the bound is strict, the diagonal
of $I+\tfrac{k}{2}A$ is strictly positive, so this factor has no zero column, and
the product of a strictly positive matrix with a nonnegative matrix having no
zero column is strictly positive.
\end{proof}

\begin{myremark}[Failure in the EM case] \label{rem:CNEM}

No analogue of \cref{thm:CNpos} holds when $A$ is merely an EM-matrix with a
negative off-diagonal entry $d<0$. For small steps, $M_{\mathrm{CN}} = I+kA + O(k^2)$, so the corresponding entry of $M_{\mathrm{CN}}$ is negative for all sufficiently small $k>0$; and since CN is not L-stable ($\mu_j^{\mathrm{CN}}\to-1$ as $k|\lambda_j|\to\infty$), the large-step mechanism that restores positivity for backward Euler and for the exponential - dominance of the Perron projection after decay of all subdominant modes - is absent, so positivity also fails for all sufficiently large $k$.

Positivity of $M_{\mathrm{CN}}$ for an EM generator can therefore occur at most on an intermediate window of step sizes, whose existence depends on the magnitudes
of the negative stencil entries and must be verified for the matrix at hand. When
provable positivity is required in the EM regime, one should use propagators
whose stability function vanishes at infinity: backward Euler (Section above,
threshold $k_0$), or the exact/polynomial-Krylov exponential (threshold $\tau_0$ of
\eqref{eq:posKron}), rather than the trapezoidal rule.
\end{myremark}

The CN scheme is 2nd order accurate in time: its stability function
$r(z)=(1+z/2)/(1-z/2)$ satisfies $r(z)=e^{z}+O(z^{3})$.  In the present scheme
this trapezoidal rule is exactly the central factor $\Phi_{xy}(\Delta t)$ of
\eqref{eq:central}: the central substep \emph{is} $M_{\mathrm{CN}}$ for the cross
operator $A_{xy}$, while the outer factors $e^{\frac{\Delta t}{2}A_\alpha}$ are
evaluated by the Krylov exponential of \cref{sec:krylov}.  The trapezoidal central
factor adds a local error of the same order $O(\Delta t^{3})$ as the splitting
defect, so the global second-order estimate of \cref{prop:order2} is preserved
(the defect acts on $A_{xy}^{3}\,\mathcal R_h p$, bounded uniformly in $h$ for
$p\in H^{6}$ by the same consistency argument as in \cref{ass:reg}).

Discrete mass is conserved exactly for every $k>0$: from $\bvec 1^\top A=0$ we
get $\bvec 1^\top(I+\tfrac{k}{2}A)=\bvec 1^\top$ and $\bvec 1^\top(I-\tfrac{k}{2}A)^{-1} = \bvec 1^\top$, hence $\bvec 1^\top M_{\mathrm{CN}} = \bvec 1^\top$ - an instance of \cref{rem:approxexp} with $r(0)=1$. As in \cref{prop:conserv}, conservation is unconditional and decoupled from positivity; in particular it persists in the EM regime of \cref{rem:CNEM}, where the propagator is conservative but not nonnegative, so the conserved functional $\bvec 1^\top\bvec p$ may not control $\norm{\bvec p}_1$ (\cref{rem:massnorm}).

Concerning norms: in the regime of \cref{thm:CNpos}, $M_{\mathrm{CN}}\ge0$ with unit column sums is column-stochastic, hence $\norm{M_{\mathrm{CN}}\bvec  u}_1 \le \norm{\bvec u}_1$ unconditionally in that window. In $\ell_2$, A-stability gives
$|\mu_j^{\mathrm{CN}}| = |r(k\lambda_j)|\le1$ for $\re\lambda_j\le0$, but for non-normal $A$ this spectral bound alone does not bound $\norm{M_{\mathrm{CN}}}_2$; the correct statement is that the trapezoidal rule is unconditionally contractive in any inner-product norm in which $A$ is dissipative, i.e., $\mu_2(A)\le0$ implies $\norm{M_{\mathrm{CN}}}_2\le1$ for all $k > 0$ (write $\bvec u=M_{\mathrm{CN}}\bvec v$ as $\bvec u-\bvec v=\tfrac{k}{2}A(\bvec u+\bvec v)$ and take the inner product with $\bvec u+\bvec v$). The drawback relative to backward Euler is the absence of damping at infinity, $r(-\infty) = -1$: stiff modes are reflected rather than suppressed, producing the well-known non-monotone transients from rough initial data, which is the quantitative content of \cref{rem:CNEM}.

Even within the positivity window of \cref{thm:CNpos}, CN transients from rough initial data are non-monotone. For an eigenvalue $\lambda_j$ of $A$ the amplification factor is
\begin{equation}
\mu_j^{\mathrm{CN}} =\frac{1+\tfrac{k}{2}\lambda_j}{1-\tfrac{k}{2}\lambda_j},
\qquad \mu_j^{\mathrm{CN}} = \frac{1-\tfrac{k}{2}|\lambda_j|}{1 + \tfrac{k}{2}|\lambda_j|}\in(-1,1) \ \ \text{for}\ \lambda_j<0 ,
\end{equation}
which is negative for $k|\lambda_j|>2$: such modes change sign at every step. For $k|\lambda_j| \gg 1$, we have $\mu_j^{\mathrm{CN}} \approx -1$, meaning these modes are reflected with little to no damping. This leads to the temporal ringing (a “Gibbs-type” artifact) characteristic of trapezoidal time stepping, which arises from $r(-\infty) = -1$ rather than from any spatial approximation. The artifact manifests as oscillations in the amplitudes of the stiff eigenmodes—i.e., as non-monotonic overshoot of $\bvec p^n$ around the smooth solution. When $M_{\mathrm{CN}} \ge 0$, the iterates themselves remain entrywise nonnegative.

Positivity and ringing are, however, not independent: for the conservative
Metzler matrices of \cref{sec:disc1d} the column Gershgorin disks give
$|\lambda_j|\le 2\max_i|b_i|$, so inside the positivity window $k\le
2/\max_i|b_i|$ every amplification factor satisfies $\mu_j^{\mathrm{CN}} \ge -\tfrac13$: oscillatory components decay by at least a factor of three per step, and persistent ringing ($\mu_j^{\mathrm{CN}}\approx-1$) can occur only for step sizes well outside the window, where positivity has already been lost. Both pathologies are thus governed by the single quantity $k\max_i|b_i|$. Since  $\max_i|b_i| = O(h^{-2})$, enforcing the window imposes the parabolic restriction $k=O(h^{2})$, which is precisely the practical objection to CN in this context and the reason we prefer propagators with $r(-\infty)=0$ (backward Euler) or the exact exponential, for which positivity is achieved without a step-size ceiling (\cref{thm:resolvent}, \eqref{eq:posKron}).

\subsection{TR-BDF2}

TR-BDF2 combines a trapezoidal (CN) sub-step with a BDF2 sub-step.  Let
$\gamma \in (0,1)$ be a splitting parameter; the classical choice is
$\gamma = 2 - \sqrt{2} \approx 0.5858$.

\myparagraph{Sub-step 1: Trapezoidal step to $t^{n+\gamma}$}
\begin{equation}
\left(I - \frac{\gamma k}{2}A\right)\bvec{p}^{n+\gamma} = \left(I + \frac{\gamma k}{2}A\right)\bvec{p}^n,
\end{equation}
giving
\begin{equation}
\bvec{p}^{n+\gamma} = M_1\,\bvec{p}^n, \qquad M_1 = \left(I - \frac{\gamma k}{2}A\right)^{-1} \left(I + \frac{\gamma k}{2}A\right).
\end{equation}
By \cref{thm:CNpos} with $k \to \gamma k$, $M_1 > 0$ provided $\gamma k < 2/|b|$, i.e.,
\begin{equation}
k < \frac{2}{\gamma|b|}.
\end{equation}
Since $\gamma < 1$, this is a \emph{looser} constraint than for CN.

\myparagraph{Sub-step 2: BDF2 step from $t^{n+\gamma}$ to $t^{n+1}$.}
The variable-step BDF2 formula with stage ratio $\omega=(1-\gamma)/\gamma$ reads
\begin{equation}\label{eq:BDF2step}
\bigl(I-\alpha kA\bigr)\bvec p^{n+1} = a\,\bvec p^{n+\gamma}-b\,\bvec p^{n},
\qquad \alpha=\frac{1-\gamma}{2-\gamma},\quad a=\frac{(1+\omega)^{2}}{1+2\omega},\quad b=\frac{\omega^{2}}{1+2\omega},
\end{equation}
with $a-b=1$. For the classical $\gamma=2-\sqrt2$ one has $\omega=1/\sqrt2$,
$a=\tfrac{1+\sqrt2}{2}$, $b=\tfrac{\sqrt2-1}{2}$, and the special property
$\alpha=\gamma/2$: both stages involve the same matrix $I-\tfrac{\gamma k}{2}A$
\cite{BankEtAl1985,HoseaShampine1996}.

\myparagraph{Positivity of TR-BDF2.}
Although the right-hand side of \eqref{eq:BDF2step} carries a negative weight
$-b<0$ on $\bvec p^{n}$, so that no stage-wise positivity argument applies, the
negative weight cancels in the composite one-step map. For $\gamma=2-\sqrt2$ the
two resolvents coincide and
\begin{equation}\label{eq:TRBDF2factor}
M_{\mathrm{TRBDF2}} = \bigl(I-\tfrac{\gamma k}{2}A\bigr)^{-2} \bigl(I + (\sqrt2-1)\,kA\bigr),
\end{equation}
since $a-b=1$ and $(a+b)\gamma/2=\sqrt2-1$.

\begin{theorem}[Positivity of the TR-BDF2 propagator: M-matrix case] \label{thm:TRBDF2pos}
Let $A$ be Metzler with $\alpha(A)\le0$, $b_i=A_{ii}$, and $\gamma=2-\sqrt2$. Then
\begin{equation}\label{eq:TRBDF2bound}
0 < k \le \frac{1+\sqrt2}{\max_i|b_i|} \;\Longrightarrow\; M_{\mathrm{TRBDF2}}\ge0,
\end{equation}
with strict entrywise positivity under irreducibility and strict inequality, by
the same argument as in \cref{thm:CNpos}. The window \eqref{eq:TRBDF2bound} exceeds the CN window \eqref{eq:CNbound} by the factor
$(1+\sqrt2)/2\approx1.21$.
\end{theorem}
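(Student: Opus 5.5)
The statement to prove is \cref{thm:TRBDF2pos}. I will mirror the proof of \cref{thm:CNpos}, using the factorised form \eqref{eq:TRBDF2factor}. That form writes $M_{\mathrm{TRBDF2}}$ as the square of a resolvent times a polynomial factor, $(I-\tfrac{\gamma k}{2}A)^{-2}(I+(\sqrt2-1)kA)$. I first check the identity itself. Substituting $\bvec p^{n+\gamma}=M_1\bvec p^n$ into \eqref{eq:BDF2step} and using $\alpha=\gamma/2$, so that both stages share the matrix $R:=(I-\tfrac{\gamma k}{2}A)^{-1}$, gives
\[
M_{\mathrm{TRBDF2}}=R\bigl(aR(I+\tfrac{\gamma k}{2}A)-b\,I\bigr).
\]
Writing $b\,I=b\,R(I-\tfrac{\gamma k}{2}A)$ gives
\[
M_{\mathrm{TRBDF2}}=R^2\bigl((a-b)I+(a+b)\tfrac{\gamma k}{2}A\bigr),
\]
and the coefficient identities $a-b=1$, $(a+b)\gamma/2=\sqrt2-1$ reduce this to \eqref{eq:TRBDF2factor}. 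All factors are polynomials or rational functions of $A$, so they commute and the ordering does not matter.

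Next I show each factor is nonnegative. Since $A$ is Metzler with $\alpha(A)\le0$, \cref{thm:resolvent} (the Metzler direction of its equivalence, as used in \cref{thm:CNpos}) gives $R\ge0$ for every $k>0$. Hence $R^2\ge0$, and $R^2>0$ entrywise when $A$ is irreducible. The polynomial factor $I+(\sqrt2-1)kA$ has nonnegative off-diagonal entries by the Metzler property. Its diagonal entries are $1+(\sqrt2-1)kb_i$, which are nonnegative exactly when $k\le 1/((\sqrt2-1)\max_i|b_i|)=(1+\sqrt2)/\max_i|b_i|$, because $1/(\sqrt2-1)=1+\sqrt2$. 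I take $b_i\le0$ here, as the Metzler-with-$\alpha(A)\le0$ setting gives: the diagonal of such an $A$ lies below the spectral abscissa, so $b_i\le\alpha(A)\le0$. Under \eqref{eq:TRBDF2bound} the product of the two nonnegative factors is nonnegative.

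For the strict statement, strict inequality in \eqref{eq:TRBDF2bound} makes every diagonal entry of the polynomial factor positive, so that factor has no zero column. A strictly positive matrix ($R^2$, under irreducibility) times a nonnegative matrix with no zero column is strictly positive, exactly as in \cref{thm:CNpos}. Finally, the ratio of the two windows is $\bigl((1+\sqrt2)/\max|b_i|\bigr)/\bigl(2/\max|b_i|\bigr)=(1+\sqrt2)/2\approx1.207$.

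The only delicate step is the algebraic reduction to \eqref{eq:TRBDF2factor}, in particular confirming $(a+b)\gamma/2=\sqrt2-1$ for $\gamma=2-\sqrt2$. I would check it directly. From $a=\tfrac{1+\sqrt2}{2}$ and $b=\tfrac{\sqrt2-1}{2}$ we get $a+b=\sqrt2$, and then $\sqrt2(2-\sqrt2)/2=\sqrt2-1$. The remaining steps are the routine sign bookkeeping already carried out for CN.
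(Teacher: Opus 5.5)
Your proof is correct and follows the paper's argument: use the factorisation \eqref{eq:TRBDF2factor}, get nonnegativity of the squared resolvent from \cref{thm:resolvent}, and check the diagonal $1+(\sqrt2-1)k\,b_i$ of the polynomial factor. You also verify the factorisation and the coefficient identities, and you justify $b_i\le\alpha(A)\le0$, which is what lets the diagonal condition be written with $|b_i|$; the paper leaves both points implicit.
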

\begin{proof}
In \eqref{eq:TRBDF2factor} the factor $(I-\tfrac{\gamma k}{2}A)^{-2}\ge0$ for all
$k>0$ by \cref{thm:resolvent}; the factor $I+(\sqrt2-1)kA$ is Metzler off
the diagonal and has diagonal $1+(\sqrt2-1)k\,b_i\ge0$ iff \eqref{eq:TRBDF2bound}
holds.
\end{proof}

No second-order one-step rational scheme can do better than such a window:
unconditional positivity on Metzler generators forces order $\le1$ \cite{BolleyCrouzeix1978}); backward Euler saturates this barrier, and CN and TR-BDF2 realise second order at the price of a finite positivity window, $k=O(h^{2})$ since $\max_i|b_i|=O(h^{-2})$.

\myparagraph{Stiff damping, conservation, EM case.}
The stability function
$r(z) = \bigl(1+(\sqrt2-1)z\bigr)/\bigl(1-(1-\tfrac1{\sqrt2})z\bigr)^{2}
= 1 + z + \tfrac{z^{2}}{2}+O(z^{3})$
satisfies $r(-\infty)=0$: TR-BDF2 is L-stable, so, unlike CN, stiff modes are
damped rather than reflected and no temporal ringing occurs at any step size.
Mass is conserved exactly for every $k>0$, since $r(0)=1$ and $\bvec 1^\top A=0$
imply $\bvec 1^\top M_{\mathrm{TRBDF2}}=\bvec 1^\top$ (\cref{rem:approxexp}).

In the EM case, positivity for small $k$ fails by the Neumann argument
($M_{\mathrm{TRBDF2}}=I+kA+O(k^{2})$), exactly as for BE and CN; but
L-stability restores the large-step mechanism of backward Euler: for a
conservative irreducible EM generator, $r(-\infty)=0$ damps all
subdominant modes as $k\to\infty$ and $M_{\mathrm{TRBDF2}} \to \bvec\pi\bvec1^\top/(\bvec1^\top \bvec\pi)>0$, so $M_{\mathrm{TRBDF2}} \ge 0$ for all $k\ge k_0$, some finite threshold - the property CN lacks (\cref{rem:CNEM}).

\end{document}